\theoremstyle{plain}
    \newtheorem{thm}{Theorem}[section]
    \newtheorem{claim}[thm]{Claim}
     \newtheorem{conjecture}[thm]{Conjecture}
    \newtheorem{corollary}[thm]{Corollary}
    \newtheorem{lemma}[thm]{Lemma}
    \newtheorem{proposition}[thm]{Proposition}
    \newtheorem{question}[thm]{Question}
    \newtheorem{theorem}[thm]{Theorem}
\theoremstyle{definition}
    \newtheorem{notation}[thm]{Notation}
    \newtheorem*{notation*}{Notation and Terminology}
      \newtheorem*{mainthm}{Main Theorem}
    \newtheorem{remark}[thm]{Remark}
    \newtheorem*{ack}{Acknowledgments}
\theoremstyle{remark}
\newcommand{\C}{\mathbb{C}}
\newcommand{\F}{\mathbb{F}}
\newcommand{\PP}{\mathbb{P}}
\newcommand{\Q}{\mathbb{Q}}
\newcommand{\R}{\mathbb{R}}
\newcommand{\Z}{\mathbb{Z}}
\newcommand{\OO}{\mathcal{O}}
\newcommand{\Amp}{\operatorname{Amp}}
\newcommand{\Aut}{\operatorname{Aut}}
\newcommand{\diag}{\operatorname{diag}}
\newcommand{\id}{\operatorname{id}}
\newcommand{\NE}{\overline{\operatorname{NE}}}
\newcommand{\Nef}{\operatorname{Nef}}
\newcommand{\NS}{\operatorname{NS}}
\newcommand{\Supp}{\operatorname{Supp}}
\newcommand{\N}{\operatorname{N}}
\newcommand{\Pic}{\operatorname{Pic}}
\newcommand{\mstriangle}[1]{
\begin{tikzpicture}[x=0.3cm,y=0.3cm]
\draw (-0.4,-0.433) -- (1.4,-0.433);
\draw (-0.2,-0.7794) -- (0.7,0.7794);
\draw (1.2,-0.7794) -- (0.3,0.7794);
\end{tikzpicture}
}
\newcommand{\mssharp}[1]{
\begin{tikzpicture}[x=0.3cm,y=0.3cm]
\draw (-0.8,-0.5) -- (0.8,-0.5);
\draw (-0.8,0.5) -- (0.8,0.5);
\draw (-0.5,-0.8) -- (-0.5,0.8);
\draw (0.5,-0.8) -- (0.5,0.8);
\end{tikzpicture}
}
\newcommand{\Rmnum}[1]{\expandafter\@slowromancap\romannumeral #1@}
\begin{document}

\title[Endomorphism of Fano threefold]
{Non-isomorphic endomorphisms of Fano threefolds}

\author{Sheng Meng, De-Qi Zhang, Guolei Zhong}

\address{Korea Institute For Advanced Study,
Seoul 02455, Republic of Korea}
\email{ms@u.nus.edu, shengmeng@kias.re.kr}

\address
{
\textsc{National University of Singapore,
Singapore 119076, Republic of Singapore
}}
\email{matzdq@nus.edu.sg}

\address
{
\textsc{National University of Singapore,
Singapore 119076, Republic of Singapore
}}
\email{zhongguolei@u.nus.edu}
\begin{abstract}
Let $X$ be a smooth Fano threefold.
We show that $X$ admits a non-isomorphic surjective endomorphism if and only if
$X$ is either a toric variety or a product of $\mathbb{P}^1$ and a del Pezzo surface; in this case, $X$
is a rational variety. We further show that $X$ admits a polarized (or amplified) endomorphism if and only if $X$ is a toric variety.
\end{abstract}

\subjclass[2010]{
14M25,  
14E30,   
32H50, 
20K30, 
08A35.  
}

\keywords{Fano threefold,  toric variety, polarized endomorphism, int-amplified endomorphism, equivariant minimal model program}

\maketitle
\tableofcontents

\section{Introduction}

We work over an algebraically closed field $k$ of characteristic $0$.

It has been a long history and involved many people working on the classification of smooth projective varieties $X$ admitting a non-isomorphic surjective endomorphism $f$.
When $\dim (X) = 1$, by the Hurwitz formula, such $X$ is  a rational or an elliptic curve.
When $\dim (X) = 2$, such (possibly singular) $X$  has been fully studied by Nakayama (cf.~\cite{Nak08}).

In higher dimensions, in view of the endomorphism-descending (or lifting) property of the three typical fibrations
(or Beauville-Bogomolov decomposition): Albanese map, Kodaira fibration, and maximal rationally connected fibration, rationally connected varieties
are the essential cases in studying
non-isomorphic surjective endomorphisms on them.

Further, to remove the non-essential case of the product of a non-isomorphic surjective endomorphism and an automorphism, we need some constraint
on $f$.
The right constraint seems being that $f : X \to X$ is {\it int-amplified}, i.e., $f^*L-L$ is ample for some ample Cartier divisor $L$ on $X$,
or equivalently, every eigenvalue of $f^*|_{\NS(X)\otimes_{\Z} \C}$ has modulus $>1$
(cf.~Lemma \ref{lem-int-amp-equiv}).
Precisely, generalizing \cite[Question 4.4]{Fak03}, the first and third authors have asked the following Question \ref{main-que-toric} (cf.~\cite[Question 1.2]{MZg20}).
Recall that $f$ is {\it polarized} if $f^*H \sim qH$ for some ample Cartier divisor $H$ and integer $q > 1$
(cf. Lemma \ref{lem-pol-mod>1}).
Note that every polarized endomorphism is int-amplified.

\begin{question}\label{main-que-toric}
Let $X$ be a rationally connected smooth projective variety. Suppose that $X$ admits an int-amplified (or polarized) endomorphism $f$. Is $X$ a toric variety?
\end{question}

Question \ref{main-que-toric} itself generalizes the following long-standing Conjecture \ref{main-conj-pn}
of the 1980's.
It has been proved for homogeneous spaces (cf.~\cite{PS89}), for Fano threefolds of Picard number one (cf.~\cite{ARV99},
\cite{HM03}), and for
hypersurfaces of the projective space (cf.~\cite[Proposition 8]{PS89}, \cite{Bea01}).
Recall that a normal projective variety $X$ is {\it Fano} if the anti-canonical divisor $-K_X$ is an  ample $\Q$-Cartier divisor.

\begin{conjecture}\label{main-conj-pn}
Let $X$ be a smooth Fano variety of Picard number one. Suppose that $X$ admits a non-isomorphic surjective endomorphism $f$.
Then $X$ is a projective space.
\end{conjecture}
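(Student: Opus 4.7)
Since $\rho(X) = 1$, let $H$ generate $\Pic(X)/\torsion$; then $f^{*}H \equiv qH$ for some positive integer $q$, and because $\deg(f) = q^{\dim X} > 1$ we have $q \geq 2$, so $f$ is automatically polarized. Writing $-K_X \equiv rH$ for the Fano index $r$, the ramification formula $K_X = f^{*}K_X + R_f$ yields $R_f \equiv (q-1)rH$, a big effective divisor. By the Kobayashi--Ochiai characterization, proving $X \simeq \PP^n$ (where $n = \dim X$) is equivalent to showing that the Fano index equals $n+1$.

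The approach I would pursue is via the Hwang--Mok theory of minimal rational curves. Fix a dominating minimal family of rational curves on $X$ and form the variety of minimal rational tangents $\mathcal{C}_x \subset \PP(T_x X)$ at a general point $x$. Since $f$ is polarized, an iterate of $f$ preserves this family up to reparametrization, and the differential $df_x$ induces a self-correspondence on $\mathcal{C}_x$. The plan is to exploit this extra dynamical structure to force $\mathcal{C}_x = \PP(T_x X)$; the minimal rational curves through $x$ would then cover every tangent direction, and a standard VMRT-filling argument (or Cartan--Fubini extension applied to the polarized self-map) would identify $X$ with $\PP^n$.

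The main obstacle, and the reason Conjecture \ref{main-conj-pn} remains open in arbitrary dimension, is precisely the step of showing $\mathcal{C}_x = \PP(T_x X)$: one must control the induced self-correspondence on the VMRT as the base point varies and rule out non-linear polarized self-maps of positive-dimensional VMRT's. This has been carried out only in special cases: for hypersurfaces in $\PP^N$ via degree-of-endomorphism arguments \cite{Bea01}, for rational homogeneous spaces via Tits-type rigidity \cite{PS89}, and for Fano threefolds of Picard number one by combining Iskovskikh's classification with the line/conic analysis of \cite{ARV99,HM03}. In the threefold setting of the present paper the conjecture is therefore available as a known theorem and can be used as a black box in the proof of the main result.
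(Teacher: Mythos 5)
The statement you were asked about is a \emph{conjecture} in the paper (Conjecture \ref{main-conj-pn}), and the paper offers no proof of it: it only records the known special cases — homogeneous spaces \cite{PS89}, hypersurfaces \cite{Bea01}, and Fano threefolds of Picard number one \cite{ARV99}, \cite{HM03} — and uses the threefold case as a black box via Theorem \ref{thm-fano-rho=1}. Your proposal is consistent with this: your preliminary reductions are correct ($\rho(X)=1$ forces $f^*H\sim qH$ with $q\ge 2$, so $f$ is polarized, and by Kobayashi--Ochiai the statement is equivalent to the Fano index being $\dim(X)+1$), and you correctly identify that the remaining VMRT step ($\mathcal{C}_x=\PP(T_xX)$, or some substitute rigidity input) is precisely what is open in general, so you do not claim a proof. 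In other words, there is no complete argument here and none was to be expected; what matters for the paper is only the $\dim X=3$, $\rho=1$ case, which is the cited result of \cite{ARV99} and \cite{HM03} (obtained there by boundedness/rational-curve arguments together with the classification of Fano threefolds of Picard number one, not by the VMRT route you sketch). So treat the general statement as open, and cite Theorem \ref{thm-fano-rho=1} where the paper needs it; your VMRT outline is a reasonable research direction but should not be presented as a proof, since the self-correspondence on the VMRT induced by $df_x$ has not been shown to force linearity of the VMRT in any generality.
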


In this paper, we shall give a positive answer to Question \ref{main-que-toric} for smooth Fano threefolds (cf.~Theorem \ref{main-thm-toric}).
More generally, in Main Theorem below, we give the criterion for the existence of a non-isomorphic surjective endomorphism.

Now we state our main results.

\begin{mainthm}\label{main-thm-niso}
A smooth Fano threefold
is either toric or a product of $\mathbb{P}^1$ and a del Pezzo surface
if and only if
it admits a non-isomorphic surjective endomorphism.
\end{mainthm}

The Main Theorem follows immediately from Theorems \ref{main-thm-prod} and \ref{main-thm-toric} below.

Recall that a surjective endomorphism $f: X \to X$
is {\it amplified} if $f^*L - L$ is ample for some
(not necessarily ample) Cartier divisor $L$.
Note that every int-amplified endomorphism is amplified and the converse holds true if $X$ is Fano (cf. Lemma \ref{lem-MDS-amp}).

\begin{theorem}\label{main-thm-prod}
Let $X$ be a smooth Fano threefold. Then the following are equivalent.
\begin{enumerate}
\item[(1)]
$X$ is a product of $\mathbb{P}^1$ and a del Pezzo surface.
\item[(2)]
$X$ admits a non-isomorphic surjective endomorphism which is not polarized even after iteration.
\item[(3)]
$X$ admits a non-isomorphic surjective endomorphism which is non-amplified (or equivalently, non-int-amplified) (cf. Lemma \ref{lem-MDS-amp}).
\end{enumerate}
\end{theorem}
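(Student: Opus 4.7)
The plan is to prove the two easy implications (3)$\Rightarrow$(2) and (1)$\Rightarrow$(3) directly, then tackle the substantive (3)$\Rightarrow$(1), which together with the trivial directions closes the equivalence loop (with (2)$\Rightarrow$(1) following by a similar argument). The implication (3)$\Rightarrow$(2) is formal: polarized implies amplified, so $f$ non-amplified is $f$ non-polarized, and by Lemma \ref{lem-MDS-amp} ``amplified'' coincides with ``int-amplified'' on the Fano threefold $X$, a property inherited by every iterate, so no $f^n$ is polarized. For (1)$\Rightarrow$(3), let $X\cong\mathbb{P}^1\times S$ with $S$ a del Pezzo surface and put $f:=g\times\id_S$ where $g:\mathbb{P}^1\to\mathbb{P}^1$ is the squaring map; then $f$ is a non-isomorphic surjective endomorphism, and $f^\ast$ acts on $\NS(X)=\NS(\mathbb{P}^1)\oplus\NS(S)$ as $2\cdot\id\oplus\id$, so the eigenvalue $1$ shows $f$ is not int-amplified, hence not amplified.

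For (3)$\Rightarrow$(1), let $X$ be a smooth Fano threefold and $f$ a non-isomorphic, non-amplified surjective endomorphism. First, $\rho(X)=1$ is ruled out by \cite{HM03}: such an $X$ would be $\mathbb{P}^3$, on which every non-isomorphic endomorphism is polarized (since $\Pic(\mathbb{P}^3)=\Z$), contradicting (3). So $\rho(X)\geq 2$. After replacing $f$ by a suitable iterate, I arrange that $f^\ast$ fixes each extremal ray of the rational polyhedral nef cone. The failure of int-amplifiedness gives an eigenvalue of $f^\ast$ on $\NS(X)$ of modulus $\leq 1$; by the general dynamical theory of surjective endomorphisms (all such eigenvalues are algebraic integers whose Galois conjugates have modulus $\geq 1$), Kronecker's theorem forces this eigenvalue to be a root of unity, and a further iterate of $f$ produces a nonzero $f^\ast$-fixed nef class on $X$.

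The main obstacle is the subsequent case analysis along the Mori--Mukai classification of smooth Fano threefolds. I would run an $f$-equivariant minimal model program to extract an $f$-equivariant extremal contraction $\pi:X\to Y$; divisorial contractions are then handled by descent to a Fano of smaller Picard rank and induction on $\rho$, while Mori fiber spaces split into $\dim Y=2$ (conic bundles over a del Pezzo surface) and $\dim Y=1$ (del Pezzo fibrations over $\mathbb{P}^1$). In each case the $f^\ast$-fixed nef class together with $f$-equivariance should force triviality of the fibration---empty discriminant locus and a globally split bundle---yielding $X\cong\mathbb{P}^1\times S$ with $S$ a del Pezzo surface. The hardest step is ruling out every nontrivial conic bundle and del Pezzo fibration in the Mori--Mukai list that could a priori support a non-amplified endomorphism; this requires a careful case-by-case verification against the explicit classification.
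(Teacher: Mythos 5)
Your easy implications are fine: (3)$\Rightarrow$(2) is formal via Lemma \ref{lem-MDS-amp} and the eigenvalue characterization in Lemma \ref{lem-int-amp-equiv}, and your example for (1)$\Rightarrow$(3) is exactly the one used in the paper. But the substantive content of the theorem is not proved in your proposal: for (3)$\Rightarrow$(1) you only set up the standard reductions (rule out $\rho(X)=1$, iterate $f$ to fix the extremal rays of the rational polyhedral nef cone, extract an $f^*$-fixed nef class) and then explicitly defer ``the hardest step'' to an unexecuted case-by-case verification against the Mori--Mukai classification. That deferred step is where the whole difficulty lies --- showing that a nontrivial conic bundle or del Pezzo fibration cannot carry a non-amplified (equivalently, never-polarized) endomorphism --- so as written there is a genuine gap, not a proof. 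There is also a structural slip: your three implications (3)$\Rightarrow$(2), (1)$\Rightarrow$(3), (3)$\Rightarrow$(1) do not close the cycle; what is really needed is (2)$\Rightarrow$(1), which you wave off as ``a similar argument'' although it is the stronger statement (indeed it is exactly Theorem \ref{thm-pol-split}: a non-isomorphic endomorphism of a smooth Fano threefold is either polarized after iteration or $X\cong\mathbb{P}^1\times S$). Finally, your dynamical justification is garbled: Kronecker's theorem needs \emph{all} Galois conjugates of modulus $\le 1$, which you do not have; the correct (and simpler) point is that after iteration $f^*$ fixes each extremal ray of $\Nef(X)$, so all eigenvalues of $f^*|_{\N^1(X)}$ are positive integers and non-int-amplifiedness forces an eigenvalue equal to $1$.

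Beyond the gap, note that even if you carried out your plan it would take a genuinely different route from the paper, which deliberately avoids the Mori--Mukai list. The paper proves (2)$\Rightarrow$(1) via Theorem \ref{thm-pol-split}, whose proof is classification-free: for $\rho(X)=2$ it uses that any fibre-dimension-one Fano contraction is an algebraic, in fact splitting, $\mathbb{P}^1$-bundle (Theorem \ref{thm-conic}, Proposition \ref{prop-ame-split}) and a negative-section argument to force polarizedness; for $\rho(X)=3$ it uses the eigenvalue bookkeeping of Lemma \ref{key-lem-thm-5.1} together with Lemma \ref{lem-noniso-nonpol} (a non-polarized non-isomorphic endomorphism of a rational surface forces $\mathbb{P}^1\times\mathbb{P}^1$); for $\rho(X)\ge 4$ it uses the equivariant relative MMP of Theorem \ref{thm-MMP} and only structural results from \cite{MM83} (e.g. Propositions 6.3, 6.8, 9.10, Corollary 6.7), never the explicit classification. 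If you want to complete your argument, the efficient path is to prove (2)$\Rightarrow$(1) along these lines rather than attempting the case-by-case check you describe.
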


\begin{theorem}\label{main-thm-toric}
Let $X$ be a smooth Fano threefold. Then the following are equivalent.
\begin{enumerate}
\item[(1)]
$X$ is a toric variety.
\item[(2)]
$X$ admits a polarized endomorphism.
\item[(3)]
$X$ admits an amplified
(or equivalently, int-amplified)
endomorphism (cf. Lemma \ref{lem-MDS-amp}).
\end{enumerate}
\end{theorem}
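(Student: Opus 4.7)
The directions $(1) \Rightarrow (2) \Rightarrow (3)$ are essentially formal. For $(1) \Rightarrow (2)$, if $X$ is toric with torus $T$, the $q$-th power map $t \mapsto t^q$ on $T$ extends to a polarized endomorphism $f \colon X \to X$ with $f^* L \sim qL$ for every torus-invariant ample Cartier divisor $L$. For $(2) \Rightarrow (3)$, any polarized $f$ gives $f^*L - L \sim (q-1)L$ ample, hence $f$ is amplified; the equivalence between amplified and int-amplified on a Fano $X$ is Lemma \ref{lem-MDS-amp}.

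For the substantive direction $(3) \Rightarrow (1)$, let $f \colon X \to X$ be int-amplified. The strategy is to run an $f$-equivariant minimal model program using the machinery developed in \cite{MZg20} and the authors' earlier work. After replacing $f$ by a sufficiently divisible iterate, one obtains a sequence
\[
X = X_0 \ratmap X_1 \ratmap \cdots \ratmap X_r
\]
in which every $X_i$ is $\Q$-factorial klt, $f$ descends to an int-amplified endomorphism $f_i$ of $X_i$, and each step is an $f_i$-equivariant $K_{X_i}$-negative divisorial contraction, flip, or Mori fiber contraction. Rational connectedness of the Fano variety $X$ forces $X_r$ to be a point, which is trivially toric. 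The goal is then to propagate toricity upward: if $X_{i+1}$ is toric, show $X_i$ is toric.

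The propagation splits by the type of the step. For a Mori fiber contraction to a point (so $\rho(X_i)=1$), we invoke Conjecture \ref{main-conj-pn} for Fano threefolds, settled by Amerik--Rovinsky--Van de Ven \cite{ARV99} and Hwang--Mok \cite{HM03}, giving $X_i \cong \mathbb{P}^3$. For a Mori fiber contraction with base $\mathbb{P}^1$, the generic fiber is a del Pezzo surface; restricting $f_i$ over an $f_{i+1}$-fixed point yields an int-amplified endomorphism of that fiber, which is therefore a toric del Pezzo by Nakayama's surface classification \cite{Nak08}. For a Mori fiber contraction with a smooth surface base, $f_i$ descends to an int-amplified endomorphism on the base, again forcing the base to be toric by \cite{Nak08}. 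For a divisorial contraction $X_i \to X_{i+1}$, the extremal ray and its exceptional divisor are $f_i$-invariant, and the Mori--Mukai classification of extremal contractions on smooth Fano threefolds identifies the contraction as toric.

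The principal obstacle is the upward propagation of toricity through fibrations: a toric base together with toric fibers does not formally force a toric total space, since moduli-variation of fibers or nontrivial monodromy could obstruct it. The int-amplified endomorphism must be used to rigidify the fibration, trivializing monodromy and pinning down the fiber modulus, after which the total space is identified within the Mori--Mukai list of smooth Fano threefolds admitting the relevant structure and every non-toric candidate is ruled out. A further technical nuance is that only $X = X_0$ is a priori smooth, while intermediate $X_i$ are merely $\Q$-factorial klt Fano, so the classifications used at each propagation step must be available in the singular setting. Combining the family-by-family Mori--Mukai check with \cite{Nak08} in the surface case and \cite{ARV99, HM03} in the Picard-number-one case completes the argument.
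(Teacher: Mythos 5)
Your easy directions are fine, but the heart of the matter, (3) $\Rightarrow$ (1), is not proved: the ``principal obstacle'' you name -- propagating toricity upward through the MMP -- is exactly the content of the theorem, and your proposal only gestures at it (``rigidify the fibration, trivializing monodromy and pinning down the fiber modulus, after which the total space is identified within the Mori--Mukai list''). That is not an argument. The paper resolves this obstacle by concrete, nontrivial steps that your outline does not contain: for a Fano contraction $\tau\colon X\to Y$ to a surface, one must first prove the conic bundle has \emph{empty discriminant} (this occupies all of Section \ref{section-4-conic}, using polarizedness, the toric-pair description of $\Delta_\tau$, and log canonical obstructions from Theorem \ref{thm-bh}); then that the resulting algebraic $\PP^1$-bundle \emph{splits} (Theorem \ref{thm-amerik2} via Amerik, and Theorem \ref{thm-conic-split-p1p1-blp2}, which replaces semistability by a wall-and-chamber argument); only then does Proposition \ref{prop-toric-pair} give toricity of the total space. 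Nothing in your sketch supplies these steps, and no appeal to the Mori--Mukai list by itself rules out non-toric $\PP^1$-bundles or non-trivial conic bundles without the dynamical input.

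Your treatment of divisorial contractions has the same gap in sharper form: knowing that $X_{i+1}$ is toric and that $X_i\to X_{i+1}$ is a blowup along an $f_{i+1}^{-1}$-invariant smooth curve does \emph{not} make the blowup toric; you need the center to be torus-invariant for some torus structure, i.e.\ contained in (in fact equal to an intersection of components of) a toric boundary. This is precisely what the paper's Theorems \ref{thm-surface-toricpair} and \ref{thm-boundary-p1bundle} (totally periodic divisors lie in a toric boundary) and Theorem \ref{thm-linear-subspace} (invariant divisors of $\PP^n$ are linear) provide, and what makes {\bf Step B} work in Theorem \ref{thm-rho>=3}; your proposal never addresses it. Two further problems: a general $f$-equivariant MMP produces $\Q$-factorial klt (possibly singular, possibly flipped) intermediate models on which your key inputs (\cite{ARV99}, \cite{HM03}, Nakayama's surface results, Amerik's splitting criteria, the Mori--Mukai structure theory) are not available -- the paper avoids this by running the \emph{relative} MMP over the conic-bundle base via \cite[\S 6]{MM83}, keeping every model a smooth threefold (Theorem \ref{thm-MMP}) and separately handling the smooth but non-Fano models (Theorem \ref{thm-conic-split-p1p1-blp2}); and your claim that rational connectedness forces the MMP to terminate at a point is false as stated, since it terminates at a Mori fibre space whose base may be a curve or surface. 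Finally, note the paper deliberately takes a classification-free route, whereas your fallback ``family-by-family Mori--Mukai check'' is neither carried out nor needed in their argument.
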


Question \ref{main-que-toric} is known to be true under further assumption that $f$ has totally invariant ramification (cf.~\cite[Theorem 1.4]{MZg20}, \cite[Corollary 1.4]{MZ19a} and \cite[Theorem 1.2]{HN11}).
However, even when $X$ is toric, $f^{-1}$ may not fix its big torus, e.g. take a general endomorphism of $\PP^n$. This actually makes the proving of our results more difficult.

Fano threefolds have no ``primitive" endomorphisms unless they are toric. Indeed:

\begin{corollary}\label{cor-2}
Let $X$ be a smooth Fano threefold, and $f: X \to X$ a non-isomorphic surjective endomorphism.
Then either $X$ is toric; or $X = \PP^1 \times S$ where $S$ is a del Pezzo surface and (after iteration)
$f = f_P \times f_S$ with $f_P : \PP^1 \to \PP^1$ and $f_S \in \Aut(S)$.
\end{corollary}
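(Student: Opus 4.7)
The plan is to invoke the Main Theorem to reduce to the product case $X=\PP^{1}\times S$, and then use the polyhedral structure of the cones of curves and divisors on $X$ to show that $f$ splits as a product after iteration.

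\emph{Step 1 (reduction).} By the Main Theorem (i.e.\ Theorems \ref{main-thm-prod} and \ref{main-thm-toric} combined), either $X$ is toric---and the first alternative of the corollary already holds---or $X\cong \PP^{1}\times S$ with $S$ a smooth del Pezzo surface. If $X$ happens to fall under both descriptions we simply take the first alternative, so we may assume $X=\PP^{1}\times S$ is not toric; then $S$ is not toric either (else the product would be).

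\emph{Step 2 (splitting $f$).} Let $\pi_{P}\colon X\to \PP^{1}$ and $\pi_{S}\colon X\to S$ be the two projections. Since $X$ is Fano, $\NE(X)$ and $\Nef(X)$ are rational polyhedral with only finitely many extremal rays. The product structure of $X$ gives
\[
\NE(X)=\mathbb{R}_{\geq 0}[\PP^{1}\times\{pt\}]\,\oplus\,\NE(S),\qquad
\Nef(X)=\mathbb{R}_{\geq 0}\,\pi_{P}^{*}(pt)\,\oplus\,\pi_{S}^{*}\Nef(S),
\]
so $R:=\mathbb{R}_{\geq 0}[\PP^{1}\times\{pt\}]$ is an extremal ray of $\NE(X)$ and $R^{\vee}:=\mathbb{R}_{\geq 0}\,\pi_{P}^{*}(pt)$ is an extremal ray of $\Nef(X)$. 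Because $f_{*}$ and $f^{*}$ permute the finitely many extremal rays of these cones, after replacing $f$ by a suitable iterate we may assume $f_{*}R=R$ and $f^{*}R^{\vee}=R^{\vee}$. The first identity forces every fiber of $\pi_{S}$ to map to an irreducible curve whose numerical class lies in $R$, which can only be another fiber of $\pi_{S}$; by the universal property of the Mori contraction $\pi_{S}$ we obtain $f_{S}\colon S\to S$ with $\pi_{S}\circ f=f_{S}\circ\pi_{S}$. The second identity, $f^{*}\pi_{P}^{*}(pt)=\lambda\,\pi_{P}^{*}(pt)$ for some $\lambda\in\mathbb{Z}_{>0}$, implies that for any fiber $F'\cong S$ of $\pi_{P}$, the restricted morphism $\pi_{P}\circ f|_{F'}\colon F'\to\PP^{1}$ has numerically trivial pullback of an ample divisor on $\PP^{1}$, hence is constant; thus $\pi_{P}\circ f$ factors through $\pi_{P}$, producing $f_{P}\colon \PP^{1}\to\PP^{1}$ with $\pi_{P}\circ f=f_{P}\circ\pi_{P}$. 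Combining the two commutative squares via the universal property of the product $X=\PP^{1}\times S$ yields $f=f_{P}\times f_{S}$.

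\emph{Step 3 ($f_{S}$ is an automorphism).} If $f_{S}$ were not an isomorphism, it would be a non-isomorphic surjective endomorphism of the del Pezzo surface $S$; by the known $2$-dimensional classification (Nakayama \cite{Nak08}) this forces $S$ to be toric, contradicting Step 1. Hence $f_{S}\in\Aut(S)$.

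\emph{Main obstacle.} The delicate part is the descent along $\pi_{P}$ in Step 2. Unlike $\pi_{S}$, the projection $\pi_{P}$ is not a Mori extremal contraction as soon as $\rho(S)\geq 2$, so one cannot invoke the cone theorem directly. The remedy is to pass to the dual, $\Nef$-side: use the product decomposition of $\Nef(X)$ to identify $R^{\vee}$ as a genuine extremal ray (not merely a face), and then exploit the numerical triviality of $(f^{*}\pi_{P}^{*}(pt))|_{F'}$ to force each fiber-restriction of $\pi_{P}\circ f$ to be constant.
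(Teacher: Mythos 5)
Your proposal is correct and follows essentially the same route as the paper: reduce to $X=\PP^1\times S$ via the Main Theorem, iterate $f$ so that it preserves the relevant extremal rays of the (rational polyhedral, $K_X$-negative) cones, descend $f$ along both projections to get $f=f_P\times f_S$, and rule out $\deg(f_S)\ge 2$ by Nakayama's result that a rational surface with a non-isomorphic endomorphism is toric (the paper cites \cite[Theorem 3]{Nak02}). The only divergence is the ``main obstacle'' you flag for $\pi_P$: since $X$ is Fano, $\pi_P$ is simply the contraction of a ($K_X$-negative) extremal \emph{face} of $\NE(X)$, which is automatically $f$-equivariant once $f$ fixes all extremal rays, so the paper descends along both projections at once; your dual nef-cone restriction argument is a valid, if unnecessary, substitute for that observation.
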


Del Pezzo surfaces being rational,
our Main Theorem recovers
\cite[Theorem 1.2]{Zha12}:

\begin{corollary}
A smooth Fano threefold admitting a non-isomorphic surjective endomorphism is rational.
\end{corollary}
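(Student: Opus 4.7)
The plan is to deduce this corollary directly from the Main Theorem together with two classical rationality facts: smooth toric varieties are rational, and (in characteristic zero) smooth del Pezzo surfaces are rational.

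First I would invoke the Main Theorem to split into two cases. Since $X$ is a smooth Fano threefold admitting a non-isomorphic surjective endomorphism, $X$ is either a toric variety or isomorphic to $\mathbb{P}^1 \times S$ for a del Pezzo surface $S$.

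In the toric case, $X$ contains a dense open torus $(k^\times)^3 \hookrightarrow X$, which is an open subvariety birational to $\mathbb{A}^3$; hence $X$ is rational. In the product case, $S$ is a smooth del Pezzo surface, so by the classical Enriques--Manin--Iskovskikh classification (in characteristic zero) $S$ is rational, i.e.\ birational to $\mathbb{P}^2$. Then $X = \mathbb{P}^1 \times S$ is birational to $\mathbb{P}^1 \times \mathbb{P}^2$, which is rational. Combining the two cases yields that $X$ is rational.

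There is essentially no obstacle: given the Main Theorem, this corollary is a one-line consequence of standard rationality results. The only thing to remark on is that our blanket hypothesis $\operatorname{char}(k)=0$ is used to guarantee rationality of del Pezzo surfaces in the second case.
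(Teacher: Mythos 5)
Your proof is correct and matches the paper's own argument: the paper derives this corollary exactly as you do, by combining the Main Theorem with the rationality of toric varieties and of del Pezzo surfaces (the paper just states it in one line, ``Del Pezzo surfaces being rational, our Main Theorem recovers\ldots''). Nothing further is needed.
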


\begin{remark}[\textbf{Classification-free approach}]
In the first trial of proving our theorems,
the authors utilized the
classification of smooth Fano
varieties which is available in dimension $\le 3$
but is not plausible in higher dimension;
it needs tedious and patient case by case checking and it would actually be 10-page longer than the present (second)
classification-free approach; this second approach adopted here might be useful
in solving Questions and Conjectures: \ref{main-que-toric}, \ref{main-conj-pn}, \ref{main-que-ti} and
\ref{main-conj-linear} in higher dimensions.
\end{remark}

Besides the enlightening structural propositions in \cite{MM83}, our key ingredient of proving Theorems \ref{main-thm-prod} and \ref{main-thm-toric}
(both for ``(3) $\Rightarrow$ (1)"), without resorting to the detailed classification of Fano threefolds, is to show that a fibre-dimension one Fano contraction (if exists) $\tau:X\to Y$ is eventually a splitting $\mathbb{P}^1$-bundle over a toric surface by the following steps, where the second and last implications require the dynamical assumption:
$$\tau \Rightarrow \text{conic bundle} \Rightarrow \mathbb{P}^1\text{-bundle} \Rightarrow \text{algebraic }\mathbb{P}^1\text{-bundle } \Rightarrow \text{ splitting } \mathbb{P}^1\text{-bundle}.$$
The first three implications are proved in Theorem \ref{thm-conic}.
The last implication is discussed in Theorems \ref{thm-amerik2} and \ref{thm-conic-split-p1p1-blp2} even for not necessarily Fano $X$, which is crucial in Step B below.
Note that a splitting $\mathbb{P}^1$-bundle over a toric variety is toric (cf.~Proposition \ref{prop-toric-pair}).

In the situation of Theorem \ref{main-thm-toric}
(for ``(3) $\Rightarrow$ (1)"),
we further show:

\begin{itemize}
\item[(\textbf{Step A})]
If $\rho(X)\le 2$, then $X$ is either $\mathbb{P}^3$, a (toric) splitting $\mathbb{P}^1$-bundle over $\mathbb{P}^2$, or a (toric) blowup of $\mathbb{P}^3$ along a line (cf.~Theorem \ref{thm-rho2}).
\item[(\textbf{Step B})]
If $\rho(X)\ge 3$, then $X$ is a (toric) blowup of a (not necessarily Fano) splitting $\mathbb{P}^1$-bundle $X'$ over a toric surface along disjoint curves which are intersections of some $(f|_{X'})^{-1}$-invariant (after iteration) prime divisors (cf.~Proof of Theorem \ref{thm-rho>=3}).
\end{itemize}

The proof of {\bf Step B} heavily relies on the study of the following Question \ref{main-que-ti} on $f^{-1}$-periodic subvarieties which generalizes another long-standing Conjecture \ref{main-conj-linear} below.

\begin{question}\label{main-que-ti}
Let $f$ be an int-amplified endomorphism of a rationally connected (or rational, or toric) smooth projective variety $X$.
Let $\Sigma$ be the union of $f^{-1}$-periodic (closed) subvarieties.
Is $X$ toric, and if so, is there a toric pair $(X, \Delta)$ such that $\Sigma\subseteq \Delta$?
\end{question}

We will study Question \ref{main-que-ti} in the divisor case with main results Theorems \ref{thm-surface-toricpair} and \ref{thm-boundary-p1bundle},
confirming the surface case and the case of splitting $\PP^1$-bundles over rational surfaces.

For the surface case,
Nakayama confirmed Sato's conjecture that a smooth projective rational surface $Y$ admitting a non-isomorphic surjective endomorphism $g$ is toric (cf.~\cite{Nak02}).
For our purpose of studying a conic bundle $X$ over a toric surface $Y$, i.e., for {\bf Step B} above, this is not enough; we need the info on the $(f|_Y)^{-1}$-periodic curves as in Question \ref{main-que-ti}.
Such info is completed, in view of Lemmas \ref{lem-noniso-nonpol},
 \ref{lem-p1p1-ticurve} and Theorem \ref{thm-surface-toricpair}.

Conjecture \ref{main-conj-linear} below appears in the  study of dynamical systems.
We refer to Theorem \ref{thm-linear-subspace} for the known cases of Conjecture \ref{main-conj-linear} by the works \cite{PS89}, \cite{CL00}, \cite{Bea01}, \cite{Gur03}, \cite[Section 5]{NZ10} and \cite{Hor17}.

\begin{conjecture}\label{main-conj-linear}
Let $f$ be an endomorphism of $X := \PP^n$ with $\deg(f) \ge 2$.
Then any $f^{-1}$-invariant (or $f^{-1}$-periodic) closed subvariety $Y$ is a linear subspace of $X$.
\end{conjecture}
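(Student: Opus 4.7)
The plan is to argue by induction on $n$, with the base case $n = 1$ trivial since proper subvarieties of $\PP^1$ are points. After replacing $f$ by a suitable iterate, I may assume $Y$ is $f^{-1}$-invariant; then the restriction $g := f|_Y$ is a surjective endomorphism with $g^*(H|_Y) \sim d(H|_Y)$, so $g$ is polarized on $Y$ in the strict sense. Writing $c = \operatorname{codim}_{\PP^n} Y$ and $\delta = \deg_{\PP^n} Y$, the Chow-ring identity $f^*[Y] = d^c \delta \cdot H^c$ in $\PP^n$, combined with $\Supp(f^{-1}(Y)) = Y$, forces $f^{-1}(Y) = d^c \cdot Y$ as effective cycles. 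Geometrically, this says $f$ ramifies along $Y$ with multiplicity exactly $d^c$, a strong constraint on the defining ideal $I_Y$ and on the differential $df$ along $Y$.

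The first reduction I would make is to the non-degenerate case. Let $L$ be the linear span of $Y$ in $\PP^n$. Pulling back by $f$ the linear forms cutting out $L$ produces degree-$d$ forms vanishing on $Y$; a Noetherian/Chow-variety argument, iterating $f$, should show that $L$ itself is $f^{-1}$-invariant after a further iteration. The restriction $f|_L$ is then a polarized endomorphism of the projective space $L$, and induction on $n$ disposes of the case $L \subsetneq \PP^n$. So the essential case is $Y$ non-degenerate.

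In the non-degenerate case I would combine the ramification constraint with the Euler sequence on $\PP^n$ to control the normal bundle $N_{Y/\PP^n}$ via the exact sequence $0 \to T_Y \to T_{\PP^n}|_Y \to N_{Y/\PP^n} \to 0$. The expectation is that $N_{Y/\PP^n}$ splits as a sum of line bundles whose degrees with respect to $H|_Y$ are positive and rigidly controlled by the ramification order $d^c$, and that this, together with $g$ being a polarized endomorphism on the non-degenerate $Y$, forces $(Y, H|_Y) \simeq (\PP^{\dim Y}, \OO(1))$ embedded linearly --- contradicting non-degeneracy unless $\dim Y = n$. This is the direction in which Beauville's argument for hypersurfaces proceeds: for $c = 1$ one writes $F \circ f = \text{const} \cdot F^d$ for the single defining form and analyzes the monomial structure.

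The main obstacle is precisely this last step in higher codimension. Controlling an entire ideal $I_Y$ under composition with $f$, rather than a single polynomial, is substantially harder; moreover, classifying non-degenerate polarized subvarieties of $\PP^n$ that admit polarized self-maps of degree $\ge 2$ appears to be of comparable difficulty to Question~\ref{main-que-toric} itself. A successful approach along these lines would likely require new input --- perhaps a variant of Zak-type rigidity theorems sensitive to the presence of an endomorphism on the ambient $\PP^n$, not merely on $Y$. This is presumably why the conjecture remains open in general, and why the excerpt lists only partial results (homogeneous spaces, hypersurfaces, low Picard number, low dimension) as known cases.
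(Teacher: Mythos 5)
The statement you were asked about is Conjecture~\ref{main-conj-linear}: it is a long-standing open problem, and the paper does not prove it. The paper only records the known partial cases in Theorem~\ref{thm-linear-subspace} ($X=\PP^2$; invariant prime divisors in $\PP^3$; smooth hypersurfaces, due to Cerveau--Lins Neto, Beauville, Gurjar, H\"oring, Nakayama--Zhang) and uses exactly those cases elsewhere. So there is no ``paper proof'' to match your proposal against, and your own text correctly stops short of claiming a proof. What you have is a plausible-looking strategy with two genuine gaps, which you partly acknowledge.

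Concretely: (i) your reduction to the non-degenerate case does not work as stated. If $\ell$ is a linear form vanishing on the span $L$ of $Y$, then $f^*\ell=\ell\circ f$ is a degree-$d$ form vanishing on $f^{-1}(Y)=Y$, but this says nothing about linear forms vanishing on $Y$, so it gives no $f^{-1}$-invariance of $L$; no Noetherian or Chow-variety argument is known to repair this, and indeed proving that $L$ is $f^{-1}$-periodic is essentially circular, since the conjecture itself predicts $Y=L$. (ii) The decisive step in the non-degenerate case is only an ``expectation'': the cycle identity $f^*Y=d^cY$ (which you compute correctly, as is the polarizedness of $f|_Y$ after noting $f(Y)=Y$) does not by itself control the normal bundle, and in codimension $c\ge 2$ there is no analogue of the single-equation analysis $F\circ f=\mathrm{const}\cdot F^{d}$ that drives the hypersurface case; classifying non-degenerate subvarieties of $\PP^n$ carrying polarized self-maps is, as you note, of comparable difficulty to Question~\ref{main-que-toric}. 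So the proposal is an honest sketch of a natural approach, not a proof --- which is consistent with the paper, where the statement is stated as a conjecture and only its known special cases (Theorem~\ref{thm-linear-subspace}) are invoked.
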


\section{Preliminaries}\label{section-2-pre}

We use the following notation throughout this paper.
\begin{notation}\label{notation2.1}
Let $X$ be a projective variety.

\begin{itemize}
\item The symbols $\sim$ (resp.~$\sim_{\mathbb Q}$, $\equiv$)  denote
the \textit{linear equivalence} (resp.~\textit{$\mathbb Q$-linear equivalence}, \textit{numerical equivalence}) on $\Q$- (or $\R$-) Cartier divisors.
We also use $\equiv$ to denote the {\it numerical equivalence} of $1$-cycles on $X$.

\item Denote by $\textup{NS}(X) = \Pic(X)/\Pic^0(X)$  the {\it N\'eron-Severi group} of $X$.
Let
 $\N^1(X):=\NS(X)\otimes_\mathbb{Z}\mathbb{R}$ the space of $\mathbb{R}$-Cartier divisors modulo  numerical equivalence and $\rho(X) :=\dim_{\mathbb{R}}\N^1(X)$ the
{\it Picard number} of $X$.
Let $\N_1(X)$ be the dual space of $\N^1(X)$ consisting of 1-cycles.
Denote by $\Nef(X)$ (resp. $\Amp(X)$) the cone of {\it nef divisors} (resp. {\it ample divisors}) in $\N^1(X)$ and $\NE(X)$ the dual cone consisting of {\it pseudo-effective 1-cycles} in $\N_1(X)$.

\item Let $f:X\to X$ be a surjective endomorphism. A subset $Y\subseteq X$ is {\it $f^{-1}$-invariant} (resp.~{\it $f^{-1}$-periodic})  if $f^{-1}(Y)=Y$ (resp.~$f^{-s}(Y)=Y$ for some  $s>0$).

\item A surjective endomorphism $f:X\to X$ is \textit{$q$-polarized} if $f^*H\sim qH$ for some ample Cartier divisor $H$ and integer $q>1$; see Lemma \ref{lem-pol-mod>1} for the equivalent definitions.

\item
A surjective endomorphism $f:X\to X$ is {\it amplified} if $f^*L - L$ is ample for some Cartier divisor $L$.
We say that  $f$ is \textit{int-amplified} if $f^*L-L$ is ample for some  \textit{ample} Cartier divisor $L$;
see Lemma \ref{lem-int-amp-equiv} for the equivalent definitions.

Note that every polarized endomorphism is int-amplified and every int-amplified endomorphism is amplified.

\item A smooth projective variety $X$ is {\it rationally connected} if any two general points of $X$ can be connected by a chain of rational curves. 

\item A normal projective variety $X$ is of \textit{Fano type}, if there is an effective Weil $\mathbb{Q}$-divisor $\Delta$ on $X$ such that the pair $(X,\Delta)$ has at worst klt singularities and $-(K_X+\Delta)$ is ample and $\mathbb{Q}$-Cartier.
If $\Delta=0$, we say that $X$ is a  \textit{(klt) Fano variety}.
A smooth Fano surface is usually called a \textit{del Pezzo surface}.

\item Let $X$ be a smooth Fano threefold. We say that $X$ is \textit{imprimitive} if it is isomorphic to the blowup of a smooth Fano threefold $Y$ along a smooth irreducible curve.
We say that $X$ is \textit{primitive} if it is not imprimitive (cf.~\cite[Definition 1.3]{MM83}).

\item A normal variety $X$ of dimension $n$ is a \textit{toric variety} if $X$ contains a {\it big torus} $T=(k^*)^n$ as an (affine) open dense subset such that the natural multiplication action of $T$ on itself extends to an action on the whole variety. In this case, let $B:=X\backslash T$, which is a divisor; the pair $(X,B)$ is said to be a \textit{toric pair}.

It is known that the reflexive sheaf of logarithmic $1$-form $\widehat{\Omega}_X^1(\textup{log}\,B)\cong {\OO}_X^{\oplus n}$ (cf.~\cite[Remark 4.6]{MZ19a} and \cite[Section 4.3, page 87]{Ful93}) and $K_X+B\sim 0$.
Hence, if $X$ is further smooth projective,
then $B$ has exactly $\rho(X) + \dim (X)$
irreducible components (see e.g. the inequality and  cohomology exact sequence in \cite[Theorem 4.5]{MZ19a} and its proof).

\item Let $\tau:X\to Y$ be the blowup of a smooth toric variety $Y$ along a smooth closed subvariety $C$.
We say that $\tau$ is a \textit{toric blowup} if there exists some big torus $T$ acting on $Y$ with $T(C)=C$.
In this case, $X$ is still toric.

\item A fibration $\tau:X\to Y$ of smooth projective varieties is a \textit{conic bundle} if every fibre is isomorphic to a conic, i.e., a scheme of zeros of a nonzero homogeneous form of degree $2$ on $\mathbb{P}^2$.

Denote by $\Delta_\tau:=\{y\in Y~|~\tau~\textup{is not smooth over }y\}$
the \textit{discriminant} of $\tau$.
When $Y$ is a surface, either $\Delta_\tau=\emptyset$ or $\Delta_\tau$ is a (not necessarily irreducible) curve with only ordinary double points (cf.~\cite[Section 6]{MM83}).
If $X$ is further assumed to be Fano, then we say $\tau$ is a \textit{Fano conic bundle}.

\item Let $\tau:X\to Y$ be a fibration of smooth projective varieties.
A subvariety $S\subseteq X$ is said to be a \textit{(cross-)section} of $\tau$ if the restriction $\tau|_S:S\cong Y$ is an isomorphism onto $Y$.
We say that $\tau$ is  a (smooth) \textit{$\mathbb{P}^1$-bundle},
if $\tau$ is smooth and every fibre is isomorphic to $\PP^1$.
We say that $\tau$ is an \textit{algebraic $\mathbb{P}^1$-bundle}, if $\tau$ is a $\mathbb{P}^1$-bundle and $X\cong\mathbb{P}_Y(\mathcal{E})$ for some locally free rank-two sheaf $\mathcal{E}$  on $Y$.
An algebraic $\mathbb{P}^1$-bundle $X:=\mathbb{P}_Y(\mathcal{E})\xrightarrow{\tau} Y$ is said to be a \textit{splitting $\mathbb{P}^1$-bundle} if $\mathcal{E}$ is a direct sum of two invertible sheaves.
Note that a splitting $\mathbb{P}^1$-bundle over a toric normal projective variety is toric (cf.~Proposition \ref{prop-toric-pair}).	
\end{itemize}
\end{notation}

For the convenience of us and readers,
we now recall several results to be used in the subsequent sections.
Lemmas \ref{lem-equiv-MMP} $\sim$ \ref{lem-MDS-amp} supplement Notation \ref{notation2.1} on endomorphisms.

\begin{lemma}\label{lem-equiv-MMP}
Let $f$ be a surjective endomorphism of a normal projective variety $X$.
Then any finite sequence of minimal model program starting from $X$, is $f$-equivariant (after iterating $f$), if one of the following conditions is satisfied.

\begin{enumerate}
\item
The (closed) Mori cone $\NE(X)$ of pseudo-effective $1$-cycles has only finitely many extremal rays
(this holds when $X$ is of Fano type) (cf.~\cite[Lemma 2.11]{Zha10} and \cite[Theorem 3.7]{KM98}).
\item  $\deg (f) \ge 2$ and $\dim (X)=2$  (then $X$ is known to have only lc singularities) (cf.~\cite[Proposition 11]{Nak02} and \cite[Theorem 4.7]{MZ19b}).
\item $X$ admits an int-amplified endomorphism (cf.~\cite[Theorem 1.1]{MZ20}).
\end{enumerate}
\end{lemma}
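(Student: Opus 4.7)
The plan is to reduce all three cases to a common permutation-of-extremal-rays argument. Any surjective endomorphism $f$ of a normal projective variety $X$ is finite, so $f^* : \N^1(X) \to \N^1(X)$ is a linear automorphism (injective by the projection formula $f_* f^* = \deg(f) \cdot \id$, hence bijective) that preserves the nef cone; dually, the induced map on $\N_1(X)$ preserves $\NE(X)$ and permutes its extremal rays. If at each stage of the MMP only finitely many extremal rays of $\NE$ are relevant (e.g.~the $K$-negative ones), replacing $f$ by a suitable iterate makes $f^*$ fix each such ray. The associated contraction $\pi : X \to Y$ or flip is then $f$-equivariant by the universal property of extremal contractions (and the uniqueness of flips), so $f$ descends to a surjective endomorphism $g : Y \to Y$. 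Iterating this procedure yields $f$-equivariance of the whole MMP sequence, provided the structural hypothesis persists through each step.

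For (1), the hypothesis that $\NE(X)$ has finitely many extremal rays is exactly what is needed to run the permutation argument on the first step; when $X$ is of Fano type, the Cone Theorem \cite[Theorem 3.7]{KM98} guarantees rational polyhedrality of $\NE(X)$, and Fano type is preserved under every step of the MMP, so the hypothesis propagates automatically. This formalization is \cite[Lemma 2.11]{Zha10}. For (3), the existence of an int-amplified endomorphism implies a rational polyhedral Mori cone and, crucially, this property descends through each divisorial contraction, flip, and Mori fibre space by \cite[Theorem 1.1]{MZ20}, which effectively reduces case (3) to the framework of case (1) at every stage.

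For (2), Nakayama \cite[Proposition 11]{Nak02} shows that a normal projective surface admitting a non-isomorphic surjective endomorphism has at worst lc singularities; combined with \cite[Theorem 4.7]{MZ19b}, the surface MMP contracts at each step a $K_X$-negative extremal ray chosen from a finite list, and lc-ness is preserved on the contracted surface. Thus $f^*$ permutes these finitely many extremal rays, an iterate of $f$ fixes each, and equivariance descends to each contraction. The main obstacle common to all three cases is precisely this \emph{propagation} step: one must verify that the relevant structural hypothesis (finitely many extremal rays or Fano type, an int-amplified endomorphism, or lc surface type) survives each elementary MMP operation so that the permutation argument can be iterated through the whole program. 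Once these propagation statements are in hand, which is the content of the cited references, the uniform extremal-ray argument concludes in every case.
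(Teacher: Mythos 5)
The paper offers no proof of this lemma at all: it is stated as a summary of known results, with each item carried entirely by the citations (\cite[Lemma 2.11]{Zha10}, \cite[Proposition 11]{Nak02}, \cite[Theorem 4.7]{MZ19b}, \cite[Theorem 1.1]{MZ20}). Your sketch of case (1) is the standard argument behind \cite[Lemma 2.11]{Zha10} and is fine: $f_*$ is a linear automorphism of $\N_1(X)$ preserving $\NE(X)$, hence permutes the finitely many extremal rays; an iterate fixes each ray, the contraction is then equivariant by rigidity, flips by uniqueness, and polyhedrality (or Fano type) propagates. In the paper's actual applications (smooth Fano threefolds and surfaces, no flips) this is all that case (1) is used for.

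The genuine problem is your case (3). The claim that ``the existence of an int-amplified endomorphism implies a rational polyhedral Mori cone'' is false: an abelian variety with multiplication by $2$ admits an int-amplified (indeed polarized) endomorphism, yet for Picard number $\ge 3$ its nef and Mori cones are round, with infinitely many extremal rays; products such as $A\times\PP^1$ show the issue persists in the presence of nontrivial MMP steps. So case (3) cannot be ``reduced to the framework of case (1)'', and \cite[Theorem 1.1]{MZ20} is not a propagation-of-polyhedrality statement --- it \emph{is} the equivariant-MMP theorem for int-amplified endomorphisms, proved by a genuinely different argument (one shows that the particular $K$-negative extremal ray being contracted is fixed by an iterate, using eigenvalue/norm estimates for $f^*$ and finiteness of totally periodic subvarieties, not finiteness of all extremal rays). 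As written, your treatment of (3) is simultaneously based on a false premise and circular, since the citation you invoke is the very statement to be proved. A similar, though milder, mischaracterization occurs in (2): the finiteness exploited by \cite{Nak02} and \cite[Theorem 4.7]{MZ19b} is that of negative curves (together with the structure theory of surfaces with $\deg(f)\ge 2$), not of the extremal rays of $\NE(X)$, which for a surface need not be finite in number; here, however, you ultimately defer to the correct references, so the gap is one of justification rather than of substance.
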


The following generalizes \cite[Lemma 2.9]{Zha10} to the int-amplified case.

\begin{lemma}[{cf.~\cite[Lemma 2.9]{Zha10}}]\label{lem-non-pe-ti}
Let $f:X\to X$ be a surjective endomorphism of a    $\mathbb{Q}$-factorial normal projective variety of dimension $n$.
Let $S(X)$ be the set of prime divisors $D$ on $X$ such that $D|_D$ is not pseudo-effective.
Then $f^{-1}(S(X))=S(X)$.
If $f$ is further assumed to be int-amplified, then $S(X)$ is a finite set; hence after iteration, $f^{-1}(D)=D$ for every prime divisor $D\in S(X)$.
\end{lemma}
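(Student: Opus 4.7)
The plan is to exploit the base-change identity
\[
(f|_{E})^{*}(D|_{D})=(f^{*}D)|_{E}=e_{E}(E|_{E})+A|_{E},
\]
valid for any prime divisor $D\subseteq X$ and any prime component $E$ of $f^{-1}(D)$; here $f^{*}D=e_{E}E+A$ is the ramification decomposition, with $A$ effective, $E\not\subseteq\Supp(A)$, and $e_{E}\ge 1$ the ramification index, so that $A|_{E}$ is an effective divisor on $E$.

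For the inclusion $f^{-1}(S(X))\subseteq S(X)$, I would argue by contradiction: suppose $D\in S(X)$ but some prime component $E$ of $f^{-1}(D)$ satisfies $E|_{E}$ pseudo-effective. Then the right-hand side of the displayed identity is the sum of a pseudo-effective class and an effective class, hence pseudo-effective, and so is $(f|_{E})^{*}(D|_{D})$; pushing forward along the finite surjective morphism $f|_{E}\colon E\to D$ (which preserves pseudo-effective classes) and using $(f|_{E})_{*}(f|_{E})^{*}=\deg(f|_{E})\cdot\mathrm{id}$ would yield $D|_{D}$ pseudo-effective, a contradiction. For the reverse inclusion $S(X)\subseteq f^{-1}(S(X))$, equivalently $f(S(X))\subseteq S(X)$, I would take $E\in S(X)$ and, using BDPP duality, a movable curve class $\gamma\in\overline{\Mov}(E)$ with $(E|_{E})\cdot\gamma<0$; the plan is then to push $\gamma$ forward along $f|_{E}$ and, with $\gamma$ chosen sufficiently generic inside $\overline{\Mov}(E)$ so that the effective contribution $A|_{E}\cdot\gamma$ does not overwhelm the negativity, apply the projection formula to the displayed identity to obtain $(D|_{D})\cdot(f|_{E})_{*}\gamma<0$, whence $D\in S(X)$.

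For the int-amplified statement, Lemma \ref{lem-int-amp-equiv} ensures that every eigenvalue of $f^{*}|_{\NS(X)\otimes_{\Z}\C}$ has modulus strictly greater than $1$, so $f^{*}$ is expanding on $\N^{1}(X)$. Were $S(X)$ infinite, the class relation $e_{E}[E]=f^{*}[D]-[A]$, combined with the invariance from the first part, would let me iterate $(f^{*})^{-1}$ backward through the $f^{-1}$-tree of any $D\in S(X)$, producing infinitely many distinct prime divisor classes accumulating in a bounded region of $\N^{1}(X)$; this contradicts the discreteness of $\NS(X)\hookrightarrow\N^{1}(X)$. Hence $S(X)$ is finite, and the first part then gives that $f$ permutes $S(X)$, so a suitable iterate $f^{s}$ fixes each $D\in S(X)$, yielding $f^{-s}(D)=D$. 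The main obstacle I anticipate is the reverse inclusion in the first part: making the pushforward of movable classes compatible with the effective remainder $A|_{E}$ requires a delicate genericity argument that genuinely uses that $f$ is a self-endomorphism rather than a general finite surjective morphism.
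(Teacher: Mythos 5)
Your first inclusion $f^{-1}(S(X))\subseteq S(X)$ is fine: if a component $E$ of $f^{-1}(D)$ had $E|_{E}$ pseudo-effective, then $(f|_{E})^{*}(D|_{D})=e_{E}(E|_{E})+A|_{E}$ would be pseudo-effective, and pseudo-effectivity descends along the finite surjective map $f|_{E}\colon E\to D$, contradicting $D\in S(X)$. The genuine gap is exactly the point you flag yourself: the reverse inclusion, i.e.\ $f(E)\in S(X)$ for $E\in S(X)$. From $(f|_{E})^{*}(D|_{D})=e_{E}(E|_{E})+A|_{E}$ you only know that the first summand is negative against some movable class $\gamma$ on $E$, while $A|_{E}\cdot\gamma\ge 0$ for \emph{every} movable $\gamma$; no ``generic'' choice of $\gamma$ in $\overline{\Mov}(E)$ makes the effective contribution small. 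Nothing in your proposal excludes, say, that $A|_{E}$ is big on $E$ with large coefficients, in which case $e_{E}(E|_{E})+A|_{E}$ would be pseudo-effective and every movable class would pair with it nonnegatively; ruling out precisely this configuration is the whole content of the claim, and it requires input beyond the displayed identity (this is the part the paper takes from Step 4 of the proof of \cite[Lemma 2.9]{Zha10}, noting it holds for an arbitrary surjective endomorphism). As it stands, the first assertion of the lemma is not proven.

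The int-amplified part is also only a sketch. The relation $e_{E}[E]=f^{*}[D]-[A]$ bounds $[E]$ by the \emph{expanding} class $f^{*}[D]$, not by a fixed bounded region, and iterating backwards through the $f^{-1}$-tree of a single $D$ in any case controls only one backward orbit, not all of $S(X)$; moreover, ``infinitely many classes in a bounded region'' contradicts the discreteness of $\NS(X)$ only after you also show that distinct members of $S(X)$ have distinct numerical classes (true, but it needs the extra observation that $D_1\equiv D_2$ with $D_1\neq D_2$ would make $D_2|_{D_1}$ an effective representative of $D_1|_{D_1}$, forcing $D_1\notin S(X)$). The paper's route is different and shorter: it shows $f^{-i}f^{i}(D)=D$ for all $i>0$, deduces that each $D\in S(X)$ is $f^{-1}$-periodic via \cite[Lemma 3.5]{MZ20}, and then invokes the finiteness of $f^{-1}$-periodic prime divisors for int-amplified endomorphisms, \cite[Proposition 3.6]{MZ20}. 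You would either need to fill in both missing arguments from scratch or fall back on those references.
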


\begin{proof}
$f^{-1}(S(X))=S(X)$ is from Step 4 of the proof of \cite[Lemma 2.9]{Zha10} which holds true for any surjective endomorphism.
Take any $D \in S(X)$.
Then $f^{-i}f^i(D)=D$ for any $i>0$.
Hence, $D$ is $f^{-1}$-periodic by
\cite[Lemma 3.5]{MZ20}. Thus $S(X)$ consists of
$f^{-1}$-periodic prime divisors and is hence a finite set by \cite[Proposition 3.6]{MZ20}.
\end{proof}

\begin{lemma}[{cf.~\cite[Propositions 1.1, 2.9 and Lemma 3.4]{MZ18}}]\label{lem-pol-mod>1}
Let $f$ be a surjective endomorphism of a projective variety $X$ and $q>0$.
Then the following are equivalent.
\begin{itemize}
\item[(1)] $f^*|_{\N^1(X)}$ is diagonalizable with all the eigenvalues being of modulus $q$.
\item[(2)] $f^*B \equiv qB$ for some big $\mathbb{R}$-Cartier divisor $B$.
\end{itemize}
If further $q>1$ is an integer, then the above are equivalent to
\begin{itemize}
\item[(3)] $f$ is $q$-polarized, i.e., $f^*H\sim qH$ for some ample Cartier divisor $H$ (and $q>1$).
\end{itemize}
In particular, $\deg (f) = q^{\dim X}$ if one of the above conditions holds.
\end{lemma}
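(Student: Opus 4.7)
The plan is to establish the chain of implications (3) $\Rightarrow$ (2) $\Rightarrow$ (1) $\Rightarrow$ (2), and, under the integrality hypothesis on $q$, (1) $\Rightarrow$ (3). The degree formula $\deg(f) = q^{\dim X}$ drops out of (2) from the identity $(f^*B)^{\dim X} = \deg(f)\,B^{\dim X} = q^{\dim X}\,B^{\dim X}$ together with $B^{\dim X} > 0$ (since $B$ is big).

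The implication (3) $\Rightarrow$ (2) is immediate by setting $B := H$. For (2) $\Rightarrow$ (1), the geometric input is that a big $B$ lies in the interior of the pseudo-effective cone $\overline{\PE}(X) \subseteq \N^1(X)$, so for every $D \in \N^1(X)$ some $C>0$ makes $CB\pm D$ pseudo-effective. Since $f^*$ preserves $\overline{\PE}(X)$ and the class of $B$ is fixed by $q^{-1}f^*$, this yields $CB \pm q^{-n}f^{n*}D \in \overline{\PE}(X)$ uniformly in $n$, hence $\{T_n := q^{-n}f^{n*}\}_{n \geq 0}$ is a bounded sequence of endomorphisms of $\N^1(X)$. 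Boundedness of powers of $T := q^{-1}f^*$ implies $T$ is semisimple on its unit-circle eigenspaces and has no eigenvalue of modulus $> 1$. To exclude eigenvalues of modulus $< 1$, apply the intersection identity
\begin{equation*}
\deg(f)\,B^{\dim X - 1}\cdot v \;=\; (f^*B)^{\dim X - 1}\cdot f^*v \;=\; q^{\dim X - 1}\lambda\,B^{\dim X - 1}\cdot v
\end{equation*}
to an $f^*$-eigenvector $v \in \N^1(X)_\C$ with eigenvalue $\lambda$; together with $\deg(f) = q^{\dim X}$ this forces $B^{\dim X - 1}\cdot v = 0$ whenever $\lambda \ne q$. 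A Hodge-index / Khovanskii--Teissier positivity argument for the big class $B$ then promotes this to $|\lambda| = q$, and diagonalizability follows from the semisimplicity already established.

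For the reverse (1) $\Rightarrow$ (2), since $T = q^{-1}f^*$ is diagonalizable with spectrum on the unit circle, the Cesaro average $\frac{1}{n}\sum_{i=0}^{n-1} T^i$ converges to the projection $\pi$ onto the real $q$-eigenspace $E_q$ of $f^*$. Applied to an ample class $A$, each partial average is a positive combination of ample classes, hence ample, and the limit $B := \pi(A)$ is nef and satisfies $f^*B \equiv qB$. Bigness of $B$ follows from $B^{\dim X} > 0$, invoking the degree formula $\deg(f) = q^{\dim X}$ (derivable from (1) via the Khovanskii--Teissier inequality $\rho_k(f) \leq \rho_1(f)^k$ on the spectral radii of $f^*$ on $\N^k(X)$, combined with the equality case forced by all $\N^1$-eigenvalues having the same modulus).

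Finally, for (1) $\Rightarrow$ (3) with $q > 1$ an integer, perturb $B \in E_q$ within $E_q$ to an ample $\R$-Cartier class (ampleness is open in $\N^1(X)$), then approximate by an ample $\Q$-Cartier class and clear denominators to obtain an ample Cartier $H_0$ with $f^*H_0 \equiv qH_0$. To upgrade from numerical to linear equivalence, note that $f^*H_0 - qH_0$ lies in the finitely generated group $\Pic^\tau(X)$, and $f^* - q\cdot\id$ acts on $\Pic^0(X)$ as an isogeny (its eigenvalues on the tangent space have modulus $\sqrt{q} \ne q$), so after absorbing the $\Pic^0(X)$ part into $H_0$ and clearing a torsion factor we obtain an ample Cartier $H$ with $f^*H \sim qH$. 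The main obstacle is the sharp spectral bound $|\lambda| = q$ in (2) $\Rightarrow$ (1): the upper bound $|\lambda| \leq q$ is cheap from boundedness, but excluding $|\lambda| < q$ requires the Hodge-type positivity of intersection against the big class $B$, which is the technical heart of the argument.
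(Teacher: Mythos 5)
The paper itself gives no proof of this lemma --- it is quoted from \cite{MZ18} --- so your argument has to stand on its own, and while the outer skeleton is the standard one ((3)$\Rightarrow$(2) trivially; the cone-boundedness argument with $B$ interior to the pseudo-effective cone correctly gives $|\lambda|\le q$ and trivial Jordan blocks on the top modulus; Ces\`aro/Haar averaging is the right idea for (1)$\Rightarrow$(2); the $\Pic^0$/$\Pic^\tau$ bookkeeping is the right idea for upgrading $\equiv$ to $\sim$), the load-bearing steps of (2)$\Rightarrow$(1) are not correct as written. First, ``$B^{\dim X}>0$ since $B$ is big'' is false: a big divisor need not be nef, and then its top self-intersection can vanish or be negative (on a surface take $B=A+mE$ with $A$ ample, $E$ a negative curve and $m\gg 0$). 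So your derivation of $\deg(f)=q^{\dim X}$ from (2) --- which you then feed into the identity forcing $B^{\dim X-1}\cdot v=0$ --- collapses; the correct route is through the volume function: $\vol(qB)=q^{\dim X}\vol(B)$, $\vol(f^*B)=\deg(f)\vol(B)$ and $\vol(B)>0$.

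Second, the step you yourself call the technical heart, excluding $|\lambda|<q$, is missing rather than proved. There is no Hodge-index or Khovanskii--Teissier positivity statement for intersection against a merely big, possibly non-nef, class $B$; and even for $B$ nef and big the form $(x,y)\mapsto B^{\dim X-2}\cdot x\cdot y$ can be degenerate on $\{x\,:\,B^{\dim X-1}\cdot x=0\}$ (take $B$ the pullback of an ample class under a divisorial contraction), so ``$B^{\dim X-1}\cdot v=0$'' cannot be promoted to $|\lambda|=q$ along these lines. The mechanism actually used in \cite{MZ18} is dual and elementary: since $f_*f^*=\deg(f)\,\id$ on $\N^1(X)$, one has $f_*B\equiv(\deg(f)/q)B$, and $f_*=\deg(f)(f^*)^{-1}$ also preserves the pseudo-effective cone; your own boundedness argument, applied to this operator with interior eigenvector $B$, bounds the eigenvalues $\deg(f)/\lambda$ of $f_*|_{\N^1(X)}$ by $\deg(f)/q$, i.e.\ $|\lambda|\ge q$, and boundedness of both $(q^{-1}f^*)^n$ and its inverse powers gives diagonalizability --- no Hodge theory and, for this step, no degree formula needed. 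Two further leaps of the same kind: in (1)$\Rightarrow$(2) you assert $B^{\dim X}>0$ for the Ces\`aro limit without argument (one needs $\deg(f)=q^{\dim X}$ together with, e.g., concavity of $D\mapsto(D^{\dim X})^{1/\dim X}$ on the nef cone, or a compactness argument; the naive expansion degenerates), and in (1)$\Rightarrow$(3) a big eigenclass cannot be ``perturbed to an ample one'' merely because ampleness is open --- big classes need not be near the ample cone --- so you must actually produce an ample class in the $q$-eigenspace (e.g.\ by averaging an ample class over the compact closure of $\{(q^{-1}f^*)^n\}$) before the denominator-clearing and the $\Pic^0$ step, whose modulus-$\sqrt q$ input is itself a nontrivial quoted fact.
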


\begin{lemma}[{cf.~\cite[Theorem 1.1]{Men20}, \cite[Propositions 3.4 and 3.7]{MZ19b}}]\label{lem-int-amp-equiv}
Let $f$ be a surjective endomorphism of a projective variety $X$.
Then the following are equivalent.

\begin{enumerate}
\item $f$ is int-amplified, i.e., $f^*L-L=H$ for some ample Cartier divisors $L$ and $H$.
\item All the eigenvalues of $\varphi:=f^*|_{\N^1(X)}$ are of modulus greater than $1$.
\item All the eigenvalues of $f^*|_{\Pic(X) \otimes_{\Z} \Q}$ are of modulus greater than $1$.
\item If $C$ is a $\varphi$-invariant convex cone in $\N^1(X)$, then $\emptyset\neq(\varphi-\textup{id}_{\N^1(X)})^{-1}(C)\subseteq C$.
\end{enumerate}
\end{lemma}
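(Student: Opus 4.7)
I plan to establish the cycle $(1)\Rightarrow(2)\Rightarrow(4)\Rightarrow(1)$ entirely inside $\N^1(X)$, and then derive $(2)\Leftrightarrow(3)$ by exploiting the $f^*$-equivariant short exact sequence $0\to\Pic^0(X)\to\Pic(X)\to\NS(X)\to 0$.

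\textbf{The heart $(1)\Rightarrow(2)$ via Perron--Frobenius.} Write $\varphi:=f^*|_{\N^1(X)}$. From $\varphi L=L+H$ with $L,H$ ample, Perron--Frobenius applied to the $f_*$-action on $\NE(X)\subset\N_1(X)$ yields a nonzero $\alpha\in\NE(X)$ with $f_*\alpha=\rho\alpha$, where $\rho=\rho(\varphi)$ is the spectral radius (equal to that of $f_*$ by adjointness). Pairing gives $H\cdot\alpha=L\cdot f_*\alpha-L\cdot\alpha=(\rho-1)L\cdot\alpha$, and positivity of both intersections (since $L,H$ are ample and $\alpha$ is a nonzero pseudo-effective class) forces $\rho>1$. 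To upgrade ``spectral radius $>1$'' to ``every eigenvalue has modulus $>1$'', I would decompose $\N^1(X)_\C$ into generalized $\varphi$-eigenspaces, quotient out those with $|\lambda|>1$, and exploit the iterated identity $\varphi^nL-L=\sum_{i=0}^{n-1}\varphi^iH$: its right-hand side is a growing ample-class combination whose norm must grow at least linearly in $n$, and this contradicts the Jordan-form norm estimate on the quotient operator $\bar\varphi$ unless the quotient is trivial.

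\textbf{The cone detour $(2)\Rightarrow(4)\Rightarrow(1)$, and $(2)\Leftrightarrow(3)$.} For $(2)\Rightarrow(4)$, invertibility of $\varphi-I$ together with the telescoping identity $\sum_{n\geq 1}\varphi^{-n}=(\varphi-I)^{-1}$ (valid in operator norm because the spectral radius of $\varphi^{-1}$ is strictly less than $1$), combined with the $\varphi$-invariance of $C$ and a Ces\`aro/closed-cone argument, place $(\varphi-I)^{-1}x$ inside $C$ whenever $x\in C$. For $(4)\Rightarrow(1)$, apply $(4)$ to the open $\varphi$-invariant cone $C=\Amp(X)$: nonemptiness and containment yield an $\R$-ample $L_\R$ with $\varphi L_\R-L_\R$ ample, and rational/integral approximation followed by clearing denominators promotes $L_\R$ to an integral ample Cartier divisor. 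Finally, for $(2)\Leftrightarrow(3)$, tensoring the $f^*$-equivariant sequence $0\to\Pic^0(X)\to\Pic(X)\to\NS(X)\to 0$ with $\Q$ partitions the eigenvalues of $f^*|_{\Pic(X)\otimes\Q}$ into those of $\varphi$ on $\NS(X)\otimes\Q$ (a sub-lattice of $\N^1(X)$) and those of the induced isogeny $\phi$ on the abelian variety $\Pic^0(X)$; the direction $(3)\Rightarrow(2)$ is immediate, and $(2)\Rightarrow(3)$ follows by combining $(1)$ with Rosati-involution reciprocity on the polarized $\Pic^0(X)$ (eigenvalues of $\phi$ come in conjugate pairs whose product is a positive integer) to propagate the modulus estimate from the $\NS$-part to the $\Pic^0$-part.

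\textbf{Main obstacle.} The subtlest steps are the Jordan-form refinement in $(1)\Rightarrow(2)$ ruling out eigenvalues on the unit circle and eigenvalues in nontrivial Jordan blocks --- since Perron--Frobenius alone controls only the spectral radius --- and the cone-theoretic passage in $(2)\Rightarrow(4)$, where $\varphi^{-1}$ need not preserve $C$ pointwise even when $\varphi$ does, so one must carefully interpret ``$\varphi$-invariant'' and use limits of partial sums inside the closure $\overline{C}$ before transferring back. Both hurdles require a delicate marriage of spectral estimates with convex-geometric positivity.
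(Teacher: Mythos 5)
The paper itself gives no proof of this lemma: it is quoted verbatim from \cite[Theorem 1.1]{Men20} and \cite[Propositions 3.4 and 3.7]{MZ19b}, so your attempt has to be measured against those sources. Your skeleton (the cycle $(1)\Rightarrow(2)\Rightarrow(4)\Rightarrow(1)$ inside $\N^1(X)$, plus the $\Pic^0$/N\'eron--Severi splitting for $(2)\Leftrightarrow(3)$) is the right one, and the Perron--Frobenius computation giving spectral radius $>1$, as well as $(4)\Rightarrow(1)$ via $C=\Amp(X)$ and rational perturbation, are fine. But the load-bearing steps have genuine gaps. In $(1)\Rightarrow(2)$, the quotient-norm contradiction does not exist: writing $V_{>1}$ for the span of the generalized eigenspaces with $|\lambda|>1$ and $\overline{V}:=\N^1(X)/V_{>1}$, the image of $\sum_{i=0}^{n-1}\varphi^iH$ in $\overline{V}$ is exactly $\overline{\varphi}^{\,n}\overline{L}-\overline{L}$, whose norm is automatically at most polynomial in $n$; the linear growth you observe for $\sum_{i}\varphi^iH$ is detected by pairing with the Perron eigenvector of $f_*$, i.e.\ it takes place along expanding directions which the quotient kills, so nothing is contradicted (and even a genuine linear lower bound would be compatible with the polynomial growth caused by modulus-one Jordan blocks). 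Eliminating eigenvalues of modulus $\le 1$ is precisely the content of \cite[Theorem 1.1]{Men20} and requires positivity arguments with the nef/pseudo-effective cones (salience, invariance under both $f^*$ and $f_*$, decay of $\varphi^{-n}$), not norm bookkeeping.

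In $(2)\Rightarrow(4)$, your route ($(\varphi-\mathrm{id})^{-1}=\sum_{n\ge1}\varphi^{-n}$ plus a Ces\`aro/closure argument) cannot work if ``$\varphi$-invariant'' only means $\varphi(C)\subseteq C$: take $X=\PP^1\times\PP^1$ with $f([x_0:x_1],[y_0:y_1])=([x_0^2:x_1^2],[y_0^3:y_1^3])$, so that $\varphi=\diag[2,3]$ in the basis of the two fibre classes $F_1,F_2$; the closed convex cone $C=\{aF_1+bF_2:\ 0\le a\le b\}$ satisfies $\varphi(C)\subseteq C$, yet $D=\frac{3}{2}F_1+F_2$ has $\varphi(D)-D=\frac{3}{2}F_1+2F_2\in C$ while $D\notin C$. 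So the statement must be read, as it is actually used in the paper, for cones such as $\Nef(X)$ and the pseudo-effective cone, which are closed and satisfy $\varphi(C)=C$ (equivalently are also $f_*$-stable); and there the argument is different, e.g.\ from $(f^n)^*D-D$ pseudo-effective one writes $D=\deg(f^n)^{-1}\big((f^n)_*D+(f^n)_*(\text{pseudo-effective})\big)$ and uses $\deg(f^n)^{-1}(f^n)_*=\varphi^{-n}\to 0$ together with closedness. Finally, in $(2)\Leftrightarrow(3)$ the Rosati-type claim that eigenvalues of the induced isogeny on $\Pic^0(X)$ ``come in conjugate pairs with positive integer product'' does not yield modulus $>1$: eigenvalues of a general isogeny are merely algebraic integers whose product is its degree and may well have modulus $<1$ (e.g.\ the roots of $x^2-3x+1$); the actual argument descends int-amplifiedness to the Albanese/dual abelian variety and invokes the abelian-variety case, and one also needs a short lifting argument for eigenvectors across $0\to\Pic^0(X)\otimes_{\Z}\Q\to\Pic(X)\otimes_{\Z}\Q\to\NS(X)\otimes_{\Z}\Q\to 0$, since the middle space is infinite-dimensional and has no Jordan theory.
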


In our Fano setting, ``int-amplified'' is equivalent to ``amplified''.

\begin{lemma}\label{lem-MDS-amp}
Let $f$ be a surjective endomorphism of a normal projective variety $X$ with the nef cone $\textup{Nef}(X)$ being a rational polyhedron (this is the case when $X$ is Fano).
Then $f$ is amplified if and only if it is int-amplified.
\end{lemma}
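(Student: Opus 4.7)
The only nontrivial direction is ``amplified $\Rightarrow$ int-amplified'', the converse having been recorded just after Notation \ref{notation2.1}. The plan is to pass to an iterate and then exploit the polyhedrality of $\Nef(X)$ (equivalently, of the dual cone $\NE(X)$ of pseudo-effective $1$-cycles) to force every eigenvalue of $f^*|_{\N^1(X)}$ to be a positive integer at least $2$.

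First I would reduce to the case in which $f_*$ fixes each extremal ray of $\NE(X)$. The dual cone is rational polyhedral with finitely many extremal rays, and $f_*$ permutes them, so a sufficiently high iterate of $f$ fixes each one. This replacement is harmless: if $f^*L-L=H$ is ample then
\[
f^{*n}L-L \;=\; \sum_{i=0}^{n-1} f^{*i}H
\]
is a sum of ample classes and hence ample, so ``amplified'' passes to iterates; meanwhile ``int-amplified'' is invariant under iteration by the eigenvalue criterion of Lemma \ref{lem-int-amp-equiv}, so it suffices to prove the conclusion for some iterate of $f$.

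After this reduction, for each extremal ray of $\NE(X)$ with generator $c_j$ ($j=1,\ldots,m$) write $f_*c_j=\nu_jc_j$ with $\nu_j>0$. Since $c_j$ is rational and $f_*$ acts integrally on $\N_1(X)$ modulo torsion, $\nu_j$ is a rational algebraic integer, hence a positive integer. Assuming $f$ is amplified with $f^*L-L=H$ ample, the projection formula gives, for every $j$,
\[
0 \;<\; H\cdot c_j \;=\; L\cdot f_*c_j - L\cdot c_j \;=\; (\nu_j-1)(L\cdot c_j),
\]
so $\nu_j\ne 1$ and, being a positive integer, $\nu_j\ge 2$. Because the $c_j$ span $\N_1(X)$ and are all eigenvectors of $f_*$, the operator $f_*$ (and hence $f^*$, which has the same spectrum) is diagonalizable with spectrum contained in $\{\nu_1,\ldots,\nu_m\}\subseteq\mathbb{Z}_{\ge 2}$. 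Applying Lemma \ref{lem-int-amp-equiv} once more shows that $f$ is int-amplified. The one subtle step is establishing $\nu_j\in\mathbb{Z}$, which combines the rationality of the extremal rays with the integrality of $f_*$; the remainder is bookkeeping with the projection formula.
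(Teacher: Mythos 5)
Your proof is correct, but it runs along a route dual to the paper's. You work with $f_*$ on the rational polyhedral cone $\NE(X)$: after passing to an iterate fixing each extremal ray, you use the projection formula against the amplified class $H=f^*L-L$ to exclude the eigenvalue $1$ on each extremal ray, integrality of $f_*$ to promote each $\nu_j$ to an integer $\ge 2$, and then the eigenvalue characterization of Lemma \ref{lem-int-amp-equiv} to conclude. The paper instead stays on the divisor side: it takes integral nef generators $D_i$ of the extremal rays of $\Nef(X)$ with $f^*D_i\equiv\lambda_iD_i$ ($\lambda_i$ positive integers), sets $L':=\sum_i D_i$ and $A':=f^*L'-L'$, and shows that for $c\gg1$ both $cL'$ and $cA'$ are ample because each is the ample class $A=f^*L-L$ plus a nef class; this verifies the definition of int-amplified directly, without ever needing to know that any individual eigenvalue exceeds $1$ (only $\lambda_i\ge 1$ is used), and without invoking the spectral criterion except implicitly to descend from the iterate. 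Your version buys a sharper intermediate statement (after iteration $f^*|_{\N^1(X)}$ is diagonalizable with integer eigenvalues $\ge 2$), at the cost of leaning on the nontrivial equivalence (1)$\Leftrightarrow$(2) of Lemma \ref{lem-int-amp-equiv}; the paper's construction is more elementary in that it exhibits the required ample divisor explicitly. The small facts you leave implicit (that $\NE(X)$ is rational polyhedral and spanned by its extremal rays, that $f_*$ permutes those rays and $\nu_j>0$, and that $f^*$ and $f_*$ have the same spectrum) are standard and at the same level of detail as the paper's own proof, so there is no genuine gap.
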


\begin{proof}
One direction is clear by the definition.
Suppose $\dim(X)>0$ and $A=f^*L-L$ is ample for some Cartier divisor $L$.
Since $\textup{Nef}(X)$ is a rational polyhedron, $f^*$ fixes each extremal ray of $\Nef(X)$ after iteration.
Note that $\Nef(X)$ spans $\N^1(X)$.
Then $\N^1(X)$ admits a nef integral basis $\{D_1,\cdots,D_m\}$ such that $f^*D_i\equiv \lambda_iD_i$ for positive integers $\lambda_i$.

Write $L\equiv \sum_{i=1}^m a_iD_i$ and $L':=\sum_{i=1}^m D_i$.
Then $A=f^*L-L \equiv \sum a_i(\lambda_i-1)D_i$ and $A':=f^*L'-L' \equiv \sum (\lambda_i-1)D_i$.
For $c\gg 1$, our $cA' \equiv A+\sum (c-a_i)(\lambda_i-1)D_i$ and $cL' \equiv A+\sum (c-a_i(\lambda_i-1))D_i$ are both sums of the ample divisor $A$ and some nef divisors.
Hence, $A'$ and $L'$ are ample.
Also, by the construction of $A'$, we have $f^*(cL')-(cL')=cA'$.
So $f$ is int-amplified.
\end{proof}

We recall the well-known result (\cite[Theorem 3.5]{Mor82}) on the Fano contractions of smooth projective threefolds.
For our purpose, we focus on the rationally connected case.

\begin{lemma}\label{lem-base-rat}
Let $X$ be a smooth projective threefold and $X \to Y$ the Fano contraction of a $K_X$-negative extremal ray to a normal projective variety
$Y$.
Suppose $X$ is rationally connected
(this is the case when $X$ is Fano or of Fano type).
Then the following hold.
\begin{enumerate}
\item[(1)]
If $\dim (Y) = 1$, then $Y \cong \PP^1$.
\item[(2)]
Suppose that $\dim (Y) = 2$.
Then $Y$ is a smooth rational surface;
if further $Y$ has Picard number one, then
$Y \cong \PP^2$.
\end{enumerate}
\end{lemma}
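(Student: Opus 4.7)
The plan is to deduce the structure of $Y$ directly from the rational connectedness of $X$ together with the smoothness portion of Mori's structure theorem for Fano contractions from smooth threefolds, and then to invoke classical surface theory in part (2).

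The key preliminary observation I would make is that the base $Y$ is itself rationally connected. Indeed, given two general points $y_1,y_2\in Y$, I lift them to points $x_1,x_2\in X$; since $X$ is rationally connected, a chain of rational curves in $X$ joins $x_1$ to $x_2$, and pushing this chain to $Y$ via the fibration produces a chain of rational curves (some components possibly collapsing to points) joining $y_1$ to $y_2$. Hence $Y$ is rationally connected.

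For assertion (1), $Y$ is a normal projective curve, hence a smooth projective curve; a smooth projective rationally connected curve has genus $0$, so $Y\cong\PP^1$.

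For assertion (2), the substantive input is \cite[Theorem 3.5]{Mor82}: when $X$ is a smooth projective threefold and $X\to Y$ is a Fano contraction of relative dimension one, the base $Y$ is a smooth projective surface. Combined with the rational connectedness of $Y$ established above, Castelnuovo's rationality criterion (for smooth projective surfaces, rational connectedness is equivalent to rationality) implies that $Y$ is a smooth rational surface. Finally, if $\rho(Y)=1$ then $Y$ admits no $K_Y$-negative divisorial contraction, so $Y$ is already a minimal rational surface; by the classification of minimal rational surfaces the only possibility with Picard number one is $Y\cong\PP^2$ (the Hirzebruch surfaces $\mathbb{F}_n$, $n\neq 1$, all have $\rho=2$). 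No individual step is technically delicate; the only thing one needs to import is Mori's smoothness statement for the base, after which surface theory closes the argument.
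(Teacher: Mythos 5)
Your proposal is correct and follows essentially the same route as the paper's (much terser) proof: rational connectedness descends to $Y$, giving $Y\cong\PP^1$ in case (1) and rationality in case (2), with smoothness of the surface base supplied by \cite[Theorem 3.5]{Mor82} and the $\rho(Y)=1$ case settled by the classification of (minimal) rational surfaces. The extra details you supply (pushing down chains of rational curves, Castelnuovo's criterion, minimality when $\rho=1$) are exactly the standard facts the paper leaves implicit.
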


\begin{proof}
Since $X$ is rationally connected, so is $Y$; hence $Y$ is rational when
$\dim (Y) \le 2$.
Then (1) follows.
For (2), just note that $Y$ is smooth by \cite[Theorem 3.5]{Mor82}.
\end{proof}

Given a smooth projective toric variety, its toric boundary is always nice.
\begin{lemma}\label{lem-SNC-tor-bd}
Let $(X, B)$ be a toric pair with $X$ being smooth projective.
Then $B$ is of simple normal crossing, i.e., the pair $(X, B)$ is log smooth,
and $K_X + B \sim 0$.
If $X$ is further a surface, then $B$ is a simple loop of smooth rational curves.
\end{lemma}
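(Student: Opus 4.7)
The plan is to pass to the fan-combinatorial description of smooth projective toric varieties. First I would invoke the standard equivalence (for normal projective varieties) between the ``big-torus'' definition used in Notation \ref{notation2.1} and the description via a complete fan $\Sigma$ in $N_{\R} = N \otimes_{\Z} \R$ with $N \cong \Z^n$; under this equivalence, smoothness of $X$ translates into the condition that every cone $\sigma \in \Sigma$ is generated by part of a $\Z$-basis of $N$, and projectivity into completeness plus the existence of a strictly convex piecewise-linear support function.

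Under this correspondence, the irreducible components $D_1,\ldots, D_r$ of $B = X \setminus T$ are in bijection with the rays $\rho_1,\ldots,\rho_r$ of $\Sigma$ (consistent with $r = \rho(X) + n$ recorded in Notation \ref{notation2.1}). A non-empty intersection $D_{i_1}\cap \cdots \cap D_{i_k}$ corresponds to the closure of the torus-orbit associated with the cone $\rho_{i_1} + \cdots + \rho_{i_k} \in \Sigma$, and on the affine chart $U_\sigma$ attached to any maximal cone $\sigma$ containing these rays, smoothness supplies toric coordinates $x_1,\ldots, x_n$ in which each $D_{i_j}$ is locally cut out by a single coordinate function. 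Hence every such intersection is transverse and every $D_i$ is itself smooth, establishing the SNC claim. The identity $K_X + B \sim 0$ is the standard canonical-divisor formula for smooth toric varieties, already recalled in Notation \ref{notation2.1}.

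For the surface case ($n = 2$), completeness of $\Sigma$ forces the rays to sweep out $\R^2$ cyclically, so after relabelling we may assume $\rho_1,\ldots,\rho_r$ are arranged in counter-clockwise order (with $r = \rho(X) + 2$); the maximal cones of $\Sigma$ are then exactly the $2$-cones $\rho_i + \rho_{i+1}$ (indices taken modulo $r$). Consequently $D_i \cap D_j \neq \emptyset$ if and only if $j \equiv i \pm 1 \pmod{r}$, in which case the intersection is a single transverse point. Each $D_i$, being a torus-invariant prime divisor in a smooth projective toric surface, is itself a smooth projective toric variety of dimension $1$, hence $\cong \PP^1$. Combining these observations realises $B$ as a simple loop of smooth rational curves, as asserted.

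There is no genuine obstacle here; the only subtlety worth attending to is the passage from the ``big-torus'' definition of a toric variety used in the paper to the fan description on which the local SNC calculation rests, and this equivalence is entirely standard (cf.~\cite[Section 4.3]{Ful93}).
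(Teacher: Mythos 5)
Your proof is correct. For the first statement it is essentially the same standard toric fact the paper invokes by citation (the paper simply refers to \cite[p.~360 and Theorem 8.2.3]{CLS11} for the SNC property and $K_X+B\sim 0$), except that you reprove it directly from the fan description; the only point to be careful about, which you flag, is the equivalence between the big-torus definition in Notation \ref{notation2.1} and the fan picture, and that is indeed standard. Where you genuinely diverge is the surface case: the paper deduces the loop structure from the first part by adjunction, namely $0\sim (K_X+B)|_{D_i}=K_{D_i}+(B-D_i)|_{D_i}$ for each component $D_i$, which forces $D_i\cong\PP^1$ and $D_i\cdot(B-D_i)=2$, i.e.\ each component meets the rest of the boundary transversally in exactly two points. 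You instead read everything off the fan of a smooth complete toric surface: the rays are cyclically ordered, the maximal cones are spanned by consecutive rays, so $D_i\cap D_j\neq\emptyset$ iff $j\equiv i\pm 1$ and the intersection is one transverse point, while each $D_i$ is a one-dimensional complete smooth toric variety, hence $\PP^1$. The adjunction route is shorter once the first part is granted and needs no fan combinatorics; your route is more self-contained and makes the combinatorial loop structure (and the count $r=\rho(X)+2$ of components) completely explicit. Both are valid.
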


\begin{proof}
The first part is from \cite[p.~360 and Theorem 8.2.3]{CLS11} while the second follows from
the first and the adjunction (cf.~\cite[Proposition 3.2.7]{CLS11} and Notation \ref{notation2.1}).
\end{proof}

The proposition below should be well-known; we give the proof for readers' convenience.

\begin{proposition}\label{prop-toric-pair}
Let $X:=\mathbb{P}_Y(\mathcal{E})\xrightarrow{\tau} Y$ be a splitting $\mathbb{P}^1$-bundle over a normal projective variety $Y$.
Suppose that $(Y, \Delta_Y)$ is a toric pair.
Then $X$ is toric and $(X, S_0+S_1+\tau^{-1}(\Delta_Y))$ is a toric pair for any two disjoint (cross) sections $S_0, S_1$ of $\tau$.
\end{proposition}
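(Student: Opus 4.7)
The plan is to produce on $X$ a faithful action of a torus of dimension $\dim X$ whose open orbit has complement exactly $S_0+S_1+\tau^{-1}(\Delta_Y)$. First, I would reduce to the case where the two given sections correspond to the splitting of $\mathcal{E}$. Since $S_0\cap S_1=\emptyset$, the two quotients $\mathcal{E}\twoheadrightarrow\mathcal{M}_i$ defining $S_i$ combine into an isomorphism $\mathcal{E}\cong\mathcal{M}_0\oplus\mathcal{M}_1$, so after replacing the splitting I may assume $S_i$ is the canonical section $\mathbb{P}_Y(\mathcal{M}_i)\hookrightarrow\mathbb{P}_Y(\mathcal{E})$. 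Twisting by $\mathcal{M}_0^{-1}$ (which does not change $\mathbb{P}_Y(\mathcal{E})$) I may further assume $\mathcal{E}\cong\OO_Y\oplus\mathcal{L}$ for a single line bundle $\mathcal{L}$.

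Next, I would construct the torus action. Let $T_Y\subseteq Y$ denote the big torus, so $\Delta_Y=Y\setminus T_Y$. Since $Y$ is a complete normal toric variety, every invertible sheaf on $Y$ admits a $T_Y$-linearization (equivalently, is represented by a $T_Y$-invariant Cartier divisor) by standard toric geometry (cf.~\cite{CLS11}); thus $\mathcal{L}$ inherits a $T_Y$-equivariant structure, and so does $\mathcal{E}=\OO_Y\oplus\mathcal{L}$. The fibrewise $k^*$-scaling $(v_0,v_1)\mapsto(tv_0,v_1)$ commutes with this $T_Y$-action; passing to projectivization, I obtain an action of $T:=T_Y\times k^*$ on $X$ lifting the torus action on $Y$.

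Finally, I would verify that $T$ sits in $X$ as its big torus, with complement equal to $S_0+S_1+\tau^{-1}(\Delta_Y)$. Restricting $\mathcal{E}$ to $T_Y$ both summands trivialise (as $\Pic(T_Y)=0$), so $\tau^{-1}(T_Y)\cong T_Y\times\mathbb{P}^1$, and the $T$-action there is the product of the translation action of $T_Y$ on itself with the standard $k^*$-action on $\mathbb{P}^1$. Hence $T\cong T_Y\times k^*$ embeds in $X$ as the open dense orbit, and its complement is the union of $\tau^{-1}(\Delta_Y)$ with the closures of $T_Y\times\{0\}$ and $T_Y\times\{\infty\}$, which are precisely $S_0$ and $S_1$. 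This realises $(X,\,S_0+S_1+\tau^{-1}(\Delta_Y))$ as a toric pair. The one step I expect to require care is securing the $T_Y$-equivariant structure on $\mathcal{L}$; after the twist-reduction above this is a standard consequence of toric geometry, and precisely what the reduction to $\OO_Y\oplus\mathcal{L}$ is designed to bypass (otherwise one would have to linearise both summands simultaneously).
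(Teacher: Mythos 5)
Your proposal is correct and follows essentially the same route as the paper's proof: both use that the disjoint sections force a splitting $\mathcal{E}=\mathcal{M}_0\oplus\mathcal{M}_1$, equip the summands with $T_Y$-equivariant structures via $T_Y$-invariant Cartier divisors, and combine this with fibrewise $k^*$-scaling to get a faithful $T_Y\times k^*$-action whose open orbit has complement $S_0+S_1+\tau^{-1}(\Delta_Y)$. The only differences are cosmetic (you normalize to $\mathcal{O}_Y\oplus\mathcal{L}$ and describe sections by quotients rather than sub-line-bundles, and you perform the re-splitting before constructing the action instead of afterwards).
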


\begin{proof}
Let $T(Y)=Y\backslash \Delta_Y$ and fix a toric action $T(Y)$ on $Y$.
Write $\mathcal{E}=\mathcal{L}_0\oplus \mathcal{L}_1$ as a direct sum of two line bundles on $Y$.
Denote by $\tau_i: \mathcal{L}_i\to Y$ the induced fibre bundles.
For each $x\in X$, we write $x=(y, [v_0:v_1])$ with $y=\tau_i(x)$ and $v_i\in (\mathcal{L}_i)_y:=\tau_i^{-1}(y)$.
Note that $\mathcal{L}_i\cong \mathcal{O}_Y(D_i)$ for some Cartier divisors $D_i$ with support contained in $\Delta_Y$ which are $T(Y)$-invariant.
Then $\mathcal{L}_i$ are $T(Y)$-equivariant line bundles, i.e., there are $T(Y)$-actions on $\mathcal{L}_i$ such that $\tau_i: \mathcal{L}_i\to Y$ are $T(Y)$-equivariant
and the $T(Y)$-actions on the fibres of $\tau_i$ are linear (cf.~\cite[\S 2]{Oda88} or \cite[\S 2.1]{KD19}).
Thus we can construct a faithful action of a torus $T:=T(Y)\times k^*=\{(g,t)\}$ on $X$ via:
$$(g, t)\cdot (y, [v_0:v_1])= (g(y), [g(v_0): t\cdot g(v_1)])$$
where the $T(Y)$ action on $X$ is determined by $\mathcal{L}_i$.
So $X$ is toric.

Let $S_0$ and $S_1$ be two disjoint sections of $\tau$.
They are uniquely determined by sub line bundles $\mathcal{H}_0, \mathcal{H}_1$ of $\mathcal{E}$.
Since $S_0\cap S_1=\emptyset$, we have $(\mathcal{H}_0)_y\neq (\mathcal{H}_1)_y$ for any $y\in Y$.
Therefore, $\mathcal{E}=\mathcal{H}_0\oplus \mathcal{H}_1$.
Updating the above $T$-action by using the new decomposition, we see that $\tau^{-1}(\Delta_Y), S_0, S_1$ are $T$-invariant and $T$ acts transitively on $X\backslash (S_0 \cup S_1 \cup \tau^{-1}(\Delta_Y))$.
\end{proof}

The following lemma allows us to focus on polarized endomorphisms whenever given non-isomorphic surface endomorphisms.

\begin{lemma}\label{lem-noniso-nonpol}
Let $S$ be a smooth projective rational surface admitting a non-isomorphic surjective endomorphism $f$ which is not polarized even after iteration.
Then $S\cong\mathbb{P}^1\times\mathbb{P}^1$.	
\end{lemma}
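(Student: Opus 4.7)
The plan is to combine Nakayama's classification with an induction on $\rho(S)$ via equivariant contractions of $(-1)$-curves, exploiting the fact that polarization propagates upward under such blowups on surfaces.

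By \cite{Nak02}, $S$ is toric. If $\rho(S)=1$ then $S\cong\mathbb{P}^2$ and every non-isomorphic surjective endomorphism is polarized, contradicting the hypothesis. If $\rho(S)=2$ then $S\cong\mathbb{F}_n$; after iteration $f^*$ is diagonal in the basis $\{F,C_\infty\}$ of extremal rays of $\Nef(S)$, with eigenvalues $d$ and $\mu$. The relations $d\mu=\deg(f)$ (the determinant of $f^*$ on the rank-two lattice $\N^1(S)$) and $(f^*C_\infty)^2=\mu^2 n=n\deg(f)$ force $d=\mu$ whenever $n\ge 1$; in that case $f$ would be polarized, contrary to hypothesis, so $n=0$ and $S\cong\mathbb{P}^1\times\mathbb{P}^1$.

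For $\rho(S)\ge 3$ I would induct on $\rho(S)$. Since $S$ is a smooth toric rational surface of Picard number at least three, it carries a $(-1)$-curve; after replacing $f$ by an iterate, some such $E$ satisfies $f(E)=E$. Contracting $E$ via $\pi:S\to S'$ produces a smooth toric rational surface $S'$ of Picard number $\rho(S)-1\ge 2$ and a non-isomorphic endomorphism $\bar f:S'\to S'$. The central observation is that polarization of $\bar f$ would force polarization of $f$: since $E$ spans an extremal ray of $\NE(S)$ with $E^2<0$, one has $f^*E=aE$, and $(f^*E)^2=\deg(f)\cdot E^2$ gives $a=\sqrt{\deg(f)}$; if $\bar f^*H=\lambda H$ for some ample $H$ on $S'$, then Lemma~\ref{lem-pol-mod>1} together with the analogous identity on $S'$ yields $\lambda=\sqrt{\deg(\bar f)}=\sqrt{\deg(f)}=a$. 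Using the decomposition $\N^1(S)=\pi^*\N^1(S')\oplus\mathbb{Z}\cdot E$ and the relation $f^*\pi^*=\pi^*\bar f^*$, the operator $f^*$ acts as the scalar $\lambda=a$ on all of $\N^1(S)$, so $f$ would be polarized---a contradiction. Hence $\bar f$ is not polarized even after iteration, and by the induction hypothesis $S'\cong\mathbb{P}^1\times\mathbb{P}^1$, forcing $\rho(S)=3$ and $S=\textup{Bl}_p(\mathbb{P}^1\times\mathbb{P}^1)$.

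Finally, I would rule out $S=\textup{Bl}_p(\mathbb{P}^1\times\mathbb{P}^1)$ by a direct computation of $\Nef(S)$. The three $(-1)$-curves $E,F_1-E,F_2-E$ generate $\NE(S)$, and dualizing shows $\Nef(S)$ has three extremal rays $F_1,F_2,F_1+F_2-E$. After iteration $f^*$ fixes each of them, so $f^*F_i=b_iF_i$ and $f^*(F_1+F_2-E)=\mu(F_1+F_2-E)$. Moreover, $f$ preserves each ruling after iteration, so $f_*F_i$ is a multiple of $F_i$, and the projection formula gives $f^*E\cdot F_i=E\cdot f_*F_i=0$, whence $f^*E=aE$ for some $a$. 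Expanding $f^*(F_1+F_2-E)=b_1F_1+b_2F_2-aE$ and comparing with $\mu(F_1+F_2-E)$ forces $b_1=b_2=a=\mu$; thus $f^*$ is a positive scalar on $\N^1(S)$ and $f$ is polarized---the final contradiction. I expect the main obstacle to lie in this last step: isolating the third Nef-extremal ray $F_1+F_2-E$ is exactly what collapses the three eigenvalues $a,b_1,b_2$ into a single scalar, and the argument genuinely uses the rank-three structure of the Picard lattice.
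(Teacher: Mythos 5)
Your argument is correct in substance, but it follows a genuinely different route from the paper. The paper's proof is a short reduction: it quotes \cite[Theorem 5.4]{MZ19b} to conclude at once that $\rho(S)=2$, takes the $f$-equivariant ruling, and notes that if $S\not\cong\PP^1\times\PP^1$ then the $f^{-1}$-invariant negative section $C_0$ satisfies $f^*C_0=qC_0$ with $q^2=\deg(f)>1$, so $f^*(C_0+F)\sim q(C_0+F)$ with $C_0+F$ big forces $f$ to be $q$-polarized by Lemma \ref{lem-pol-mod>1}. You instead avoid that external theorem: you invoke Nakayama's toricity result \cite{Nak02}, induct on $\rho(S)$ by contracting an $f$-periodic $(-1)$-curve and showing that polarization propagates back up (via $a^2=\deg(f)=\deg(\bar f)=\lambda^2$ and the $f^*$-stable splitting $\N^1(S)=\pi^*\N^1(S')\oplus\R E$), and finish with explicit nef-cone computations on $\F_n$ and on $\textup{Bl}_p(\PP^1\times\PP^1)$, all of which are correct. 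What your route buys is independence from \cite[Theorem 5.4]{MZ19b}; what it costs is the inductive bookkeeping plus a few points you should make explicit: (i) the existence of an $f$-periodic $(-1)$-curve does not follow from extremality alone, but from Lemma \ref{lem-non-pe-ti} ($f^{-1}$ preserves the set of negative curves, for any surjective endomorphism) combined with the finiteness of negative curves on a smooth toric surface, and you need the resulting stronger statement $f^{-1}(E)=E$ (not merely $f(E)=E$) in order to write $f^*E=aE$; (ii) equivariance of the contraction should be referred to Lemma \ref{lem-equiv-MMP}(2); (iii) ``acts as the scalar $\lambda$'' should read ``is diagonalizable with all eigenvalues of modulus $\lambda$'', which is exactly condition (1) of Lemma \ref{lem-pol-mod>1} and suffices to conclude polarization. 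With these references supplied your proof is complete.
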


\begin{proof}
By \cite[Theorem 5.4]{MZ19b}, $\rho(S)=2$ and hence $S$ has an $f$-equivariant (after iterating $f$) ruling $\pi:S\to C\cong \mathbb{P}^1$.
Suppose the contrary that $S\not\cong\mathbb{P}^1\times\mathbb{P}^1$.
Then the ruling admits a negative section $C_0$ which is an $f^{-1}$-invariant curve after iterating $f$ (cf.~\cite[Lemma 9 and Proposition 11]{Nak02} or \cite[Lemma 4.3]{MZ19b}).
Write $f^*C_0=qC_0$ with $q\ge 1$.
By the projection formula, $q^2 = \deg (f) >1$ and hence $q>1$.
Then $f|_{C_0}$ and hence $f|_C$ are $q$-polarized.
Let $F$ be a fibre of $\pi$.
Then $f^*F\sim qF$.
Since $C_0+F$ is big and $f^*(C_0+F)\sim q(C_0+F)$, our $f$ is $q$-polarized (cf.~Lemma \ref{lem-pol-mod>1}),
a contradiction!
\end{proof}

In Theorem \ref{thm-bh} below,
we apply \cite[Theorem 1.4]{BH14} and \cite[Theorem 6.2]{MZ19b} to give some restrictions on totally invariant prime divisors.
Locally, for example when $X$ is smooth, the log canonical pair $(X,\Delta)$ implies that $\Delta$ has at most two components containing a common codimensional $2$ subvariety of $X$ (cf.~\cite[Lemma 2.29]{KM98}).
Globally, for example when $X=\mathbb{P}^n$, the effectivity of $-(K_X+\Delta)$ implies that  $\deg (\Delta)\le n+1$ and hence $\Delta$ has at most $n+1$ components.

\begin{theorem}\label{thm-bh}
Let $f$ be an int-amplified endomorphism of a normal projective variety $X$.
Let $\Delta$ be an $f^{-1}$-invariant reduced divisor such that $K_X + \Delta$ is $\Q$-Cartier.
Then
\begin{enumerate}
\item  $(X,\Delta)$ has at worst log canonical singularities, and
\item  $-(K_X+\Delta)$ is ($\Q$-linearly equivalent to) an effective divisor.
\end{enumerate}
\end{theorem}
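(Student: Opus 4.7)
The plan is to extract the key ramification identity forced by the $f^{-1}$-invariance of $\Delta$, and then deduce both statements by invoking the cited results together with the dynamical input on $f$. Since $\Delta = \sum_i D_i$ is reduced and $f^{-1}(\Delta) = \Delta$, one has $f^*\Delta = \sum_i r_i D_i$ with $r_i \ge 1$, while Riemann-Hurwitz gives $f^*K_X = K_X - R_f$ with $R_f \ge \sum_i (r_i-1) D_i$. Combining these,
$$K_X + \Delta \sim_{\mathbb Q} f^*(K_X + \Delta) + R'_f, \qquad R'_f := R_f - \sum_i(r_i-1) D_i \ge 0.$$
Equivalently, setting $E := -(K_X + \Delta)$, one obtains $f^*E \sim_{\mathbb Q} E + R'_f$ with $R'_f$ an effective $\mathbb{Q}$-Cartier divisor, a relation that drives both parts of the theorem.

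For (1), I would run the Broustet--H\"oring strategy of \cite[Theorem 1.4]{BH14}: if some log discrepancy on a log resolution of $(X, \Delta)$ were $<-1$, iterated $f$-pullback of the corresponding divisorial valuation would, via the identity above, produce infinitely many distinct divisorial valuations all with log discrepancy $<-1$, contradicting the finiteness of such valuations on a fixed birational model. The argument is essentially local and uses only that $\deg f>1$, so it adapts directly to the int-amplified setting; I would cite \cite[Theorem 6.2]{MZ19b} for the recorded form of this adaptation to conclude log canonicity.

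For (2), the identity $(\varphi - \mathrm{id}) E = R'_f$ in $\N^1(X)$, with $\varphi := f^*|_{\N^1(X)}$ and $R'_f$ pseudo-effective, combined with Lemma \ref{lem-int-amp-equiv}(4) applied to the $\varphi$-invariant closed pseudo-effective cone, immediately yields that $E$ is pseudo-effective. The main obstacle is then upgrading this numerical statement to a $\mathbb{Q}$-linear equivalence with an actual effective divisor. In the polarized case \cite{BH14} exploits $f^*E \equiv qE$ to take a convergent limit of $(f^n)^*E / q^n$, whereas for int-amplified $f$ one must iterate $(f^n)^* E \sim_{\mathbb Q} E + \sum_{i=0}^{n-1} (f^i)^* R'_f$ and control both the integral structure on $\Pic(X)\otimes\mathbb{Q}$ and the mixed-modulus spectrum of $\varphi$ (in particular its Jordan blocks). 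The resulting effective representative is precisely what \cite[Theorem 6.2]{MZ19b} provides, completing the argument.
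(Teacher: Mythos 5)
Your proposal is correct and follows essentially the same route as the paper: the log ramification formula $K_X+\Delta=f^*(K_X+\Delta)+R'$ with $R'\ge 0$, then \cite[Theorem 1.4]{BH14} for log canonicity and (the proof of) \cite[Theorem 6.2]{MZ19b} for effectivity of $-(K_X+\Delta)$. The only remark is that no adaptation of the Broustet--H\"oring argument is needed for (1): since every int-amplified endomorphism is amplified, \cite[Theorem 1.4]{BH14} applies verbatim (this is exactly how the paper cites it), and \cite[Theorem 6.2]{MZ19b} is the ingredient for part (2), not part (1).
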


\begin{proof}
(1) follows immediately from Lemma \ref{lem-int-amp-equiv} and \cite[Theorem 1.4]{BH14}.
To prove (2), we consider the log ramification divisor formula
$K_X+\Delta=f^*(K_X+\Delta)+R'$
where $R'$ is a $\Q$-Cartier effective divisor.
Let $D:=-(K_X+\Delta)$.
Then
$f^*D-D= R'$.
By the same proof of \cite[Theorem 6.2]{MZ19b}, $D$ is effective; see also \cite[Propositions 3.4, 3.7]{MZ19b} and \cite[Proposition 3.2]{Men20}.
\end{proof}

Below is a criterion for a $\PP^1$-bundle to be algebraic. 

\begin{lemma}[{cf.~\cite[\S 3. Proposition and Corollary 1]{Ele81}}]\label{lem-p1-bundle-projective}
Let $\tau:X\to Y$ be a (smooth) $\mathbb{P}^1$-bundle over a  smooth projective rational variety $Y$.
Then $\tau$ is an algebraic $\mathbb{P}^1$-bundle, i.e., $X=\mathbb{P}_Y(\mathcal{E})$ for some  locally free rank-two sheaf $\mathcal{E}$  over $Y$.
\end{lemma}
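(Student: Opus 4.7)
The plan is to identify the obstruction to algebraicity as a class in the (cohomological) Brauer group of $Y$, and then invoke the birational invariance of the Brauer group for smooth projective varieties in characteristic $0$ to conclude that this obstruction vanishes.

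First, I would observe that, since $\Aut(\PP^1)=\PGL_2$, every smooth $\PP^1$-bundle $\tau:X\to Y$ in the Zariski sense is in particular étale locally trivial with structure group $\PGL_2$, hence determines a class $[\tau]\in H^1_{\text{\'et}}(Y,\PGL_2)$. From the central short exact sequence of sheaves of groups
\[
1\longrightarrow \BGG_m\longrightarrow \GL_2\longrightarrow \PGL_2\longrightarrow 1
\]
one obtains the usual non-abelian cohomology exact sequence
\[
H^1_{\text{\'et}}(Y,\GL_2)\longrightarrow H^1_{\text{\'et}}(Y,\PGL_2)\xrightarrow{\ \delta\ } H^2_{\text{\'et}}(Y,\BGG_m).
\]
By standard descent theory, $\tau$ is of the form $\PP_Y(\SC E)$ for some locally free rank-two sheaf $\SC E$ on $Y$ if and only if $[\tau]$ lifts to $H^1_{\text{\'et}}(Y,\GL_2)$, i.e., if and only if $\delta([\tau])=0$ in the Brauer group $\operatorname{Br}(Y):=H^2_{\text{\'et}}(Y,\BGG_m)$.

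Second, I would reduce the vanishing of this obstruction to a Brauer-group vanishing statement. Since $Y$ is smooth projective and $k$ has characteristic zero, the cohomological Brauer group $\operatorname{Br}(Y)$ is a birational invariant of smooth projective $k$-varieties; combining this with $\operatorname{Br}(\PP^n)=0$ (a direct computation, e.g.\ via the Leray spectral sequence or via the fact that $\PP^n$ has no unramified Brauer classes), I conclude that $\operatorname{Br}(Y)=0$, since $Y$ is rational. Hence $\delta([\tau])=0$, which gives the desired lift and shows that $X\cong \PP_Y(\SC E)$ for some locally free rank-two sheaf $\SC E$ on $Y$.

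The main obstacle, if one wanted a self-contained account, is the birational invariance and vanishing of $\operatorname{Br}$; but since $Y$ is smooth projective and rational, both facts are classical (and are precisely what is used in Elencwajg's paper \cite{Ele81} cited in the statement). If we are willing to quote \cite[\S 3.~Proposition and Corollary 1]{Ele81} directly, then the only remaining work is to observe that the argument of \cite{Ele81}, originally formulated for rational surfaces, extends verbatim to arbitrary smooth projective rational $Y$ because it relies solely on $\operatorname{Br}(Y)=0$, which we have established above.
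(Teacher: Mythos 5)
Your proposal is correct and is essentially the same argument as the source the paper quotes without proof (Elencwajg's \S 3 Proposition and Corollary 1): identify the obstruction to $X\cong\PP_Y(\mathcal{E})$ as a class in the Brauer group via $1\to\BGG_m\to\GL_2\to\PGL_2\to 1$, and kill it because $\operatorname{Br}$ of a smooth projective rational variety vanishes by birational invariance and $\operatorname{Br}(\PP^n)=0$. One small wording fix: the hypothesis is not Zariski-local triviality but only that $\tau$ is smooth with all fibres $\PP^1$; the class in $H^1_{\text{\'et}}(Y,\PGL_2)$ exists because a smooth proper morphism acquires a section after an \'etale base change, and a $\PP^1$-fibration with a section $\sigma$ is $\PP(\tau_*\OO_X(\sigma))$, giving the required \'etale-local triviality.
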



Theorem \ref{thm-amerik2} and Proposition \ref{prop-ame-split} below are important in showing the splitting-ness of algebraic $\PP^1$-bundles admitting non-isomorphic surjective endomorphisms.
For certain $X$ as in Theorem \ref{thm-conic-split-p1p1-blp2},
we can dispose the semistability assumption in
Theorem \ref{thm-amerik2}.

\begin{theorem}[{cf.~\cite[Theorem 2, Proposition 2.4]{Ame03}}]\label{thm-amerik2}
Let $X=\PP_Y(\mathcal{E})\xrightarrow{\tau} Y$ be an algebraic $\mathbb{P}^1$-bundle  over a  smooth projective rational  variety $Y$.
If $X$ admits an int-amplified endomorphism $f$ and $\mathcal{E}$ is $H$-semistable for some ample divisor $H$, then $\tau$ is a splitting $\mathbb{P}^1$-bundle.
\end{theorem}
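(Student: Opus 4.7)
The plan is to follow the strategy of \cite[Theorem 2 and Proposition 2.4]{Ame03}, adapting it to the int-amplified setting. First I would iterate $f$ so that it becomes $\tau$-equivariant; this is possible by Lemma \ref{lem-equiv-MMP}(3) applied to the extremal ray of $\NE(X)$ contracted by $\tau$. Thus $f$ descends to a surjective endomorphism $g:Y\to Y$, and since $g^*|_{\N^1(Y)}$ is identified with the restriction of $f^*|_{\N^1(X)}$ to the $f^*$-invariant subspace $\tau^*\N^1(Y)$, all its eigenvalues have modulus $>1$, so $g$ is again int-amplified by Lemma \ref{lem-int-amp-equiv}.

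Next I would encode the $\tau$-equivariant endomorphism $f$ as a map of sheaves on $Y$. The restriction of $f$ to any fibre of $\tau$ is a degree-$d$ self-map of $\PP^1$ for a fixed integer $d\ge 2$, where $d>1$ is forced by applying Lemma \ref{lem-int-amp-equiv} to the eigenvalue of $f^*$ on the one-dimensional quotient $\N^1(X)/\tau^*\N^1(Y)$. This gives an isomorphism $f^*\mathcal{O}_X(1)\cong\mathcal{O}_X(d)\otimes\tau^*M$ for some line bundle $M$ on $Y$. Pushing forward along $\tau$ and using $\tau_*\mathcal{O}_X(d)=\Sym^d\mathcal{E}$ then yields a canonical fibrewise-injective $\OO_Y$-linear map
\[
g^*\mathcal{E}\hookrightarrow\Sym^d\mathcal{E}\otimes M,
\]
and iterating $f^n$ produces a compatible tower $g^{n*}\mathcal{E}\hookrightarrow\Sym^{d^n}\mathcal{E}\otimes M_n$ with $d^n\to\infty$.

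Finally I would exploit the semistability of $\mathcal{E}$. In characteristic zero $\Sym^d\mathcal{E}\otimes M$ remains $H$-semistable, while $g^*\mathcal{E}$ is $g^*H$-semistable. A rank-$2$ semistable bundle is either strictly stable, a non-split semistable extension of line bundles of equal slope, or a direct sum of two line bundles; the aim is to rule out the first two cases. In the stable case, the tower of embeddings together with the rigidity of stable bundles (whose endomorphism algebra reduces to scalars) would produce a non-scalar endomorphism of $\mathcal{E}$, a contradiction. In the non-split extension case, the extension class in the relevant $\mathrm{Ext}^1$-group is fixed by the expanding linear action induced by $f^*$ (all eigenvalues of modulus $>1$, by Lemma \ref{lem-int-amp-equiv}), so it must vanish and the extension splits. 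Either way $\mathcal{E}$ is a direct sum of two line bundles and $\tau$ is a splitting $\PP^1$-bundle.

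The main obstacle is this last step --- extracting an explicit rank-$1$ direct summand of $\mathcal{E}$ from the tower of subsheaf inclusions into symmetric powers. This is precisely where the int-amplified hypothesis (rather than bare ``non-isomorphism'') is essential, providing the expansion on $\mathrm{Ext}^1$ and on $\Hom$-spaces needed both to kill the extension class and to preclude a non-scalar endomorphism of a stable $\mathcal{E}$. The remaining ingredients --- equivariant descent along the Mori contraction and the relative degree formula --- are essentially formal.
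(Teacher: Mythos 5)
Your preparatory steps are fine and essentially reproduce the reduction the paper itself makes before quoting Amerik: by Lemma \ref{lem-equiv-MMP}(3) the contraction $\tau$ (a Fano contraction of a $K_X$-negative extremal ray) becomes $f$-equivariant after iteration, $g=f|_Y$ is int-amplified, the fibre degree $d$ is the eigenvalue of $f^*$ on $\N^1(X)/\tau^*\N^1(Y)$ and is $\ge 2$ by Lemma \ref{lem-int-amp-equiv}, and one gets the embedding $g^*\mathcal{E}\hookrightarrow\Sym^d\mathcal{E}\otimes M$. But the heart of your argument --- the two case analyses --- is not a proof, and the gap is not cosmetic: nowhere do you use the hypothesis that $Y$ is \emph{rational} (equivalently, for the paper's purposes, simply connected), and the statement is false without it. Take $Y=C$ an elliptic curve and $\mathcal{E}$ the stable indecomposable rank-two bundle of odd degree, so that $X=\PP_C(\mathcal{E})\cong\Sym^2C$ with $\tau$ the Abel--Jacobi map $\Sym^2C\to\Pic^2(C)\cong C$. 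Multiplication by $m\ge 2$ on $C\times C$ descends to an endomorphism $\psi_m$ of $\Sym^2C$ covering an isogeny of degree $m^2$ of the base, with fibre degree $m^2$ and $\psi_m^*|_{\N^1(X)}=m^2\,\id$; so $\psi_m$ is int-amplified, $\tau$ is equivariant, and the whole tower $g^{n*}\mathcal{E}\hookrightarrow\Sym^{d^n}\mathcal{E}\otimes M_n$ exists exactly as you describe --- yet $\mathcal{E}$ is stable and does not split. Hence your stable case (``the tower plus rigidity of stable bundles produces a non-scalar endomorphism of $\mathcal{E}$'') cannot follow from the listed ingredients, and the non-split case is equally unsubstantiated: $g^*$ does not act on a fixed $\textup{Ext}^1$-group (it maps extension data of $\mathcal{E}$ to that of $g^*\mathcal{E}$, a different bundle), the only comparison you have between $g^*\mathcal{E}$ and $\mathcal{E}$ is the embedding into $\Sym^d\mathcal{E}\otimes M$, which does not transport extension classes, and Lemma \ref{lem-int-amp-equiv} gives expansion on $\N^1$, not on coherent cohomology.

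For comparison, the paper's proof is a two-line reduction to \cite[Theorem 2, Proposition 2.4]{Ame03}: int-amplifiedness gives $\deg(f)>\deg(f|_Y)$ (fibre degree $\ge 2$), and rationality gives simple connectedness of $Y$, which is precisely the hypothesis under which Amerik concludes splitting. Amerik's route is not via endomorphisms of a stable bundle or an $\textup{Ext}^1$ fixed-point argument: the endomorphism forces the discriminant of $\mathcal{E}$ to vanish, a semistable rank-two bundle with vanishing discriminant is projectively flat, and projective flatness over a simply connected base trivializes $\PP(\mathcal{E})$, whence $\mathcal{E}$ splits. Any repair of your sketch must inject simple connectedness at this point; as written, the proposal has a genuine, unfillable gap.
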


\begin{proof}
Since $f$ is int-amplified, we see that $\deg (f)>\deg (f|_Y)$ (cf.~Lemma \ref{lem-int-amp-equiv}).
Note that rational varieties are simply connected.
Hence, the $H$-semistable sheaf $\mathcal{E}$ splits by \cite[Theorem 2 and Proposition 2.4]{Ame03}.
\end{proof}

\begin{proposition}[{\cite[Proposition 3]{Ame03}}]\label{prop-ame-split}
Suppose $B=\mathbb{P}^n$ and $X=\mathbb{P}_B(\mathcal{E})$ with $\mathcal{E}$ a vector bundle of rank two.
Then there exists an endomorphism of $X$ of degree bigger than one if and only if $\mathcal{E}$ is a direct sum of two line bundles.
\end{proposition}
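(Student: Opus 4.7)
The plan is to handle the two directions separately. The \emph{if} direction is short: assuming $\mathcal{E} \cong \mathcal{L}_0 \oplus \mathcal{L}_1$, apply Proposition \ref{prop-toric-pair} with the standard toric structure on $B = \PP^n$ (boundary being the $n+1$ coordinate hyperplanes) to conclude $X$ is toric; the $d$-th power map on the big torus then extends to a polarized endomorphism of degree $d^{n+1} > 1$.

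For the \emph{only if} direction, suppose $f : X \to X$ satisfies $\deg(f) > 1$. The first step is to descend $f$ to the base. Unless $X \cong \PP^1 \times \PP^n$ (in which case $\mathcal{E}$ already splits), the contraction $\tau$ represents the unique non-trivial extremal ray of $\NE(X)$ of fibre-type, so after iteration $f$ preserves the fibres of $\tau$ and induces $f_B : B \to B$ with $\tau \circ f = f_B \circ \tau$. Write $d = \deg(f_B)$ and $e = \deg(f|_F)$ for a general fibre $F$, so that $de = \deg(f) > 1$.

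If $\mathcal{E}$ is $H$-semistable for some ample $H$ on $B$, I would verify that some iterate of $f$ is int-amplified by examining the action of $f^*$ on $\N^1(X)$ in the basis $\{\mathcal{O}_X(1), \tau^*\OO(1)\}$, which is upper triangular with diagonal entries controlled by $e$ and the polarization constant of $f_B$, and then invoke Theorem \ref{thm-amerik2} to conclude. When $\mathcal{E}$ is not semistable, the Harder--Narasimhan filtration provides a unique maximal destabilising sub-line-bundle $\mathcal{L}_0 \subset \mathcal{E}$, yielding a distinguished section $S_0$ of $\tau$ which is $f^{-1}$-periodic by uniqueness. The final step is to produce a complementary disjoint section $S_1$: when $e > 1$, the horizontal part of the ramification divisor of $f$ supplies such a section after iteration; when $e = 1$ and $d > 1$, the fibrewise isomorphism $f$ gives $f_B^* \mathcal{E} \cong \mathcal{E} \otimes \mathcal{M}$ for some $\mathcal{M} \in \Pic(\PP^n)$, and iterating together with the polarization of $f_B$ forces the extension class of $0 \to \mathcal{L}_0 \to \mathcal{E} \to \mathcal{E}/\mathcal{L}_0 \to 0$ to be a contracting-fixed vector in a finite-dimensional cohomology group, hence zero. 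Two disjoint $f^{-1}$-periodic sections of $\tau$ then give $\mathcal{E} \cong \mathcal{L}_0 \oplus \mathcal{L}_1$.

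The main obstacle is the unstable fibrewise-isomorphism case $e = 1,\, d > 1$, since no fibrewise ramification can be harvested and Theorem \ref{thm-amerik2} does not apply directly; one must argue the vanishing of the extension class via eigenvalue analysis of polarized pullbacks on cohomology of $\PP^n$. A secondary delicate point is the descent step itself: ruling out that $f$ could swap the $\tau$-fibration with some other fibration, which requires either the uniqueness of the fibre-type extremal contraction (the generic case) or separate treatment of the trivial projective bundle $\PP^1 \times \PP^n$.
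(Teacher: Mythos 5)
First, note that the paper does not prove this statement at all: Proposition \ref{prop-ame-split} is quoted verbatim from \cite[Proposition 3]{Ame03}, so your attempt has to stand on its own. Your ``if'' direction is fine, and the descent step is harmless (with $\rho(X)=2$ the two extremal rays are permuted by $f^*$, so $\tau$ becomes $f$-equivariant after iteration; by contrast your uniqueness claim for the fibre-type ray is literally false, e.g.\ for $\PP(T_{\PP^2})$). The serious gap is in the ``only if'' direction, in how you split into cases. Writing $f^*|_{\N^1(X)}$ in the basis $\{\OO_X(1),\tau^*\OO(1)\}$, its eigenvalues are exactly the fibre degree $e$ and the integer $q$ with $f_B^*\OO(1)\cong\OO(q)$; hence no iterate of $f$ is int-amplified as soon as $e=1$ or $q=1$, and iteration never repairs an eigenvalue equal to $1$. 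So your plan to ``verify int-amplifiedness and invoke Theorem \ref{thm-amerik2}'' cannot cover the semistable case with $e=1$, $q>1$ (a fibrewise-linear $f$ over a non-isomorphic endomorphism of $\PP^n$, i.e.\ $f_B^*\mathcal{E}\cong\mathcal{E}\otimes\mathcal{M}$), nor the case $q=1$, $e>1$. The first of these is precisely where the real work lies (in \cite{Ame03} it is handled by a Chern class/projective flatness argument: the relation forces $c_1(\mathcal{E})^2=4c_2(\mathcal{E})$, equality in Bogomolov, whence $\PP(\mathcal{E})$ comes from a flat $\PGL_2$-bundle on the simply connected $\PP^n$ and $\mathcal{E}$ splits); your proposal leaves it untreated, since you only consider $e=1$ under the unstable branch.

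The unstable branch itself is set up incorrectly. The maximal destabilising saturated sub-line-bundle $\mathcal{L}_0\subset\mathcal{E}$ on $\PP^n$ has quotient of the form $\mathcal{L}_1\otimes I_Z$ with $Z$ of codimension $\ge 2$; it yields a genuine section of $\tau$ only when $Z=\emptyset$, and the obstruction to splitting is not an extension class of two line bundles: indeed $\operatorname{Ext}^1(\mathcal{L}_1,\mathcal{L}_0)=H^1(\PP^n,\mathcal{L}_0\otimes\mathcal{L}_1^{-1})=0$ for $n\ge 2$, so if your picture were correct every unstable rank-two bundle on $\PP^n$ would split, contradicting the Serre construction. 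The dynamical input must be used to show $Z=\emptyset$, and your eigenvalue/``contracting fixed vector'' argument never addresses this. Two further steps are unjustified: the $f^{-1}$-periodicity of $S_0$ ``by uniqueness'' of the HN filtration is only immediate when $e=1$ (for $e>1$ one needs something like: $S_0$ is the unique prime divisor $D$ with $D|_D$ not pseudo-effective, together with the first half of Lemma \ref{lem-non-pe-ti}); and the claim that for $e>1$ the horizontal part of $R_f$ supplies an $f^{-1}$-invariant section disjoint from $S_0$ has no argument behind it---horizontal components of the ramification divisor are a priori multisections and need not be $f^{-1}$-invariant, and the analogous steps in this paper (Theorem \ref{thm-boundary-p1bundle}, via Theorem \ref{thm-bh} and Lemma \ref{lem-non-pe-ti}) all rely on int-amplifiedness, which is exactly what you lack in these residual cases.
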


In what follows, we recall the known results of Conjectures \ref{main-conj-pn} and \ref{main-conj-linear}.

\begin{theorem}[{cf.~\cite{ARV99}, \cite{HM03}}]\label{thm-fano-rho=1}
Let $X$ be a smooth Fano threefold of Picard number one which admits a non-isomorphic surjective endomorphism.
Then $X\cong\mathbb{P}^3$.
\end{theorem}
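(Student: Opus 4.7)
My plan is as follows. Since $\rho(X)=1$, the group $\Pic(X)$ is generated by a single ample class $H$, and any surjective endomorphism $f$ satisfies $f^*H\sim qH$ for some positive integer $q$ with $q^3=\deg(f)$. Because $f$ is non-isomorphic, $q\ge 2$, so $f$ is automatically $q$-polarized in the sense of Lemma \ref{lem-pol-mod>1}. Writing $-K_X\sim rH$ with $r\in\{1,2,3,4\}$ the Fano index, the log ramification formula $K_X\sim f^*K_X+R$ (with $R$ effective) gives $R\sim(q-1)rH$, in particular $R\ne 0$.

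The first move would be to dispose of the hypersurface/complete-intersection cases by invoking the Iskovskikh-Mori-Mukai classification of smooth Fano threefolds with $\rho=1$. The index-$4$ case is $X\cong\PP^3$ by Kobayashi-Ochiai, and the index-$3$ case is the smooth quadric $Q^3\subset\PP^4$. Among the index-$2$ del Pezzo threefolds $V_d$, those with $d\le 4$ are hypersurfaces or complete intersections of degree $\ge 2$ in suitable (weighted) projective spaces; a similar description covers many of the index-$1$ Fano threefolds. Each of these cases is then excluded by \cite[Proposition 8]{PS89} and \cite{Bea01}, which show that a smooth hypersurface of degree $\ge 2$ and dimension $\ge 2$ in a projective space admits no non-isomorphic surjective endomorphism.

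The remaining cases are the sporadic ones, chiefly the quintic del Pezzo threefold $V_5$ and the index-$1$ Fano threefolds of genera $g\in\{7,9,10,12\}$. For these I would follow the Hwang-Mok approach via varieties of minimal rational tangents (VMRT). Choosing a dominating family of minimal rational curves on $X$, at a general point $x\in X$ one forms the VMRT $\mathcal{C}_x\subset\PP(T_xX)$. Since $f$ is polarized, after replacing it by an iterate it preserves the class of minimal-degree rational curves, and hence the differential $df_x$ carries $\mathcal{C}_x$ onto $\mathcal{C}_{f(x)}$. Inspection of the known VMRT geometries in each sporadic case, together with the Cartan-Fubini type extension theorem, then forces $f$ to be birational and hence an isomorphism, contradicting our assumption; by the Mori-Kobayashi-Ochiai characterization, only $X\cong\PP^3$ can escape this argument.

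The hard part is the VMRT analysis: one must verify that $f$ really preserves the minimal rational curve family (not merely a dominating family of higher-degree curves), control how $df_x$ deforms $\mathcal{C}_x$, and then match the resulting equivariance with the specific VMRT geometry case by case. This is exactly where the technical work of \cite{ARV99} and \cite{HM03} is concentrated, and any classification-free variant in higher dimensions would have to supply a substitute for Cartan-Fubini extension together with semistability information on $T_X$.
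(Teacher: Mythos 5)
The paper does not prove this statement at all: it is imported as a known theorem, cited to \cite{ARV99} and \cite{HM03}, so there is no internal argument to compare against, and your plan---reduce to the polarized case using $\rho(X)=1$, kill the (weighted) hypersurface and complete-intersection cases via \cite{PS89} and \cite{Bea01}, and defer the sporadic Fano threefolds to the machinery of \cite{ARV99} and \cite{HM03}---amounts to the same citation-level treatment the paper adopts. Just note that, read as a standalone proof, your sketch still rests entirely on those two references (and in the cases settled by \cite{HM03} the minimal rational curves have trivial normal bundle, so the VMRT is zero-dimensional and the usual Cartan--Fubini extension argument you invoke is not directly available there), which is precisely why the paper quotes the result rather than reproving it.
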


\begin{theorem}\label{thm-linear-subspace}
Conjecture \ref{main-conj-linear} holds if one of the following cases occurs.
\begin{enumerate}
\item $X = \mathbb{P}^2$ (cf.~\cite{Gur03}).
\item $X = \mathbb{P}^3$; $Y$ is an $f^{-1}$-invariant prime divisor (cf.~\cite{Hor17}, \cite[Thm 1.5 (5)]{NZ10}).
\item $Y$ is a smooth hypersurface of $X$ (cf.~\cite{CL00}, \cite{Bea01} and \cite{Gur03}).
\end{enumerate}
\end{theorem}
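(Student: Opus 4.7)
The common setup I would use for all three cases is as follows. Any surjective endomorphism $f$ of $\mathbb{P}^n$ with $\deg(f)\ge 2$ is $q$-polarized for some integer $q\ge 2$, because $\Pic(\mathbb{P}^n)=\Z H$ forces $f^*H\sim qH$ with $q^n=\deg(f)>1$; in particular $f$ is int-amplified. Hence, for any $f^{-1}$-invariant (and, after iteration, any $f^{-1}$-periodic) reduced divisor $\Delta$, Theorem \ref{thm-bh} applies: the pair $(\mathbb{P}^n,\Delta)$ is log canonical, and $-(K_{\mathbb{P}^n}+\Delta)\sim_{\Q}$ an effective divisor. Since $-K_{\mathbb{P}^n}\sim (n+1)H$, this immediately gives the degree bound $\deg(\Delta)\le n+1$. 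If $Y$ itself has codimension $\ge 2$, I would pass to the $f^{-1}$-invariant divisor that is the union of hypersurfaces cut out by $f^{-1}$-orbits of appropriate linear systems through $Y$, or argue directly that the singular locus of an lc pair containing $Y$ inherits the invariance.

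For case (1), $n=2$ and $Y$ is an $f^{-1}$-periodic curve of degree $d\in\{1,2,3\}$. The plan is to rule out $d=2,3$. After iteration one may assume $Y$ is $f^{-1}$-invariant. If $Y$ is irreducible of degree $2$ or $3$, the log canonical condition on $(\mathbb{P}^2,Y)$ forces $Y$ to have only nodes; but an invariant irreducible conic or nodal cubic has only finitely many distinguished points (the node, the flex points, the ramification locus of $f|_Y$), and tracking them under $f|_Y$, which is a polarized self-map of $\mathbb{P}^1$ (or of a nodal rational curve after normalization), leads to a contradiction with a Hurwitz-type count, as carried out by Gurevich. If $Y$ is reducible, one reduces to invariant components and applies the same argument.

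For case (2), $n=3$ and $Y$ is an $f^{-1}$-invariant prime divisor of degree $d\le 4$. The approach (following H\"oring and Nakayama--Zhang) is: the lc condition forces $Y$ to have at worst Du Val or simple elliptic singularities along a curve, and $f|_Y:Y\to Y$ is a polarized endomorphism. For $d=2,3,4$ one then analyzes the possible $Y$ via Kodaira dimension and adjunction: a quadric, cubic or quartic surface with a polarized non-isomorphic self-map is highly restricted, and none is compatible with a totally invariant embedding as a hypersurface in $\mathbb{P}^3$ unless $Y$ is a hyperplane.

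For case (3), $Y$ is a smooth hypersurface of degree $d$ in $\mathbb{P}^n$, and $f|_Y:Y\to Y$ is a $q$-polarized endomorphism of degree $q^{n-1}>1$ with respect to $\mathcal{O}_Y(1)$. The plan is to invoke the theorem of Beauville (extending Cerveau--Lins Neto and Gurevich) that a smooth hypersurface $Y\subset\mathbb{P}^n$ of degree $d\ge 2$ admits a non-isomorphic polarized endomorphism only in the degenerate cases; combined with the constraint that $f|_Y$ extends from the ambient $f$, this forces $d=1$.

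The main obstacle in all three cases is to go from the numerical/log canonical degree bounds (which only limit $\deg Y\le n+1$) to the actual classification of which low-degree invariant subvarieties can occur: the lc condition alone does not eliminate, for example, a nodal cubic in $\mathbb{P}^2$ or a quadric cone in $\mathbb{P}^3$, and one must use the dynamics of $f|_Y$ (through polarization, ramification, and invariant point counts) to exclude these. This is precisely the content of the cited works \cite{Gur03}, \cite{Hor17}, \cite[Thm 1.5(5)]{NZ10}, \cite{CL00}, and \cite{Bea01}, and I would simply cite them for the detailed exclusion arguments rather than reproduce the case analyses.
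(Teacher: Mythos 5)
The paper gives no actual proof of this theorem: it is a record of known results established in the cited works, and your proposal ultimately does the same thing, deferring the substantive exclusion arguments to those same references, so it matches the paper's approach. (One small slip worth fixing: the author of \cite{Gur03} is Gurjar, not ``Gurevich,'' and for case (3) the clean statement you want from \cite{Bea01} is that a smooth hypersurface of dimension $\ge 2$ and degree $\ge 3$ admits no endomorphism of degree $>1$, with low-degree and curve cases handled by \cite{CL00} and \cite{Gur03}.)
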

At the end of this section, we recall the following well-known fact which will be heavily used in Section \ref{section-4-conic}.
\begin{proposition}[{cf.~\cite[Theorem 3.7 (4)]{KM98}}]\label{pro-cone-reduced-reducible}
Let $\pi:X\to Y$ be a Fano contraction of a $K_{X}$-negative extremal ray $R$ between projective varieties.
Suppose that $X$ is smooth.
Then for every prime divisor $Q$ on $Y$, its pullback $\pi^*Q$ is reduced and irreducible.
\end{proposition}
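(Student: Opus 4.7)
The plan is to show that each irreducible component $D_i$ of $\pi^*Q$ coincides with $\pi^*Q$ itself as a Weil divisor, which forces $\pi^*Q$ to be reduced and irreducible. The engine is the descent for line bundles numerically trivial on the extremal ray $R$, promoted to an equality of Weil divisors by means of $\pi_*\mathcal{O}_X = \mathcal{O}_Y$.

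I would first write $\pi^*Q = \sum_{i=1}^k a_i D_i$ with $a_i \in \mathbb{Z}_{\geq 1}$ and distinct prime divisors $D_i \subseteq \pi^{-1}(Q)$. Each $D_i$ is $\pi$-vertical; a curve $\ell \in R$ lying in a general fibre of $\pi$ avoids every vertical divisor, so $D_i \cdot R = 0$. By the Cone Theorem descent (cf.~\cite[Theorem 3.7]{KM98}), $\mathcal{O}_X(D_i) \cong \pi^*M_i$ for some line bundle $M_i$ on $Y$. The projection formula combined with $\pi_*\mathcal{O}_X = \mathcal{O}_Y$ yields the canonical identification $H^0(X, \pi^*M_i) \cong H^0(Y, M_i)$, so the image of the tautological section $1_{D_i}$ of $\mathcal{O}_X(D_i)$ under the isomorphism is pulled back from a section of $M_i$. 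Taking divisors gives $D_i = \pi^*E_i$ as actual Weil divisors, for some effective Cartier divisor $E_i$ on $Y$.

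Next I would decompose $E_i = \sum_j n_{ij} Q_{ij}$ with $Q_{ij}$ distinct prime divisors and $n_{ij} \geq 1$. If two distinct $Q_{ij}, Q_{ij'}$ admitted a common prime component $D$ of their pullbacks, then $\pi(D) \subseteq Q_{ij} \cap Q_{ij'}$ would have codimension $\geq 2$ in $Y$; rerunning the descent for $D$ gives $D = \pi^*C$ with $\Supp C = \pi(D)$ of codimension $\geq 2$, forcing $C = 0$ and hence $D = 0$, a contradiction. Thus the prime components of $\pi^*Q_{ij}$ across $j$ are pairwise disjoint, and from $D_i = \sum_j n_{ij} \pi^*Q_{ij}$ being a single prime with coefficient one I conclude $E_i = Q_i'$ is itself prime and $\pi^*Q_i' = D_i$ is reduced irreducible. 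Combining $\pi(D_i) = Q_i' \subseteq Q$ with the primality of $Q$ gives $Q_i' = Q$, so $D_i = \pi^*Q$ for every $i$. Consequently all the $D_i$ coincide, $k = 1$, and $\pi^*Q = a_1 \pi^*Q$ forces $a_1 = 1$; hence $\pi^*Q = D_1$ is reduced and irreducible.

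The main technical obstacle I anticipate is the promotion of the line-bundle isomorphism $\mathcal{O}_X(D_i) \cong \pi^*M_i$ to an actual equality of Weil divisors $D_i = \pi^*E_i$; this relies crucially on the global section identification $H^0(X, \pi^*M_i) \cong H^0(Y, M_i)$, a direct consequence of $\pi_*\mathcal{O}_X = \mathcal{O}_Y$. The same principle is what rules out vertical prime divisors with codimension-$\geq 2$ image, i.e., the pathological fibre-dimension jumps that could otherwise spoil the decomposition.
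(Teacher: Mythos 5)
Your proposal is correct and is essentially the paper's own argument: every component of $\pi^*Q$ is numerically trivial on $R$, hence descends via the cone theorem \cite[Theorem 3.7 (4)]{KM98} and $\pi_*\mathcal{O}_X=\mathcal{O}_Y$ to a nonzero effective Cartier divisor on $Y$, and the primality of $Q$ then gives the contradiction. The paper's write-up is just more economical---it writes $\pi^*Q=P_1+P_2$, descends both pieces to nonzero effective Cartier divisors $Q_1,Q_2$ and observes that $Q=Q_1+Q_2$ cannot be prime---which also sidesteps your pullbacks of the individual components $Q_{ij}$ of $E_i$ (not known to be Cartier a priori); indeed $\pi(D_i)=\Supp E_i\subseteq Q$ already forces $E_i=n_iQ$, so the codimension-two detour is unnecessary.
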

\begin{proof}
Suppose the contrary that $\pi^*Q=P_1+P_2$ where $P_1$ is a prime divisor and $P_2$ is a non-zero integral divisor on $X$.
Then for a general curve $\ell$ contracted by $\pi$ (and hence a generator of $R$) we have $P_i\cdot\ell=0$ for each $i$.
Since $X$ is smooth, both $P_i$ are effective Cartier divisors.
By the cone theorem (cf.~\cite[Theorem 3.7 (4)]{KM98}), there exist non-zero effective Cartier divisors $Q_i$ on $Y$ such that $P_i=\pi^*Q_i$.
Therefore, $Q=Q_1+Q_2$ being non-reduced or reducible gives the contradiction.
\end{proof}

\section{Totally periodic prime divisors}\label{section-3-tid}
This section describes totally periodic
divisors in surfaces and splitting $\mathbb{P}^1$-bundles.

\begin{lemma}\label{lem-p1p1-ticurve}
Let $f$ be a non-isomorphic surjective endomorphism of $X=Y\times Z\cong\mathbb{P}^1\times \mathbb{P}^1$.
Let $C$ be an $f^{-1}$-periodic curve in $X$.
Then $C$ is either $\{\cdot\}\times \mathbb{P}^1$ or $\mathbb{P}^1\times\{\cdot\}$.
\end{lemma}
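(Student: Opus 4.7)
The plan is to first reduce to the case where $f$ is literally a product $f=f_Y\times f_Z$ of endomorphisms of $\PP^1$ with $f^{-1}(C)=C$, and then derive a contradiction by comparing the explicit form of the ramification divisor of a product with the total invariance of $C$.

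Step 1 (Reduction to a product). Since $\Nef(X)=\R_{\ge 0}F_Y+\R_{\ge 0}F_Z$ has exactly two extremal rays (the fibre classes of the two projections $p_Y,p_Z$), $f^*$ permutes these rays. Replacing $f$ by a sufficiently high iterate (which preserves non-isomorphicity), I may assume $f^{-1}(C)=C$ and $f^*F_Y\equiv d_YF_Y$, $f^*F_Z\equiv d_ZF_Z$ for positive integers with $d_Yd_Z=\deg(f)>1$. Any effective divisor in $|d_YF_Y|$ on $\PP^1\times\PP^1$ is a union of $d_Y$ fibres of $p_Y$, so every fibre of $p_Y\circ f$ is a union of $p_Y$-fibres; by rigidity this gives a surjective endomorphism $f_Y$ of $Y$ with $p_Y\circ f=f_Y\circ p_Y$, and analogously $f_Z$. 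Combining both factorisations yields $f=f_Y\times f_Z$.

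Step 2 (Bidegree and ramification along $C$). Write $[C]=aF_Y+bF_Z$ and suppose for contradiction that $a,b\ge 1$. Since $C$ is prime and $f^{-1}(C)=C$, one has $f^*C=mC$ as divisors for some integer $m\ge 1$. Comparing classes via $f^*[C]=ad_YF_Y+bd_ZF_Z=m(aF_Y+bF_Z)$ forces $d_Y=d_Z=m$, hence $m^2=\deg(f)>1$ and $m\ge 2$. The standard formula for the ramification divisor along a totally ramified prime divisor then gives $R_f\ge (m-1)C$, so $C$ appears as an irreducible component of $R_f$ with positive coefficient.

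Step 3 (The contradiction). Because $f=f_Y\times f_Z$, a local Jacobian computation (or equivalently the Leray decomposition $K_X=p_Y^*K_Y+p_Z^*K_Z$) yields
\[R_f=p_Y^*R_{f_Y}+p_Z^*R_{f_Z},\]
so every irreducible component of $R_f$ is a fibre of $p_Y$ or a fibre of $p_Z$. But $C$ is not a fibre since $a,b\ge 1$, contradicting the conclusion of Step~2. Hence $a=0$ or $b=0$, so $C$ is one of $\{\cdot\}\times\PP^1$ or $\PP^1\times\{\cdot\}$.

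I expect the only subtle point to be Step 1, namely upgrading the numerical statement ``$f^*$ fixes the two ruling classes after iteration'' to the geometric statement ``$f$ is an honest product'' via the rigidity argument applied to each projection. Once this product structure is secured, Step 2 is pure intersection bookkeeping and Step 3 is the sharp geometric tension between the fibre-supported nature of $R_f$ for a product endomorphism and the non-fibre nature of a bidegree $(a,b)$ curve with $a,b\ge 1$.
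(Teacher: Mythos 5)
Your proof is correct and follows essentially the same route as the paper's: after iterating so that $f$ splits as a product $f_Y\times f_Z$ and $f^{-1}(C)=C$, a curve dominating both factors would satisfy $f^*C=mC$ with $m\ge 2$, hence lie in $R_f=p_Y^*R_{f_Y}+p_Z^*R_{f_Z}$, whose components are all fibres --- a contradiction. The only cosmetic differences are that you derive $m\ge 2$ by comparing numerical classes (forcing $d_Y=d_Z=m$) and prove the splitting by hand via rigidity, where the paper invokes the equivariant-MMP lemma and a degree comparison showing $f$ is polarized; the substance is identical.
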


\begin{proof}
Denote by $p_Y:X\to Y$ and $p_Z:X\to Z$ the natural projections.
After iteration, we may assume $C$ is $f^{-1}$-invariant and $f$ splits into the form $g\times h$ with $g$ and $h$ being surjective endomorphisms.

Suppose the contrary that $C$ dominates both $Y$ and $Z$, via $p_Y$ and $p_Z$.
Then $q: = \deg (g)=\deg (f|_C)=\deg (h)$.
Since $f$ is non-isomorphic, we have $q > 1$.
Hence,
$f$ is $q$-polarized.
So, $f^*C=qC$ (cf.~Lemma \ref{lem-pol-mod>1}) and thus $C$ is a component of the ramification divisor $R_f$ of $f$.
Since $f = g \times h$, we have $R_f=p_Y^*R_g+p_Z^*R_{h}$ where  $R_g$ and $R_h$ are the ramification divisors of $g$ and $h$, respectively.
But then, $C$ is of the form $\{\cdot\}\times \mathbb{P}^1$ or $\mathbb{P}^1\times\{\cdot\}$,  a contradiction to our assumption.
This proves the lemma.
\end{proof}

Now we consider general smooth projective rational surfaces.

\begin{theorem}\label{thm-surface-toricpair}
Let $X$ be a smooth projective rational surface admitting an int-amplified endomorphism $f$.
Then $X$ is toric and there is a toric pair $(X,\Delta)$ such that $\Delta$ contains the union $\Sigma$ of all the $f^{-1}$-periodic prime divisors.

In particular, if $\rho(X) = 2$, then $\Delta$ is a union of two disjoint (cross) sections of a fixed ruling of $X$ and two distinct fibres.
\end{theorem}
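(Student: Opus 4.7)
The plan is to reduce to a Mori fibre space via an equivariant MMP, construct a toric boundary explicitly in the base case, and then pull it back through the MMP.

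Since $f$ is int-amplified and hence non-isomorphic, $X$ is toric by Nakayama's resolution of Sato's conjecture (\cite{Nak02}). After iterating $f$, every $f^{-1}$-periodic prime divisor is $f^{-1}$-invariant; the set $\Sigma$ of such divisors is finite by \cite[Proposition 3.6]{MZ20} (see also Lemma \ref{lem-non-pe-ti}). Theorem \ref{thm-bh} then yields that $(X,\Sigma)$ is log canonical and $-(K_X+\Sigma)$ is effective.

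By Lemma \ref{lem-equiv-MMP}(3), after further iteration of $f$ we obtain an $f$-equivariant MMP $X = X_0 \xrightarrow{\pi_1} X_1 \to \cdots \xrightarrow{\pi_n} X_n$, each step contracting an $f^{-1}$-invariant $(-1)$-curve $E_{i-1}\subseteq X_{i-1}$ and ending at $X_n \in \{\mathbb{P}^2, \mathbb{F}_m\}$. The pushforwards $\Sigma_i := (\pi_i)_*\Sigma_{i-1}$ retain log canonicity and effectivity of $-(K_{X_i}+\Sigma_i)$. If $X_n = \mathbb{P}^2$, Theorem \ref{thm-linear-subspace}(1) shows every component of $\Sigma_n$ is a line; effectivity caps the count at three and log canonicity forbids three concurrent ones, so $\Sigma_n$ fits into three lines in general position forming a toric boundary $\Delta_n$. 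If $X_n = \mathbb{F}_m$, the $f$-equivariant ruling descends to an int-amplified (hence degree $\ge 2$) endomorphism of $\mathbb{P}^1$ with at most two totally invariant points, pinning down the fibre components of $\Sigma_n$; intersection-number bookkeeping on the class $-(K_{X_n}+\Sigma_n)$ caps the number of invariant sections at two, and when $m\ge 1$ the unique negative section $C_0$ is automatically invariant, so any second invariant section is forced into the class $C_0 + mF$ disjoint from $C_0$ by the combined effectivity and log-canonicity constraints (for $m = 0$ one invokes Lemma \ref{lem-p1p1-ticurve} directly). Completing with two torus-invariant fibres yields a toric boundary $\Delta_n \supseteq \Sigma_n$ of the form ``two disjoint sections of the ruling plus two distinct fibres'', matching the $\rho(X)=2$ addendum in the case $n=0$.

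Finally, I would lift $\Delta_n$ through the MMP inductively, setting $\Delta_{i-1} := \widetilde{\Delta_i} + E_{i-1}$, where $\widetilde{\Delta_i}$ denotes the strict transform. The key claim is that the contraction centre $p_{i-1} := \pi_i(E_{i-1})$ is a node of $\Delta_i$: since $E_{i-1}\subseteq \Sigma_{i-1}$ with $K_{X_{i-1}}\cdot E_{i-1} = -1$, effectivity of $-(K_{X_{i-1}}+\Sigma_{i-1})$ combined with log canonicity forces exactly two other components of $\Sigma_{i-1}$ to meet $E_{i-1}$ transversally in a single point each, and these descend via $\pi_i$ to two boundary components of $\Delta_i$ meeting precisely at $p_{i-1}$. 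Blowing up a node of a toric boundary preserves toricness, so $\Delta_{i-1}$ is a toric boundary of $X_{i-1}$ containing $\Sigma_{i-1}$, and iterating delivers the desired $\Delta \supseteq \Sigma$ on $X$. The main obstacle I anticipate is exactly this intersection count---ruling out the degenerate case where $E_{i-1}$ meets $\Sigma_{i-1}$ in fewer than two other components, which would leave $p_{i-1}$ on the interior of a single boundary curve and obstruct the lift. Handling this may require a more refined tracking of $f^{-1}$-invariant divisors through the MMP than a naive pushforward, possibly by first completing $\Sigma_{i-1}$ to a larger log-canonical candidate before contracting.
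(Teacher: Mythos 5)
Your overall strategy (reduce to invariant divisors, get lc and effectivity of $-(K_X+\Sigma)$ from Theorem \ref{thm-bh}, run an $f$-equivariant MMP, settle the base cases $\mathbb{P}^2$ and $\mathbb{F}_m$ essentially as the paper does, then lift the boundary back up) matches the paper up to the last step, but the last step has a genuine gap, and it is exactly the one you flag yourself. Your lift $\Delta_{i-1}:=\widetilde{\Delta_i}+E_{i-1}$ only works if the blow-up centre $p_{i-1}=\pi_i(E_{i-1})$ is a node of $\Delta_i$, and your proposed justification --- that log canonicity of $(X_{i-1},\Sigma_{i-1})$ together with effectivity of $-(K_{X_{i-1}}+\Sigma_{i-1})$ ``forces exactly two other components of $\Sigma_{i-1}$ to meet $E_{i-1}$'' --- does not hold: both constraints are upper bounds (at most two branches of $\Sigma$ through any point, $\Sigma\cdot(\text{nef curve})\le 2$) and give no lower bound whatsoever on how many components of $\Sigma_{i-1}\setminus E_{i-1}$ meet $E_{i-1}$. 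Nothing you have written rules out the degenerate case where $E_{i-1}$ meets at most one other component of $\Sigma_{i-1}$, in which case $p_{i-1}$ need not be a torus-fixed point of $(X_i,\Delta_i)$ and the inductive lift breaks down. So the proof is incomplete at its crucial point.

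The paper avoids this problem by \emph{not} contracting the MMP's chosen $(-1)$-curve at all when $\rho(X)\ge 3$. Instead it inducts on $\rho(X)$ as follows: let $\pi:X\to Y$ be the equivariant MMP to a Hirzebruch surface with ruling $\tau_Y:Y\to Z\cong\mathbb{P}^1$ and $\tau=\tau_Y\circ\pi$. One produces an $(f|_Y)^{-1}$-invariant section $C_Y$ of $\tau_Y$ (the negative section, or via Lemma \ref{lem-p1p1-ticurve} on $\mathbb{P}^1\times\mathbb{P}^1$), takes its strict transform $C_X\subseteq\Sigma$, and picks an $f^{-1}$-periodic reducible fibre $F=\bigcup_{i=1}^n F_i$ of $\tau$ (it exists since $\rho(X)\ge 3$). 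Because each $F_i$ and $C_X$ carries an int-amplified restriction of $f$, each has at most two totally periodic points, and one shows $C_X\cup F$ is a linear chain of $f^{-1}$-invariant smooth rational curves; an induction on $n$ then produces an \emph{interior} component $F_{i_0}$ with $F_{i_0}^2=-1$. Contracting $F_{i_0}$ (an $f$-equivariant contraction $\pi':X\to X'$, in general not the MMP step) sends $F_{i_0}$ to the intersection point of the images of its two chain-neighbours, which are $(f|_{X'})^{-1}$-periodic curves on $X'$; by the inductive hypothesis on the Picard number these lie in a toric boundary $\Delta'$, so the centre is automatically a node of $\Delta'$, $\pi'$ is a toric blowup, and $\Delta=F_{i_0}+\widetilde{\Delta'}$ works. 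In other words, the missing idea is to choose, via the chain structure of an invariant section plus a periodic reducible fibre, a $(-1)$-curve whose contraction point is forced to be a node of the periodic locus downstairs, rather than trying to prove a node statement for the arbitrary $(-1)$-curve handed to you by the MMP.
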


\begin{proof}
Since there are only finitely many $f^{-1}$-periodic subvarieties by \cite[Corollary 3.8]{MZ20}, we may assume that they are all $f^{-1}$-invariant, after iterating $f$. Further,
$(X, \Sigma)$ is lc and $-(K_X+\Sigma)$ is $\Q$-linearly equivalent to an effective divisor by Theorem \ref{thm-bh}.

\par \vskip 0.3pc \noindent
\textbf{Case $\rho(X)=1$.}
Then $X\cong \mathbb{P}^2$ and $\Sigma$ is a union of at most three lines by Theorem \ref{thm-linear-subspace}; noting  that  $\deg (\Sigma)\le 3$.
Since $(X, \Sigma)$ is lc, $\Sigma$ is contained in some $\Delta$, a union of three lines with no common intersection (looking like \mstriangle{}).
Clearly, $(X, \Delta)$ is a toric pair.

\par \vskip 0.3pc \noindent
\textbf{Case $\rho(X)=2$.}
Suppose $X\cong \mathbb{P}^1\times \mathbb{P}^1$.
By Lemma \ref{lem-p1p1-ticurve},
$\Sigma$ is contained in some $\Delta$, a union of two fibres each from two projections of $X$ (looking like \mssharp{}).
Clearly, $(X, \Delta)$ is a toric pair.

Suppose $X\not\cong \mathbb{P}^1\times \mathbb{P}^1$.
Consider the $f$-equivariant (after iteration) ruling $\tau:X\to Z\cong\mathbb{P}^1$ (which is also a splitting $\mathbb{P}^1$-bundle).
Write $\Sigma=\Sigma_h\cup\Sigma_v$ where $\Sigma_h$ (resp.~$\Sigma_v$) is the union of components of $\Sigma$ dominating (resp.~not dominating) $Z$.
Since $f|_Z$ is also int-amplified on $Z \cong \PP^1$ (cf.~Lemma \ref{lem-int-amp-equiv}),
there are at most two $(f|_Z)^{-1}$-periodic points.
So $\Sigma_v$ is a union of at most two fibres of $\tau$ (cf.~\cite[Lemma 7.5]{CMZ20}).
In particular, there exists a toric pair $(Z, \Delta_Z)$ such that $\Sigma_v\subseteq \tau^{-1}(\Delta_Z)$.

Let $F\cong \mathbb{P}^1$ be a fibre of $\tau$ which is nef on $X$.
Then  $-(K_X+\Sigma)$ being $\Q$-linearly equivalent to an effective divisor (cf.~Theorem \ref{thm-bh}) implies that
$$\Sigma_h\cdot F=\Sigma\cdot F\le -K_X\cdot F=-\deg (K_F)=2$$
by the adjunction formula.
Note that $\Sigma_h$ contains the unique negative section $C_0$ of $\tau$.
Therefore, either $\Sigma_h=C_0$ or $\Sigma_h$ is a union of $C_0$ and another section $C$ of $\tau$.
In the first case, we are done by applying Proposition \ref{prop-toric-pair} for $\Delta=C_1\cup C_0\cup \tau^{-1}(\Delta_Z)$ where $C_1$ is a section of $\tau$ disjoint with $C_0$ (cf.~\cite[Chapter V, Theorem 2.17]{Har77}).

Similarly, in the second case, it suffices for us to show that $C\cap C_0=\emptyset$.
Suppose the contrary and we take $x\in C\cap C_0$ which is $f^{-1}$-invariant after iteration.
Note that $\tau^{-1}(\tau(x))$ is $f^{-1}$-invariant (cf.~\cite[Lemma 7.5]{CMZ20}).
Then $\Sigma$ contains three curves $C, C_0, \tau^{-1}(\tau(x))$ with a common intersection point $x$.
In particular, $(X, \Sigma)$ is not lc at $x$ (cf.~\cite[Lemma 2.29]{KM98}), a contradiction to Theorem \ref{thm-bh}.
So $C\cap C_0=\emptyset$ and the second case is done by applying Proposition \ref{prop-toric-pair} again for $\Delta=C\cup C_0\cup \tau^{-1}(\Delta_Z)$.

\par \vskip 0.3pc \noindent
\textbf{Case $\rho(X)\ge 3$.}
By Lemma \ref{lem-equiv-MMP}, running an $f$-equivariant (after iterating $f$) MMP, we have a composition $\pi:X\to Y$ of smooth blowdowns of ($f^{-1}$-invariant successively) $(-1)$-curves with $Y$ being a Hirzebruch surface and the ruling $\tau_Y:Y\to Z\cong \mathbb{P}^1$.
Set $\tau:=\tau_Y\circ \pi$,
and $f_Y:= f|_Y$.
Since $\rho(X)\ge 3$, there is an $f_Y^{-1}$-invariant point $y_0$ in $Y$.

We claim that $\tau_Y: Y \to Z$ admits at least one $f_Y^{-1}$-invariant (cross) section $C_Y$.
Indeed, if $Y\cong \mathbb{P}^1\times\mathbb{P}^1$, then the horizontal (fibre) section $C_Y$ of $\tau_Y$ containing $y_0$ is $f_Y^{-1}$-invariant (cf.~Lemma \ref{lem-p1p1-ticurve}).
If $Y\not\cong \mathbb{P}^1\times\mathbb{P}^1$, then the unique negative section $C_Y$ of $\tau_Y$ is $f_Y^{-1}$-invariant.
So the claim is proved.
Let $C_X \subseteq X$ be the strict transform of $C_Y$.
Then $C_X \subseteq \Sigma$.

Since $\rho(X)\ge 3$, $\tau$ admits at least one reducible fibre $F=\bigcup_{i=1}^n F_i$ with $F_i\cong \mathbb{P}^1$ for all $i$ and $n\ge 2$.
By the generic smoothness, $\tau$ has only finitely many reducible fibres.
So $F$ is $f^{-1}$-periodic by \cite[Lemma 7.4]{CMZ20} and hence $C_X\cup F\subseteq \Sigma$.
After iteration, we may assume $f^{-1}(F_i)=F_i$ for each $i$.

We claim that $F\cup C_X$ is a linear chain of smooth rational curves with dual graph
\begin{center}
\begin{tikzpicture}[x=1.3cm,y=1cm]
\draw (0,0) node{{\tiny $\circ$}};
\draw (1,0) node{{\tiny $\circ$}};
\draw (2,0) node{{\tiny $\circ$}};
\draw (3,0) node{{\tiny $\circ$}};
\draw (3.5,0) node{{\tiny $\cdots$}};
\draw (4,0) node{{\tiny $\circ$}};
\draw (5,0) node{{\tiny $\circ$}};

\draw (0+0.05,0) -- (1-0.05,0);
\draw (1+0.05,0) -- (2-0.05,0);
\draw (2+0.05,0) -- (3-0.05,0);
\draw (4+0.05,0) -- (5-0.05,0);

\draw (0,-0.5) node{{\tiny $C_X$}};
\draw (1,-0.5) node{{\tiny $F_1$}};
\draw (2,-0.5) node{{\tiny $F_2$}};
\draw (3,-0.5) node{{\tiny $F_3$}};
\draw (4,-0.5) node{{\tiny $F_{n-1}$}};
\draw (5,-0.5) node{{\tiny $F_n$}};
\end{tikzpicture}
\end{center}
Denote by $F_0:=C_X$ for convenience.
Note that $f|_{F_i}$ (like $f$) is int-amplified for each $0\le i\le n$.
So each $F_i$ admits at most two $(f|_{F_i})^{-1}$-periodic points.
Since $F_i\cap F_j$ is $(f|_{F_i})^{-1}$-invariant for any $j$, we have $\# F_i \cap (F_0\cup F\backslash F_i)\le 2$ for each $0\le i\le n$.
Recall that $\pi(F)$ intersects with $C_Y=\pi(C_X)=\pi(F_0)$ transversally at only one point.
So $C_X$ intersects $F$ transversally at only one point and $C_X \cup F$ is a divisor of simple normal crossing.
Finally, note that $C_X \cup F$ is connected.
So the claim follows.

We claim further that one can find some $F_{i_0}^2=-1$ with $0<i_0<n$.
We show by induction on $n$.
If $n=2$, then $F_1$ and $F_2$ are both $(-1)$-curves, since $\pi(F)^2=0$.
In general, note that at least one component of $F$ is a $(-1)$-curve because our $\pi$ is a composition of blowups.
Suppose $F_n$ is the only $(-1)$-curve contained in $F$.
Let $\widetilde{\pi}:X\to \widetilde{X}$ be the ($f$-equivariant) contraction of $F_n$.
Then we have the following dual graph of $\widetilde{\pi}(C_X \cup F)$:
\begin{center}
\begin{tikzpicture}[x=1.3cm,y=1cm]
\draw (0,0) node{{\tiny $\circ$}};
\draw (1,0) node{{\tiny $\circ$}};
\draw (2,0) node{{\tiny $\circ$}};
\draw (3,0) node{{\tiny $\circ$}};
\draw (3.5,0) node{{\tiny $\cdots$}};
\draw (4,0) node{{\tiny $\circ$}};
\draw (5,0) node{{\tiny $\circ$}};

\draw (0+0.05,0) -- (1-0.05,0);
\draw (1+0.05,0) -- (2-0.05,0);
\draw (2+0.05,0) -- (3-0.05,0);
\draw (4+0.05,0) -- (5-0.05,0);

\draw (0,-0.5) node{{\tiny $\widetilde{\pi}(C_X)$}};
\draw (1,-0.5) node{{\tiny $\widetilde{\pi}(F_1)$}};
\draw (2,-0.5) node{{\tiny $\widetilde{\pi}(F_2)$}};
\draw (3,-0.5) node{{\tiny $\widetilde{\pi}(F_3)$}};
\draw (4,-0.5) node{{\tiny $\widetilde{\pi}(F_{n-2})$}};
\draw (5,-0.5) node{{\tiny $\widetilde{\pi}(F_{n-1})$}};
\end{tikzpicture}
\end{center}
Note that $\widetilde{\pi}(F_i)^2=F_i^2\neq -1$ for $0<i<n-1$, a contradiction by induction on $n$.
So the claim is proved.

By the above claim, let $\pi': X \to X'$ be the ($f$-equivariant) contraction of the $(-1)$-curve $F_{i_0}$ over $Z$ (which may no longer be over $Y$).
Then $X'$ is still smooth and $\rho(X')=\rho(X)-1$.
Let $\Sigma'$ be the union of
$(f|_{X'})^{-1}$-periodic curves in $X'$.
Then $\Sigma'=\pi'(\Sigma)$ (cf.~\cite[Lemma 7.5]{CMZ20}).
By induction on the Picard number, there is a toric pair $(X', \Delta')$ such that $\Sigma'\subseteq \Delta'$.
Denote by $F_0:=C_X$.
Since $0<i_0<n$, our $\pi'(F_{i_0})=\pi'(F_{i_0-1})\cap \pi'(F_{i_0+1})$ is an intersection of two components of $\Sigma'$ (and also $\Delta'$).
In particular, $\pi'$ is a toric blowup and $(X, \Delta)$ is a toric
pair with $\Delta$ being the union of $F_{i_0}$ and the strict transform of $\Delta'$.
Clearly, $\Sigma\subseteq \Delta$.
This proves the theorem.
\end{proof}

\begin{theorem}\label{thm-boundary-p1bundle}
Let $\tau:X\to Y$ be a splitting $\mathbb{P}^1$-bundle over a smooth projective rational surface
$Y$.
Suppose that $X$ admits an int-amplified endomorphism $f$.
Then $X$ is toric and there is a toric pair $(X,\Delta)$ such that $\Delta$ contains the union $\Sigma$ of all the $f^{-1}$-periodic prime divisors.
\end{theorem}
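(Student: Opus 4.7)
The plan is to first reduce to an $f$-equivariant setting over $Y$ and then analyze $\Sigma$ separately in its horizontal and vertical parts. Since the fibres of $\tau$ span a $K_X$-negative extremal ray of $\NE(X)$ and $f$ is int-amplified, after iterating $f$ the pushforward $f_*$ fixes every extremal ray (by Lemma \ref{lem-equiv-MMP}(3)), so $\tau$ becomes $f$-equivariant and descends to an int-amplified surjective endomorphism $g:Y\to Y$. Applying Theorem \ref{thm-surface-toricpair} to $g$ yields a toric pair $(Y,\Delta_Y)$ whose boundary contains every $g^{-1}$-periodic prime divisor of $Y$, and Proposition \ref{prop-toric-pair} immediately gives that $X$ is toric, with candidate toric boundary $\Delta=S_0+S_1+\tau^{-1}(\Delta_Y)$ for any pair of disjoint sections $S_0,S_1$ of $\tau$.

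After iterating further we may assume every component of $\Sigma$ is $f^{-1}$-invariant. Decompose $\Sigma=\Sigma_v\cup\Sigma_h$ into components not dominating and dominating $Y$. For $D\in\Sigma_v$, $\tau(D)=C$ is a $g^{-1}$-invariant curve in $Y$, hence $C\subseteq\Delta_Y$; since $\tau$ is a $\PP^1$-bundle, $\tau^{-1}(C)$ is an irreducible divisor equal to $D$, so $\Sigma_v\subseteq\tau^{-1}(\Delta_Y)$. For the horizontal part, Theorem \ref{thm-bh} gives that $(X,\Sigma)$ is log canonical and $-(K_X+\Sigma)$ is $\Q$-linearly equivalent to an effective divisor; pairing with a general fibre $F\cong\PP^1$ yields $\Sigma_h\cdot F\le -K_X\cdot F=2$. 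So $\Sigma_h$ has at most two components with total $\tau$-degree at most $2$. The case of a single irreducible $2$-section is ruled out by a separate analysis of its branch divisor (which is $g^{-1}$-invariant, hence lies in $\Delta_Y$) combined with the induced int-amplified endomorphism on the cover and the splitting of $\mathcal{E}$, leading to a contradiction via a Riemann--Hurwitz computation on fibres. Thus every component of $\Sigma_h$ is a section of $\tau$.

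It remains to exhibit two disjoint sections whose union contains $\Sigma_h$, and this is the main obstacle. If $|\Sigma_h|\le 1$, a second disjoint section is produced using the splitting $\mathcal{E}=\mathcal{L}_0\oplus\mathcal{L}_1$ together with $f$-equivariance (reselecting the splitting, when possible, so that an invariant section becomes distinguished). If $\Sigma_h=\{D_1,D_2\}$, one must show $D_1\cap D_2=\emptyset$. Suppose for contradiction that $x\in D_1\cap D_2$ and set $y=\tau(x)$, which is $g^{-1}$-invariant. The subcase $y\in\Delta_Y$ is dispatched immediately: $\tau^{-1}(y)$ is then an irreducible component of $\tau^{-1}(\Delta_Y)$ passing through $x$, giving three components of the $f^{-1}$-invariant reduced divisor $D_1+D_2+\tau^{-1}(\Delta_Y)$ meeting at $x$, which contradicts log canonicity (Theorem \ref{thm-bh} together with \cite[Lemma 2.29]{KM98}). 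The harder subcase $y\notin\Delta_Y$ is resolved by analyzing the $f^{-1}$-periodic fibre $F_y\cong\PP^1$: the common point $x$ must be a totally invariant (hence totally ramified) point of the int-amplified endomorphism $f|_{F_y}$, of which there are at most two, and the structure of $f^*\mathcal{O}_X(1)$ relative to the splitting $\mathcal{E}_y=\mathcal{L}_{0,y}\oplus\mathcal{L}_{1,y}$ identifies these two points with $F_y\cap S_0^{\mathrm{std}}$ and $F_y\cap S_1^{\mathrm{std}}$, contradicting $D_i\neq S_i^{\mathrm{std}}$. Once $D_1\cap D_2=\emptyset$ is established, Proposition \ref{prop-toric-pair} applied with $(S_0,S_1)=(D_1,D_2)$ produces the required toric pair $(X,\Delta)$ with $\Delta\supseteq\Sigma$, completing the proof.
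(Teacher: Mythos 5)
Your skeleton (equivariance of $\tau$, descent to an int-amplified $g$ on $Y$, Theorem \ref{thm-surface-toricpair} for $(Y,\Delta_Y)$, the decomposition $\Sigma=\Sigma_h\cup\Sigma_v$, the bound $\Sigma_h\cdot F\le 2$ from Theorem \ref{thm-bh}, and Proposition \ref{prop-toric-pair} at the end) matches the paper, but the two steps that carry the real content are left as sketches that do not work as stated. First, the disjointness argument for $\Sigma_h=\{D_1,D_2\}$ is flawed in both subcases. In the subcase $\tau(x)\in\Delta_Y$ you apply Theorem \ref{thm-bh} to $D_1+D_2+\tau^{-1}(\Delta_Y)$, but $\Delta_Y$ is only a toric boundary \emph{containing} the $g^{-1}$-periodic curves; its components need not be $g^{-1}$-periodic, so $\tau^{-1}(\Delta_Y)$ is not an $f^{-1}$-invariant divisor and Theorem \ref{thm-bh} does not apply to it. In the subcase $\tau(x)\notin\Delta_Y$, the identification of the (at most two) totally invariant points of $f|_{F_y}$ with $F_y\cap S_0^{\mathrm{std}}$ and $F_y\cap S_1^{\mathrm{std}}$ is unjustified (the standard sections of a chosen splitting are in general not $f^{-1}$-invariant, so nothing pins the ramification of $f|_{F_y}$ to them), and even granting it, ``$x$ lies on a standard section'' contradicts nothing, since $D_i\ne S_i^{\mathrm{std}}$ does not prevent them from sharing points. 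The paper's argument avoids this dichotomy entirely: pick an $f^{-1}$-invariant irreducible curve $C\subseteq D_1\cap D_2$ (possible after iteration), note that $F:=\tau^{-1}(\tau(C))$ \emph{is} $f^{-1}$-invariant by \cite[Lemma 7.5]{CMZ20}, and then $(X,D_1+D_2+F)$ fails to be lc along $C$ by \cite[Lemma 2.29]{KM98}, contradicting Theorem \ref{thm-bh}; this also shows afterwards that the second component contains no fibre and hence really is a section (a point you assert too early: $D\cdot F=1$ alone does not rule out a ``quasi-section'' containing some fibres).

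Second, your route to ``every component of $\Sigma_h$ is a section'' and to producing a second disjoint section when $|\Sigma_h|\le 1$ has gaps. The claimed Riemann--Hurwitz argument ruling out an invariant irreducible $2$-section is not given and is not obviously available; and ``reselecting the splitting, when possible, so that an invariant section becomes distinguished'' is exactly the point at issue: an arbitrary section of $\mathbb{P}_Y(\mathcal{E})$ need not admit any disjoint section (this requires $\mathcal{E}$ to split with the corresponding sub-line bundle as a direct summand), so ``when possible'' hides the whole difficulty. The paper resolves both issues by the trivial/non-trivial dichotomy: if $\tau$ is trivial, then after iteration $f=g\times h$ and the ramification-divisor argument of Lemma \ref{lem-p1p1-ticurve} forces $\Sigma_h\subseteq Y\times\Supp R_h$, so its components are automatically fibres of the second projection (hence pairwise disjoint sections of $\tau$); if $\tau$ is non-trivial, writing $\mathcal{E}=\mathcal{O}_Y\oplus\mathcal{L}$ with $\mathcal{L}$ not pseudo-effective, the distinguished section $S_0$ has $\mathcal{O}_X(S_0)|_{S_0}$ not pseudo-effective, hence $S_0\subseteq\Sigma_h$ by Lemma \ref{lem-non-pe-ti}; the bound $\Sigma_h\cdot F\le 2$ then leaves room for at most one further degree-one component, the splitting supplies a section disjoint from $S_0$ when $\Sigma_h=S_0$, and the lc argument above handles the remaining case. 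Without these ingredients your proof does not close.
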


\begin{proof}
Since there are only finitely many $f^{-1}$-periodic subvarieties by \cite[Corollary 3.8]{MZ20},
we may assume that components of $\Sigma$ are all $f^{-1}$-invariant, after iterating $f$.
Write $\Sigma=\Sigma_h\cup\Sigma_v$ where $\Sigma_h$ (resp.~$\Sigma_v$) is the union of components of $\Sigma$ dominating (resp.~not dominating) $Y$.
Note that $\tau$ is a Fano contraction of some $K_X$-negative extremal ray.
After iteration, $\tau$ is $f$-equivariant and $f|_Y$ (like $f$) is also int-amplified (cf.~Lemmas \ref{lem-equiv-MMP} and \ref{lem-int-amp-equiv}).
Note that $\Sigma_v=\tau^{-1}(\Sigma_Y)$ where $\Sigma_Y$ is the union of $(f|_Y)^{-1}$-periodic curves in $Y$ (cf.~\cite[Lemma 7.4]{CMZ20}).
By Theorem \ref{thm-surface-toricpair}, there exists a toric pair $(Y, \Delta_Y)$ such that $\Sigma_v\subseteq \tau^{-1}(\Delta_Y)$.
By Proposition \ref{prop-toric-pair}, it suffices for us to show that $\Sigma_h$ is contained in two disjoint sections of $\tau$.

Assume first $\tau$ is a trivial bundle
so that $X = Y \times Z \cong Y \times \PP^1$
and we may assume that $f = g \times h$, after iteration (cf.~Lemma \ref{lem-equiv-MMP}).
By Lemma \ref{lem-int-amp-equiv} and the same proof of Lemma \ref{lem-p1p1-ticurve},
$\Sigma_h \subseteq \Supp R_f=(\Supp R_g\times Z)\cup (Y\times \Supp R_h)$.
Hence, $\Sigma_h \subseteq Y\times \Supp R_h$.
Since $h$ on $Z \cong \PP^1$ (like $f$) is also int-amplified,
$\Supp R_h$ has at most two $h^{-1}$-periodic points.
So $\Sigma_h$ is contained in two disjoint fibres of $X \to Z$ which are two disjoint sections of $\tau$ as desired.

Assume next $\tau$ is non-trivial.
Write $X=\mathbb{P}_Y(\mathcal{E})$ with $\mathcal{E}=\mathcal{O}_Y\oplus \mathcal{L}$ such that $\mathcal{L}$ is not pseudo-effective.
Let $S_0$ be the (cross) section of $\tau$ determined by the projection $\mathcal{O}_Y\oplus\mathcal{L}\to\mathcal{L}$.
Note that $\mathcal{O}_X(S_0)|_{S_0}\cong (\tau|_{S_0})^*\mathcal{L}$ is not pseudo-effective.
By Lemma \ref{lem-non-pe-ti}, $S_0$ is $f^{-1}$-periodic and hence $S_0\subseteq \Sigma_h$.

Let $X_y:=\tau^{-1}(y)\cong \mathbb{P}^1$ be a fibre of $\tau$.
By Theorem \ref{thm-bh}, $-(K_X+\Sigma)$ is $\Q$-linearly equivalent to an effective divisor.
Since $X_y$ is movable,
we have $-(K_X+\Sigma)\cdot X_y\ge 0$ and hence
$$\Sigma_h\cdot X_y=\Sigma\cdot X_y\le -K_X\cdot X_y=-\deg (K_{X_y})=2$$
by the adjunction formula.
Note that $\Sigma_h$ contains $S_0$.
Therefore, either $\Sigma_h=S_0$ or $\Sigma_h$ is a union of $S_0$ and another prime divisor $S$ with $S\cdot X_y=1$ (which is not known to be a section of $\tau$ currently).
In the first case, we are done by applying Proposition \ref{prop-toric-pair} for $\Delta=S_1\cup S_0\cup \tau^{-1}(\Delta_Y)$ where $S_1$ is a section of $\tau$ disjoint with $S_0$ by the splitting structure of $\mathcal{E}$ (cf.~the proof of Proposition \ref{prop-toric-pair}).
Similarly, in the second case, it suffices for us to show that $S$ is a section of $\tau$ and $S\cap S_0=\emptyset$.

We first claim that $S\cap S_0=\emptyset$ if $\Sigma_h$ contains another prime divisor $S\neq S_0$.
Suppose the contrary and let $C$ be an irreducible curve in  $S\cap S_0$ which is $f^{-1}$-invariant after iteration.
Then $F:=\tau^{-1}(\tau(C))$ is an $f^{-1}$-invariant divisor (cf.~\cite[Lemma 7.5]{CMZ20}).
Note that $C\subseteq F\cap S\cap S_0$.
Then the pair $(X,S+S_0+F)$ is not lc at $C$ (cf.~\cite[Lemma 2.29]{KM98}).
However, this contradicts Theorem \ref{thm-bh}.
The claim is proved.

The above claim further implies that $S$ (if exists) does not contain any fibre of $\tau$.
Then $S\cdot X_y=1$ implies that $S\cap X_y$ is a single point for any $y\in Y$.
Therefore, $S$ is a section disjoint with $S_0$ as desired.
This proves the theorem.
\end{proof}

\begin{corollary}\label{cor-ti-rational}
Let $f$ be a non-isomorphic surjective endomorphism of a smooth projective rational surface $X$.
Let $C$ be an $f^{-1}$-periodic curve.
Then $C$ is a smooth rational curve.
In particular, if $X$ is further assumed to be Fano, then $C^2\le 1$.
\end{corollary}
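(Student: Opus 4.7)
The plan is to prove that $C$ is smooth and rational by a dichotomy on whether some iterate of $f$ is polarized, and then to deduce $C^{2}\le 1$ in the Fano case by combining adjunction with the classification of smooth projective toric del Pezzo surfaces.

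Suppose first that some iterate $f^{s}$ is polarized. By the eigenvalue criterion in Lemma \ref{lem-int-amp-equiv} (applied to $f^{s}$ and $f$, noting that $|\lambda|>1$ iff $|\lambda^{s}|>1$) this forces $f$ itself to be int-amplified. Theorem \ref{thm-surface-toricpair} then yields a toric pair $(X,\Delta)$ whose boundary $\Delta$ contains every $f^{-1}$-periodic prime divisor, so in particular $C\subseteq \Delta$. Lemma \ref{lem-SNC-tor-bd} describes $\Delta$ as a simple loop of smooth rational curves on the surface $X$, from which $C$ is smooth and rational.

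Suppose instead that no iterate of $f$ is polarized. Then Lemma \ref{lem-noniso-nonpol} forces $X\cong \PP^{1}\times \PP^{1}$, and Lemma \ref{lem-p1p1-ticurve} (applied to an iterate under which $C$ becomes $f^{-1}$-invariant) identifies $C$ with a fibre of one of the two projections; in particular $C\cong \PP^{1}$ is smooth and rational.

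For the Fano case $X$ is a smooth del Pezzo surface. Since $C$ is smooth and rational, adjunction gives $C^{2}=-K_{X}\cdot C-2$, so $C^{2}\le 1$ is equivalent to $-K_{X}\cdot C\le 3$. If $X\cong\PP^{1}\times\PP^{1}$ the claim is immediate ($C$ is a ruling, $C^{2}=0$); otherwise the previous analysis applies, $X$ is a smooth toric del Pezzo surface, and $C$ is a torus-invariant component of the anticanonical cycle $\Delta$. Running through the (finite) classification of smooth projective toric del Pezzo surfaces---namely $\PP^{2}$, $\PP^{1}\times\PP^{1}$, $\mathbb{F}_{1}$, and the toric blowups of $\PP^{2}$ at two or three torus-fixed points---one checks from each fan that every torus-invariant prime divisor $D$ satisfies $D^{2}\in\{-1,0,1\}$, and this yields $C^{2}\le 1$. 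The main obstacle I anticipate is precisely this last verification: the identity $K_{X}+\Delta\sim 0$ together with ampleness of $-K_{X}$ give only the lower bound $D^{2}\ge -1$ (via $-K_{X}\cdot D=D^{2}+2>0$), so the matching upper bound $D^{2}\le 1$ appears to require invoking the classification rather than a purely local/numerical argument.
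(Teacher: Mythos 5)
Your proof is correct. The first half is essentially the paper's own argument: the dichotomy ``some iterate of $f$ is polarized (hence $f$ is int-amplified by the eigenvalue criterion of Lemma \ref{lem-int-amp-equiv}) versus no iterate polarized'' matches the paper's split into the int-amplified case (Theorem \ref{thm-surface-toricpair} plus Lemma \ref{lem-SNC-tor-bd}) and the case $X\cong\PP^1\times\PP^1$ (Lemma \ref{lem-noniso-nonpol} plus Lemma \ref{lem-p1p1-ticurve}).

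Where you genuinely diverge is the Fano statement $C^2\le 1$. The paper runs an $f$-equivariant MMP $\pi:X\to Y$ (Lemma \ref{lem-equiv-MMP}) with $Y\cong\PP^2$ or $\PP^1\times\PP^1$, and concludes that either $C$ is $\pi$-exceptional, so $C^2=-1$, or $\pi(C)$ is an $(f|_Y)^{-1}$-periodic curve, hence a line or a ruling by Theorem \ref{thm-linear-subspace} or Lemma \ref{lem-p1p1-ticurve} (via \cite[Lemma 7.5]{CMZ20}), giving $C^2\le\pi(C)^2\le 1$. You instead exploit the toric pair already produced in the first half: $C$ is a torus-invariant boundary component of a smooth toric del Pezzo surface, and the finite classification of such surfaces shows every invariant divisor has self-intersection in $\{-1,0,1\}$. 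Both routes are valid. Yours is shorter given Theorem \ref{thm-surface-toricpair}, but imports the toric del Pezzo classification as an external fact; the paper's stays inside its own dynamical toolkit (equivariant MMP and the known invariant-curve results on $\PP^2$ and $\PP^1\times\PP^1$), which is in keeping with its stated classification-light philosophy. Concerning the ``obstacle'' you flag: the upper bound does not really need the full classification. With $n=\rho(X)+2$ boundary components forming a simple loop, $\sum_i(D_i^2+2)=(-K_X)\cdot\Delta=K_X^2=10-\rho(X)$, so $\sum_i D_i^2=6-3\rho(X)$; combined with $D_i^2\ge -1$ for every $i$ (which you already derived from ampleness) this gives $D_j^2\le 7-2\rho(X)\le 1$ once $\rho(X)\ge 3$, and the cases $\rho(X)\le 2$ are just $\PP^2$, $\F_0$, $\F_1$, checked by hand. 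So your argument can be made numerically self-contained if desired.
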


\begin{proof}
If $f$ is not int-amplified, then $X\cong\mathbb{P}^1\times\mathbb{P}^1$ by Lemma \ref{lem-noniso-nonpol}, and our corollary follows from Lemma \ref{lem-p1p1-ticurve}.
If $f$ is int-amplified, then there exists a toric pair $(X,\Delta)$ such that $\Delta$ contains $C$ (cf.~Theorem \ref{thm-surface-toricpair});
thus $C \cong \PP^1$ by Lemma \ref{lem-SNC-tor-bd}.

Suppose further that $X$ is Fano.
By Lemma \ref{lem-equiv-MMP},
there is an $f$-equivariant (after iteration) composition $\pi:X\to Y$ of smooth blowdowns such that $Y\cong \mathbb{P}^2$ or $\mathbb{P}^1\times \mathbb{P}^1$ since $X$ and hence $Y$ are still Fano.
Note also that either $C$ is an exceptional curve of $\pi$ or $\pi(C)$ is an $(f|_Y)^{-1}$-periodic curve
as described in Theorem \ref{thm-linear-subspace} or Lemma \ref{lem-p1p1-ticurve}  (cf.~\cite[Lemma 7.5]{CMZ20}).
Thus $C^2 = -1$ or $C^2 \le \pi(C)^2 \le 1$.
\end{proof}

\section{Fano contractions to surfaces}\label{section-4-conic}

The whole section is devoted to the proof of the following theorem.

\begin{theorem}\label{thm-conic}
Let $\tau:X\to Y$ be a Fano contraction of an extremal ray of a smooth Fano threefold $X$ with $\dim(Y)=2$.
Suppose $X$ admits a non-isomorphic surjective endomorphism $f$.
Then $X$ is an algebraic $\mathbb{P}^1$-bundle $\mathbb{P}_Y(\mathcal{E})$ over $Y$.
\end{theorem}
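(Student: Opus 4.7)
The plan is to establish the chain sketched in the introduction,
\[
\tau \;\Longrightarrow\; \text{conic bundle}\;\Longrightarrow\;\mathbb P^1\text{-bundle}\;\Longrightarrow\;\text{algebraic }\mathbb P^1\text{-bundle},
\]
in which only the middle implication genuinely uses dynamics. The first implication is immediate: $X$ is smooth and $\tau$ contracts a $K_X$-negative extremal ray with two-dimensional image, so \cite[Theorem~3.5]{Mor82} forces $Y$ to be smooth and $\tau$ to be a conic bundle, while Lemma~\ref{lem-base-rat} identifies $Y$ as a smooth rational surface. The last implication will come from Lemma~\ref{lem-p1-bundle-projective} once smoothness of $\tau$ is in hand. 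Everything therefore reduces to showing $\Delta_\tau=\emptyset$.

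Assume for contradiction that $\Delta_\tau\neq\emptyset$. Since $X$ is of Fano type, Lemma~\ref{lem-equiv-MMP}(1) allows us, after replacing $f$ by an iterate, to make $\tau$ $f$-equivariant with induced endomorphism $f_Y\colon Y\to Y$; the intrinsic definition of $\Delta_\tau$ together with \cite[Proposition~3.6]{MZ20} then makes each component of $\Delta_\tau$ $f_Y^{-1}$-invariant after further iteration. For each such component $\Delta_i$, Proposition~\ref{pro-cone-reduced-reducible} identifies $D_i:=\tau^{*}\Delta_i$ as a prime (reduced and irreducible) Cartier divisor on $X$; each $D_i$ is $f^{-1}$-invariant after iteration.

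I would then split by Lemma~\ref{lem-MDS-amp}. If $f$ is int-amplified, then $f_Y$ is too (by Lemma~\ref{lem-int-amp-equiv} its eigenvalues on $\N^1(Y)$ embed via $\tau^{*}$ into those of $f^{*}$). Theorem~\ref{thm-surface-toricpair} then makes $Y$ toric with a toric pair $(Y,\Delta_Y)$ containing $\Delta_\tau$, and by Lemma~\ref{lem-SNC-tor-bd} the boundary $\Delta_Y$ is SNC, so any two components $\Delta_i,\Delta_j$ of $\Delta_\tau$ that meet do so transversally at a single node $p$. Theorem~\ref{thm-bh} simultaneously makes $(X,\textstyle\sum D_i)$ log canonical. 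I would extract the contradiction from the fibre $\tau^{-1}(p)$: the standard local model of a smooth conic bundle over a smooth surface gives a non-reduced ``double line'' fibre $2L$ over each node of its discriminant, so the flat-pullback identity $D_i\cdot D_j=\tau^{*}(\Delta_i\cdot\Delta_j)=\tau^{*}p=2L$ forces $D_i$ and $D_j$ to meet tangentially (with intersection multiplicity two) along $L$. A tangential meeting of two prime divisors in the smooth threefold $X$ produces a tacnode configuration on a transverse slice, which is not log canonical, contradicting Theorem~\ref{thm-bh}. If $f$ is non-amplified, the non-iso hypothesis still yields $\deg(f_Y)\cdot\deg(f|_{\mathrm{fib}})\ge 2$: when $\deg(f_Y)\ge 2$, Nakayama's theorem \cite{Nak02} still makes the rational surface $Y$ toric and the above argument runs verbatim, while when $f_Y$ is an automorphism the entire degree of $f$ sits on fibres, $f$ must preserve each singular fibre $F_1+F_2$ after iteration, and the same tangential/tacnodal obstruction reappears from the action on $D_i+D_j$ near a node.

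The main obstacle is the local analysis at a node of $\Delta_\tau$: one must carefully derive the tangential meeting $D_i\cdot D_j=2L$ from Proposition~\ref{pro-cone-reduced-reducible} and the flatness of $\tau$, and then rigorously verify that such a tacnode configuration on a transverse slice is incompatible with the log canonicity of $(X,\sum D_i)$ supplied by Theorem~\ref{thm-bh}. A secondary obstacle is the bookkeeping of the non-amplified case: when $f_Y$ is an automorphism the standard toric-descent arguments do not apply directly, and one has to argue via the fibrewise action of $f$ instead.
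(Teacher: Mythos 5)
Your core local mechanism (the non-reduced double-line fibre over a node of $\Delta_\tau$ forces two $f^{-1}$-invariant pulled-back divisors to meet non-transversally along the reduced fibre, which is incompatible with log canonicity of the invariant pair) is sound, and it is in fact a close cousin of the paper's own endgame: Proposition \ref{prop-conic-p2} also works with the reduced double-line fibre $\ell_d=S_1\cap S_2$, but gets non-lc-ness from a third invariant divisor through $\ell_d$ and \cite[Lemma 2.29]{KM98} rather than from tangency. The genuine gap is earlier: your contradiction only exists if $\Delta_\tau$ actually has a node, i.e.\ if two $g^{-1}$-invariant components meet (or some component is singular), and you never prove this. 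The inclusion $\Delta_\tau\subseteq\Delta_Y$ into a toric boundary does not force it: a priori $\Delta_\tau$ could be one boundary component, a disjoint union of non-adjacent ones, or --- before any dynamics is used --- a smooth irreducible curve of positive genus. Extremal Fano conic bundles with nonempty \emph{smooth} discriminant really exist (e.g.\ the double cover of $\PP^1\times\PP^2$ branched in a $(2,2)$-divisor is an extremal conic bundle over $\PP^2$ with smooth quartic discriminant), so this case must be excluded, and excluding it is precisely where the paper works: Proposition \ref{prop-zhang} gives $\deg(g)>1$, Corollary \ref{cor-ti-rational} makes each invariant component a smooth rational curve, and then \cite[Remark 4.2]{Miy83} --- equivalently, if $C_i\cong\PP^1$ were disjoint from the rest of $\Delta_\tau$, the associated double cover of $C_i$ would be \'etale, hence split, so $\tau^*C_i$ would be reducible, contradicting Proposition \ref{pro-cone-reduced-reducible} --- shows each component meets the others in at least two points (Lemma \ref{lem-simple-loop}). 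You have the needed ingredients nearby but never make this argument, and without it the tacnode contradiction never gets off the ground.

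The second gap is the non-int-amplified case. Your contradiction rests on Theorem \ref{thm-bh}, which requires $f$ itself to be int-amplified; knowing only $\deg(f_Y)\ge 2$, or that $Y$ is toric by Nakayama, gives no log canonicity statement on $X$, so ``the above argument runs verbatim'' is unjustified. (Also, the subcase ``$f_Y$ an automorphism'' cannot occur once $\Delta_\tau\neq\emptyset$ by Proposition \ref{prop-zhang}, and the claim that ``the same tangential obstruction reappears'' there is not an argument, since the lc input is exactly what is missing.) The paper's route is to bootstrap polarization first: Lemma \ref{lem-noniso-nonpol} together with Lemmas \ref{lem-p2orp1p1}, \ref{lem-p2-pol} and \ref{lem-p1p1-pol} (which in turn lean on Proposition \ref{prop-no-div} and Lemmas \ref{lem-branch}, \ref{lem-no-p1} about the second extremal contraction and the double cover $\psi:X\to Y\times Z$) shows that $\Delta_\tau\neq\emptyset$ forces $f$ to be polarized after iteration, and only then do Theorem \ref{thm-bh}-type arguments apply. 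You would need a substitute for this polarization step; nothing in your sketch supplies one. If both gaps were filled, your tangency argument would indeed shorten the int-amplified half considerably, but as written the proof is incomplete at these two essential points.
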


\par \noindent
\textbf{Now we begin to prove Theorem \ref{thm-conic}.} Recall that $Y$ is a
smooth rational surface and
$\tau$ is a conic bundle with the discriminant locus denoted as $\Delta_{\tau}$
(cf.~\cite[Theorem 3.5]{Mor82} and Lemma \ref{lem-base-rat}).
Besides, we can descend $f: X \to X$ to $g:= f|_Y$ after iteration by Lemma \ref{lem-equiv-MMP}.
By Lemma \ref{lem-p1-bundle-projective}, to prove Theorem
\ref{thm-conic},
it suffices for us to show $\Delta_{\tau}=\emptyset$.

\begin{notation}\label{notation-conic}
We will use the following notation throughout this section.
\begin{itemize}

\item[(1)]
$X$ is a smooth Fano threefold, and $Y$ is a smooth projective rational surface;
the conic bundle $\tau : X \to Y$ is flat
(cf.~\cite[Proposition 1.2]{Bea77}); $\rho(X) = \rho(Y)+1$.

\item[(2)]
$f:X\to X$ is a non-isomorphic surjective endomorphism and $g:=f|_Y$.

\item[(3)]
$\Delta_\tau$ is the discriminant locus of $\tau$, which is a union of irreducible curves $C_i$.

\item[(4)]$
S_i:=\tau^{-1}C_i=\tau^*C_i$ is
reduced and irreducible (cf.~\cite[Theorem 3.7]{KM98}).

\item[(5)]The fibres $X_y$ with $y\in \Delta_\tau$ satisfy the following: If $y$ is a smooth point of $\Delta_\tau$, then $X_y$ is a reducible conic
in $\PP^2$.
If $y$ is a double point of $\Delta_\tau$, then $X_y$ is isomorphic to a double line in $\PP^2$ (cf.~\cite[Proposition 1.2]{Bea77}).

\item[(6)] If $f$ is polarized (or int-amplified), then so is $g: Y \to Y$ (cf.~Lemmas \ref{lem-pol-mod>1} and \ref{lem-int-amp-equiv}).
\end{itemize}	
\end{notation}

\begin{proposition}[{cf.~\cite[Claim 3.3]{Zha12}}]\label{prop-zhang}
Suppose that $\Delta_\tau\neq\emptyset$.
Then $\deg (g)>1$.
\end{proposition}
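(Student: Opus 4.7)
I will prove Proposition \ref{prop-zhang} by contradiction: assume $\deg(g) = 1$, so that $g \in \Aut(Y)$, and set $d := \deg(f) > 1$.

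\textbf{Reduction to an invariant component of $\Delta_\tau$.} Since $\Delta_\tau$ has only finitely many irreducible components and $g$ permutes them, after replacing $f$ by a further iterate we may assume $g$ fixes every irreducible component of $\Delta_\tau$. Pick one such component $C$ and set $S := \tau^*C = \tau^{-1}(C)$; by Proposition \ref{pro-cone-reduced-reducible}, $S$ is a reduced irreducible prime divisor on $X$. Moreover $f^*S = \tau^*g^*C = \tau^*C = S$ as divisors, so the coefficient of $S$ in $f^*S$ equals one; hence $f$ is étale in the direction transverse to $S$ at a general point of $S$, so $S \not\subset R_f$, where $R_f$ denotes the ramification divisor of $f$.

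\textbf{Ramification numerics on the components of reducible fibres.} For a general point $y \in C$, the fibre $X_y = L_1 + L_2$ is a reduced reducible conic whose components $L_1, L_2 \cong \PP^1$ meet transversally at a single node, and by adjunction $-K_X \cdot L_i = 1$. Since $g$ is an automorphism, $f$ sends $X_y$ onto $X_{g(y)}$ and each $L_i$ maps onto a component $L_i'$ of $X_{g(y)}$; set $e_i := \deg(f|_{L_i})$. Using $R_f = K_X - f^*K_X$ and the projection formula,
\begin{equation*}
R_f \cdot L_i \;=\; K_X \cdot L_i - K_X \cdot f_*L_i \;=\; -1 - e_i\,K_X \cdot L_i' \;=\; e_i - 1.
\end{equation*}
On the other hand, if $L_i \not\subset R_f$, then at every $q \in L_i$ where $f|_{L_i}$ is ramified with order $r_q$, a local coordinate computation shows $\mathrm{ord}_q(\det J_f|_{L_i}) \ge r_q - 1$; summing over ramification points and applying Hurwitz for $f|_{L_i}: \PP^1 \to \PP^1$ of degree $e_i$ yields $R_f \cdot L_i \ge 2 e_i - 2$. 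Combined with the previous equality this forces $e_i \le 1$, hence $e_i = 1$.

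\textbf{Conclusion.} Suppose both $L_1, L_2 \not\subset R_f$; then $e_1 = e_2 = 1$, so $f_*[L_i] = [L_i']$. Applying $f_*$ to the decomposition $[X_y] = [L_1] + [L_2] = [F]$ of the fibre class,
\begin{equation*}
[L_1'] + [L_2'] \;=\; f_*[F] \;=\; d\,[F] \;=\; d\bigl([L_1'] + [L_2']\bigr),
\end{equation*}
which gives $(1-d)[F] = 0$ in $\N_1(X)$; since $d > 1$ and $[F] \ne 0$ (as $-K_X \cdot F = 2 > 0$), this is a contradiction. Therefore for every general $y \in C$ at least one component of $X_y$ lies in $R_f$. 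As $y$ ranges over the dense open set of smooth points of $C$, the Zariski closure of these components is a closed subvariety of $S$ of dimension $\ge 1 + 1 = 2$, hence equals the irreducible surface $S$, forcing $S \subseteq R_f$ and contradicting $S \not\subset R_f$ established at the start.

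\textbf{Main obstacle.} The delicate point is the local inequality $R_f \cdot L_i \ge 2e_i - 2$ when $L_i \not\subset R_f$: one must verify, via coordinates on $X$ adapted to $L_i$ and $f(L_i)$, that the multiplicity of $\det J_f$ along $L_i$ at each ramification point of $f|_{L_i}$ is at least the corresponding tangential ramification order, so that Hurwitz provides the global bound. Everything else is bookkeeping with the projection formula and the fact that $f^*S = S$ forces $S \not\subset R_f$.
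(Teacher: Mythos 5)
Your proof is correct, and it is worth noting that the paper itself gives no argument for Proposition \ref{prop-zhang} beyond the citation of \cite[Claim 3.3]{Zha12}; your write-up is a valid self-contained substitute in the same spirit (a degree/ramification count on the components of the singular fibres over the discriminant). The pieces all check out: $f^*S=\tau^*g^*C=S$ indeed forces ramification index one along $S$, so $S\not\subseteq\Supp R_f$; the projection-formula identity $R_f\cdot L_i=e_i-1$ is right (using $K_X\cdot L_i=K_X\cdot L_i'=-1$ for components of reducible conic fibres); and the local inequality you single out as the main obstacle does hold: in coordinates with $L_i=\{y=z=0\}$ and $f(L_i)=\{v=w=0\}$, the two components $v\circ f$, $w\circ f$ lie in the ideal $(y,z)$, hence so do their $x$-derivatives, so $(\det Jf)|_{L_i}$ factors as $\partial_x(u\circ f)|_{L_i}$ times a $2\times 2$ minor and vanishes at each ramification point $q$ of $f|_{L_i}$ to order at least $r_q-1$; Hurwitz then gives $R_f\cdot L_i\ge 2e_i-2$ and hence $e_i=1$, exactly as you say. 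Two small steps should be made explicit. First, the claim that $g$ permutes the components of $\Delta_\tau$: since $\deg(g)=1$ one has $f^{-1}(X_{g(y)})=X_y$ and hence $f(X_y)=X_{g(y)}$, so for $y\notin\Delta_\tau$ the irreducible image $f(X_y)$ cannot be a reduced reducible conic, giving $g^{-1}(\Delta_\tau)\subseteq\Delta_\tau$ and then equality by comparing components (alternatively, cite \cite[Lemma 7.4]{CMZ20}, exactly as the paper does in Lemma \ref{lem-simple-loop}). Second, your display implicitly assumes $L_1'\neq L_2'$; this again follows from $f(X_y)=X_{g(y)}$, or can be bypassed entirely by intersecting $f_*[X_y]\equiv d[F]$ with $-K_X$ to get $e_1+e_2=2d\ge 4$, which already contradicts $e_1=e_2=1$. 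With these two remarks inserted, the argument is complete.
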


\begin{lemma}\label{lem-simple-loop}
Suppose that $\Delta_\tau\neq\emptyset$.
Then $\Delta_{\tau}$ is $g^{-1}$-invariant and
a union of (at least two) $g^{-1}$-periodic smooth rational curves $C_i$ with
$C_i \cdot (\Delta_\tau - C_i) \ge 2$.
\end{lemma}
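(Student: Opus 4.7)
The plan is to establish the four assertions in the lemma in turn: $g^{-1}$-invariance of $\Delta_\tau$, smooth rationality of each component $C_i$, the intersection bound $C_i \cdot (\Delta_\tau - C_i) \ge 2$, and finally the existence of at least two components.

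First, using that $\tau$ is $f$-equivariant after iteration (Lemma \ref{lem-equiv-MMP}), so $g \circ \tau = \tau \circ f$, and that $\Delta_\tau$ is the intrinsic divisor of non-smoothness of the flat conic bundle $\tau$ (for instance, the zero locus of the discriminant section of a rank-$3$ bundle on $Y$), one obtains $g^{-1}(\Delta_\tau) = \Delta_\tau$. As $g$ permutes the finitely many irreducible components $C_i$ of $\Delta_\tau$, each $C_i$ is $g^{-1}$-periodic. By Proposition \ref{prop-zhang}, $\deg(g) > 1$, so $g$ is a non-isomorphic surjective endomorphism of the smooth projective rational surface $Y$, and Corollary \ref{cor-ti-rational} forces each $C_i$ to be smooth rational.

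The main work is the inequality $C_i \cdot (\Delta_\tau - C_i) \ge 2$. Here I would exploit the irreducibility of $S_i = \tau^*C_i$ recorded in Notation \ref{notation-conic}(4). The standard structure theory of flat conic bundles (cf.~\cite{Bea77}, \cite{MM83}) produces, by parametrizing the line components of the fibres of $\tau|_{S_i}$, a finite degree-$2$ cover $\pi_i \colon \widetilde{C}_i \to C_i$ ramified precisely over the nodes of $\Delta_\tau$ lying on $C_i$, i.e.~over $C_i \cap (\Delta_\tau - C_i)$ (there are no other branch points because $C_i \cong \PP^1$ is smooth by the previous paragraph). The normalization $\widetilde{S}_i \to S_i$ is then a $\PP^1$-bundle over $\widetilde{C}_i$, so $\widetilde{C}_i$ must be connected since normalization preserves the number of irreducible components of $S_i$. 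But an unramified connected degree-$2$ cover of $C_i \cong \PP^1$ does not exist, so $\pi_i$ must be ramified, and Riemann--Hurwitz forces its branch locus to consist of an even (hence $\ge 2$) number of points, yielding the claim.

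The last claim that $\Delta_\tau$ has at least two irreducible components then follows at once: if $\Delta_\tau = C_1$ were irreducible, the intersection $C_1 \cdot (\Delta_\tau - C_1) = 0$ would violate the inequality just proved. The most delicate point, and where I would spend the most care, is the identification of the branch locus of $\widetilde{C}_i \to C_i$ with $C_i \cap (\Delta_\tau - C_i)$; this should be verified via the local descriptions in Notation \ref{notation-conic}(5) of the fibres over smooth and double points of $\Delta_\tau$, using the already-established smoothness of $C_i$ to rule out any hidden self-intersection contributions.
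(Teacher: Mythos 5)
Your proposal is correct, and its first half coincides with the paper's own proof: $g^{-1}$-invariance of $\Delta_\tau$, hence $g^{-1}$-periodicity of the components, then smooth rationality of each $C_i$ from Proposition \ref{prop-zhang} together with Corollary \ref{cor-ti-rational}. Where you genuinely diverge is the inequality $C_i\cdot(\Delta_\tau-C_i)\ge 2$: the paper simply quotes \cite[Remark 4.2]{Miy83} for ``each $C_i$ meets other components in at least two points'', whereas you re-prove this via the double cover of lines over $C_i$, using the irreducibility of $S_i=\tau^*C_i$ (Notation \ref{notation-conic}(4), i.e.\ Proposition \ref{pro-cone-reduced-reducible}) to force connectedness of the cover, and Riemann--Hurwitz on $C_i\cong\PP^1$ to force an even, hence $\ge 2$, number of branch points, all lying in $C_i\cap(\Delta_\tau-C_i)$ because $C_i$ is smooth. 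This is exactly the classical argument behind Miyanishi's remark, so your route buys self-containedness at the cost of re-deriving a standard fact; note that for the conclusion you only need the containment ``branch locus $\subseteq$ double points of $\Delta_\tau$ on $C_i$'' (unramifiedness over smooth points of $\Delta_\tau$, where the fibre has two distinct lines), while your stronger claim ``ramified precisely over the nodes'' requires the local normal form of a smooth conic bundle at an ordinary double point.

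One caveat: the opening step $g^{-1}(\Delta_\tau)=\Delta_\tau$ is less automatic than ``intrinsic-ness of the discriminant'' suggests. Equivariance only gives $f(X_y)\subseteq X_{g(y)}$; a priori a smooth fibre could dominate the support of a double line, and a given fibre need not surject onto $X_{g(y)}$ at all, since other fibres over $g^{-1}(g(y))$ could cover the remaining components. Ruling this out takes a short but genuine argument (e.g.\ fibrewise surjectivity via the base change $X\times_{Y,g}Y$, the fact that double-line fibres occur over only finitely many points, and a count of irreducible components to upgrade $g^{-1}(\Delta_\tau)\subseteq\Delta_\tau$ to equality). The paper black-boxes precisely this step as \cite[Lemma 7.4]{CMZ20}; you should either cite it likewise or supply that argument explicitly.
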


\begin{proof}
By Proposition \ref{prop-zhang}, $\deg (g)>1$.
By \cite[Lemma 7.4]{CMZ20}, $g^{-1}(\Delta_\tau)=\Delta_{\tau}$. Thus all $C_i$ are $g^{-1}$-periodic.
Hence, $C_i$ are all smooth rational curves by Corollary \ref{cor-ti-rational}.
Note that each $C_i$ meets at least two points of other irreducible components of $\Delta_\tau$ (cf.~\cite[Remark 4.2]{Miy83}).
So the lemma follows.
\end{proof}

\begin{lemma}\label{lem-Y-toric}
Suppose that $\Delta_\tau\neq\emptyset$ and $g$ is int-amplified.
Then $\Supp R_g=\Delta_\tau$ and $(Y,\Delta_\tau)$ is a toric pair.
In particular, $\Delta_\tau$ is a simple loop of smooth rational curves, and $K_Y + \Delta_{\tau} \sim 0$ (cf.~Lemma \ref{lem-SNC-tor-bd}).
\end{lemma}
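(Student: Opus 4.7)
The plan is to use Theorem \ref{thm-surface-toricpair} to embed $\Delta_\tau$ into a toric boundary on $Y$, then force equality using the intersection constraints from Lemma \ref{lem-simple-loop}, and finally apply the log ramification formula together with Lemma \ref{lem-int-amp-equiv} to pin down $\Supp R_g$. To begin, since each component of $\Delta_\tau$ is $g^{-1}$-periodic by Lemma \ref{lem-simple-loop}, Theorem \ref{thm-surface-toricpair} applied to the int-amplified $g$ on the rational surface $Y$ gives a toric pair $(Y,\Delta)$ with $\Delta\supseteq\Delta_\tau$. By Lemma \ref{lem-SNC-tor-bd}, $\Delta$ is a simple loop of smooth rational curves meeting transversally with $K_Y+\Delta\sim 0$; in particular $D\cdot(\Delta-D)=2$ for every irreducible component $D$ of $\Delta$.

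The heart of the argument is to show $\Delta=\Delta_\tau$. For each component $C_i$ of $\Delta_\tau\subseteq\Delta$, the loop structure gives $C_i\cdot(\Delta-C_i)=2$, while Lemma \ref{lem-simple-loop} gives $C_i\cdot(\Delta_\tau-C_i)\ge 2$. Subtracting, and using that intersection numbers between distinct components of the SNC divisor $\Delta$ are nonnegative, I get $C_i\cdot D_k=0$ for every component $D_k$ of $\Delta-\Delta_\tau$. Hence $\Delta_\tau$ is disjoint from $\Delta-\Delta_\tau$, and connectivity of the simple loop $\Delta$ forces $\Delta-\Delta_\tau=\emptyset$ (since $\Delta_\tau\neq\emptyset$ by hypothesis). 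Thus $(Y,\Delta_\tau)$ is toric, and the two ``in particular'' assertions follow directly from Lemma \ref{lem-SNC-tor-bd}.

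For the equality $\Supp R_g=\Delta_\tau$, I would iterate $f$ so that each component $C_i$ of $\Delta_\tau$ is $g^{-1}$-invariant and write $g^*C_i=e_iC_i$ with $e_i\ge 1$. The log ramification formula $K_Y+\Delta_\tau=g^*(K_Y+\Delta_\tau)+R_g'$ produces an effective divisor $R_g'$ (as in the proof of Theorem \ref{thm-bh}) satisfying $R_g=R_g'+\sum(e_i-1)C_i$. Since $K_Y+\Delta_\tau\sim 0$, we get $R_g'\sim 0$, hence $R_g'=0$ and $\Supp R_g\subseteq\Delta_\tau$. Conversely, if some $e_i=1$, then the class of $C_i$ in $\N^1(Y)$ is nonzero (it has positive intersection with any ample divisor) and is a $g^*$-eigenvector of eigenvalue $1$, contradicting Lemma \ref{lem-int-amp-equiv}. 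Hence $e_i\ge 2$ for all $i$, giving $\Supp R_g=\Delta_\tau$.

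The main obstacle I anticipate is the connectivity step: one has to match the lower bound from Lemma \ref{lem-simple-loop} against the exact value $2$ coming from the toric loop in order to squeeze the gap between $\Delta_\tau$ and the a priori larger toric boundary $\Delta$ down to zero, and the success of this hinges on the very clean SNC loop structure produced by Lemma \ref{lem-SNC-tor-bd}.
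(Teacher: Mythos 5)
Your argument is essentially the paper's: it likewise embeds $\Delta_\tau$ into a toric boundary $\Delta$ via Lemma \ref{lem-simple-loop} and Theorem \ref{thm-surface-toricpair}, plays the simple-loop structure of $\Delta$ (Lemma \ref{lem-SNC-tor-bd}), which gives $C_i\cdot(\Delta-C_i)=2$, against the bound $C_i\cdot(\Delta_\tau-C_i)\ge 2$ to force $\Delta=\Delta_\tau$ (your squeeze-plus-connectivity step is exactly the detail the paper leaves implicit), and then reads off $\Supp R_g\subseteq\Delta_\tau$ from the log ramification formula together with $K_Y+\Delta_\tau\sim 0$.

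One caveat concerns the reverse inclusion $\Delta_\tau\subseteq\Supp R_g$: you obtain $e_i\ge 2$ only after replacing $f$ (hence $g$) by an iterate making each $C_i$ individually $g^{-1}$-invariant. Since $R_{g^s}=R_g+g^*R_{g^{s-1}}$, this yields $\Delta_\tau\subseteq\Supp R_{g^s}=\bigcup_{j<s}g^{-j}(\Supp R_g)$, which does not formally give $\Delta_\tau\subseteq\Supp R_g$ for the original $g$: a priori $g$ could be unramified along a component that is merely permuted, with the ramification picked up elsewhere in its orbit. So as literally phrased your proof establishes the equality of supports only for an iterate. In context this is a cosmetic rather than substantive defect: the paper's own proof offers no argument for this direction either, iteration is permitted freely throughout Section \ref{section-4-conic}, and what is actually used downstream (Lemma \ref{lem-p2orp1p1}, Claim \ref{cl-etale}, Lemma \ref{lem-p1p1-pol}) is only the inclusion $\Supp R_g\subseteq\Delta_\tau$, the toric-pair conclusion, and $K_Y+\Delta_\tau\sim 0$ --- all of which your argument delivers for $g$ itself, since that inclusion needs no iteration ($\Delta_\tau$ is already $g^{-1}$-invariant by Lemma \ref{lem-simple-loop}).
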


\begin{proof}
By Lemma \ref{lem-simple-loop} and Theorem \ref{thm-surface-toricpair}, there is a toric pair $(Y, \Delta)$ such that $\Delta_{\tau}\subseteq \Delta$.
Note that $\Delta$ is a simple loop of smooth rational curves, i.e, each component of $\Delta$ intersects (transversally) exactly two other components of $\Delta$ at two distinct points (cf.~Lemma \ref{lem-SNC-tor-bd}).
So Lemma \ref{lem-simple-loop} further implies that $\Delta_{\tau}=\Delta$, which is $g^{-1}$-invariant.
Since $K_Y+\Delta_{\tau}\sim 0$,
we obtain the log ramification divisor formula
$K_Y+\Delta_{\tau}=g^*(K_Y+\Delta_{\tau})$.
Therefore, $\Supp R_g=\Delta_\tau$.
So the lemma is proved.
\end{proof}

\begin{lemma}\label{lem-p2orp1p1}
Suppose that $\Delta_\tau\neq\emptyset$.
Then
$Y\cong\mathbb{P}^2$ or $Y\cong\mathbb{P}^1\times\mathbb{P}^1$.
\end{lemma}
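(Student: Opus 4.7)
The plan is to split into two cases depending on whether $g:=f|_Y$ is polarized after iteration. In the first case, where $g$ is not polarized even after iteration, Proposition \ref{prop-zhang} gives $\deg(g)>1$, so $g$ is a non-isomorphic surjective endomorphism of the smooth projective rational surface $Y$; Lemma \ref{lem-noniso-nonpol} then forces $Y\cong \PP^1\times \PP^1$ at once. In the second case, $g$ is polarized after iteration. Combined with $\deg(g)>1$, Lemma \ref{lem-int-amp-equiv} yields that $g$ is int-amplified, so Lemma \ref{lem-Y-toric} applies and identifies $(Y,\Delta_\tau)$ as a toric pair. Hence $Y$ is a smooth projective toric rational surface with $\Delta_\tau$ its simple-loop toric boundary, and each component $C_i$ is $g^{-1}$-invariant (after iteration) with $g^*C_i=qC_i$. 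The remaining goal is to eliminate every smooth toric rational surface other than $\PP^2$ and $\PP^1\times\PP^1$, i.e., the Hirzebruch surfaces $F_n$ with $n\geq 1$ and the toric rational surfaces with $\rho(Y)\geq 3$.

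The approach is twofold. First, I would use the Fano property of $X$ together with the Mori conic bundle $\tau$ to deduce that $Y$ is a smooth del Pezzo surface (so $-K_Y$ is ample). This lets Corollary \ref{cor-ti-rational} kick in, giving $C_i^2\leq 1$ for every boundary component of the $g^{-1}$-periodic loop; this immediately rules out $F_n$ for $n\geq 2$, whose toric boundary carries a $(+n)$-curve. Second, for the remaining cases---namely $F_1$ and the toric del Pezzo surfaces of Picard number $\geq 3$---each toric boundary $\Delta_\tau$ contains at least one $(-1)$-curve $E$, and the plan is to run the $f$-equivariant MMP on $X$ (Lemma \ref{lem-equiv-MMP}, available since $X$ is Fano) along the $K_X$-negative extremal ray of $\overline{\mathrm{NE}}(X)$ lifting $\mathbb{R}_{\geq 0}[E]\subset \overline{\mathrm{NE}}(Y)$. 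The hope is that the resulting equivariant contraction descends to the blowdown $Y\to Y'$ of $E$, producing a smaller smooth Fano threefold $X'\to Y'$ still satisfying all the hypotheses (smoothness, Fano, conic bundle with nonempty discriminant, and $f'$-equivariance), so that iteration reduces $\rho(Y)$ and eventually forces $Y\in\{\PP^2,\PP^1\times \PP^1\}$.

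The main obstacle is analyzing this extremal contraction on $X$. Although the lifted ray is $K_X$-negative (by an adjunction computation using $K_Y\cdot E=-1$ together with an estimate of $K_{X/Y}$ on a section-like lift of $E$), the corresponding contraction need not be a straightforward divisorial contraction of $S_E:=\tau^*E$ to a curve: the normal bundle identification $N_{S_E/X}=(\tau|_{S_E})^*\mathcal{O}_E(-1)$ gives degree $0$ on each fibre of the ruling $S_E\to \widetilde{E}\cong \PP^1$ arising from the associated discriminant double cover (noting $E\cdot(\Delta_\tau-E)=2$ from Lemma \ref{lem-simple-loop}). One must therefore identify the actual Mori contraction---potentially small or of fibre type---verify it preserves smoothness and Fano-ness of the total space, and check compatible descent to a conic bundle $X'\to Y'$ with $\Delta_{\tau'}\subseteq \Delta_\tau-E$ still nonempty, so that Lemmas \ref{lem-simple-loop}, \ref{lem-Y-toric} and Corollary \ref{cor-ti-rational} remain applicable and the induction on $\rho(Y)$ closes, leaving only $\PP^2$ and $\PP^1\times\PP^1$ as possibilities.
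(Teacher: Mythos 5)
Your reduction is fine up to a point: the non-polarized case is disposed of by Lemma \ref{lem-noniso-nonpol} exactly as in the paper, and in the polarized case Proposition \ref{prop-zhang} plus Lemma \ref{lem-int-amp-equiv} do make $g$ int-amplified, so Lemma \ref{lem-Y-toric} applies and $(Y,\Delta_\tau)$ is a toric pair with every boundary component $g^{-1}$-invariant. The gap is in what you do next. First, the assertion that the base of a Fano conic bundle is a del Pezzo surface is stated with no argument; it is not among the results quoted in the paper, and even granting it, it only excludes $\mathbb{F}_n$ with $n\ge 2$ (which are not del Pezzo anyway), so it does no work in the cases that matter. Second, and decisively, for the remaining bases ($\mathbb{F}_1$ and the toric surfaces with $\rho(Y)\ge 3$) your argument is only a plan: you yourself list as unresolved the identification of the extremal contraction lifting $\mathbb{R}_{\ge 0}[E]$, the preservation of smoothness and Fano-ness, and the descent to a conic bundle over the blow-down $Y'$ with nonempty discriminant. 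This is not a technicality one can wave through. Since $(Y,\Delta_\tau)$ is a toric pair, the $(-1)$-curve $E$ you want to contract is a component of the simple loop $\Delta_\tau$ and meets the other components in two points (Lemma \ref{lem-simple-loop}); the Mori--Mukai type reductions that contract a piece of $\tau^{-1}(E)$ and descend the conic bundle structure are available precisely when the discriminant component is a \emph{connected component} of $\Delta_\tau$, which $E$ is not. Contracting the whole divisor $\tau^*E$ (the only divisorial contraction available over $E$, since $\tau^*E$ is irreducible by Proposition \ref{pro-cone-reduced-reducible}) has no reason to produce a smooth Fano threefold with a conic bundle over $Y'$ whose discriminant is $\Delta_\tau-E$; the image of the loop acquires a worse singularity at the image point of $E$. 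So the induction step does not close, and the main cases of the lemma are left unproved.

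For comparison, the paper's proof is a two-line contradiction that never needs any blow-down of $X$: if $Y\not\cong\mathbb{P}^2,\,\mathbb{P}^1\times\mathbb{P}^1$, then $Y$ carries a $g^{-1}$-invariant negative curve $E$, which lies in $\Supp R_g=\Delta_\tau$ by Lemma \ref{lem-Y-toric}; by \cite[Corollary 6.7]{MM83} a negative curve contained in the discriminant of a Fano conic bundle must be a (rational) connected component of $\Delta_\tau$, and this contradicts $E\cdot(\Delta_\tau-E)\ge 2$ from Lemma \ref{lem-simple-loop}. If you want to salvage your approach, the missing ingredient is precisely such a statement about negative curves versus the discriminant of a Fano conic bundle (e.g.\ via the ampleness of $-(4K_Y+\Delta_\tau)$ together with the connectedness of the discriminant double cover forced by Proposition \ref{pro-cone-reduced-reducible}), not an equivariant MMP on $X$.
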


\begin{proof}
Suppose the contrary that the lemma is false.
After iteration, $g$ is $q$-polarized by Lemma \ref{lem-noniso-nonpol} and $Y$ admits a $g^{-1}$-invariant negative curve $E$ (cf.~\cite{Nak02} or \cite[Lemma 4.3]{MZ19b}).
Then $g^*E=qE$ and $E \subseteq \Supp R_g$.
By Lemma \ref{lem-Y-toric}, $E$ is a smooth rational curve contained in $\Delta_{\tau}$.
Hence,  $E$ is a rational connected component of $\Delta_\tau$ by \cite[Corollary 6.7]{MM83}.
This contradicts Lemma \ref{lem-simple-loop}.
So the lemma holds true.
\end{proof}

\begin{proposition}\label{prop-no-div}
Suppose that $\Delta_\tau\neq\emptyset$ and $f$ is int-amplified.
Then $X$ has no divisorial contraction.
\end{proposition}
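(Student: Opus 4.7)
I would argue by contradiction: suppose $\pi:X\to X'$ is a divisorial contraction of a $K_X$-negative extremal ray, with exceptional divisor $E$. By Lemma \ref{lem-equiv-MMP} and finiteness of $f^{-1}$-periodic prime divisors, after iterating $f$ we may assume $\pi$ is $f$-equivariant and $E$ is $f^{-1}$-invariant. By Lemma \ref{lem-Y-toric}, $(Y,\Delta_\tau)$ is a toric pair so $\sum_i S_i=\tau^*(-K_Y)$; setting $\sigma:=E+\sum_iS_i$, Theorem \ref{thm-bh} gives that $(X,\sigma)$ is lc and $-(K_X+\sigma)=-K_{X/Y}-E$ is $\Q$-linearly equivalent to an effective divisor.

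Since $\dim E=2$ while $\tau$ has $1$-dimensional fibres, $\tau(E)$ is either a curve or all of $Y$. If $\tau(E)=C$ is a curve, then Proposition \ref{pro-cone-reduced-reducible} forces $E=\tau^*C$ to be reduced irreducible, and $f$-equivariance makes $C$ a $g^{-1}$-invariant curve. Since every $g^{-1}$-periodic curve on $Y\in\{\PP^2,\PP^1\times\PP^1\}$ lies in $\Delta_\tau$ (by Theorem \ref{thm-linear-subspace}(1) or Lemma \ref{lem-p1p1-ticurve}), we get $E=S_i$ for some $i$. Now matching Mori's classification of exceptional divisors ($E$ is $\PP^2$, $\PP^1\times\PP^1$, a Hirzebruch surface $\Sigma_n$, or the quadric cone) against the restricted morphism $\tau|_E:E\to C_i\cong\PP^1$, whose fibres are degenerate conics by Notation \ref{notation-conic}(5), produces a contradiction in every case: there is no surjection $\PP^2\to\PP^1$ (any base-point-free pencil in $\PP^2$ shares base points by Bezout); the unique (or two) $\PP^1$-fibrations of $\Sigma_n$ or $\PP^1\times\PP^1$ have irreducible $\PP^1$-fibres rather than degenerate conics; and on the quadric cone every ruling passes through the vertex $v$, so every fibre of $\tau|_E$ would contain $v$, impossible for a morphism sending $v$ to a single point.

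Thus $\tau|_E:E\to Y$ is finite and dominant, of degree $d:=E\cdot F_\tau\in\{1,2\}$, where the upper bound comes from $(-K_{X/Y}-E)\cdot F_\tau=2-d\ge 0$ via $\Q$-effectivity and the movability of $[F_\tau]$. If $d=2$, then $-(K_X+\sigma)\cdot F_\tau=0$; since the orthogonal of $[F_\tau]$ in $\N^1(X)$ equals $\tau^*\N^1(Y)$ for a Mori fibre contraction, we have $-(K_X+\sigma)\equiv\tau^*M$ for some $M\in\N^1(Y)$. Riemann--Hurwitz for $\tau|_E$ gives $(K_X+\sigma)|_E=K_E-\tau|_E^*K_Y=R$, the (effective) ramification divisor, whence $\tau|_E^*M=-R$. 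The $\Q$-effectivity of $\tau^*M$ transfers to $M$ itself on $Y$ (connected fibres of Mori fibrations preserve effectivity under pullback), so $\tau|_E^*M$ is $\Q$-effective; combined with $\tau|_E^*M=-R\le 0$, this forces $R=0$. But then $\tau|_E$ is étale of degree $2$, contradicting simple-connectedness of $Y$ and irreducibility of $E$.

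The main obstacle is the case $d=1$: here $\tau|_E$ is an isomorphism, so $E\cong Y\in\{\PP^2,\PP^1\times\PP^1\}$, which together with Mori's classification restricts $\pi$ to be of type E2, E3, or E5. I would descend $f$ to an int-amplified endomorphism $f'$ on $X'$. In the cleanest subcase (type E2 with $Y=\PP^2$), one checks directly from $-K_X=\pi^*(-K_{X'})-2E$ that $-K_{X'}$ is ample, so $X'$ is a smooth Fano threefold of Picard number $1$; by Theorem \ref{thm-fano-rho=1}, $X'=\PP^3$, and then $X=\operatorname{Bl}_p\PP^3=\PP_{\PP^2}(\OO\oplus\OO(1))$ is a splitting $\PP^1$-bundle with $\Delta_\tau=\emptyset$, contradicting the hypothesis. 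For the remaining subcases (E3 with $Y=\PP^1\times\PP^1$; E5 with $Y=\PP^2$), the variety $X'$ is singular with possibly $\rho(X')=2$, so Theorem \ref{thm-fano-rho=1} does not apply directly; the plan is to exploit the further extremal rays of $X$ (using that $X$ is a Mori dream space) and/or to descend $\tau$ to a fibration on $X'$ via Theorem \ref{thm-boundary-p1bundle}, reducing each to a splitting $\PP^1$-bundle structure on $X$ and hence once more to $\Delta_\tau=\emptyset$, a contradiction.
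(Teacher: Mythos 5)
Your proposal has a genuine gap, and it sits exactly where you acknowledge it: the case $d:=E\cdot F_\tau=1$. That case is in fact vacuous, but you never rule it out, and your attempted analysis of it is both flawed and unfinished. The paper's key observation (Claim \ref{cl-2-cover}) is that the hypothesis $\Delta_\tau\neq\emptyset$ forces $d=2$: since $\tau$ is a flat conic bundle with nonempty discriminant, a general fibre satisfies $\ell\equiv\ell'+\ell''$ for some reduced reducible fibre $\ell'\cup\ell''$, and since $E$ dominates $Y$ it is $\tau$-ample (the relative Picard number is one and $E$ cannot be $\tau$-trivial or $\tau$-anti-ample), so integrality gives $E\cdot\ell=E\cdot\ell'+E\cdot\ell''\ge 2$; combined with your upper bound $E\cdot\ell\le 2$ this pins $d=2$. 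Without this, your $d=1$ discussion does not stand: degree one only makes $\tau|_E$ birational, not an isomorphism, so you cannot conclude $E\cong Y$ and restrict to types E2, E3, E5 (an E1 divisor such as $\F_1$ maps birationally onto $\PP^2$, and E4 is not excluded); and for the subcases you do list, everything beyond E2 over $Y\cong\PP^2$ is left as ``the plan is...'', i.e.\ no proof. A similar, more minor, shortcut is missed in the curve-image case: if $\tau(E)$ is a curve, then $E=\tau^*\tau(E)$ is nef because $Y\in\{\PP^2,\PP^1\times\PP^1\}$ has no negative curves (Lemma \ref{lem-p2orp1p1}), contradicting that $E$ is exceptional for a divisorial contraction; your route through Mori's list of exceptional divisors and the fibration $S_i\to C_i$ can be made to work, but the citation you give for ``every $g^{-1}$-periodic curve lies in $\Delta_\tau$'' is not what Theorem \ref{thm-linear-subspace} or Lemma \ref{lem-p1p1-ticurve} says; one needs $\Supp R_g=\Delta_\tau$ from Lemma \ref{lem-Y-toric} together with $g^*C=\lambda C$, $\lambda\ge 2$.

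On the positive side, your $d=2$ argument is essentially sound and is a genuinely different, shorter route than the paper's Claims \ref{cl-etale}--\ref{cl-neq0}: writing $-(K_X+E+\tau^{-1}(\Delta_\tau))\equiv\tau^*M$ with $M$ pseudo-effective, restricting to $E$ and using $K_Y+\Delta_\tau\sim 0$ to identify the restriction with the relative canonical divisor $R\ge 0$ of $\tau|_E$, one gets $R=0$ and then a degree-two cover of the simply connected $Y$ unramified in codimension one, which is absurd (one should pass through the Stein factorization and purity of the branch locus, since $\tau|_E$ is a priori only generically finite). This bypasses the toric-pair characterization of \cite{MZg20} used in Claim \ref{cl-neq0}. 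But as submitted, the proof is incomplete: the missing step is precisely the elementary lower bound $E\cdot\ell\ge 2$ coming from the reducible fibre, which is the only place the standing hypothesis $\Delta_\tau\neq\emptyset$ enters at this stage.
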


We now prove Proposition \ref{prop-no-div}.
It will last till Claim \ref{cl-neq0}.
Suppose the contrary that $X$ has a divisorial contraction $\pi$ with $E$ the exceptional divisor.
After iteration, we may assume $f^{-1}(E)=E$
(cf.~Lemma \ref{lem-equiv-MMP}).
By \cite[Theorem 3.3]{Mor82}, $E$ is normal and $\mathbb{Q}$-factorial.
Note that $g$ and $f|_E$ (like $f$) are also int-amplified.
Let $T:=Y\backslash\Delta_{\tau}$ and $T_E:=E\backslash \tau^{-1}(\Delta_{\tau})$.
By Lemma \ref{lem-simple-loop}, $f^{-1}(T_E)=T_E$ after iteration.

\begin{claim}\label{cl-2-cover}
$\tau|_E:E\to Y$ is a generically finite surjective morphism of degree $2$.
\end{claim}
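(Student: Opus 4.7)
The plan is to establish both surjectivity and the specific degree of $\tau|_E$, using the Mori-theoretic structure of the divisorial contraction $\pi$ together with the conic-bundle structure of $\tau$.

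First, I would show that $\tau|_E$ is dominant by contradiction. If not, then since $\dim E = 2$ and fibres of $\tau$ are one-dimensional, the image $\tau(E)$ must be a curve $C$ in $Y$; Proposition~\ref{pro-cone-reduced-reducible} then gives $E = \tau^{-1}(C) = \tau^*C$. Picking a curve $\ell$ in the extremal ray $R_\pi$ contracted by $\pi$, we have $E \cdot \ell < 0$. If $\tau$ contracted $\ell$, the projection formula would yield $E \cdot \ell = C \cdot \tau_*\ell = 0$, contradicting $E \cdot \ell < 0$; hence $\tau(\ell) = C$. Using the identity $E|_E \cong \tau|_E^* N_{C/Y}$, I compute $E \cdot \ell = \deg(\tau|_\ell) \cdot (C^2)_Y$, which forces $C^2 < 0$. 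Now $C$ is $g^{-1}$-invariant (as $\tau$ is $f$-equivariant and $E$ is $f^{-1}$-invariant), and since $\Delta_\tau \neq \emptyset$, Lemma~\ref{lem-Y-toric} gives that $(Y, \Delta_\tau)$ is a toric pair with $\Delta_\tau$ a simple loop of smooth rational curves. Theorem~\ref{thm-surface-toricpair} then forces $C$ to lie in such a toric boundary, and the combination of $C^2 < 0$ with the explicit loop structure (each component meets exactly two others transversally) yields a contradiction.

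Next, with $\tau|_E$ surjective (hence generically finite, since $\dim E = \dim Y = 2$), set $d := \deg(\tau|_E) = E \cdot F$ where $F$ is a general fibre of $\tau$. Adjunction on $F \cong \mathbb{P}^1$ gives $-K_X \cdot F = 2$, and the discrepancy relation $K_X = \pi^*K_{X'} + aE$ (with $a \geq 1$, from Mori's classification of divisorial contractions on smooth $3$-folds) yields $-K_{X'} \cdot \pi_*F = 2 + ad$. On the other hand, the dynamical equation $g \circ \tau|_E = \tau|_E \circ f|_E$ combined with $\deg(f|_E) = \deg(f)$ (from $f^{-1}(E) = E$) gives $\deg g = \deg(f)$, hence $f|_F : F \to F'$ is generically finite of degree $\deg(f)/\deg(g) = 1$ on a general fibre. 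A fibre-wise analysis of $f|_F$ acting on the $d$-point sets $E \cap F$ and $E \cap F'$, combined with the Mori-classification constraint on $E$ (it is normal, $\mathbb{Q}$-factorial, and one of $\mathbb{P}^2$, $\mathbb{P}^1 \times \mathbb{P}^1$, a Hirzebruch surface, or a quadric cone), will pin down $d = 2$.

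The main obstacle I anticipate is excluding $d = 1$, which would make $E$ a cross-section of $\tau$ with $E \cong Y$. By Mori's classification, $E$ as an exceptional divisor would then force $Y$ to be $\mathbb{P}^2$, $\mathbb{P}^1 \times \mathbb{P}^1$, or a Hirzebruch surface. Ruling this out will require combining the nonempty discriminant $\Delta_\tau$ with the topological requirement that the section $E$ interacts with the nodes of the reducible fibres over $\Delta_\tau$ in controlled ways, together with the $f$-equivariance constraints and a comparison of $-K_X \cdot \ell$ against the value predicted by Mori's adjunction for ruling lines.
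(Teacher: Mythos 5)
Your proposal breaks down in the second half, where the degree is actually pinned down. The identity $\deg(f|_E)=\deg(f)$ is false: for a prime divisor with $f^{-1}(E)=E$ one has $f^*E=aE$ and $\deg(f)=a\cdot\deg(f|_E)$, so equality would force $f$ to be unramified along $E$; in the present situation one has instead $\deg(f|_E)=\deg(g)$ (this is precisely what Claim \ref{cl-etale} exploits), and $\deg(f)>\deg(g)$ because $f$ is int-amplified (cf.~Lemma \ref{lem-int-amp-equiv}, as used in the proof of Theorem \ref{thm-amerik2}). Hence your intermediate conclusion $\deg(g)=\deg(f)$, and with it the claim that $f$ has degree $1$ on general $\tau$-fibres, is not merely unproven but impossible, so the announced ``fibre-wise analysis'' has no foundation; you also concede that excluding $d=1$ remains open. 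The paper's argument needs neither discrepancies nor Mori's list for $E$: once $\tau(E)=Y$ is known, $E$ is $\tau$-ample, and the two bounds are (i) $E\cdot\ell\le -K_X\cdot\ell=2$ for a general (movable) fibre $\ell$, because $-(K_X+E)$ is effective by Theorem \ref{thm-bh} (using $f^{-1}(E)=E$ and int-amplifiedness), and (ii) $E\cdot\ell=E\cdot(\ell'+\ell'')\ge 1+1$, using $\Delta_\tau\neq\emptyset$ to produce a reduced reducible fibre $\ell'\cup\ell''$ with $\ell\equiv\ell'+\ell''$ and the integrality plus $\tau$-ampleness of $E$. Neither ingredient appears in your plan.

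The surjectivity half also ends with an unjustified step. Your computation forcing $(C^2)_Y<0$ when $\tau(E)=C$ is a curve is fine, but ``$C$ lies in a toric boundary loop, contradiction'' does not follow: toric boundaries of smooth toric surfaces routinely contain negative curves (the $(-2)$-section of $\F_2$ lies in a simple loop each of whose components meets exactly two others transversally), so $C^2<0$ is perfectly compatible with the data you invoke from Lemma \ref{lem-Y-toric} and Theorem \ref{thm-surface-toricpair}. What closes this case in the paper is Lemma \ref{lem-p2orp1p1}: since $\Delta_\tau\neq\emptyset$, $Y\cong\PP^2$ or $\PP^1\times\PP^1$, which carries no negative curves, so $\tau(E)$ and hence $E=\tau^*\tau(E)$ are nef, contradicting $-E$ being $\pi$-ample; the alternative would be the input \cite[Corollary 6.7]{MM83} that a negative component of $\Delta_\tau$ is a connected component, contradicting Lemma \ref{lem-simple-loop}. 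This half is repairable by citing that lemma, but as written the contradiction is missing, and the degree argument needs to be replaced along the lines above.
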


\begin{proof}
Consider the case $\tau(E)\neq Y$.
We have $E=\tau^*\tau(E)$ by the cone theorem (cf.~\cite[Theorem 3.7]{KM98}).
By Lemma \ref{lem-p2orp1p1}, $Y$ has no negative curves.
Then $\tau(E)$ and hence $E$ are nef, a contradiction to $-E$ being $\pi$-ample.
Thus $\tau(E)=Y$. Note that $\tau$ is a contraction of an extremal ray.
So $E$ is $\tau$-ample.

We still need to show that $E\cdot \ell=2$ for a general fibre $\ell$ of $\tau$.
Note that $-(K_X+E)$ is $\Q$-linearly equivalent to an effective divisor by Theorem \ref{thm-bh} and $\ell$ is movable.
So $-(K_X+E)\cdot \ell\ge 0$ and hence
$$E\cdot \ell\le -K_X\cdot \ell=2$$
by the adjunction formula.
On the other hand, since $\tau$ is a (flat) conic bundle, $\ell\equiv \ell'+\ell''$ where $\ell' \cup \ell''$ is a (reduced) reducible fibre lying over a smooth point of $\Delta_\tau$.
So we have
$$E \cdot \ell = E \cdot (\ell' + \ell'') \ge 1 + 1$$
where the second inequality is because $E$ is integral and $\tau$-ample.
Hence, $E \cdot \ell = 2$ and the claim is proved.
\end{proof}

\begin{claim}\label{cl-etale}
The restriction $f|_{T_E}$ is an \'etale morphism.
\end{claim}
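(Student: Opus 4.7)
My plan is to reduce the claim to showing $\Supp R_f\subseteq E\cup\tau^{-1}(\Delta_\tau)$, and then to combine this with the log ramification formula for the $f^{-1}$-invariant divisor $E$. By Lemma \ref{lem-Y-toric}, $\Supp R_g=\Delta_\tau$, so $g|_T$ is étale, where $T=Y\setminus\Delta_\tau$; thus only ramification coming from fibrewise directions can meet $T_E$.

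The first step is a fibrewise analysis of $f$ on the generic conic. For a very general $y\in T$, both $E\cap X_y$ and $E\cap X_{g(y)}$ are pairs of distinct reduced points on the smooth fibres $X_y,X_{g(y)}\cong\PP^1$. Since $f^{-1}(E)=E$ and $\tau\circ f=g\circ\tau$, the restriction $f|_{X_y}\colon X_y\to X_{g(y)}$, of degree $m:=\deg(f)/\deg(g)$, satisfies $(f|_{X_y})^{-1}(E\cap X_{g(y)})=E\cap X_y$ set-theoretically. A short count on the two degree-$m$ divisors $(f|_{X_y})^*q$ ($q\in E\cap X_{g(y)}$) forces each point of $E\cap X_y$ to be a totally ramified preimage of index $m$, and Riemann–Hurwitz on $\PP^1$ gives $2m-2=2(m-1)$, so $f|_{X_y}$ is étale outside $E\cap X_y$.

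I would then deduce $\Supp R_f\subseteq E\cup\tau^{-1}(\Delta_\tau)$ by classifying components $D$ of $R_f$ as vertical or horizontal over $Y$. If $\tau(D)\subsetneq Y$, then by Proposition \ref{pro-cone-reduced-reducible}, $D=\tau^*(\tau(D))$; smoothness of $\tau$ at the generic point of $D$ (when $\tau(D)\not\subset\Delta_\tau$) descends the ramification of $f$ along $D$ to that of $g$ along $\tau(D)$, so $\tau(D)\subseteq\Supp R_g=\Delta_\tau$. If $\tau(D)=Y$, picking a very general $p\in D$ with $\tau(p)=y\in T$ generic, the unramifiedness of $g$ at $y$ identifies ramification of $f$ at $p$ with ramification of $f|_{X_y}$ at $p$; the fibrewise step then forces $p\in E$, hence $D=E$.

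The conclusion follows from the log ramification formula
\[
K_X+E=f^*(K_X+E)+R_{\log,f},\qquad R_{\log,f}:=R_f-(r-1)E\ge 0,
\]
where $r$ is defined by $f^*E=rE$. Since $R_{\log,f}$ does not contain $E$ in its support, the previous step yields $\Supp R_{\log,f}\subseteq\tau^{-1}(\Delta_\tau)$. Restricting to $E$ and using the identification $f^*(K_X+E)|_E=(f|_E)^*K_E$ (adjunction together with functoriality of pullback), one obtains $K_E=(f|_E)^*K_E+R_{\log,f}|_E$, so the ramification divisor $R_{f|_E}=R_{\log,f}|_E$ is supported in $\tau^{-1}(\Delta_\tau)\cap E=E\setminus T_E$. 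Thus $f|_{T_E}$ has trivial ramification divisor, hence is étale. The main obstacle I anticipate is the fibrewise step: one must verify that the set-theoretic equality $(f|_{X_y})^{-1}(E\cap X_{g(y)})=E\cap X_y$ really forces each point of $E\cap X_y$ to map bijectively with total ramification of index $m$, ruling out the degenerate possibility that both points of $E\cap X_y$ map to a single point of $E\cap X_{g(y)}$ (which would contradict finite surjectivity of $f|_{X_y}$).
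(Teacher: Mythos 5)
Your Steps 1--2, establishing $\Supp R_f\subseteq E\cup\tau^{-1}(\Delta_\tau)$ via the fibrewise total-ramification count and the vertical/horizontal classification of components of $R_f$, are essentially sound (using $\Supp R_g=\Delta_\tau$ from Lemma \ref{lem-Y-toric} and Proposition \ref{pro-cone-reduced-reducible}). The final step, however, has two genuine gaps. First, the identity $R_{f|_E}=R_{\log,f}|_E$ is extracted by restricting $K_X+E=f^*(K_X+E)+R_{\log,f}$ to $E$; but this formula is only an equality of linear equivalence classes, so restriction yields $R_{f|_E}\sim R_{\log,f}|_E$, and linear equivalence gives no control of supports. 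To pin down the support of the actual ramification divisor of $f|_E$ one needs a local (Jacobian or residue-sequence) computation at the codimension-one points of $E$; your fibrewise input only covers general $y\in T$, so it does not handle curves of $E$ lying over the branch curve of the degree-two map $\tau|_E$, or smooth fibres contained in $E$, where $E\cap X_y$ degenerates. This can be repaired, but it is not done in your write-up.

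Second, and decisively: even granting $\Supp R_{f|_E}\subseteq E\setminus T_E$, the inference ``trivial ramification divisor, hence \'etale'' is invalid here. $E$ is only known to be normal and $\mathbb{Q}$-factorial (by Mori's classification the exceptional divisor can be the singular quadric cone), so $T_E$ may be singular; for a finite surjective morphism with normal but non-regular target, unramified in codimension one does not imply \'etale, since purity of the branch locus requires a regular target. The claim asserts that $f|_{T_E}$ is \'etale at every point, so your argument stops short of the statement. The paper proves exactly this stronger conclusion by a different route: since $\Supp R_g=\Delta_\tau$ and $Y$ is smooth, $g|_T$ is finite \'etale; letting $T'$ be the fibre product of $g|_T$ and $\tau|_{T_E}$, the induced map $\phi:T_E\to T'$ is finite of degree $\deg(f|_{T_E})/\deg(g|_T)=1$ onto the normal variety $T'$, hence an isomorphism, and $f|_{T_E}=p_1\circ\phi$ is \'etale because $p_1$ is a base change of the \'etale $g|_T$. (For the later application in Claim \ref{cl-eq0} your weaker, divisorial statement would suffice, but it does not prove the claim as stated.)
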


\begin{proof}
Consider the following commutative diagram:
\[
\xymatrix{
T_E\ar@/^1pc/[rr]^{f|_{T_E}}\ar[rd]_{\tau|_{T_E}}\ar[r]_{\phi} &T'\ar[r]_{p_1}\ar[d]^{p_2} &T_E\ar[d]^{\tau|_{T_E}}\\
&T\ar[r]^{g|_T} &T
}
\]
where $T'$ is the fibre product of $g|_T$ and $\tau|_{T_E}$.
Since $g|_T$ is finite \'etale by Lemma \ref{lem-Y-toric},
so is $p_1$, hence $T'$ is normal.
Note that $\tau|_{T_E}$ is generically finite.
Then $\deg (f|_{T_E})=\deg (g|_T)=\deg (p_1)$, and hence $\deg (\phi)=1$, i.e., $\phi$ is birational.
Since $\phi$ (like $f|_{T_E}$) is also finite, and both $T_E$ and $T'$ are normal,
$\phi$ is an isomorphism.
So the claim is proved.
\end{proof}

\begin{claim}\label{cl-eq0}
$\tau^{-1}(\Delta_{\tau})|_E$ equals
$\tau^{-1}(\Delta_{\tau})\cap E$, a reduced divisor,
and
$K_E+\tau^{-1}(\Delta_{\tau})|_E\sim_{\mathbb{Q}}0$.
\end{claim}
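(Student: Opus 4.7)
The plan is to prove the two assertions of the claim separately: (a) $\tau^{-1}(\Delta_\tau)|_E$ is reduced and equals the scheme-theoretic intersection $\tau^{-1}(\Delta_\tau)\cap E$; and (b) $K_E+\tau^{-1}(\Delta_\tau)|_E\sim_{\Q}0$.

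For (a), I would consider the reduced and (after iteration) $f^{-1}$-invariant divisor $\Delta:=E+\tau^{-1}(\Delta_\tau)$; it is reduced because $\tau^{-1}(\Delta_\tau)=\sum_i S_i$ is reduced by Notation \ref{notation-conic}(4) and $E$ shares no component with $\tau^{-1}(\Delta_\tau)$ (since $\tau|_E$ is surjective by Claim \ref{cl-2-cover}). By Theorem \ref{thm-bh}(1), $(X,\Delta)$ is lc. Standard adjunction for lc pairs applied to the normal divisor $E$ on smooth $X$ then yields the lc pair $(E,\tau^{-1}(\Delta_\tau)|_E)$, forcing each coefficient of $\tau^{-1}(\Delta_\tau)|_E$ to be at most $1$. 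On the other hand, these coefficients are positive integers, since $\tau^{-1}(\Delta_\tau)$ is a reduced Cartier divisor on the smooth $X$ sharing no component with $E$. Combining, all coefficients equal $1$, which proves (a).

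For (b), the idea is to descend $M:=K_X+\Delta$ to $Y$ and then exploit int-amplified-ness of $g$. Since $(K_X+E)\cdot\ell=-2+2=0$ for a general fibre $\ell$ of $\tau$ (using Claim \ref{cl-2-cover} and adjunction on $\ell\cong\PP^1$), the contraction theorem yields $K_X+E\sim\tau^*D$ for some Cartier divisor $D$ on $Y$. With $\tau^{-1}(\Delta_\tau)=\tau^*\Delta_\tau\sim-\tau^*K_Y$ (from $K_Y+\Delta_\tau\sim 0$ in Lemma \ref{lem-Y-toric}), setting $D':=D-K_Y$ gives $M\sim\tau^*D'$. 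Next I apply the log ramification formula, $R':=M-f^*M\ge 0$ (cf.\ proof of Theorem \ref{thm-bh}(2)). Decomposing $R_f=(d_E-1)E+\tau^*R_g+R_f^{\mathrm{other}}$, where $d_E$ is the ramification index of $f$ along $E$ (so $f^*E=d_EE$), and using $f^*\tau^{-1}(\Delta_\tau)=\tau^*g^*\Delta_\tau=\tau^{-1}(\Delta_\tau)+\tau^*R_g$ (from $g^*\Delta_\tau=\Delta_\tau+R_g$ via Lemma \ref{lem-Y-toric}), one computes $R'=R_f^{\mathrm{other}}$, which is supported disjointly from $E\cup\tau^{-1}(\Delta_\tau)$.

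On the other hand, $f^*\tau^*=\tau^*g^*$ gives $R'\sim_{\Q}\tau^*(D'-g^*D')$, so $R'\cdot\ell=0$ for every fibre $\ell$; hence every irreducible component of the effective divisor $R'$ is $\tau$-vertical, of the form $\tau^{-1}(C')$ for some curve $C'\subseteq Y$, and by the disjointness noted above $C'\not\subseteq\Delta_\tau$. But $\Supp R_g=\Delta_\tau$ (Lemma \ref{lem-Y-toric}) means $g$ is \'etale at the generic point of any such $C'$, so after iteration $g^{-1}(C')=C'$ and $g^*C'=C'$; then $f$ is unramified along $\tau^{-1}(C')$, contradicting $\tau^{-1}(C')\subseteq\Supp R_f$. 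Hence $R'=0$, so $D'\equiv g^*D'$; since $g$ is int-amplified (Notation \ref{notation-conic}(6)), Lemma \ref{lem-int-amp-equiv} forces $D'\sim_{\Q}0$, and thus $M\sim_{\Q}0$. Adjunction then gives $K_E+\tau^{-1}(\Delta_\tau)|_E=M|_E\sim_{\Q}0$. The main obstacle I expect is the dual bookkeeping of $R'$ in this last paragraph: reading $R'$ simultaneously as the explicit ramification-theoretic divisor $R_f^{\mathrm{other}}$ and as the numerical $\tau$-pullback $\tau^*(D'-g^*D')$, and then using $\Supp R_g=\Delta_\tau$ to rule out $\tau$-vertical ramification of $f$ lying over curves outside $\Delta_\tau$.
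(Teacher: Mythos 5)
Your proof takes a genuinely different route from the paper's. For the first half you use Theorem \ref{thm-bh}(1) plus adjunction of lc pairs to the normal Cartier divisor $E$ (legitimate here: $E$ is normal by \cite[Theorem 3.3]{Mor82} and, since $X$ is smooth and $E$ is Cartier, the different is just $\tau^{-1}(\Delta_\tau)|_E$), while the paper gets reducedness and $\Q$-triviality simultaneously by working only on $E$: Claim \ref{cl-etale} gives $K_E+\tau^{-1}(\Delta_{\tau})\cap E=(f|_E)^*(K_E+\tau^{-1}(\Delta_{\tau})\cap E)$, hence $\equiv 0$ by int-amplifiedness, and restricting the log ramification formula on $X$ to $E$ gives pseudo-effectivity of $-(K_E+\tau^{-1}(\Delta_\tau)|_E)$, which pins both statements down at once. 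Your second half instead descends $K_X+E+\tau^{-1}(\Delta_\tau)\sim\tau^*D'$ to $Y$ and tries to kill the residual ramification $R'$ on $X$; the bookkeeping $R'=R_f-(d_E-1)E-\tau^*R_g$, the relation $R'\sim\tau^*(D'-g^*D')$, and the verticality of the components of $R'$ over curves $C'\not\subseteq\Delta_\tau$ are all correct.

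There is, however, one invalid inference: from ``$g$ is \'etale at the generic point of $C'$'' you conclude ``after iteration $g^{-1}(C')=C'$ and $g^*C'=C'$''. \'Etaleness gives no invariance, and there is no reason for such a $C'$ (a component of the image of the residual ramification, which moreover changes when $f$ is iterated) to be $g^{-1}$-periodic. Fortunately periodicity is not needed: since $\tau$ is a Fano contraction, $\tau^*$ of any prime divisor on $Y$ is reduced and irreducible (Proposition \ref{pro-cone-reduced-reducible}), so comparing the coefficient of $\tau^*C'$ in $f^*\tau^*(g(C'))=\tau^*g^*(g(C'))$ shows that the ramification index of $f$ along $\tau^{-1}(C')$ equals that of $g$ along $C'$, which is $1$ because $\Supp R_g=\Delta_\tau$ and $C'\not\subseteq\Delta_\tau$; hence $\tau^{-1}(C')$ cannot be a component of $R_f\supseteq R'$, and $R'=0$. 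With this repair your argument is complete and in fact yields the stronger statement $K_X+E+\tau^{-1}(\Delta_\tau)\sim_{\Q}0$; this is consistent with Claim \ref{cl-neq0} only because everything here sits inside the reductio of Proposition \ref{prop-no-div} --- indeed, your $R'=0$ fed into the component count in the proof of Claim \ref{cl-neq0} would already finish that proposition, whereas the paper deliberately proves the present claim without deciding anything about $K_X+\tau^{-1}(\Delta_\tau)+E$ on $X$.
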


\begin{proof}
By Claim \ref{cl-etale}, the ramification divisor of $f|_E$ has support contained in the $(f|_E)^{-1}$-invariant divisor $\tau^{-1}(\Delta_{\tau})\cap E$.
Thus, by the log ramification divisor formula,
\begin{equation}\label{cl4-10-eq1} \tag{\dag}
K_E+\tau^{-1}(\Delta_{\tau})\cap E=(f|_E)^*(K_E+\tau^{-1}(\Delta_{\tau})\cap E).
\end{equation}
This and Lemma \ref{lem-int-amp-equiv} imply $K_E+\tau^{-1}(\Delta_{\tau})\cap E \equiv0$,
since $f|_E$ is int-amplified.

On the other hand, by the log ramification divisor formula on $X$, we have
\begin{equation}\label{cl4-10-eq-adj}\tag{\dag\dag}
K_X+\tau^{-1}(\Delta_{\tau})+E=f^*(K_X+\tau^{-1}(\Delta_{\tau})+E)+R_f',
\end{equation}
where $R_f'$ is an effective divisor with no common component with $\tau^{-1}(\Delta_{\tau})+E$.
Restricting the above equation
to $E$ for both sides, we have
$$(K_X+\tau^{-1}(\Delta_{\tau})+E)|_E=(f|_E)^*(K_X+\tau^{-1}(\Delta_{\tau})+E)|_E+R_f'|_E,$$
where $R_f'|_E$ is effective since $E\not\subseteq \Supp R_f'$.
By the adjunction formula, we have
\begin{equation*}\label{cl4.10-log-onx}
K_E+\tau^{-1}(\Delta_{\tau})|_E=(f|_E)^*(K_E+\tau^{-1}(\Delta_{\tau})|_E)+R_f'|_E	,
\end{equation*}
which implies that
$-(K_E+\tau^{-1}(\Delta_{\tau})|_E)$ is pseudo-effective by Lemma \ref{lem-int-amp-equiv}.
Note that
\begin{equation*}\label{cl4-10-eq2}
-(K_E+\tau^{-1}(\Delta_{\tau})|_E)\le -(K_E+\tau^{-1}(\Delta_{\tau})\cap E)\equiv 0.
\end{equation*}
So the inequality above is in fact an equality. This proves the first half  of the
claim, while the second half follows from the first half,
Equation (\ref{cl4-10-eq1}) above
and Lemma \ref{lem-int-amp-equiv} (noting that $E$ is normal and $\mathbb{Q}$-factorial).
\end{proof}

\begin{claim}\label{cl-neq0}
$K_X+\tau^{-1}(\Delta_{\tau})+E\not\equiv 0$.
\end{claim}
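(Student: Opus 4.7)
The plan is to prove Claim \ref{cl-neq0} by contradiction. Suppose $D := K_X + \tau^{-1}(\Delta_\tau) + E \equiv 0$; I will then deduce that the finite degree-$2$ morphism $\tau|_E : E \to Y$ from Claim \ref{cl-2-cover} must be \'etale, which contradicts the simple connectedness of $Y \cong \PP^2$ or $\PP^1 \times \PP^1$ (Lemma \ref{lem-p2orp1p1}).

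Recall from Claim \ref{cl-eq0} that $K_E + (\tau|_E)^* \Delta_\tau \sim_{\mathbb{Q}} 0$ on $E$, which is normal by \cite[Theorem 3.3]{Mor82}. Since $\tau|_E$ is a finite morphism onto the smooth surface $Y$, Riemann--Hurwitz supplies an effective ramification $\mathbb{Q}$-divisor $R$ with
$$K_E = (\tau|_E)^* K_Y + R.$$
Subtracting the two expressions for $K_E$ and invoking $K_Y + \Delta_\tau \sim 0$ from Lemma \ref{lem-Y-toric}, we obtain
$$R \sim_{\mathbb{Q}} -(\tau|_E)^*(K_Y + \Delta_\tau) \sim_{\mathbb{Q}} 0.$$
As $\tau|_E$ is finite, the pullback $(\tau|_E)^* H$ of any ample class $H$ on $Y$ is ample on $E$, so $R \cdot (\tau|_E)^* H = 0$ combined with $R \geq 0$ forces $R = 0$. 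Hence $\tau|_E$ is unramified in codimension one, and by the Zariski--Nagata purity of the branch locus (which only requires the target $Y$ to be regular) it is \'etale everywhere---producing a nontrivial connected degree-$2$ \'etale cover of the simply connected $Y$, giving the desired contradiction.

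The main technical point I anticipate is ensuring that Riemann--Hurwitz and branch-locus purity apply when $E$ is only normal and possibly singular (e.g.\ the quadric-cone cases of \cite{Mor82}). This is unproblematic because $E$ is a Cartier divisor in the smooth threefold $X$ and hence Gorenstein, so $K_E$ is genuinely Cartier and the Riemann--Hurwitz identity holds as a linear equivalence of Cartier divisors. I also note that the assumption $D \equiv 0$ is never actively used in the derivation above---the contradiction emerges from Claim \ref{cl-eq0} and Lemma \ref{lem-Y-toric} alone---so the same argument in fact also completes the proof of Proposition \ref{prop-no-div}, with Claim \ref{cl-neq0} obtained as a by-product.
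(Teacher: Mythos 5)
Your argument takes a genuinely different route from the paper (which, assuming $K_X+\tau^{-1}(\Delta_\tau)+E\equiv 0$, deduces $R_f'=0$ in the log ramification formula for $f$, applies purity to $f$ itself to conclude $(X,\tau^{-1}(\Delta_\tau)+E)$ is a toric pair via \cite[Theorem 1.4]{MZg20}, and then gets a contradiction by counting boundary components). However, as written your proof has a genuine gap: you treat $\tau|_E\colon E\to Y$ as a \emph{finite} degree-two morphism, citing Claim \ref{cl-2-cover}, but that claim only establishes that $\tau|_E$ is \emph{generically} finite of degree two. Nothing in the setup rules out that $E$ contains an irreducible component of some conic fibre, in which case $\tau|_E$ contracts a curve. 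This matters at both decisive points of your argument: (a) $(\tau|_E)^*H$ is then only nef and big, so $R\cdot(\tau|_E)^*H=0$ does not force $R=0$ (this particular step is repairable, since $E$ is $\mathbb{Q}$-factorial by \cite[Theorem 3.3]{Mor82}, so an effective $\mathbb{Q}$-Cartier divisor with $R\sim_{\mathbb{Q}}0$ must vanish); and (b), more seriously, without finiteness you cannot invoke Zariski--Nagata purity and the covering-space argument directly: you would first have to pass to the Stein factorization $E\to \overline{E}\to Y$, check that $\overline{E}$ is normal (hence Cohen--Macaulay), transfer the codimension-one unramifiedness of $\tau|_E$ to the finite part $\overline{E}\to Y$ through the birational map $E\to\overline{E}$, and only then apply purity and miracle flatness to get an \'etale double cover of the simply connected $Y$. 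None of this is addressed, and it is exactly the content your contradiction rests on.

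Your closing observation --- that the hypothesis $K_X+\tau^{-1}(\Delta_\tau)+E\equiv 0$ is never used, so the argument would prove Proposition \ref{prop-no-div} outright --- should itself have been a warning sign: if correct, it would make Claim \ref{cl-neq0} and the paper's concluding step of Proposition \ref{prop-no-div} redundant. That stronger conclusion is plausible once the finiteness issue is handled as above (the numerical input you use, namely Claim \ref{cl-eq0} together with $K_Y+\Delta_\tau\sim 0$ from Lemma \ref{lem-Y-toric}, does give $R\sim_{\mathbb{Q}}0$), but with the finiteness of $\tau|_E$ asserted rather than proved, the proof as submitted is incomplete.
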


\begin{proof}
Suppose the contrary that $K_X+\tau^{-1}(\Delta_{\tau})+E\equiv 0$.
Then it follows from Equation (\ref{cl4-10-eq-adj}) that the effective divisor $R_f' = 0$ and hence
\begin{equation*}\label{cl-4.11-eq1}
	K_X+\tau^{-1}(\Delta_{\tau})+E=f^*(K_X+\tau^{-1}(\Delta_{\tau})+E)
\end{equation*}
which implies that the restriction map
$$f|_{X\backslash (\tau^{-1}(\Delta_{\tau})\cup E)}:X\backslash (\tau^{-1}(\Delta_{\tau})\cup E)\to X\backslash (\tau^{-1}(\Delta_{\tau})\cup E)$$
 is \'etale by the purity of branch loci.
By \cite[Theorem 1.4]{MZg20}, $(X, \tau^{-1}(\Delta_{\tau})+E)$ is a toric pair.
In particular, $\tau^{-1}(\Delta_{\tau})+E$ has $\dim(X)+\rho(X)$ components.

On the other hand, both $\Delta_\tau$ and $\tau^{-1}(\Delta_\tau)$ have $\dim(Y)+\rho(Y)$ components each by Lemma \ref{lem-Y-toric} (cf.~Notation \ref{notation-conic}).
So
$\tau^{-1}(\Delta_{\tau})+E$ is a divisor with only $\dim(Y)+\rho(Y)+1=\dim(X)+\rho(X)-1$ components.
We have reached a contradiction.
\end{proof}

\begin{proof}
[{\bf End of Proof of Proposition \ref{prop-no-div}}]
By the adjunction formula, $K_X\cdot \ell=-2$ for a general fibre $\ell$ of $\tau: X \to Y$.
This and Claim \ref{cl-2-cover} imply that $K_X+\tau^{-1}(\Delta_{\tau})+E$ is $\tau$-trivial.
By the cone theorem (cf.~\cite[Theorem 3.7]{KM98}) and Claim \ref{cl-neq0},
\begin{equation*}\label{cl4-pf4.7} K_X+\tau^{-1}(\Delta_{\tau})+E\sim \tau^*D \end{equation*}
for some Cartier divisor $D\not\equiv 0$ on $Y$.
Therefore, restricting the above equation
to $E$ for both sides and applying the adjunction formula, we have
$$K_E+\tau^{-1}(\Delta_{\tau})|_E=(K_X+\tau^{-1}(\Delta_{\tau})+E)|_E\sim \tau^*D|_E=(\tau|_E)^*D\not\equiv 0,$$
where the last inequality follows from Claim \ref{cl-2-cover}.
This contradicts Claim \ref{cl-eq0}.
The proof of Proposition \ref{prop-no-div} is completed.
\end{proof}

\begin{lemma}\label{lem-branch}
Suppose that $\Delta_{\tau}\neq\emptyset$.
Suppose further that there exists an extremal contraction $\pi:X\to Z\cong \mathbb{P}^1$ of a $K_X$-negative extremal face, with reduced and irreducible fibres such that the induced map $\psi:X\to Y\times Z$ is finite surjective.
Denote by $h:=f|_Z$ after iteration.
Then the following hold:
\begin{enumerate}
\item $\deg (\psi)>1$ and $\psi$ has branch loci $B_{\psi}\neq\emptyset$.
\item Each prime divisor of $B_{\psi}$ dominates both $Y$ and $Z$.
\item $(g\times h)^*B_{\psi}=(g\times h)^{-1}(B_{\psi})$.
\item $B_{\psi}$ is $(g\times h)$-invariant.
\end{enumerate}
\end{lemma}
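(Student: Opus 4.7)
I would prove the four parts in order, using the commutative identity $\psi\circ f=(g\times h)\circ\psi$ (valid after the iteration that makes both $\tau$ and $\pi$ equivariant) and its consequence from the chain rule for ramification divisors,
\[
R_f+f^*R_\psi=R_\psi+\psi^*R_{g\times h},
\]
where $R_{g\times h}=p_1^*R_g+p_2^*R_h$ has prime components only of the form $C\times Z$ (for a curve $C\subset Y$) or $Y\times\{z\}$ (for $z\in Z$). Throughout, $B_\psi$ is the (reduced) image of $R_\psi$ under $\psi$.

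\emph{For (1),} if $\deg\psi=1$ then $\psi$ is birational and finite between smooth projective varieties, hence an isomorphism by Zariski's main theorem, which would identify $\tau$ with the first projection $Y\times Z\to Y$ and force $\Delta_\tau=\emptyset$, contradicting the hypothesis. For $B_\psi\neq\emptyset$, observe that $Y$ (smooth rational) and $Z\cong\PP^1$ are simply connected, hence so is $Y\times Z$; a connected finite cover of degree greater than one therefore cannot be \'etale.

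\emph{For (2) and (3),} suppose a prime component $B\subseteq B_\psi$ fails to dominate $Y$; then $B=C\times Z$ for a curve $C\subseteq Y$, so $\psi^*B=\tau^*C$, which is reduced and irreducible by Proposition \ref{pro-cone-reduced-reducible} applied to the Fano extremal contraction $\tau$ on smooth $X$. Hence $\psi$ is unramified along $\psi^{-1}(B)$, contradicting $B\subseteq B_\psi$. Likewise $\psi^*(Y\times\{z\})=\pi^*z$ is reduced and irreducible by the hypothesis on $\pi$, handling the $Z$-dominance case and proving (2). For (3), any prime component $D$ of $(g\times h)^{-1}(B_\psi)$ is mapped by $g\times h$ onto a component of $B_\psi$ that dominates both factors by (2); finiteness of $g\times h$ forces $D$ itself to dominate both factors. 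Since every component of $R_{g\times h}$ dominates only one factor, $D$ lies outside $R_{g\times h}$, so $g\times h$ is unramified along $D$ and $(g\times h)^*B_\psi=(g\times h)^{-1}(B_\psi)$.

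\emph{For (4),} I will show that $f$ permutes the prime components of $R_\psi$; this yields
\[
(g\times h)(B_\psi)=(g\times h)(\psi(R_\psi))=\psi(f(R_\psi))=\psi(R_\psi)=B_\psi,
\]
and together with (3) furnishes the $(g\times h)$-invariance of $B_\psi$. Fix a component $D$ of $R_\psi$; by (2), $\psi(D)$ dominates both $Y$ and $Z$, and since $\tau|_D=p_1\circ\psi|_D$ and $\pi|_D=p_2\circ\psi|_D$, the divisor $D$ itself dominates both factors. For any component $D'$ of $f^{-1}(D)$, the equivariances $\tau\circ f=g\circ\tau$ and $\pi\circ f=h\circ\pi$ combined with finiteness of $g,h$ force $\tau(D')=Y$ and $\pi(D')=Z$; hence $D'$ dominates both factors, so it is not a component of $\tau^*R_g$ or $\pi^*R_h$, and the coefficient of $D'$ in $\psi^*R_{g\times h}$ vanishes. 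Comparing coefficients of $D'$ in the ramification formula then gives $[R_\psi]_{D'}=[R_f]_{D'}+e\,[R_\psi]_D\geq 1$, where $e\geq 1$ is the multiplicity of $D'$ in $f^*D$; thus $D'\in R_\psi$. Since $R_\psi$ has finitely many components and $f$ is finite surjective, $f$ induces a permutation of them. The decisive step is the coefficient comparison: without the dominance property guaranteed by (2), $[\psi^*R_{g\times h}]_{D'}$ could be positive and would possibly force $[R_\psi]_{D'}$ to vanish, breaking the argument.
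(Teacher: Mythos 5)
Your proposal is correct, and parts (1)--(3) run exactly as the paper does: degree one would make $\psi$ an isomorphism and kill $\Delta_\tau$, simple connectedness of $Y\times Z$ forces a branch divisor, and the reduced-irreducibility of $\tau^*C$ (Proposition \ref{pro-cone-reduced-reducible}) and of the fibres of $\pi$ rules out branch components that fail to dominate a factor; your argument for (3) is the same observation phrased upstairs (components of $(g\times h)^{-1}(B_\psi)$ dominate both factors, hence avoid $R_{g\times h}$) rather than downstairs (no $P_i$ lies in the branch locus of $g\times h$).

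For (4) you take a genuinely different, though closely parallel, route. The paper stays downstairs: using (3) it compares coefficients in $\psi^*(g\times h)^*P_i=f^*\psi^*P_i$, extracts for each branch component $P_i$ a component $P_i(1)$ of $(g\times h)^{-1}(P_i)$ lying in $B_\psi$, notes $P_i(1)\neq P_j(1)$ for $i\neq j$, and concludes $B_\psi=\bigcup_i P_i(1)$, hence $(g\times h)(B_\psi)=B_\psi$. You instead work upstairs with the chain rule $R_f+f^*R_\psi=R_\psi+\psi^*R_{g\times h}$: statement (2) guarantees that components of $R_\psi$ and their $f$-preimage components dominate both factors, so the $\psi^*R_{g\times h}$ term contributes nothing at such a component, and the coefficient comparison shows every component of $f^{-1}(D)$, $D\subseteq R_\psi$, lies again in $R_\psi$; pushing forward by $\psi$ then gives $(g\times h)(B_\psi)=\psi(f(\Supp R_\psi))=B_\psi$. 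This makes (4) independent of (3) (your parenthetical ``together with (3)'' is not actually needed), at the cost of invoking the ramification chain rule explicitly. The only terse spot is the assertion that $f$ ``induces a permutation'' of the components of $R_\psi$: this needs the small counting argument that the preimage-component sets $\{$components of $f^{-1}(D)\}$, $D\subseteq R_\psi$, are pairwise disjoint nonempty subsets of the finite set of components of $R_\psi$, hence singletons exhausting it -- which is exactly the role played by ``$P_i(1)\neq P_j(1)$'' in the paper's counting. With that sentence added, your proof is complete.
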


\begin{proof}
Let $p_Y:Y\times Z\to Y$ and $p_Z:Y\times Z\to Z$  be the projections.
If $\deg (\psi)=1$, then $X \cong Y\times Z$ and $\tau$ is a trivial (smooth) $\mathbb{P}^1$-bundle,  contradicting that $\Delta_{\tau}\neq\emptyset$.
So $\deg (\psi)>1$.
Since $Y\times Z$ is simply connected, $\psi$ has branch loci $B_{\psi}\neq \emptyset$. This proves (1).

By the purity of branch loci, write $B_{\psi}=\bigcup_{i=1}^n P_i$ with $P_i$ being prime divisors.
Let $Q$ be a prime divisor of $Y\times Z$ with  $p_Z(Q)\neq Z$.  
Then we have $Q = p_Z^*p_Z(Q)$ by the coordinate projection and hence
$\psi^*Q=\pi^*p_Z(Q)$. 
Since $p_Z(Q)$ is a single point, by our assumption, $\psi^*Q=\pi^*p_Z(Q)$ is reduced and irreducible.
Similar arguments show that $\psi^*Q$ is also a reduced and irreducible prime divisor if  $Q$ is a prime divisor of $Y\times Z$ with  $p_Y(Q)\neq Y$, noting that $\tau$ is a Fano contraction of a $K_X$-negative extremal ray (cf.~Proposition \ref{pro-cone-reduced-reducible}).
As a result, each prime divisor $P_i\subseteq B_{\psi}$ dominates both $Y$ and $Z$.
So (2) is proved.

Note that the branch locus of $g\times h$: $B_{g\times h}=p_Y^{-1}(B_g)+p_Z^{-1}(B_h)$.
By (2), no prime divisor of $B_{\psi}$ lies in the branch locus of $g\times h$.
So $(g\times h)^*P_i=(g\times h)^{-1}(P_i)=\sum_k P_i(k)$ with $P_i(k)$ being prime divisors.
In particular, (3) is proved.

By (3) and comparing the coefficients on the both sides of $\psi^*(g\times h)^*P_i=f^*\psi^*P_i$,
we may assume $P_i(1)\subseteq B_{\psi}$ for each $i$.
Note that $P_i(1)\neq P_j(1)$ for any $i\neq j$.
Then $B_{\psi}=\bigcup_{i=1}^n P_i(1)$ and hence $(g\times h)(B_{\psi})=B_{\psi}$.
So (4) is proved.
\end{proof}

\begin{lemma}\label{lem-no-p1}
Suppose that $\Delta_{\tau}\neq\emptyset$.
Suppose further that $f$ is polarized.
Then it is impossible that $X$ has an extremal contraction $\pi:X\to Z\cong \mathbb{P}^1$ of a $K_X$-negative extremal face, with reduced and irreducible fibres  such that the induced map $\psi:X\to Y\times Z$ is finite surjective.
\end{lemma}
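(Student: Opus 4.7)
The plan is to derive a contradiction by exhibiting a non-zero class in $\N^1(Y\times Z)$ that is fixed by $(g\times h)^*$, which is impossible once we know $g\times h$ is $q$-polarized with $q>1$. First, using the $f$-equivariance of $\tau$ and $\pi$ (after iteration, by Lemma \ref{lem-equiv-MMP}) together with Lemma \ref{lem-pol-mod>1}, I would promote the $q$-polarization of $f$ on $X$ to a $q$-polarization of both $g=f|_Y$ and $h=f|_Z$ with the same modulus $q>1$. Consequently $g\times h$ is $q$-polarized on $Y\times Z$, witnessed by the ample divisor $p_Y^*H_Y+p_Z^*H_Z$, and every eigenvalue of $(g\times h)^*$ acting on $\N^1(Y\times Z)$ has modulus $q>1$.

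The main step is to upgrade the $(g\times h)$-invariance of $B_\psi$ from Lemma \ref{lem-branch}(4) to the set-theoretic equality $(g\times h)^{-1}(B_\psi)=B_\psi$. The inclusion $B_\psi\subseteq(g\times h)^{-1}(B_\psi)$ is immediate. For the reverse, let $P'$ be a prime component of $(g\times h)^{-1}(B_\psi)$ and set $P=(g\times h)(P')\subseteq B_\psi$. By Lemma \ref{lem-branch}(2), $P$ dominates both $Y$ and $Z$, and since $g\times h$ is surjective on each factor, $P'$ does so as well. The branch divisor $R_{g\times h}=p_Y^*R_g+p_Z^*R_h$ has components dominating only one of $Y$ or $Z$, so $P'\not\subseteq\Supp R_{g\times h}$, i.e.\ $g\times h$ is étale at the generic point of $P'$. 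Since a finite surjective morphism between smooth varieties is flat (miracle flatness), the identity $\psi\circ f=(g\times h)\circ\psi$ identifies, in a suitable étale neighbourhood, the scheme-theoretic fibre $\psi^{-1}(p')$ with $\psi^{-1}(p)$ for generic $p'\in P'$ and $p=(g\times h)(p')\in P$. Since $p\in B_\psi$ forces $\psi^{-1}(p)$ to be non-reduced, so is $\psi^{-1}(p')$, whence $p'\in B_\psi$ and $P'\subseteq B_\psi$.

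With this equality and Lemma \ref{lem-branch}(3), the pullback $(g\times h)^*B_\psi$ equals $B_\psi$ as a (reduced) Cartier divisor, so $(g\times h)^*[B_\psi]=[B_\psi]$ in $\N^1(Y\times Z)$. Since $B_\psi$ is a non-zero effective divisor (Lemma \ref{lem-branch}(1)), its class is non-zero, which contradicts the fact that every eigenvalue of $(g\times h)^*$ has modulus $q>1$. The main technical hurdle is the flat-base-change step in the key argument: one must carefully separate the roles of $R_\psi$ and $R_{g\times h}$ and use the flatness of $\psi$ between smooth varieties so that the étale map $g\times h$ correctly transports the ramification profile of $\psi$ from above $P$ to above $P'$.
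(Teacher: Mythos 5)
Your reduction to showing the set-theoretic equality $(g\times h)^{-1}(B_{\psi})=B_{\psi}$ is the right target (once you have it, either your eigenvalue contradiction or the paper's appeal to Lemma \ref{lem-branch}(2) finishes the job), and your observations that $g\times h$ is $q$-polarized and that no component of $(g\times h)^{-1}(B_{\psi})$ lies in $\Supp R_{g\times h}$ are correct. The gap is in the step that is supposed to prove the reverse inclusion: the claim that, because $g\times h$ is \'etale at a general point $p'$ of $P'$, flatness of $\psi$ identifies the scheme-theoretic fibre $\psi^{-1}(p')$ with $\psi^{-1}(p)$. The square $\psi\circ f=(g\times h)\circ\psi$ is \emph{not} Cartesian, and nothing in the hypotheses makes it Cartesian near $p'$; \'etale base change along $g\times h$ identifies $\psi^{-1}(p)$ with the fibre over $p'$ of the pulled-back family $X\times_{(Y\times Z),\,\psi,\,g\times h}(Y\times Z)$, not with the fibre of $\psi$ itself over $p'$. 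What the commutative square actually gives, using that $g\times h$ is \'etale at $p'$, is only this: a point $x'\in\psi^{-1}(p')$ is a ramification point of $\psi$ if and only if $f$ is ramified at $x'$ or $f(x')$ is a ramification point of $\psi$ over $p$. Since $f$ need not map $\psi^{-1}(p')$ onto the ramified part of $\psi^{-1}(p)$ (it could send every point of $\psi^{-1}(p')$, unramifiedly, to the unramified points of $\psi^{-1}(p)$), the non-reducedness of $\psi^{-1}(p)$ does not transfer to $\psi^{-1}(p')$. So $p\in B_{\psi}$ does not yield $p'\in B_{\psi}$ by this argument.

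This is precisely why the paper spends most of its proof on a step your proposal omits entirely: showing $\deg(\psi)=2$. That step is dynamical and geometric (an $f$-periodic general fibre $F$ of $\pi$, the log ramification formula together with Lemma \ref{lem-Y-toric} to get $(K_X+\tau^{-1}(\Delta_\tau))|_F\equiv 0$, the cone theorem, and the intersection computation $F\cdot\ell=2$ against a reducible conic fibre). With $\deg(\psi)=2$ one has $\psi^*B_{\psi}=2\psi^{-1}(B_{\psi})$, and then the divisor identity $\psi^*\bigl((g\times h)^{-1}(B_{\psi})\bigr)=\psi^*(g\times h)^*B_{\psi}=f^*\psi^*B_{\psi}=2f^*\psi^{-1}(B_{\psi})$ forces every component of $(g\times h)^{-1}(B_{\psi})$ to pull back with even multiplicity, i.e.\ to lie in $B_{\psi}$ --- a parity argument that genuinely uses the double-cover structure and has no obvious analogue in higher degree. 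Without either the degree-two computation or a valid substitute for it, the key inclusion $(g\times h)^{-1}(B_{\psi})\subseteq B_{\psi}$ remains unproved, so the proposal does not establish the lemma.
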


\begin{proof}
Suppose the contrary that such $\pi$ exists.
Denote by $h:=f|_Z$ after iteration (cf.~Lemma \ref{lem-equiv-MMP}) and $p_Y:Y\times Z\to Y$, $p_Z:Y\times Z\to Z$ the two projections.

We claim that $\deg (\psi)=2$.
Note that $h$ (like $f$) is also polarized.
Let $F$ be an $f$-periodic smooth general fibre of $\pi:X\to Z$ (cf.~\cite[Theorem 5.1]{Fak03}).
After iteration, we may assume $f(F)=F$.
Consider the following commutative diagram
\[
\xymatrix{
F\ar[r]^{f|_F}\ar[d]_{\tau|_F}& F\ar[d]^{\tau|_F}\\
Y\ar[r]_g& Y
}
\]
where $\tau|_F$ is finite surjective by the finiteness of $\psi$.
Since $F$ is general, we may assume $\tau^{-1}(\Delta_{\tau})|_F=\tau^{-1}(\Delta_{\tau})\cap F$ is a reduced divisor on $F$.
By the adjunction formula, the log ramification divisor formula on $\tau|_F$ and Lemma \ref{lem-Y-toric}, we have
\begin{equation}\tag{$*$}
(K_X+\tau^{-1}(\Delta_{\tau}))|_F=K_F+\tau^{-1}(\Delta_{\tau})|_F=(\tau|_F)^*(K_Y+\Delta_{\tau})+R\sim R
\end{equation}
with $R$ an effective divisor on $F$.
Consider the log ramification divisor formula on $f|_F$:
$$K_F+\tau^{-1}(\Delta_{\tau})|_F=(f|_F)^*(K_F+\tau^{-1}(\Delta_{\tau})|_F)+R' ;$$
here $R'$ is effective  on $F$.
Hence, $-(K_F+\tau^{-1}(\Delta_{\tau})|_F)$ is pseudo-effective by Lemma \ref{lem-int-amp-equiv},
since
$f|_F$ is still polarized.
This and the $(*)$ above imply $(K_X+\tau^{-1}(\Delta_{\tau}))|_F\equiv 0$.
Hence, $K_X+\tau^{-1}(\Delta_{\tau})=\pi^*H$ for some divisor $H$ on $Z$, by the cone theorem (cf.~\cite[Theorem 3.7 (4)]{KM98}).
Write $\pi^*H\sim nF$ for some $n\in \mathbb{Z}$.
Let $\ell$ be a general fibre of $\tau$ and $\ell'\cup\ell''$ a (reduced) reducible fibre.
Since $\tau$ is a (flat) conic bundle, $\ell\equiv \ell'+\ell''$.
Then we have
$$nF\cdot \ell=\pi^*H\cdot \ell=(K_X+\tau^{-1}(\Delta_{\tau}))\cdot \ell=K_X\cdot \ell=-2$$
by the adjunction formula.
Since $F$ is integral and $\tau$-ample, $F\cdot \ell=F\cdot (\ell'+\ell'')\ge 1+1$.
So $n=-1$ and $F\cdot \ell=2$.
In particular, $\deg (\psi)=2$ and the claim is proved.

Note that $\psi$ being a double cover implies $\psi^*B_{\psi}=2\psi^{-1}(B_{\psi})$.
By Lemma \ref{lem-branch} (3), $(g\times h)^*B_{\psi}=(g\times h)^{-1}(B_{\psi})$.
Then we have:
$$2f^*\psi^{-1}(B_{\psi})=f^*\psi^*B_{\psi}=\psi^*(g\times h)^*B_{\psi}=\psi^*((g\times h)^{-1}(B_{\psi}));$$
hence $(g\times h)^{-1}(B_{\psi})\subseteq B_{\psi}$, noting that for each irreducible component $P$ of $(g\times h)^{-1}(B_\psi)$, we have $\psi^*P=2Q$ for some prime divisor $Q$ on $X$.
Since $g\times h$ is finite surjective, we have $(g\times h)^{-1}(B_{\psi})=B_{\psi}$.
Then $B_{\psi}\subseteq B_{g\times h}= p_Y^{-1}(B_g)+p_Z^{-1}(B_h)$
since $g\times h$ (like its lifting $f$) is also polarized
 (cf.~Lemma \ref{lem-pol-mod>1}).
This contradicts Lemma \ref{lem-branch} (2).
\end{proof}

\begin{lemma}\label{lem-p2-pol}
Suppose that $\Delta_{\tau}\neq\emptyset$ and $Y\cong \mathbb{P}^2$.
Then $f$ is polarized after iteration.
\end{lemma}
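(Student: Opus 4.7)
The plan is to reduce the polarization of $f$ to the statement $f^*|_{\N^1(X)} = q\cdot\id$, which by Lemma \ref{lem-pol-mod>1} is equivalent to $f$ being $q$-polarized. Since $Y = \PP^2$ has Picard number one and $\deg(g) > 1$ by Proposition \ref{prop-zhang}, the descent $g = f|_Y$ is automatically $q$-polarized: $g^*H \sim qH$ for the hyperplane class $H$ and some integer $q > 1$. Pulling back along $\tau$ gives $f^*(\tau^*H) \equiv q\tau^*H$, so $q$ is already one eigenvalue of $f^*|_{\N^1(X)}$.

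Next I would complete $\tau^*H$ to a basis $(\tau^*H, M)$ of $\N^1(X)_\R$ by choosing $M$ with $M\cdot\ell = 1$ for a general fibre $\ell$ of $\tau$; such $M$ exists since $\rho(X) = \rho(Y)+1 = 2$. The projection formula $f^*M \cdot \ell = M\cdot f_*\ell = d$ with $d := \deg(f|_\ell)$ gives $f^*M \equiv \alpha\,\tau^*H + d\,M$. Hence the eigenvalues of $f^*|_{\N^1(X)}$ are precisely $q$ and $d$, and polarization of $f$ is equivalent to the conjunction (a) $d = q$ and (b) $\alpha = 0$ (both are needed, the first for equality of moduli and the second for diagonalizability, ruling out a Jordan block at $q$).

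To establish (a) and (b) I would analyse the other extremal contraction $\pi: X \to Z$ of $X$, which is $f$-equivariant after iteration by Lemma \ref{lem-equiv-MMP}(1). Three cases arise: (I) $\pi$ is divisorial with exceptional divisor $E$; (II) $\pi: X \to Z \cong \PP^1$ is a del Pezzo fibration; (III) $\pi: X \to Z \cong \PP^2$ is a second conic bundle (using Lemma \ref{lem-base-rat} and $\rho(Z) = \rho(X)-1 = 1$). In (II) and (III) the class $\pi^*A$ of an ample pullback gives a second $f^*$-eigenvector transverse to $\tau^*H$, so $\alpha = 0$ comes for free and the second eigenvalue equals the polarization degree $q'$ of $f|_Z$; matching the two decompositions $\deg(f) = q^2 d$ from $\tau$ and its analogue from $\pi$ (via Lemma \ref{lem-branch} applied to the induced morphism $X \to Y\times Z$, with reducedness and irreducibility of fibres coming from Proposition \ref{pro-cone-reduced-reducible}) then forces $d = q' = q$.

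The main obstacle is case (I), since Proposition \ref{prop-no-div} would exclude it only under the int-amplified hypothesis that we are trying to prove. I would rule it out by reprising the adjunction argument in Claims \ref{cl-2-cover}--\ref{cl-neq0}: the invariant divisor $E$ is normal and $\Q$-factorial by \cite{Mor82}; the log-ramification formula for $(X, E + \tau^{-1}(\Delta_\tau))$ restricted to $E$, combined with Theorem \ref{thm-bh} applied to $(X, \tau^{-1}(\Delta_\tau))$ and Lemma \ref{lem-Y-toric} (which gives $K_Y + \Delta_\tau \sim 0$, so that $K_X + \tau^{-1}(\Delta_\tau)$ is $\tau$-trivial), yields $K_E + \tau^{-1}(\Delta_\tau)|_E \equiv 0$. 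This is incompatible with $K_X+\tau^{-1}(\Delta_\tau)+E \not\equiv 0$, which follows from the cone theorem and the component-count of $\tau^{-1}(\Delta_\tau)+E$ (having $\dim(Y) + \rho(Y) + 1 < \dim(X) + \rho(X)$ components since $\rho(X) > \rho(Y) + 1$ fails), exactly as in Claim \ref{cl-neq0}. Ruling out (I) this way is the technical heart of the proof; once done, cases (II) and (III) deliver (a) and (b) and hence the $q$-polarization of $f$.
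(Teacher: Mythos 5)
Your reduction to the matrix $\begin{pmatrix} q & \alpha \\ 0 & d\end{pmatrix}$ and the case division by the second extremal contraction are fine, but your treatment of case (I) is circular. You propose to rule out a divisorial contraction by reprising Claims \ref{cl-2-cover}--\ref{cl-neq0}, yet every essential step there runs on int-amplifiedness of $f$ on $X$: Theorem \ref{thm-bh} (needed for $E\cdot\ell\le 2$ and for the lc/effectivity statements) assumes $f$ int-amplified, and the conclusion $K_E+\tau^{-1}(\Delta_\tau)|_E\equiv 0$ in Claim \ref{cl-eq0} is extracted from the fixed-class identity via Lemma \ref{lem-int-amp-equiv} applied to $f|_E$, which again requires int-amplifiedness. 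At this point of the argument you only know $\deg(g)>1$ (so $g$ on $\PP^2$ is polarized, which legitimizes Lemma \ref{lem-Y-toric}), but the possible second eigenvalue of $f^*|_{\N^1(X)}$ could be $1$, so $f$ need not be int-amplified --- indeed, for Fano $X$ int-amplifiedness is exactly equivalent to what you are trying to prove. Moreover, excluding the divisorial case is unnecessary: the paper handles it in one line --- if $\pi:X\to Z$ is birational then $\rho(Z)=1$, $f|_Z$ is non-isomorphic hence polarized, and pulling back a big class through $\pi$ makes $f$ polarized by Lemma \ref{lem-pol-mod>1}(2). (Alternatively, note that $\pi^*A$ is a big nef $f^*$-eigenvector not proportional to the non-big $\tau^*H$ nor to the non-nef $E$, forcing both eigenvalues to agree.)

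There is a second, smaller gap in case (II) ($Z\cong\PP^1$). Your stated mechanism, ``matching the two decompositions of $\deg(f)$,'' does not deliver $d=q$: from $\tau$ you get $\deg(f)=q^2 d$, and from the finite map $\psi:X\to Y\times Z$ you get $\deg(f)=\deg(g\times h)=q^2 q'$, which only reproves $d=q'$ (already known from the trace), not $q'=q$. The actual content of the paper's argument is a transfer of polarization degrees through the branch divisor: by Lemma \ref{lem-branch}(2),(4) some $(g\times h)$-invariant prime component $P_1\subseteq B_\psi$ dominates both $Y$ and $Z$; since $p_Y|_{P_1}$ is generically finite surjective, $(g\times h)|_{P_1}$ is $q$-polarized by Lemma \ref{lem-pol-mod>1}, and since $p_Z|_{P_1}$ is surjective this forces $h$ to be $q$-polarized, i.e.\ $q'=q$. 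You cite Lemma \ref{lem-branch} but do not perform this transfer, so case (II) as written is incomplete; case (III) ($Z\cong\PP^2$), by contrast, does follow from the intersection-number computation $q^2p=p^2q$ as you intend.
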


\begin{proof}
Note that $\rho(X)=\rho(Y)+1=2$ and $-K_X$ is ample.
Let $\pi:X\to Z$ be the second contraction of a $K_X$-negative extremal ray.
After iteration, we may assume $\pi$ is $f$-equivariant (cf.~Lemma \ref{lem-equiv-MMP}).
Note that $\rho(Z)=1$.
If $\dim(Z)=\dim(X)$, then $f|_Z$ is non-isomorphic and hence polarized; thus
$f$ is polarized (cf.~Lemma \ref{lem-pol-mod>1}).
So we may assume $\pi$ is a Fano contraction.
Then
$Z\cong \mathbb{P}^2$ or $\mathbb{P}^1$
(cf.~Lemma \ref{lem-base-rat}).
Set $h := f|_Z$.
Then $h^*\mathcal{O}_Z(1)=\mathcal{O}_Z(p)$ for some $p\ge 1$.
On the other hand, since $g=f|_Y$ has $\deg(g) > 1$ by Proposition \ref{prop-zhang} and $Y \cong \PP^2$,
we have $g^*\mathcal{O}_Y(1)=\mathcal{O}_Y(q)$ for some $q>1$.
Now $f^*|_{\N^1(X)}=\diag[q,p]$.
It then suffices to show that $p=q$ (cf.~Lemma \ref{lem-pol-mod>1}).

Consider the case $Z\cong \mathbb{P}^2$.
Let $D_1=\tau^*\mathcal{O}_{Y}(1)$ and $D_2=\pi^*\mathcal{O}_{Z}(1)$.
Then $f^*D_1\sim qD_1$ and $f^*D_2\sim pD_2$.
Since $\tau \ne \pi$, we have $D_1^2\cdot D_2>0$ and $D_2^2\cdot D_1>0$.
By the projection formula, $\deg (f)=q^2p=p^2q$.
So $q=p$ and $f$ is polarized.

Consider the case $Z\cong \mathbb{P}^1$.
Then our $\pi$ here is a Fano contraction of a $K_X$-negative extremal ray (and hence has reduced and irreducible fibres; cf.~Proposition \ref{pro-cone-reduced-reducible}).
Then the induced map $\psi:X\to Y\times Z$ is finite surjective since $\tau \ne \pi$ and
$\rho(X)=\rho(Y\times Z)$.
By Lemma \ref{lem-branch}, $\deg (\psi)>1$, the branch locus $B_\psi\neq\emptyset$ and we may assume (after iteration) $(g\times h) (P_1)=P_1$ for some prime divisor $P_1\subseteq B_\psi$.
Denote by $p_Y:Y\times Z\to Y$ and $p_Z:Y\times Z\to Z$ the two projections.
Since $p_Y|_{P_1}:P_1\to Y$ is generically finite and surjective by Lemma \ref{lem-branch} (2),
$(g\times h)|_{P_1}$ (like its descending $g$ on $Y$) is $q$-polarized by Lemma \ref{lem-pol-mod>1}.
Since $p_Z|_{P_1}:P_1\to Z$ is surjective,
$h$ on $Z$ (like its lifting $(g\times h)|_{P_1}$) is $q$-polarized by  Lemma \ref{lem-pol-mod>1}.
So $p=q$ and  $f$ is polarized.
\end{proof}

\begin{lemma}\label{lem-p1p1-pol}
Suppose that $\Delta_{\tau}\neq \emptyset$ and $Y\cong \mathbb{P}^1\times \mathbb{P}^1$.
Then $f$ is polarized after iteration.
\end{lemma}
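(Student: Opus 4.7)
The strategy parallels the proof of Lemma \ref{lem-p2-pol}. After iteration, by Lemma \ref{lem-equiv-MMP}, $g := f|_Y$ descends to an endomorphism of $Y = \mathbb{P}^1 \times \mathbb{P}^1$; since the two rulings $p_s, p_t$ are the only Fano contractions of $Y$ (and hence $f^{-1}$-preserved), $g$ splits as $g_s \times g_t$. Set $q_i := \deg g_i$, so $q_s q_t = \deg g > 1$ by Proposition \ref{prop-zhang}. Then $\tau_i := p_i \circ \tau: X \to \mathbb{P}^1$ gives two $f^*$-eigendivisors $D_i := \tau_i^* \mathcal{O}_{\mathbb{P}^1}(1)$ with $f^* D_i \sim q_i D_i$, contributing eigenvalues $q_s, q_t$ of $f^*|_{\operatorname{N}^1(X)}$. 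Since $\rho(X) = \rho(Y) + 1 = 3$, a third eigendirection, whose eigenvalue must be matched with $q_s$ and $q_t$, is still needed.

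The next step is to locate, after further iterating $f$, a $K_X$-negative $f$-equivariant extremal face contraction $\pi: X \to Z \cong \mathbb{P}^1$ not factoring through $\tau$, with reduced and irreducible fibres, such that $\psi := (\tau, \pi): X \to Y \times Z$ is finite surjective. Since $\rho(X) = 3$ and $X$ is Fano, $\overline{\operatorname{NE}}(X)$ has enough extremal rays to provide a $2$-dimensional face not containing the ray of $\tau$; finiteness of $\psi$ then follows from $\rho(X) = 3 = \rho(Y \times Z)$ together with $\tau \ne \pi$, analogous to the $Z \cong \mathbb{P}^1$ case of Lemma \ref{lem-p2-pol}. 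Setting $h := f|_Z$, a fibre-by-fibre computation as in Lemma \ref{lem-no-p1} — using the log-ramification and adjunction formulas on a smooth general fibre $F$ of $\pi$ together with $K_Y + \Delta_\tau \sim 0$ from Lemma \ref{lem-Y-toric} — shows that $\deg \psi = 2$, making $\psi$ a double cover.

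Since $\psi$ is a double cover, $\psi^* B_\psi = 2 \psi^{-1}(B_\psi)$; combined with Lemma \ref{lem-branch}(3) this yields $(g \times h)^{-1}(B_\psi) = B_\psi$. By Lemma \ref{lem-branch}(2), each prime component $P \subseteq B_\psi$ dominates both $Y$ and $Z$, so the polarization data of $(g \times h)|_P$, transferred via the surjections $P \to Y$ and $P \to Z$ and Lemma \ref{lem-pol-mod>1}, force $q_s = q_t = \deg h =: q$. Thus all eigenvalues of $f^*|_{\operatorname{N}^1(X)}$ are equal to $q > 1$, and $f$ is polarized by Lemma \ref{lem-pol-mod>1}. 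The main obstacle is Step~2: guaranteeing the existence of the auxiliary face contraction $\pi: X \to \mathbb{P}^1$ independent of $\tau$ with reduced irreducible fibres and $\deg \psi = 2$. This will require a careful analysis of the extremal rays of $\overline{\operatorname{NE}}(X)$ beyond the ray of $\tau$, and excluding the degenerate scenario in which every morphism $X \to \mathbb{P}^1$ factors through $\tau$, using the Fano hypothesis on $X$ together with $\Delta_\tau \ne \emptyset$.
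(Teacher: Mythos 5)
Your outline replaces the paper's argument by an attempt to transplant the $Z\cong\PP^1$ branch-locus argument of Lemma \ref{lem-p2-pol}, and the two places you flag or gloss over are genuine gaps that cannot be filled at this point of the paper without circularity. First, Step 2: in the paper a second contraction $\pi:X\to Z\cong\PP^1$ with reduced and irreducible fibres and $\psi=(\tau,\pi)$ finite is produced only in Proposition \ref{prop-conic-p1p1}, \emph{after} polarization is known, because one needs Proposition \ref{prop-no-div} (no divisorial contractions), whose proof requires $f$ to be int-amplified, to force the second extremal contraction to be a Fano contraction onto a surface isomorphic to $\PP^1\times\PP^1$ and then compose with a ruling. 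Likewise your ``$\deg(\psi)=2$'' computation, modelled on Lemma \ref{lem-no-p1}, uses an $f$-invariant smooth general fibre $F$ of $\pi$ (Fakhruddin's density of periodic points, valid for polarized $f$) and the polarizedness of $f|_F$ to conclude that $-(K_F+\tau^{-1}(\Delta_\tau)|_F)$ is pseudo-effective; none of this is available while polarization is still the goal. Note also the internal tension: Lemma \ref{lem-no-p1} says that for polarized $f$ such a $\pi$ cannot exist, so your plan is in effect trying to run Proposition \ref{prop-conic-p1p1} before Lemma \ref{lem-p1p1-pol}, which is exactly the order the paper's logic cannot afford.

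Second, the final transfer step is circular. In Lemma \ref{lem-p2-pol} the argument works because $g$ on $Y\cong\PP^2$ is automatically $q$-polarized; hence $(g\times h)|_{P_1}$ is polarized (polarization lifts along the generically finite surjection $P_1\to Y$, Lemma \ref{lem-pol-mod>1}) and then descends along $P_1\to Z$ to force $\deg(h)=q$. Here $g=g_s\times g_t$ is precisely \emph{not} known to be polarized, so there is no polarization on $(g\times h)|_P$ to transfer: knowing only that $P$ dominates $Y$ and $Z$ gives $q_s,q_t,\deg(h)$ as eigenvalues of $((g\times h)|_P)^*$ on $\N^1(P)$, with no relation among them. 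The paper's actual proof avoids any auxiliary contraction: after diagonalizing $f^*|_{\N^1(X)}=\diag[\lambda_1,\lambda_2,\lambda_3]$ it uses that $\Delta_\tau$ is ample (being a union of fibres of both rulings, by Lemmas \ref{lem-p1p1-ticurve} and \ref{lem-Y-toric}), restricts $f$ to the surfaces $\tau^{-1}(L)$ over invariant or periodic fibres $L$ of the two rulings, gets $\lambda_1=\lambda_3$ from a projection-formula computation on a $(-1)$-curve in a degenerate conic fibre when $\lambda_2=1$, and gets $\lambda_1=\lambda_2$ by applying Lemma \ref{lem-noniso-nonpol} to $\tau^{-1}(L')\not\cong\PP^1\times\PP^1$; combining these yields that all three eigenvalues coincide. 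As written, your Steps 2 and 3 do not go through.
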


\begin{proof}
Note that $\rho(X) = \rho(Y)+1 = 3$.
After iteration, we write $f^*|_{\N^1(X)}=\diag[\lambda_1,\lambda_2,\lambda_3]$ and  $g^*|_{\N^1(Y)}=\diag[\lambda_2,\lambda_3]$ (cf.~\cite[Lemma 9.2]{Men20} or \cite[Lemma 5.4]{MZ20}).
We may assume $\lambda_2\le \lambda_3$.
By Proposition \ref{prop-zhang}, $\deg (g)>1$, hence $\lambda_3>1$.
Denote by $p_i:Y\to Y_i\cong \mathbb{P}^1$ ($i = 2, 3$) the two projections such that $\deg (g|_{Y_i})=\lambda_i$.
By Lemma \ref{lem-simple-loop}, $g^{-1}(\Delta_{\tau})=\Delta_{\tau}$.
Hence, $\Delta_\tau$ contains at least (and indeed exactly) two fibres each of  $p_2$ and $p_3$ by Lemmas \ref{lem-p1p1-ticurve} and \ref{lem-Y-toric},
so it is ample.

We first claim that either $\lambda_2>1$ or $\lambda_1=\lambda_3>1$.
Suppose $\lambda_2=1$.
Let $L$ be a general fibre of $p_2$ such that $L\not\subseteq \Delta_{\tau}$.
Since $g^{-1}(\Delta_{\tau})=\Delta_{\tau}$, we have $g(L)\not\subseteq \Delta_{\tau}$.
Since $g|_{Y_2}$ is an automorphism, $\#L\cap \Delta_{\tau}=L\cdot \Delta_{\tau}=g(L)\cdot \Delta_{\tau}=\#g(L)\cap \Delta_{\tau}\ge 1$.
Let $F:=\tau^{-1}(L)$.
Then $F$ (resp.~$f(F)$) is a smooth ruled surface with a ruling $\tau|_F: F \to L$ (resp.~$\tau|_{f(F)}: f(F) \to g(L)$) having
$\#L\cap \Delta_{\tau}$ ($>0$) singular fibres being intersecting $(-1)$-curves.
Let $\ell$ be an irreducible component of a singular fibre of $\tau|_F$ lying over $P:=\tau(\ell) \in L$.
Since $P\in L\cap \Delta_{\tau}$, we have $g^{-1}(g(P))=P$ and $(f|_F)^*(f(\ell))=\lambda_3 \ell$ by noting that $\deg (g|_L)=\lambda_3$.
By the projection formula,
$$-\deg (f|_F)=\deg (f|_F)\cdot \ell^2=((f|_F)^*(f(\ell)))^2=\lambda_3^2\ell^2=-\lambda_3^2.$$
Note that $\deg (f|_F)=\lambda_1\lambda_3$.
Then we have $\lambda_1=\lambda_3$ and the claim is proved.

Let $L'$ be a $g$-periodic general fibre of $p_3$ such that $L'\not\subseteq \Delta_{\tau}$ (cf.~\cite[Theorem 5.1]{Fak03}).
After iteration, we may assume $g(L')=L'$ with $\deg (g|_{L'})=\lambda_2$.
Denote by $F':=\tau^{-1}(L')$.
Then $(f|_{F'})^*|_{\N^1(F')}$ has two positive integral eigenvalues $\lambda_1, \lambda_2$
with either $\lambda_1 > 1$ or $\lambda_2 > 1$ by the claim above.
In particular, $f|_{F'}$ is non-isomorphic.
Our $F'$ is a smooth ruled surface over $L'$ with $\# L'\cap \Delta_{\tau}$ ($>0$) singular fibres.
By Lemma \ref{lem-noniso-nonpol},
$f|_{F'}$ is polarized after iteration.
So $\lambda_1=\lambda_2$.
If $\lambda_1=\lambda_3$, then $f$ is polarized.

Suppose $\lambda_1 \ne \lambda_3$.
By the claim above, $\lambda_2 > 1$.
Taking a $g$-periodic fibre of $p_2$, and arguing as above, we get $\lambda_1 = \lambda_3$, a contradiction.
So $f$ is polarized as required.
\end{proof}

\begin{proposition}\label{prop-conic-p2}
Suppose that $\Delta_{\tau}\neq\emptyset$.
Then $Y\not\cong \mathbb{P}^2$.
\end{proposition}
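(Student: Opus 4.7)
The plan is to suppose, for a contradiction, that $Y\cong\mathbb{P}^2$. By Lemma \ref{lem-p2-pol}, after iteration $f$ is $q$-polarized for some integer $q>1$; in particular $f$ is int-amplified, so by Proposition \ref{prop-no-div} our $X$ admits no divisorial contraction. Since $\rho(X)=\rho(Y)+1=2$, the other $K_X$-negative extremal ray gives an $f$-equivariant (after iteration, by Lemma \ref{lem-equiv-MMP}) Fano contraction $\pi:X\to Z$ with $Z$ smooth rational of Picard number one, hence $Z\cong\mathbb{P}^1$ or $\mathbb{P}^2$ by Lemma \ref{lem-base-rat}. When $Z\cong\mathbb{P}^1$, $\pi$ has reduced and irreducible fibres (Proposition \ref{pro-cone-reduced-reducible}), and the induced morphism $\psi:X\to Y\times Z=\mathbb{P}^2\times\mathbb{P}^1$ is finite (no curve can be contracted by both $\tau$ and $\pi$) and surjective by dimension; Lemma \ref{lem-no-p1} then provides the contradiction.

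Assume henceforth $Z\cong\mathbb{P}^2$, so $\pi$ is itself a Fano conic bundle. If $\Delta_\pi=\emptyset$, then $\pi$ is a $\mathbb{P}^1$-bundle and $X=\mathbb{P}_{\mathbb{P}^2}(\mathcal{E})$ by Lemma \ref{lem-p1-bundle-projective}; Proposition \ref{prop-ame-split} forces $\mathcal{E}$ to split as $\mathcal{O}\oplus\mathcal{O}(k)$ for some $k\ge 0$ (up to twisting). For $k=0$, $X\cong\mathbb{P}^2\times\mathbb{P}^1$, whose second extremal contraction is the projection to $\mathbb{P}^1$, contradicting $Y\cong\mathbb{P}^2$; for $k\ge 1$, the section of $\pi$ with normal bundle $\mathcal{O}_{\mathbb{P}^2}(-k)$ can be contracted divisorially, contradicting Proposition \ref{prop-no-div}.

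The main obstacle is the remaining sub-case $Z\cong\mathbb{P}^2$ with $\Delta_\pi\ne\emptyset$, which will be settled by two incompatible computations of the ramification divisor $R_f$. Put $H_Y:=\tau^*\mathcal{O}_Y(1)$ and $H_Z:=\pi^*\mathcal{O}_Z(1)$, a nef integral basis of $\N^1(X)$, and write $-K_X=aH_Y+bH_Z$. Applying adjunction to an irreducible component $\ell'$ of a reducible fibre of $\pi$ (which exists because $\Delta_\pi\ne\emptyset$) gives $-K_X\cdot\ell'=1$ with $H_Z\cdot\ell'=0$ and $H_Y\cdot\ell'\in\mathbb{Z}_{>0}$, forcing $a=H_Y\cdot\ell'=1$; symmetrically $b=1$, so $-K_X=H_Y+H_Z$. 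Applying Lemma \ref{lem-Y-toric} to both $\tau$ and $\pi$, each of $\Delta_\tau,\Delta_\pi$ is a union of three lines in $\mathbb{P}^2$, so every prime component of $\tau^{-1}(\Delta_\tau)$ is numerically $H_Y$ and every prime component of $\pi^{-1}(\Delta_\pi)$ is numerically $H_Z$; since $H_Y\ne H_Z$ in $\N^1(X)$, the reduced divisors $\tau^{-1}(\Delta_\tau)$ and $\pi^{-1}(\Delta_\pi)$ share no common component.

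Each such prime $S$ is $f^{-1}$-invariant after iteration and satisfies $f^*S=qS$ by polarization, so $R_f$ contains $S$ with multiplicity $q-1$. Hence, as effective divisors,
\[ R_f\ge(q-1)\bigl(\tau^{-1}(\Delta_\tau)+\pi^{-1}(\Delta_\pi)\bigr)\equiv 3(q-1)(H_Y+H_Z)=3(q-1)(-K_X). \]
On the other hand, $f^*K_X\equiv qK_X$ (by polarization) together with the ramification formula $K_X=f^*K_X+R_f$ yields $R_f\equiv(q-1)(-K_X)$. Therefore the non-zero effective divisor $R_f-(q-1)\bigl(\tau^{-1}(\Delta_\tau)+\pi^{-1}(\Delta_\pi)\bigr)$ has class $2(q-1)K_X$, which for $q>1$ is anti-ample and hence not pseudo-effective---the sought contradiction.
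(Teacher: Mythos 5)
Your reduction to the case $Z\cong\mathbb{P}^2$ coincides with the paper's proof: polarization via Lemma \ref{lem-p2-pol}, exclusion of a divisorial second contraction via Proposition \ref{prop-no-div}, and Lemma \ref{lem-no-p1} (with Proposition \ref{pro-cone-reduced-reducible}) killing $Z\cong\mathbb{P}^1$. From there you genuinely diverge. The paper exploits the double-line fibre $\ell_d=\tau^{-1}(C_1\cap C_2)$ over a node of $\Delta_\tau$: the three $f^{-1}$-invariant prime divisors $S_1$, $S_2$ and $D=\pi^{-1}(\pi(\ell_d))$ all contain the curve $\ell_d$, contradicting the log canonicity supplied by Theorem \ref{thm-bh}; no case distinction on $\Delta_\pi$ is needed. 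You instead split off $\Delta_\pi=\emptyset$ (correctly handled by Lemma \ref{lem-p1-bundle-projective}, Proposition \ref{prop-ame-split} and Proposition \ref{prop-no-div}) and, when $\Delta_\pi\neq\emptyset$, apply Lemma \ref{lem-Y-toric} symmetrically to both conic-bundle structures (legitimate, since $\pi$ satisfies the same standing hypotheses with $h=f|_Z$ int-amplified) to produce six distinct $f^{-1}$-invariant primes of class $H_Y$ or $H_Z$, and then play the resulting lower bound for $R_f$ against $R_f\equiv(q-1)(-K_X)$. This ramification count is a genuine alternative to the non-lc trick, and its ingredients ($f^*S=qS$, reducedness and irreducibility of $\tau^*C_i$, disjointness of the two families of primes) are all justified.

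One step, however, is not justified as written: the identity $-K_X\equiv H_Y+H_Z$. From $-K_X\cdot\ell'=1$, $H_Z\cdot\ell'=0$ and $H_Y\cdot\ell'\in\mathbb{Z}_{>0}$ you only obtain $a=1/(H_Y\cdot\ell')\le 1$; the phrase ``forcing $a=H_Y\cdot\ell'=1$'' tacitly assumes $a$ is a positive integer, which is not known because $H_Y,H_Z$ need not generate $\Pic(X)$ over $\mathbb{Z}$, and nothing classification-free rules out $H_Y\cdot\ell'\ge 2$ at this point. Fortunately the gap is inessential: your own computation gives $0<a\le 1$ and $0<b\le 1$, so the effective difference $R_f-(q-1)\bigl(\tau^{-1}(\Delta_\tau)+\pi^{-1}(\Delta_\pi)\bigr)$ has class $(q-1)\bigl((a-3)H_Y+(b-3)H_Z\bigr)$, the negative of a class in the interior of $\Nef(X)$, hence anti-ample, and the contradiction persists. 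Even more economically, intersect with a general fibre $\ell$ of $\tau$: $R_f\cdot\ell=(q-1)(-K_X)\cdot\ell=2(q-1)$, while $R_f\ge(q-1)\pi^{-1}(\Delta_\pi)$ already forces $R_f\cdot\ell\ge 3(q-1)(H_Z\cdot\ell)\ge 3(q-1)$, so the values of $a,b$ (and the $\tau$-side divisors) are not needed at all. With that repair your argument is complete.
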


\begin{proof}
Suppose the contrary that $Y\cong \mathbb{P}^2$.
Then $\rho(X) = \rho(Y)+1 = 2$.
By Lemma \ref{lem-p2-pol}, $f$ is polarized (after iteration).
Let $\pi:X\to Z$ be the second contraction of a $K_X$-negative extremal ray.
After iteration, we may assume $\pi$ is $f$-equivariant (cf.~Lemma \ref{lem-equiv-MMP}).
Then $h := f|_Z$ (like $f$) is also polarized.
By Proposition \ref{prop-no-div}, $\pi$ has to be a Fano contraction.
Since $\rho(Z) = \rho(X)-1 = 1$, our $Z\cong \mathbb{P}^2$ or $\mathbb{P}^1$ by Lemma \ref{lem-base-rat}.
Suppose $Z\cong \mathbb{P}^1$.
Then the induced map $\psi:X\to Y\times Z$ is finite surjective since $\tau \ne \pi$ and $\rho(X)=\rho(Y\times Z)=2$.
However, this contradicts Lemma \ref{lem-no-p1} (cf.~Proposition \ref{pro-cone-reduced-reducible}).
So we may assume $Z\cong \mathbb{P}^2$.

After iterating $f$, we may assume that every component $C_i$ of $\Delta_\tau$ is $g^{-1}$-invariant (cf.~Lemma \ref{lem-simple-loop}).
Hence, $S_i = \tau^{-1}(C_i)$ and $S_i \cap S_j$ are $f^{-1}$-invariant.
Consider the irreducible non-reduced (double line) fibre with reduced structure $\ell_d=S_1\cap S_2=\tau^{-1}(C_1\cap C_2)$, which is non-empty  (cf.~\cite[Theorem 3.5]{Mor82} and Lemma \ref{lem-simple-loop}).
Since $\ell_d$ is not contracted by $\pi$, the image $\pi(\ell_d)$ is an $h^{-1}$-invariant irreducible curve by \cite[Lemma 7.5]{CMZ20}.
Denote by $D:=\pi^{-1}(\pi(\ell_d))$ which is a prime divisor on $X$ by the cone theorem (cf.~\cite[Theorem 3.7]{KM98}).
Then $f^{-1}(D)=D$ and $D$ is different from $S_1, S_2$.
Hence, the pair $(X, S_1+S_2+D)$ is not lc at $\ell_d$ since all the three prime divisors $S_1, S_2, D$ contain $\ell_d$ (cf.~\cite[Lemma 2.29]{KM98}). This contradicts Theorem \ref{thm-bh}.
\end{proof}

\begin{proposition}\label{prop-conic-p1p1}
Suppose that $\Delta_{\tau}\neq\emptyset$.
Then $Y\not\cong \mathbb{P}^1\times \mathbb{P}^1$.
\end{proposition}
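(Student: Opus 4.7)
The plan is to adapt the strategy of Proposition \ref{prop-conic-p2}: assume for contradiction $Y \cong \PP^1 \times \PP^1$, and exhibit three distinct $f^{-1}$-invariant prime divisors through a common double-line fibre of $\tau$, violating the log canonicity of $(X, \Sigma)$ guaranteed by Theorem \ref{thm-bh} via \cite[Lemma~2.29]{KM98}, where $\Sigma$ is the sum of all $f^{-1}$-invariant prime divisors on $X$.

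First, the setup: Lemma \ref{lem-p1p1-pol} allows (after iteration) $f$ to be polarized; Proposition \ref{prop-no-div} rules out divisorial contractions; and $\rho(X) = \rho(Y) + 1 = 3$. Lemma \ref{lem-Y-toric} forces $\Delta_\tau = F_1 + F_2 + F_1' + F_2'$ to be the toric ``square loop'' on $Y$ with $F_i$ (resp.~$F_j'$) two fibres of the first (resp.~second) projection of $Y$, yielding four corners $p_{ij} := F_i \cap F_j'$ and four double-line fibres $\ell_{ij} := \tau^{-1}(p_{ij}) = S_i \cap S_j'$, where $S_i := \tau^*F_i$ and $S_j' := \tau^*F_j'$ are prime (Proposition \ref{pro-cone-reduced-reducible}). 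After another iteration all $S_i$, $S_j'$, and $\ell_{ij}$ are $f^{-1}$-invariant.

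Next, pick any extremal contraction $\pi : X \to Z$ with $R_\pi \ne R_\tau$, which must be Fano. If $\dim Z = 1$, then $Z \cong \PP^1$ (Lemma \ref{lem-base-rat}) with reduced irreducible fibres (Proposition \ref{pro-cone-reduced-reducible}); since $R_\pi \ne R_\tau$ the morphism $(\tau,\pi) : X \to Y \times Z$ is finite surjective, and Lemma \ref{lem-no-p1} yields the contradiction. If $\dim Z = 2$, then $\pi$ is a conic bundle over a smooth rational surface with $\rho(Z) = 2$; for a double line $\ell_d := \ell_{ij}$, the fact $[\ell_d] \in R_\tau \ne R_\pi$ forces $\pi(\ell_d) \subset Z$ to be an $(f|_Z)^{-1}$-invariant irreducible curve, so $D := \pi^*(\pi(\ell_d))$ is an $f^{-1}$-invariant prime divisor through $\ell_d$. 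If $D \notin \{S_i, S_j'\}$ for some $(i,j)$, then $\{S_i, S_j', D\}$ supplies three prime divisors through $\ell_d$ and the log-canonical contradiction follows.

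The main obstacle is the degenerate sub-case where $D \in \{S_i, S_j'\}$ for every choice of $(i,j)$. In this case, say $S_1 = \pi^*C_1^Z$, so $[S_1] \in R_\pi^\perp$; since $[S_1] = [S_2]$ (both pull back the class of a fibre of the first projection of $Y$), irreducibility of $S_2$ forces $S_2 = \pi^*C_2^Z$ too, and disjointness $S_1 \cap S_2 = \emptyset$ yields $(C_1^Z)^2 = 0$. Corollary \ref{cor-ti-rational} makes the $C_i^Z$ smooth rational, so $|C_1^Z|$ induces a ruling $q : Z \to \PP^1$ with $C_1^Z, C_2^Z$ as fibres, and the fibrations $p_2 \circ \tau$ and $q \circ \pi$ agree up to $\Aut(\PP^1)$. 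To close, I would exploit that $\NE(X)$ is a 3-dimensional polyhedral cone: amongst its cyclically arranged 2-faces, $R_\tau$ lies in only two, so some 2-face $F' \not\ni R_\tau$ yields a face contraction $\mu : X \to \PP^1$ making $(\tau,\mu)$ finite surjective; applying Lemma \ref{lem-no-p1} to $\mu$ finishes the argument, the technical heart being the verification of its reduced-irreducible-fibre hypothesis for a face (rather than ray) contraction. Alternatively, one re-runs the dichotomy on a third extremal ray $\pi''$, sorts into types according to whether $\{S_1,S_2\}$ or $\{S_1',S_2'\}$ is $\pi''$-vertical, and argues by a dimension count in $\N_1(X)$ that some $[S_k]$ would be orthogonal to three linearly independent extremal rays and hence zero; the residual mixed-type case with only three extremal rays is the most delicate, and requires feeding back the compatibility $q \circ \pi = p_2 \circ \tau$ (up to $\Aut(\PP^1)$) to constrain the geometry further.
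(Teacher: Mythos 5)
Your non-degenerate branch is correct and is a genuinely different route from the paper (which never uses the double-line/three-divisor argument in the $\PP^1\times\PP^1$ case), but you have put your finger on the problem without solving it: the degenerate sub-case, where $D=\pi^*(\pi(\ell_{ij}))$ coincides with $S_i$ or $S_j'$ at every corner, is left open, and your two proposed ways of closing it are not proofs. The first plan hinges on the unjustified claim that some $2$-dimensional $K_X$-negative face $F'\not\ni R_\tau$ is contracted onto $\PP^1$. A priori this is false: the contraction of such a face factors as $X\to Z_1\to W$ through the surface base $Z_1\cong\mathbb{F}_d$ of a ray contraction, and if the second ray of the face pushes forward to the negative section of $\mathbb{F}_d$ ($d\ge 1$), then $W$ is $\PP^2$ or a cone, not $\PP^1$. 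Excluding this is exactly the missing work, and it is what the paper's proof supplies: using Proposition \ref{prop-no-div}, a negative curve $C\subset Z_1$ would make $E=\pi_1^*C$ non-nef, produce an $E$-negative extremal ray whose contraction is birational, hence divisorial — contradiction; therefore $Z_1\cong\PP^1\times\PP^1$, and one can then choose the projection $p:Z_1\to\PP^1$ so that the general $\tau$-fibre is not contracted, getting an extremal-face contraction $\pi=p\circ\pi_1$ to $\PP^1$ with reduced irreducible fibres (Proposition \ref{pro-cone-reduced-reducible}) and a finite surjective $\psi=(\tau,\pi)$, to which Lemma \ref{lem-no-p1} applies. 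Note also that the fibration $q\circ\pi$ you build inside the degenerate case cannot play this role: its fibre class is $S_1$, which is trivial on the $\tau$-fibres, so $R_\tau$ lies in the contracted face and $(\tau,q\circ\pi)$ is not finite — which is precisely why a different face is needed and why its existence and shape must be proved rather than asserted.

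Your fallback ("re-run the dichotomy on a third extremal ray and do a dimension count in $\N_1(X)$") is likewise incomplete by your own admission: nothing in the sketch disposes of the mixed-type case where $\{S_1,S_2\}$ is $\pi$-vertical and $\{S_1',S_2'\}$ is vertical for the third ray, and with only three extremal rays no divisor class is forced to be orthogonal to three independent rays. (A minor additional slip: since an extremal ray contraction drops the Picard number by exactly one and $\rho(X)=3$, the alternative "$\dim Z=1$" cannot occur, so that case of your dichotomy is vacuous.) In short, the key idea of the paper's proof — use the absence of divisorial contractions to show the base of the second ray contraction is $\PP^1\times\PP^1$, and from this manufacture the transverse face contraction to $\PP^1$ required by Lemma \ref{lem-no-p1} — is absent from your argument, and without it the degenerate sub-case is not resolved.
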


\begin{proof}
Suppose the contrary that $Y\cong \mathbb{P}^1\times \mathbb{P}^1$. Then $\rho(X) = \rho(Y) + 1 = 3$.
By Lemma \ref{lem-p1p1-pol}, $f$ is polarized (after iteration).
Let $\pi_1:X\to Z_1$ be the second contraction of a $K_X$-negative extremal ray.
After iteration, we may assume that $\pi_1$ is $f$-equivariant (cf.~Lemma \ref{lem-equiv-MMP}).
By Proposition \ref{prop-no-div}, $\pi_1$ has to be a Fano contraction and
$Z_1$ is a smooth rational (ruled) surface with $\rho(Z_1)= \rho(X) - 1 = 2$ (cf.~Lemma \ref{lem-base-rat}).

We claim that $Z_1\cong \mathbb{P}^1\times \mathbb{P}^1$.
Suppose the contrary.
Then $Z_1$ contains some negative curve $C$.
Let $E:=\pi_1^{-1}(C)$ ($= \pi_1^*(C)$ by the cone theorem).
By the projection formula, $E\cdot \widetilde{C}<0$ for any curve $\widetilde{C}\subseteq X$ dominating $C$.
Since $-K_X$ is ample, the Mori cone
$\NE(X)$ is generated by finitely many extremal (rational) curves, one of which, still denoted as $\widetilde{C}$,
is $E$-negative and is of course $K_X$-negative too.
Then the contraction of the ``non-nef" extremal ray $\R_{\ge 0}[\widetilde{C}]$ is birational and hence divisorial by \cite[Theorem 3.3]{Mor82}.
However, this contradicts Proposition \ref{prop-no-div}.
So the claim is proved.

Let $\ell$ be a general fibre of $\tau$.
Then $\ell$ is not contracted by $\pi_1$.
Let $p:Z_1\to Z\cong\mathbb{P}^1$ be one of the natural projections such that $\pi_1(\ell)$ is not contracted by $p$.
Denote by $\pi:X\xrightarrow{p\circ \pi_1} Z$ the composition.
Then the induced map $\psi:X\to Y\times Z$ is finite surjective since $\tau \ne \pi$ and
$\rho(X)=\rho(Y\times Z)=3$.
For any $t\in Z$, the fibre $p^*(t)$ is a prime divisor in $Z_1\cong\mathbb{P}^1\times\mathbb{P}^1$.
By Proposition \ref{pro-cone-reduced-reducible}, $\pi$ has reduced and irreducible fibres.

Let $H$ be an ample divisor of $Z$ and denote by
$$F:=\{C\in \NE(X)\,|\, \pi^*H\cdot C=0\}.$$
Since $\pi^*H$ is nef, $F$ is a ($K_X$-negative) extremal face of $\NE(X)$.
By the projection formula, any curve $C$ is contracted by $\pi$ if and only if $\pi^*H\cdot C=0$.
So $\pi$ is the contraction of the face $F$.
However, this contradicts Lemma \ref{lem-no-p1}.
\end{proof}

\begin{proof}
[{\bf End of Proof of Theorem \ref{thm-conic}}]
Suppose $\Delta_{\tau}\neq\emptyset$.
By Lemma \ref{lem-p2orp1p1}, $Y\cong \mathbb{P}^2$ or $\mathbb{P}^1\times \mathbb{P}^1$.
However, this contradicts Propositions \ref{prop-conic-p2} and \ref{prop-conic-p1p1}.
Thus, $\tau$ is a (smooth) $\mathbb{P}^1$-bundle over $Y$.
By Lemma \ref{lem-p1-bundle-projective}, $X$ is an algebraic $\mathbb{P}^1$-bundle $\mathbb{P}_Y(\mathcal{E})$ over $Y$.
We are done.
\end{proof}

\section{Fano threefolds of Picard number two}\label{section-5-rho2}

In this section, we deal with the case where the Picard number $\rho(X)=2$ and prove:

\begin{theorem}\label{thm-rho2}
Let $X$ be a smooth Fano threefold of Picard number $\rho(X)\le 2$.
Suppose that $X$ admits a non-isomorphic surjective endomorphism $f$.
Then $X$ is toric. To be precise, $X$ is either $\mathbb{P}^3$, or a splitting $\mathbb{P}^1$-bundle over $\mathbb{P}^2$, or a blowup of $\mathbb{P}^3$ along a line.
\end{theorem}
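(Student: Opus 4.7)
The plan is to induct on the Picard number. If $\rho(X)=1$, Theorem \ref{thm-fano-rho=1} directly gives $X\cong\mathbb{P}^3$, so assume $\rho(X)=2$. By the cone theorem, $\NE(X)$ has exactly two extremal rays contracted by morphisms $\tau_1,\tau_2\colon X\to Y_i$, and after replacing $f$ by an iterate Lemma \ref{lem-equiv-MMP} makes both $\tau_i$ equivariant under $f$. Each $\tau_i$ is either a Fano contraction (fiber type) or divisorial, and the strategy is to run through the resulting combinations and identify $X$ in each.

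The cleanest case is that some $\tau_i$ is a Fano contraction to a surface. Then by Lemma \ref{lem-base-rat} the target is a smooth rational surface, and $\rho(Y_i)=\rho(X)-1=1$ forces $Y_i\cong\mathbb{P}^2$. Theorem \ref{thm-conic} upgrades $\tau_i$ to an algebraic $\mathbb{P}^1$-bundle $X\cong\mathbb{P}_{\mathbb{P}^2}(\mathcal{E})$; since $f$ is non-isomorphic surjective we have $\deg(f)>1$, so Proposition \ref{prop-ame-split} (Amerik's criterion) forces $\mathcal{E}$ to split and $X$ is a splitting $\mathbb{P}^1$-bundle over $\mathbb{P}^2$, as desired.

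Otherwise each $\tau_i$ is either a fibration over $\mathbb{P}^1$ (by Lemma \ref{lem-base-rat}) or divisorial. The ``both fibrations'' subcase is excluded because no curve is contracted by both $\tau_1$ and $\tau_2$ (the two extremal rays are distinct and $\rho(X)=2$), so $(\tau_1,\tau_2)\colon X\to\mathbb{P}^1\times\mathbb{P}^1$ is finite, contradicting $\dim(X)=3$. For the ``both divisorial'' subcase, each $f$-equivariant $\tau_i$ descends $f$ to a surjective endomorphism $f_i\colon Y_i\to Y_i$ of degree $\deg(f_i)=\deg(f)>1$. Mori's structure theorem \cite[Theorem 3.3]{Mor82} together with $\rho(Y_i)=1$ identifies each $Y_i$ with a $\mathbb{Q}$-factorial Fano variety of Picard rank one; the plan is to use Theorem \ref{thm-fano-rho=1} (combined with the log-canonicity and effectivity constraints of Theorem \ref{thm-bh} applied to the $f^{-1}$-invariant pair $(X,E_1+E_2)$, which should exclude the Mori types whose targets have quotient singularities) to force $Y_i\cong\mathbb{P}^3$, after which a direct intersection-number comparison of the two bases $\{[E_i],\tau_i^*\mathcal{O}_{\mathbb{P}^3}(1)\}$ of $\N^1(X)$ will yield the contradiction.

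The residual subcase has $\tau_1\colon X\to\mathbb{P}^1$ fiber type and $\tau_2\colon X\to Y_2$ divisorial; descending along $\tau_2$ gives $f_2$ non-isomorphic on $Y_2$, so $Y_2\cong\mathbb{P}^3$ by Theorem \ref{thm-fano-rho=1}, and $X=\mathrm{Bl}_Z\mathbb{P}^3$ with $Z$ either a point or a smooth curve by \cite[Theorem 3.3]{Mor82}. A point $Z$ yields $X\cong\mathbb{P}_{\mathbb{P}^2}(\mathcal{O}\oplus\mathcal{O}(1))$, whose second extremal contraction is a conic bundle to $\mathbb{P}^2$, returning us to the first case. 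Otherwise $Z$ is a smooth curve, and the fibration $\tau_1$ corresponds to a pencil in some linear system $|\mathcal{O}_{\mathbb{P}^3}(d)|$ whose base locus contains $Z$; combining the Fano condition with the intersection identity $-K_X\cdot\ell=1$ on a ruling fiber $\ell$ of $E_2\to Z$ should pin down $d=1$, so $Z$ is exactly the base locus of a pencil of hyperplanes, i.e.\ a line, and $X=\mathrm{Bl}_L\mathbb{P}^3$. The main technical obstacles will be the classification-free exclusion of the ``both divisorial'' subcase and the intersection-theoretic argument forcing $Z$ to be a line in the residual subcase.
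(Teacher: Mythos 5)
Your reduction for $\rho(X)=1$ and for the case of a Fano contraction to a surface matches the paper exactly (Theorem \ref{thm-fano-rho=1}, then Theorem \ref{thm-conic} plus Proposition \ref{prop-ame-split}), and your exclusion of two simultaneous fibrations over $\mathbb{P}^1$ is fine. But in the two remaining subcases the proposal is only a plan, and the steps you sketch would not go through as stated. First, in the ``both divisorial'' subcase the hoped-for ``direct intersection-number comparison of the two bases $\{[E_i],\tau_i^*\OO_{\PP^3}(1)\}$'' cannot produce a contradiction: smooth Fano threefolds of Picard number two with \emph{two} divisorial contractions onto $\PP^3$ genuinely exist (e.g.\ the blowup of $\PP^3$ along a sextic curve of genus $3$ in the Mori--Mukai list), so the exclusion must use the endomorphism. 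This is exactly what the paper's Lemma \ref{lem-rho2-div} does: it picks an $f^{-1}$-invariant curve in $E\cap F$, uses Theorem \ref{thm-linear-subspace} (the divisor case on $\PP^3$) to force the relevant invariant surfaces, hence the blown-up curve, to be linear, concludes $X=\mathrm{Bl}_{\text{line}}\PP^3$, and then derives the contradiction from the free pencil of planes through the line, which yields a Fano contraction to $\PP^1$. Note also that Conjecture \ref{main-conj-linear} for \emph{curves} in $\PP^3$ is open, so you cannot shortcut this by declaring the invariant blown-up curve linear; the paper only ever uses the known divisor case (and Gurjar's theorem on $\PP^2$).

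Second, in your residual subcase ($\tau_1$ a fibration to $\PP^1$, $\tau_2$ a blowup of $\PP^3$ along a smooth curve $Z$), the claim that ``the Fano condition together with $-K_X\cdot\ell=1$ should pin down $d=1$'' fails: the identity $-K_X\cdot\ell=1$ holds for every blowup of a smooth threefold along a smooth curve and carries no information about $d$, and the blowup of $\PP^3$ along an elliptic quartic (a pencil of quadrics, $d=2$) is Fano and fibres over $\PP^1$, so no purely numerical argument can force $Z$ to be a line. Again the dynamical hypothesis is indispensable; the paper's Lemma \ref{lem-div-fanop1} takes an $f$-periodic general fibre $F$ of $\tau_1$, shows $C_0=E\cap F$ is an $(f|_F)^{-1}$-invariant smooth rational curve with $(C_0^2)_F\le 1$ (Corollary \ref{cor-ti-rational}), deduces $\pi(F)^3=1$ so that $\pi(F)$ is a hyperplane, and only then concludes via Theorem \ref{thm-linear-subspace}(1) that $Z$ is a line. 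Finally, your appeals to Theorem \ref{thm-fano-rho=1} presuppose that the divisorial target $Y_2$ is smooth, but Mori's classification also allows contractions of type E3--E5 with singular targets, which your Theorem~\ref{thm-bh} remark does not actually dispose of; the paper avoids this entirely by invoking the primitive/imprimitive dichotomy of \cite{MM83}, which in the absence of a Fano contraction to a surface exhibits $X$ directly as the blowup of a \emph{smooth} Fano threefold of Picard number one along a smooth curve.
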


We prepare two lemmas before proving our main result of this section.

\begin{lemma}\label{lem-rho2-div}
Let $\pi:X\to Y\cong \mathbb{P}^3$
be a blowup of $Y$ along a smooth curve $C$.
Suppose that $X$ is Fano and admits a non-isomorphic surjective endomorphism $f$.
Then the second extremal contraction $\tau:X\to Z$ is a Fano contraction.
\end{lemma}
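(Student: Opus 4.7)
The plan is to rule out $\tau$ being divisorial by contradiction; then, since on a smooth threefold every $K_X$-negative extremal contraction is either divisorial or Fano, $\tau$ must be a Fano contraction. The key is that the divisorial case produces two distinct $f^{-1}$-invariant prime divisors $E_\pi$ and $E := \Exc(\tau)$, which together force $f$ to be $q$-polarized, so that Theorem \ref{thm-linear-subspace}\,(3) pins down $C$ as a line in $\PP^3$ and a clash in $h^0(X,E)$ delivers the contradiction.

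After iterating $f$ via Lemma \ref{lem-equiv-MMP} (applicable since $X$ is Fano), both $\pi$ and $\tau$ are $f$-equivariant. Descending via $\pi$, one gets a surjective endomorphism $g := f|_{\PP^3}$, which is non-isomorphic (else $g \circ \pi = \pi \circ f$ would be birational, forcing $f$ to be an iso). Since $\rho(\PP^3) = 1$, $g$ is $q$-polarized for some integer $q > 1$; hence $f^*H_\pi \sim qH_\pi$ where $H_\pi := \pi^*\OO_{\PP^3}(1)$, and $g^{-1}(C)=C$.

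Now suppose $\tau$ is divisorial with exceptional divisor $E$. Both $E$ and $E_\pi$ are $f^{-1}$-invariant prime divisors. Setting $D := \pi(E)$, this is a $g^{-1}$-invariant prime divisor on $\PP^3$, hence a hyperplane by Theorem \ref{thm-linear-subspace}\,(2), so $D \cong \PP^2$. If $C \not\subseteq D$, then $E = \pi^*D \sim H_\pi$ would be nef, contradicting $E \cdot \ell_\tau < 0$. So $C \subseteq D$, and since $D$ is smooth, $\mathrm{mult}_C D = 1$, yielding $E \sim H_\pi - E_\pi$ and $E \cong D \cong \PP^2$ (blowing up a smooth surface along a smooth curve is an iso). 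Writing $f^*E_\pi = aE_\pi$ and $f^*E = cE$ with $a,c$ positive integers, and comparing with $f^*E = qH_\pi - aE_\pi = cH_\pi - cE_\pi$, coefficient-matching forces $a = c = q$. Thus $f^*|_{\N^1(X)} = q\cdot\id$, so $f$ is $q$-polarized by Lemma \ref{lem-pol-mod>1}, and therefore $f|_E$ is $q$-polarized on $\PP^2$.

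The smooth curve $E \cap E_\pi \subseteq E \cong \PP^2$ is $(f|_E)^{-1}$-invariant and, via the iso $\pi|_E \colon E \xrightarrow{\sim} D$, identifies with $C \subseteq D$, so has degree $\deg C$. Theorem \ref{thm-linear-subspace}\,(3) forces it to be a line, i.e., $\deg C = 1$. Then by the projection formula $h^0(X,E) = h^0(\PP^3, \OO_{\PP^3}(1) \otimes \mathcal I_C) = 2$, the pencil of planes through this line. On the other hand, since $\tau$ is divisorial, $\tau(E)$ has codimension $\geq 2$ in the normal variety $Z$, so Hartogs extension gives $\tau_*\OO_X(E) = \OO_Z$ and $h^0(X,E) = h^0(Z, \OO_Z) = 1$, a contradiction. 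The main obstacle is the polarization step in the middle: without upgrading $f$ to a polarized endomorphism, Theorem \ref{thm-linear-subspace}\,(3) cannot be applied to $f|_E$ and the forcing $\deg C = 1$ collapses. The argument relies crucially on the accidental relation $E \sim H_\pi - E_\pi$ linking all three $f^*$-stable classes.
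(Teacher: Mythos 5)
You never justify the very first assertion of your plan, namely that the divisorial case produces \emph{two distinct} $f^{-1}$-invariant prime divisors $E_\pi$ and $E:=\Exc(\tau)$, and your whole argument depends on this: if $E=E_\pi$, then $\pi(E)=C$ is a curve rather than a prime divisor, Theorem \ref{thm-linear-subspace}\,(2) does not apply, there is no plane $D$, no relation $E\sim H_\pi-E_\pi$, and no invariant curve $E\cap E_\pi$ sitting inside a $\PP^2$. The coincidence $\Exc(\tau)=\Exc(\pi)$ is not vacuously absurd (a single extremal divisor can a priori carry curves from both rays), and the paper devotes a separate step to excluding it: if $\tau(E_\pi)$ were a point, every $\pi$-fibre contained in $E_\pi$ would also be $\tau$-contracted, forcing the two extremal rays to coincide; if $\tau(E_\pi)$ were a curve, then $\tau$ is the blowup of a smooth threefold along a smooth curve by Mori's classification, so $K_X=\pi^*K_Y+E_\pi=\tau^*K_Z+E_\pi$, whence $K_X-E_\pi$ is trivial on both extremal rays and hence numerically trivial since $\rho(X)=2$, contradicting the ampleness of $-K_X$. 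An argument of this kind must be inserted before you may set $D:=\pi(E)$; as written, this is a genuine gap.

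Once $E\neq E_\pi$ is secured, the rest of your proof checks out and is a genuine variant of the paper's route. Like the paper, you upgrade $f$ to a $q$-polarized endomorphism (the paper gets this at once from $g$ on $\PP^3$ via Lemma \ref{lem-pol-mod>1}; your eigenvalue matching on $H_\pi,E_\pi$ is a slightly longer but correct way to record $f^*|_{\N^1(X)}=q\,\id$), and you invoke Theorem \ref{thm-linear-subspace} to force a line. But you avoid the paper's case division on whether the invariant curve in $\Exc(\pi)\cap\Exc(\tau)$ is contracted by $\tau$ (which in the paper leads either to $Z\cong\PP^3$ via Theorem \ref{thm-fano-rho=1} or back to $Y$), working entirely on the $\pi$-side; and your closing contradiction, $h^0(X,\OO_X(E))=2$ for the pencil of planes through the line versus $\tau_*\OO_X(E)=\OO_Z$ (by normality of $Z$ and the image $\tau(E)$ having codimension at least two), replaces the paper's observation that the free pencil of planes through the line gives a Fano contraction to $\PP^1$ which must equal the second extremal contraction. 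Both endings exploit the same geometry — the class $H_\pi-E_\pi$ moves in a base-point-free pencil and so cannot be negative on the $\tau$-ray — and yours is a clean equivalent.
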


\begin{proof}
Note that $\rho(X) = \rho(Y) + 1 = 2$. Suppose the contrary that $\tau:X\to Z$ is birational.
Then it is a divisorial contraction by \cite[Theorems 3.3]{Mor82}.
Let $E$ and $F$ be the exceptional loci of $\pi$ and $\tau$, respectively, which are known to be prime divisors.
After iteration, we may assume $\pi$ and $\tau$ are $f$-equivariant and denote by $g:=f|_Y$ and $h:=f|_Z$ (cf.~Lemma \ref{lem-equiv-MMP}).
Note that $g$ and hence $f$ and $h$ are all polarized (cf.~Lemma \ref{lem-pol-mod>1}).

We claim that $E\neq F$.
Suppose the contrary that $E=F$.
Then $\tau(F)$ is not a point; otherwise $\pi$ and $\tau$ will contract some common curve, a contradiction.
Hence, $\tau$ is the blowup of a smooth threefold $Z$ along a smooth curve by \cite[Theorem 3.3]{Mor82}.
By \cite[Lemma 2.29]{KM98}, $K_X=\pi^*K_Y+E=\tau^*K_Z+F$ and hence $K_X-E$ is both $\pi$-trivial and $\tau$-trivial.
Since $\rho(X)=2$, we have $K_X-E\equiv 0$, a contradiction to $X$ being Fano.
The claim is proved.

By this claim, $\pi(F)$ and $\tau(E)$ are both surfaces.
If $E\cap F=\emptyset$, then by the cone theorem (cf.~\cite[Theorem 3.7]{KM98}), $F=\pi^*\pi(F)$ is nef, a contradiction to $-F$ being $\tau$-ample.
So $E\cap F\neq \emptyset$.
Let $C_0$ be an $f^{-1}$-invariant (after iteration) irreducible curve contained in $E\cap F$.
Since $\pi \ne \tau$, this $C_0$ is not contracted by  $\pi$ and $\tau$ at the same time.

Suppose $\tau(C_0)$ is not a point.
Then $\tau(C_0)=\tau(F)\subseteq \tau(E)$, and
$\tau: X \to Z$ is the blowup along the (smooth)
curve $\tau(F)$ by \cite[Theorem 3.3]{Mor82}.
Hence,
the rationally connected smooth threefold $Z$ of Picard number one has $Z\cong\mathbb{P}^3$ (cf.~Theorem \ref{thm-fano-rho=1}).
Note that $\tau(E)$ is a surface in $Z$.
By \cite[Lemma 7.5]{CMZ20}, $h^{-1}(\tau(E))=\tau(E)$.
So the $h^{-1}$-periodic $\tau(E)$
and hence $\tau(C_0) = \tau(F)$
are both linear
by Theorem \ref{thm-linear-subspace}.
Thus $\tau : X \to Z \cong \PP^3$ is a toric blowup along the line $\tau(F)$, and $X$ is toric.

Suppose $\tau(C_0)$ is a point.
Then $\pi(C_0)$ is not a point and $C=\pi(C_0)=\pi(E)\subseteq \pi(F) \subseteq Y \cong \PP^3$.
By Theorem \ref{thm-linear-subspace},
the $g^{-1}$-periodic $\pi(F)$ and hence $C$ are both linear. So $\pi: X \to Y \cong \PP^3$ is the toric blowup along the line $C$ and is toric.

Thus, in both cases, $X$ is a toric blowup of $\PP^3$ along a line $\ell$ ($=C$ or $\tau(F)$).
Then there is a free pencil $\varphi:X\to B:=\PP^1$, parameterizing hyperplanes in $\PP^3$ passing through $\ell$.
Since $\rho(X)=\rho(B)+1$ and $-K_X$ is ample,
$\varphi$ ($\neq \pi$) is a Fano contraction of a $K_X$-negative extremal ray and hence $\varphi=\tau$, a contradiction.
So our lemma holds.
\end{proof}

\begin{lemma}\label{lem-div-fanop1}
Let $\pi:X\to Y\cong \mathbb{P}^3$
be a blowup of $Y$ along a smooth curve $C$.
Suppose that $X$ is Fano and admits a non-isomorphic surjective endomorphism $f$.
Suppose further that $X$ admits a Fano contraction $\tau:X\to Z$ to a curve.
Then $C$ is a line, $\pi$ is a toric blowup and $X$ is toric.
\end{lemma}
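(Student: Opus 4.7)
The plan is to combine $f^{-1}$-invariant fibers of $\tau$ with Theorem \ref{thm-linear-subspace}(2) to force $C$ to be a plane curve, and then use a short numerical check to conclude that $C$ is a line; the blowup of $\PP^3$ along a line is toric after a coordinate change.

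After replacing $f$ by an iterate (cf.~Lemma \ref{lem-equiv-MMP}) we may assume both $\pi$ and $\tau$ are $f$-equivariant, so that $g:=f|_{\PP^3}$ and $h:=f|_Z$ are well-defined. Since $Y=\PP^3$ has Picard number one and $f$ is non-isomorphic, $g$ is polarized of some degree $\geq 2$ (cf.~Lemma \ref{lem-pol-mod>1}); since $X$ is rationally connected and $\dim Z=1$, we have $Z\cong\PP^1$ (cf.~Lemma \ref{lem-base-rat}). A totally $h^{-1}$-invariant point $t_0\in\PP^1$ exists after further iteration (whether $h$ is an automorphism or has degree $\geq 2$), so $F_0:=\tau^{-1}(t_0)$ is $f^{-1}$-invariant; after one more iteration, some irreducible component $D$ of $F_0$ is itself $f^{-1}$-invariant. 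Because $\tau|_E$ is surjective onto $\PP^1$, the exceptional divisor $E$ is not contained in any single fiber of $\tau$, so $D\neq E$; hence $\pi|_D$ is generically finite and $\pi(D)$ is an irreducible prime divisor of $\PP^3$. Any spurious components of $g^{-1}(\pi(D))$ would, by $f$-equivariance of $\pi$ and the $f^{-1}$-invariance of $E$, have to lie in $\pi(E)=C$, which is $1$-dimensional; since $g^{-1}(\pi(D))$ is pure of dimension $2$, no such components exist, and therefore $\pi(D)$ is $g^{-1}$-invariant. Theorem \ref{thm-linear-subspace}(2) then forces $\pi(D)$ to be a plane.

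To deduce planarity of $C$, write $[F]=aH-bE$ with $H=\pi^{*}\mathcal{O}_{\PP^3}(1)$; the intersection $F\cdot\ell=b$ with a $\pi$-fiber line $\ell\subseteq E$ must be positive, for otherwise $F$ would be a multiple of $H$ and $\tau$ would factor through $\pi$, contradicting that the two contractions are of distinct extremal rays. Hence $b\geq 1$, so $F_0\cap E$ surjects onto $C$ under $\pi$, giving $C\subseteq\pi(F_0)$; running the argument of the previous paragraph on every $f^{-1}$-invariant component of $F_0$ shows that $\pi(F_0)$ is a union of planes, and the irreducibility of $C$ places $C$ inside one of them. Combining this planarity with the $1$-cycle identity $F^2\equiv 0$ (from $F=\tau^{*}\mathcal{O}_{\PP^1}(1)$) and the standard intersection numbers $H^3=1$, $H\cdot E^2=-d$, $E^3=-(4d+2g-2)$ on the blowup (where $d=\deg C$ and $g=p_a(C)$), one obtains $a^2=b^2d$ and, writing $a=bm$, the relations $d=m^2$ and $g=m^3-2m^2+1$. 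Since a smooth plane curve of degree $d$ has genus $(d-1)(d-2)/2$, equating the two expressions for $g$ yields $m^2(m-1)^2=0$, so $m=1$, $d=1$, and $C$ is a line. Any line in $\PP^3$ can, after a coordinate change, be written as $\{x_0=x_1=0\}$, which is setwise preserved by the standard $3$-torus action on $\PP^3$; therefore $\pi$ is a toric blowup and $X$ is toric.

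The main obstacle I anticipate is the component-extraction step in the second paragraph: the fiber $F_0$ may be reducible and non-reduced, so one must carefully produce a genuinely $f^{-1}$-invariant prime divisor $D\subseteq F_0$ distinct from $E$, and then verify that the image $\pi(D)$ is itself $g^{-1}$-invariant (rather than merely $g^{-1}$-periodic) so that Theorem \ref{thm-linear-subspace}(2) applies. Once $C$ is shown to be a plane curve, the concluding genus comparison and the toric identification of $\pi$ are routine.
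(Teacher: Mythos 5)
The fatal step is the first sentence of your second paragraph: you assert that, after iteration, $h:=f|_Z$ has a totally $h^{-1}$-invariant point $t_0\in Z\cong\PP^1$ ``whether $h$ is an automorphism or has degree $\ge 2$''. Here $\deg(h)\ge 2$ is the relevant case (as in the paper, $g$ and hence $f$ and $h$ are polarized of degree $q\ge 2$), and a degree $\ge 2$ endomorphism of $\PP^1$ need not have any totally invariant point, even after replacing it by an iterate: totally invariant points are exceptional points, and the exceptional set is empty for Latt\`es maps and indeed for a general rational map. Nothing in the hypotheses of the lemma rules this out. For instance, on the blowup $X$ of $\PP^3$ along the line $\{x_0=x_1=0\}$, the endomorphism lifted from $[g_0(x_0,x_1):g_1(x_0,x_1):x_2^q:x_3^q]$ (with $g_0,g_1$ coprime binary forms of degree $q$) satisfies all assumptions of the lemma, and its descent to the base of the plane-pencil contraction $\tau$ is the arbitrary degree-$q$ map $[g_0:g_1]$, which can be chosen to have no totally invariant point after any iteration. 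So the $f^{-1}$-invariant fibre $F_0$ on which your entire second and third paragraphs rest may simply not exist; this is a different (and earlier) problem than the component-extraction issue you flagged. Nor can it be repaired by taking an $f$-periodic fibre $F$ with $f(F)=F$ (such fibres do exist, e.g.\ by \cite[Theorem 5.1]{Fak03}): then the components of $F$ and their images in $\PP^3$ are only $g$-invariant, not $g^{-1}$-invariant, and Theorem \ref{thm-linear-subspace}(2) no longer applies, so you lose the planarity of $\pi(D)$, hence of $C$, and with it the genus comparison that closes your argument.

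For contrast, the paper's proof is designed to work with exactly such a merely $f$-invariant general fibre $F$: it shows $f|_F$ is polarized, so $F$ is a toric Fano surface (Theorem \ref{thm-surface-toricpair}); the curve $C_0=E\cap F$ is $(f|_F)^{-1}$-invariant, hence $(C_0^2)_F\le 1$ by Corollary \ref{cor-ti-rational}; and the identity $\pi(F)^3=(C_0^2)_F$ forces $\pi(F)$ to be a hyperplane, inside which the totally invariant curve $C=\pi(C_0)$ is a line by Theorem \ref{thm-linear-subspace} applied on the surface $\pi(F)\cong\PP^2$ --- no totally invariant fibre is ever needed. Your intersection-theoretic computation ($F^2\equiv 0$ giving $d=m^2$, $g=m^3-2m^2+1$, then comparison with the plane-curve genus) is correct as far as it goes, but it only becomes applicable once $C$ is known to lie in a plane, which is precisely what the unjustified step was supposed to deliver.
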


\begin{proof}
By Lemma \ref{lem-base-rat}, $Z \cong \PP^1$.
We may assume $\pi$ and $\tau$ are $f$-equivariant after iteration and
denote by $g:=f|_Y$ and $h:=f|_Z$ (cf.~Lemma \ref{lem-equiv-MMP}).
Note that $g$ and hence $f$ and $h$ are all polarized by Lemma \ref{lem-pol-mod>1}.
Let $E$ be the ($f^{-1}$-invariant) exceptional divisor of $\pi$.
Note that $\tau|_E:E\to Z$ is surjective because $\pi$ and $\tau$ contract no common curves.

Let $F$ be a general $f$-periodic fibre of $\tau$
(cf.~\cite[Theorem 5.1]{Fak03}).
After iteration, we assume that $f(F)=F$.
Then $f|_F$ (like $f$) is also polarized.
Hence, $F$ is a smooth toric Fano surface
(cf.~Theorem \ref{thm-surface-toricpair}).
Since $E$ is $\tau$-ample,
$E|_F$ is an ample divisor on $F$; hence the support $E\cap F$ is connected (cf.~\cite[Chapter \Rmnum{3}, Corollary 7.9]{Har77}).
By the generic smoothness of $\tau|_E:E\to Z$ (cf.~\cite[Chapter \Rmnum{3}, Corollary 10.7]{Har77}), we see that $C_0:=E\cap F$ is smooth and hence irreducible.
Since $f^{-1}(E)=E$, we have $(f|_F)^{-1}(C_0)=C_0$ and thus $C_0\cong\mathbb{P}^1$ (cf.~Corollary \ref{cor-ti-rational}).
Now that $\pi \ne \tau$ and $C_0$ dominates $C$ via $\pi|_E : E \to C$, we have $C\cong\mathbb{P}^1$.
As a result, the exceptional divisor $E$, as a smooth rational surface of Picard number two admitting two rulings, has to be $\mathbb{P}^1\times\mathbb{P}^1$ with $C_0$ being a fibre of the (smooth) ruling $\tau|_E$.

Further, if $\ell$ is any fibre of $\pi|_E : \PP^1 \times \PP^1 \cong E \to C$, we have
$(C_0 \cdot \ell)_E = 1$.
Note that $C\subseteq \pi(F)$.
Write $\pi^*\pi(F)=F+eE$ for some integer $e>0$.
Then we have
$$F\cdot \ell=(F|_E\cdot \ell)_E=(C_0\cdot \ell)_E=1.$$
Since $\pi^*\pi(F)$ is $\pi$-trivial, we have
$$1=F\cdot \ell=(F+eE)\cdot \ell-eE\cdot \ell=-e E\cdot \ell$$
which implies $e=1$ and $E\cdot \ell =-1$.
Note that
$$1\le \pi(F)^3=(\pi^*\pi(F))^2\cdot F=(F+E)^2\cdot F=E^2\cdot F=(C_0^2)_F,$$
Note  also that $C_0 = E|_F$ is $(f|_F)^{-1}$-invariant.
By Corollary \ref{cor-ti-rational}, $(C_0^2)_F\le 1$.
So $\pi(F)^3=1$ and hence $\pi(F)\cong \mathbb{P}^2$ is a hyperplane in $Y\cong \mathbb{P}^3$.
Since $g|_{\pi(F)}$ (like $g$) is polarized,
the $(g|_{\pi(F)})^{-1}$-invariant curve
$C = \pi(C_0)$ is a line in $\pi(F)$
(and also in $Y$) by Theorem \ref{thm-linear-subspace}.
Thus, $\pi$ is a toric blowup along a line $C \subseteq Y \cong \PP^3$
and $X$ is toric.
\end{proof}

\begin{proof}[Proof of Theorem \ref{thm-rho2}]
If $\rho(X)=1$, then $X\cong \mathbb{P}^3$ by Theorem \ref{thm-fano-rho=1}.

Assume $\rho(X)=2$.
If there is a Fano contraction $\tau:X\to Y$ to a surface (this is the case when $X$ is primitive by
\cite[Theorem 1.6]{MM83}), then $\rho(Y) = \rho(X) -1 = 1$ implies $Y\cong \mathbb{P}^2$
(cf.~Lemma \ref{lem-base-rat}).
By Theorem \ref{thm-conic}, $X:=\mathbb{P}_Y(\mathcal{E})\xrightarrow{\tau} Y$ is an algebraic $\mathbb{P}^1$-bundle with $\mathcal{E}$ a locally free sheaf of rank $2$ over $Y \cong \PP^2$.
This $\mathcal{E}$ splits, and hence $X$ is toric (cf.~Propositions \ref{prop-ame-split} and \ref{prop-toric-pair}).

Suppose that $X$ has no Fano contraction to a surface.
Then $X$ is imprimitive, so
there is a divisorial contraction $\pi:X \to X_1$
to a smooth Fano threefold $X_1$ with $\rho(X_1) = \rho(X) - 1 = 1$, and $\pi$ is the blowup along a smooth curve $C \subseteq X_1$.
We may assume that $f$ descends to a (non-isomorphic) surjective endomorphism $f|_{X_1}$ after iteration (cf.~Lemma \ref{lem-equiv-MMP}).
By Theorem \ref{thm-fano-rho=1}, $X_1 \cong \PP^3$.
Now our theorem follows from Lemmas \ref{lem-rho2-div} and \ref{lem-div-fanop1}.
\end{proof}

\section{Conic bundle and minimal model program}\label{section-6-mmp}

In this section, we deal with general Fano threefolds.
First, we observe:

\begin{theorem}\label{thm-mm83-rho>=3}
Let $X$ be a smooth Fano threefold with $\rho(X)\ge 3$.
Suppose that $X$ admits a non-isomorphic surjective endomorphism $f$.
Then $X$ admits a conic bundle.
\end{theorem}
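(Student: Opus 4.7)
The plan is to apply the Mori–Mukai structural classification of smooth Fano threefolds. I would argue in two parts.

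First, I reduce the problem to exhibiting \emph{any} extremal Fano (Mori fibre) contraction on $X$. For if $\tau : X \to Y$ is such an extremal contraction, then $\rho(Y) = \rho(X) - 1 \ge 2$. Since a point has $\rho = 0$, and the only rationally connected smooth curve is $\PP^1$ with $\rho = 1$ (Lemma \ref{lem-base-rat}), we necessarily have $\dim Y = 2$. Then by Mori's theorem \cite[Theorem 3.5]{Mor82}, such a $\tau$ is automatically a conic bundle over the smooth rational surface $Y$. So everything hinges on producing one Fano extremal ray.

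Second, the existence of an extremal Fano contraction on $X$ follows from the MM83 structural theorem. If $X$ is \emph{primitive} in the sense of MM83 (not expressible as a blowup of a smaller smooth Fano threefold along a smooth curve), then by \cite[Theorem 1.6]{MM83} $X$ carries a Mori fibre space structure, which together with the first step gives the desired conic bundle. If $X$ is \emph{imprimitive}, then there is an $f$-equivariant (after iteration, by Lemma \ref{lem-equiv-MMP}) divisorial contraction $\pi : X \to X_1$ onto a smooth Fano threefold $X_1$ with $\rho(X_1) = \rho(X) - 1$, and $f$ descends to a non-isomorphic surjective endomorphism of $X_1$. Iterating this reduction, we eventually reach either a primitive Fano threefold (handled above) or a smooth Fano threefold of Picard number $2$, which by Theorem \ref{thm-rho2} is toric and carries an explicit conic bundle structure in each of the three cases listed.

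The main obstacle lies in the imprimitive case: transferring the Mori fibre structure found on a blowdown $X_k$ of $X$ back up to $X$ itself, since the composition $X \to X_k \to Y_k$ is in general \emph{not} a conic bundle on $X$. To overcome this one must locate a new extremal ray of $X$ (different from the exceptional rays of the chain of blowups) whose contraction is again of Fano type; by the first step, any such contraction is automatically a conic bundle. Here I would rely on MM83's detailed enumeration of extremal rays of smooth Fano threefolds together with the dynamical control provided by $f$: the blown-up curves are $f^{-1}$-periodic after iteration, which restricts the possible imprimitive configurations sufficiently to guarantee that at least one extremal ray of $X$ itself gives a conic bundle.
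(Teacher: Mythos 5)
Your first step (any extremal Fano contraction of $X$ with $\rho(X)\ge 3$ has a surface base, hence is a conic bundle by Mori) and your treatment of the primitive case via \cite[Theorem 1.6]{MM83} are fine, and they overlap with what the paper implicitly uses. The genuine gap is in the imprimitive case, and it sits exactly where you admit the ``main obstacle'' lies: you propose to find a fibre-type extremal ray on $X$ itself by combining MM83's enumeration with the dynamics of $f$, but you never produce such a ray, and in fact no such ray need exist. By \cite[Section 9]{MM83}, a smooth Fano threefold with $\rho(X)\ge 3$ either already carries a conic bundle or is the blowup of $\mathbb{P}^3$ along a disjoint union of a line and a conic; in the latter (exceptional) variety every extremal contraction is birational, so the strategy ``locate a new extremal ray of $X$ whose contraction is of Fano type'' cannot succeed there, no matter how much dynamical information about $f^{-1}$-periodic centres you feed in. Deferring this to ``MM83's detailed enumeration together with dynamical control'' is precisely the content of the theorem, not an argument.

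What is actually needed — and what the paper does — is to rule out that exceptional variety dynamically rather than to find a conic bundle on it: if $X$ is the blowup of $\mathbb{P}^3$ along a disjoint line and conic, one runs an $f$-equivariant contraction (Lemma \ref{lem-equiv-MMP}) of the exceptional divisor over the line, so that $f$ descends to a non-isomorphic surjective endomorphism of $X'$, the blowup of $\mathbb{P}^3$ along the conic alone. This $X'$ is a smooth Fano threefold of Picard number $2$ which is not toric, contradicting Theorem \ref{thm-rho2} (which forces any $\rho\le 2$ smooth Fano threefold with a non-isomorphic endomorphism to be $\mathbb{P}^3$, a splitting $\mathbb{P}^1$-bundle over $\mathbb{P}^2$, or the blowup of $\mathbb{P}^3$ along a line). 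Hence the exceptional case never occurs under your hypotheses, and the conic bundle exists by the MM83 dichotomy. Your iterated-blowdown reduction is not needed (and, as you noticed, does not transfer fibre structures back up); the missing idea is this single dynamical exclusion.
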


\begin{proof}
By \cite[Section 9]{MM83}, either $X$ admits a conic bundle $\tau:X\to Y$ or $X$ is isomorphic to the blowup of $\mathbb{P}^3$ along a disjoint union of a line and a conic (and hence of Picard number $3$).
For the latter case, let $X'$ be the blowup of $\mathbb{P}^3$ along the conic curve which is still Fano by the ramification divisor formula or by \cite[Corollary 4.6]{MM83}.
One can verify that $X'$ is not toric.
By Theorem \ref{thm-rho2}, $X'$ and hence $X$ admit no non-isomorphic surjective endomorphism, a contradiction.
So $X$ admits a conic bundle.
\end{proof}

Indeed, if a smooth Fano threefold $X$ has the Picard number $\rho(X)\ge 6$, then $X$ splits into a product of $\mathbb{P}^1$ and a del Pezzo surface (cf.~\cite[Theorem 1.2]{MM83} and \cite[Theorem 1.1]{Cas12}).
Encouraged by Theorem \ref{thm-mm83-rho>=3}, next we aim to reduce a general (Fano) conic bundle to a Fano contraction $X \to Y$ to a surface $Y$ as in Theorem \ref{thm-conic}, so that $\rho(X) = \rho(Y) + 1$.
So we apply the relative minimal model program.

\begin{theorem}\label{thm-MMP}
Let $X$ be a smooth Fano threefold with a conic bundle $\tau:X\to Y$.
Suppose that $X$ admits a non-isomorphic surjective endomorphism $f$.
Then there exists an $f$-equivariant (after iteration) minimal model program
$$X=X_1\rightarrow \cdots \rightarrow X_i\rightarrow\cdots\rightarrow X_{r+1}\rightarrow Y$$
such that the following hold.

\begin{enumerate}
\item $r=\rho(X)-\rho(Y)-1$ and each $X_i$ is a smooth Fano threefold.
\item $\tau_{r+1}:X_{r+1}\to Y$ is a Fano contraction and an algebraic $\mathbb{P}^1$-bundle $\mathbb{P}_Y(\mathcal{E})$ over $Y$.
\item The composition $\tau_i:X_i\to Y$ is a conic bundle with
$(f|_Y)^{-1}$-invariant discriminant
$\Delta_{\tau_i}=C_i\cup\cdots \cup C_r$ a disjoint union of $r+1-i$ smooth curves on $Y$.
\item The composition $\pi:X\to X_{r+1}$ is the blowup of $X_{r+1}$ along $r$ disjoint union of
$(f|_{X_{r+1}})^{-1}$-invariant
smooth curves $\bigcup_{i=1}^r\overline{C_i}$ with $\tau_{r+1}(\overline{C_i})=C_i$.
\end{enumerate}
\end{theorem}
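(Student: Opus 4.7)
We plan to run an $f$-equivariant relative MMP over $Y$ and combine it with Mori--Mukai's structural analysis of smooth Fano threefolds with conic bundle structures in \cite[Sections~6--7]{MM83}. By Lemma \ref{lem-equiv-MMP}(1), $X$ being of Fano type guarantees that every finite MMP starting from $X$ is $f$-equivariant after iterating $f$; in particular $f$ descends to $g := f|_Y$ on $Y$, and the finitely many extremal rays of the relative Mori cone $\NE(X/Y)$ (a face of $\NE(X)$) are permuted by $f^*$, so after iteration each such ray is $f^*$-fixed.

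If $\NE(X/Y)$ is one-dimensional (equivalently $\Delta_\tau=\emptyset$), then $\tau=\tau_1$ is an elementary Fano contraction; Proposition \ref{pro-cone-reduced-reducible} forces every fibre to be a reduced irreducible conic in $\PP^2$, hence a smooth $\PP^1$, so $\tau$ is a smooth $\PP^1$-bundle, algebraic by Lemma \ref{lem-p1-bundle-projective}, and we take $r=0$. Otherwise, pick an $f^*$-fixed extremal ray $R_1\ne\R_{\ge 0}[\ell_{\mathrm{gen}}]$ in $\NE(X/Y)$ and contract it via $\pi_1:X_1\to X_2$. Since $X_1$ is a smooth threefold (so no small contractions occur), $\pi_1$ is divisorial; its two-dimensional exceptional divisor $E_1$ cannot fit inside a one-dimensional $\tau$-fibre, so $\tau(E_1)=C_1$ is an irreducible curve on $Y$ necessarily contained in $\Delta_\tau$. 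By the type-E1 case of \cite[Theorem~3.3]{Mor82}, $X_2$ is smooth and $\pi_1$ is the blowup of $X_2$ along a smooth curve $\overline{C_1}^{(2)}\subseteq X_2$. The induced $\tau_2:X_2\to Y$ remains a conic bundle, with the reducible fibres of $\tau$ over $C_1$ collapsing in $X_2$ to their surviving smooth components, giving $\Delta_{\tau_2}=\Delta_\tau\setminus C_1$. Preservation of Fano-ness (i.e.\ that $X_2$ is again a smooth Fano threefold) is extracted from \cite[Section~6]{MM83}, and the $f^*$-invariance of $R_1$ makes $\pi_1$ equivariant.

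Iterating produces the required $f$-equivariant sequence $X=X_1\to\cdots\to X_{r+1}$, terminating when $\NE(X_{r+1}/Y)$ becomes one-dimensional; then $\tau_{r+1}$ is an algebraic $\PP^1$-bundle by the base-case argument, and a Picard-number count gives $r=\rho(X)-\rho(Y)-1$. For assertion (4) we invoke the Mori--Mukai structure theorem \cite[Sections~6--7]{MM83}: every smooth Fano threefold with a conic bundle structure is the blowup of an algebraic $\PP^1$-bundle over a smooth surface along a pairwise disjoint union of smooth curves. Applied to $X$, this identifies the ambient $\PP^1$-bundle with our $X_{r+1}$ and yields the pairwise disjointness of the $\overline{C_i}$'s in $X_{r+1}$; the $(f|_{X_{r+1}})^{-1}$-invariance of each $\overline{C_i}$ and the $g^{-1}$-invariance of each $\Delta_{\tau_i}$ follow from the $f$-equivariance built into every step of the MMP. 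The chief technical hurdles are (a) preservation of Fano-ness at each MMP step, which is not automatic from general MMP theory and must be extracted from \cite[Section~6]{MM83}, and (b) the pairwise disjointness of the $\overline{C_i}$'s in $X_{r+1}$, which similarly relies on the Mori--Mukai structural description rather than on the bare MMP output.
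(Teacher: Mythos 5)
There is a genuine gap at the heart of your argument: the treatment of the terminal step $\tau_{r+1}:X_{r+1}\to Y$ (your ``base case''). You assert that $\NE(X/Y)$ being one-dimensional is \emph{equivalent} to $\Delta_\tau=\emptyset$, and that Proposition \ref{pro-cone-reduced-reducible} forces every fibre of an elementary conic-bundle contraction to be a reduced irreducible conic, hence a smooth $\PP^1$. Both claims are false. Proposition \ref{pro-cone-reduced-reducible} concerns the pullback $\tau^*Q$ of a \emph{prime divisor} $Q$ on $Y$ (i.e.\ the surface $\tau^{-1}(C)$ over a curve $C$), not the fibre over a point; an elementary Fano contraction which is a conic bundle can perfectly well have nonempty discriminant, since the two components $\ell',\ell''$ of a reducible fibre are numerically proportional to half the general fibre class and so create no extra extremal ray (e.g.\ a standard conic bundle over $\PP^2$ with $\rho(X)=2$ and discriminant a cubic). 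Consequently, nothing in your argument shows $\Delta_{\tau_{r+1}}=\emptyset$, and without that, assertions (2), (3) and (4) all collapse: $\tau_{r+1}$ need not be a $\PP^1$-bundle, the residual discriminant need not vanish, and $\pi$ need not be a blowup along curves mapping onto the listed $C_i$.

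This missing step is exactly where the dynamical hypothesis must enter, and it is the hardest part of the theorem. The paper's proof runs the $f$-equivariant relative MMP over $Y$ using \cite[Propositions 6.2, 6.3, Corollary 6.4, Propositions 6.5, 6.8]{MM83} (much as you do, and with the same deference to Mori--Mukai for preservation of Fano-ness), but then invokes Theorem \ref{thm-conic} --- the main result of Section \ref{section-4-conic}, whose entire content is that an elementary Fano contraction from a smooth Fano threefold to a surface admitting a non-isomorphic surjective endomorphism has empty discriminant --- to conclude that $\tau_{r+1}$ is an algebraic $\PP^1$-bundle and hence that $\Delta_{\tau_{r+1}}=\emptyset$, from which (3) and (4) follow. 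In your proposal the endomorphism is used only for equivariance of the MMP, so the essential dynamical input is never brought to bear; to repair the argument you would need to cite Theorem \ref{thm-conic} (noting that $f$ descends to a non-isomorphic surjective endomorphism of $X_{r+1}$ after iteration) or reprove its content, which cannot be done by cone-theorem generalities alone.
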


\begin{proof}
By \cite[Propositions 6.2, 6.3, Corollary 6.4 and Propositions 6.5, 6.8]{MM83}, we may run a relative minimal model program
$X=X_1\rightarrow \cdots \rightarrow X_i\rightarrow\cdots\rightarrow X_{r+1}$
 of $X$ over $Y$ which is $f$-equivariant after iteration (cf.~Lemma \ref{lem-equiv-MMP}),
such that:

\begin{itemize}
\item[(i)] $r=\rho(X)-\rho(Y)-1$ and each $X_i$ is a smooth Fano threefold; and
\item[(ii)] The composition $\tau_i:X_i\to Y$ is a conic bundle with $\Delta_{\tau_i}=\Delta_{\tau_{r+1}}\cup C_i\cup\cdots \cup C_r$ where $C_i,\cdots, C_r$ are connected (and irreducible) components of $\Delta_{\tau_i}$.
\end{itemize}

So (1) is proved.
Note that $\rho(X_{r+1})=\rho(Y)+1$.
So the conic bundle $\tau_{r+1}:X_{r+1}\to Y$ is automatically a Fano contraction of a $K_{X_{r+1}}$-negative extremal ray.
After iteration, $f$ descends to a non-isomorphic surjective endomorphism of $X_{r+1}$.
By Theorem \ref{thm-conic}, $\tau_{r+1}$ is an algebraic $\mathbb{P}^1$-bundle $\mathbb{P}_Y(\mathcal{E})$ over $Y$.
So (2) is proved.
In particular, $\Delta_{\tau_{r+1}}=\emptyset$ and (3) is proved.
Finally, (4) follows easily from (3) by noting that each divisorial contraction is the blowup along some smooth curve which is either a smooth fibre of the conic bundle $X_i \to Y$ or a subsection over $Y$ (cf.~\cite[Proposition 6.8]{MM83}).
\end{proof}

In order to prove Theorem \ref{thm-rho>=3} and hence \ref{main-thm-toric}, we need to study other possible minimal model program which is a sequence of smooth (but not necessarily Fano) threefolds.

\begin{lemma}\label{lem-fanofibrebundle}
Let $X$ be a smooth Fano threefold with a conic bundle $\tau:X\to Y$ over a Hirzebruch surface $Y\xrightarrow{p} Z\cong \mathbb{P}^1$.
Suppose that $\tau$ factors through $X\xrightarrow{\pi}W\xrightarrow{\tau_0}Y$ with $\tau_0$ being a (smooth)  $\mathbb{P}^1$-bundle.
Suppose further that $X$ admits an int-amplified endomorphism $f$.
Then $p\circ \tau_0 : W \to Z$ is a fibre bundle such that either all fibres are $\mathbb{F}_0$ or all fibres are $\mathbb{F}_1$, where $\mathbb{F}_d$ denotes the Hirzebruch surface of degree $d$.
\end{lemma}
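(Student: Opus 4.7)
The plan is to proceed in three main steps. First I would establish the setup: $\tilde p := p\circ\tau_0\colon W\to Z$ is a proper smooth morphism, being the composition of the smooth ruling $p$ of the Hirzebruch surface $Y$ with the smooth $\mathbb{P}^1$-bundle $\tau_0$. Every scheme-theoretic fibre $W_z := \tilde p^{-1}(z)$ is therefore smooth, and the restriction $\tau_0|_{W_z}\colon W_z\to p^{-1}(z)\cong\mathbb{P}^1$ realises $W_z$ as a Hirzebruch surface $\mathbb{F}_{d(z)}$ for some $d(z)\in\mathbb{Z}_{\geq 0}$. By Lemma \ref{lem-p1-bundle-projective}, $W=\mathbb{P}_Y(\mathcal{E})$ for a rank-two locally free sheaf $\mathcal{E}$ on $Y$, so $d(z)$ is the difference of the splitting type of $\mathcal{E}|_{p^{-1}(z)}$; this is upper semicontinuous in $z$, whence $d(z)$ equals a generic value $d_0$ off a finite jumping set $J\subseteq Z$, on which $d(z)>d_0$ with the same parity. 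By Lemma \ref{lem-equiv-MMP}, after iterating $f$, one descends equivariantly to int-amplified endomorphisms $f_W$ on $W$, $g=f|_Y$ on $Y$, and $h=f|_Z$ on $Z\cong\mathbb{P}^1$, the last being polarized of some degree $q\geq 2$ by Lemma \ref{lem-pol-mod>1}. Each restriction $\phi_z:=f_W|_{W_z}\colon W_z\to W_{h(z)}$ is then finite surjective and compatible with the two rulings.

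The second step bounds the fibre type by using the Fano property of $X$. For any $z$ with $d(z)\geq 1$, the unique negative section $C\subseteq W_z\cong\mathbb{F}_{d(z)}$ satisfies $C^2=-d(z)$ in $W_z$; since flatness of $\tilde p$ makes $N_{W_z/W}$ trivial, adjunction gives $-K_W\cdot C=-K_{W_z}\cdot C=2-d(z)$. Because $\pi$ is a composition of finitely many blowups along smooth curves (cf.\ Theorem \ref{thm-MMP}), for all but finitely many $z$ the section $C$ avoids every blowup centre of $\pi$, so the strict transform $\tilde C\subseteq X$ is an irreducible curve with $\pi_\ast\tilde C=C$; writing $K_X=\pi^\ast K_W+E$ with $E$ an effective $\pi$-exceptional divisor and noting $E\cdot\tilde C\geq 0$, I deduce $-K_X\cdot\tilde C\leq -K_W\cdot C=2-d(z)$, which forces $d(z)\leq 1$ by Fano-ness, and in particular $d_0\leq 1$. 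For constancy, any finite surjective $\phi\colon\mathbb{F}_d\to\mathbb{F}_{d'}$ compatible with the rulings pulls back the negative section $C_0'$ to a class $n\sigma_0+tF\in\mathrm{Pic}(\mathbb{F}_d)$, with $n$ the fibrewise degree and $m$ the base degree; using $C_0'\cdot F'=1$ and $(C_0')^2=-d'$, one computes $t=\tfrac{1}{2}(nd-md')$, and effectivity of $\phi^\ast C_0'$ forces $nd\geq md'$. Applied to $\phi_z$ in the case $d_0=0$, this immediately gives $d(z)=0\Rightarrow d(h(z))=0$, so $h^{-1}(J)\subseteq J$ and hence $h^{-1}(J)=J$ by surjectivity and finiteness; since $h$ is polarized of degree $\geq 2$ on $\mathbb{P}^1$, the totally invariant set $J$ has $|J|\leq 2$. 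Combining this with the Fano inequality above (and a refined normal-bundle analysis on the exceptional $\mathbb{P}^1$-bundle $E\to C$ at the at-most-two remaining $z$ where $C$ is itself a blowup centre) forces $J=\emptyset$. The $d_0=1$ case is handled by the same two mechanisms: the Fano bound excludes jumps to $d(z)\geq 3$ except at finitely many $z$, and the effective-pullback inequality plus the dynamics of $h$ eliminate those.

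Finally, once every fibre of the smooth morphism $\tilde p$ is isomorphic to a fixed $\mathbb{F}_{d_0}$ with $d_0\in\{0,1\}$, the smoothness of $\mathrm{Aut}(\mathbb{F}_{d_0})$ as an algebraic group makes $\tilde p$ an étale-locally trivial $\mathbb{F}_{d_0}$-bundle, which (since $Z\cong\mathbb{P}^1$ is simply connected) is Zariski-locally trivial. The main obstacle is completing the constancy step: the Fano inequality directly rules out large $d(z)$ only where $C$ avoids $\pi$'s finitely many blowup centres, and at the potentially problematic fibres one must work with sections of the exceptional $\mathbb{P}^1$-bundle and a more careful normal bundle calculation, dovetailed with the dynamical finiteness coming from the polarization of $h$ on $\mathbb{P}^1$.
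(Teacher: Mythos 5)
Your generic bound is sound: restricting $-K_X$ via $K_X=\pi^*K_W+\sum E_i$ to the strict transform of the negative section of $W_z$ does force $d(z)\le 1$ --- and in fact you only need the negative section not to \emph{be} one of the blowup centres $\overline{C_i}$ (meeting a centre is harmless, since $E_i\cdot\tilde C\ge 0$ only requires $\tilde C\not\subseteq E_i$; your stronger claim that the section ``avoids every blowup centre'' for all but finitely many $z$ is actually unjustified, as a horizontal centre may lie on the surface swept out by the negative sections). The genuine gap is at the finitely many fibres where the negative section \emph{is} a blowup centre: there you have no argument at all --- the ``refined normal-bundle analysis on the exceptional $\mathbb{P}^1$-bundle'' is announced but never carried out --- and this is exactly where the content of the lemma lies. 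In the paper this situation (a discriminant component $C_i$ equal to the fibre $Y_z$, with centre $\overline{C_i}\subset W_z$) is handled by induction on $r=\rho(X)-\rho(Y)-1$ using Mori--Mukai: if the exceptional divisor $E_i$ over $\overline{C_i}$ is not $\mathbb{F}_0$, its contraction remains Fano (cf.~\cite[Proposition 6.5]{MM83}) and one inducts; if $E_i\cong\mathbb{F}_0$, the int-amplified hypothesis enters through Lemma \ref{lem-p1p1-ticurve}: the $f^{-1}$-invariant curve $L_i=E_i\cap F_i$ must be a ruling fibre of $E_i$, whence $(L_i^2)_{E_i}=0$ and then $(L_i^2)_{F_i}=(\tau^*C_i-F_i)\cdot L_i=C_i^2=0$, so $W_z\cong F_i\cong\mathbb{F}_0$. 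Some use of the dynamics at these bad fibres is unavoidable, and your only dynamical input there (total invariance of the jumping set $J$ under $h=f|_Z$, hence $\#J\le 2$) does not exclude them.

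Moreover, the constancy mechanism you propose breaks down when $d_0=1$. Your effective-pullback inequality for $\phi_z\colon\mathbb{F}_{d(z)}\to\mathbb{F}_{d(h(z))}$ is $n\,d(z)\ge m\,d(h(z))$; for $d(z)=0$ this forces $d(h(z))=0$ and gives $h^{-1}(J)\subseteq J$, but for $d(z)=1$ it only yields $d(h(z))\le n/m$, which does not prevent a jump to $d(h(z))\ge 3$, so total invariance of $J$ (and with it the bound $\#J\le 2$) is not established in that case. As written, the proposal therefore proves the claim only at fibres whose negative section is not a blowup centre, which is strictly weaker than the lemma; to close it you would need to supply the missing analysis at the jumping fibres, and there the paper's route (the Mori--Mukai structure results together with Lemma \ref{lem-p1p1-ticurve}) is essentially what is needed.
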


\begin{proof}
For $z\in Z$, let $Y_z:=p^{-1}(z)\cong \mathbb{P}^1$, $X_z:=(p\circ \tau)^{-1}(z)$ and $W_z:=(p\circ \tau_0)^{-1}(z)$.
Then $W_z$ is a smooth (rational) ruled surface of Picard number $2$.
Hence, $W_z \cong \mathbb{F}_d$
for some $d \ge 0$.
The lemma is equivalent to the claim that $W_z \cong \mathbb{F}_d$ has anti-ample canonical divisor,
i.e., $d = 0, 1$.
Indeed, once this claim is proved, the lemma then follows from the fact that $\mathbb{F}_0$ and $\mathbb{F}_1$ are not deformed to each other.

We prove this claim by induction on $r:=\rho(X)-\rho(Y)-1$.
If $r=0$, then $\pi$ is isomorphic and $W\cong X$ is Fano.
By the adjunction formula, $K_{W_z}=K_W|_{W_z}$ is anti-ample, as claimed.

Suppose $r>0$.
By \cite[Proposition 6.3 and Corollary 6.4]{MM83}, since $\tau_0$ is a (smooth) $\mathbb{P}^1$-bundle,
$\Delta_{\tau}$ is a disjoint union of $r$ smooth curves $C_i$,
and $\pi$ is the blowup of $W$ along the disjoint union $\coprod_{i=1}^r\overline{C_i}$ of $r$ many $(f|_{W})^{-1}$-invariant smooth curves $\overline{C_i}$ with $\tau_0(\overline{C_i})=C_i$.
Write $\tau^{-1}(C_i)=\tau^*C_i=E_i+F_i$ where $E_i$ is the $\pi$-exceptional divisor with center $\overline{C_i}$ and $F_i$ is the strict transform of $\tau_0^{-1}(C_i)$.
Denote by $L_i:=E_i\cap F_i$ the transversal intersection.
Note that $\pi|_{F_i}:F_i\to \tau_0^{-1}(C_i)$ and $\pi|_{L_i}: L_i\to \overline{C_i}$ are isomorphic.
Let $z\in Z$.

If $X_z$ is irreducible, then $X_z$ is the blowup of $W_z$ along points $\coprod_{i=1}^r\overline{C_i}\cap W_z$.
In particular, $X_z$ is smooth.
By the adjunction formula, $K_{X_z}=K_X|_{X_z}$ is anti-ample, so is $K_{W_z}$ on the image $W_z$
of $X_z$, as claimed.

If $X_z$ ($=\tau^{-1}(Y_z))$ is reducible, then $Y_z$ equals some $C_i$ (cf.~\cite[Proposition  6.3]{MM83}) and $F_i \cong W_z$ via $\pi: X \to W$.
Let $X\to X'$ be the contraction of $E_i$ over $W$.
If $E_i\not\cong \mathbb{F}_0$, then $X'$ is Fano (and smooth) by \cite[Proposition 6.5]{MM83}.
Replacing $X$ by $X'$, we are done by induction.
So we may assume $E_i\cong \mathbb{F}_0$.
Note that $E_i, F_i$ and hence $L_i$ are $f^{-1}$-invariant after iteration (cf.~Lemmas  \ref{lem-equiv-MMP} and \ref{lem-simple-loop}).
By Lemma \ref{lem-p1p1-ticurve}, we have
$$F_i\cdot L_i=(F_i|_{E_i}\cdot L_i)_{E_i}=(L_i^2)_{E_i}=0$$
since $f|_{E_i}$ (like $f$) is int-amplified.
This implies (working over $F_i$):
$$(L_i^2)_{F_i}=E_i\cdot L_i=(\tau^*C_i-F_i)\cdot L_i=C_i^2=Y_z^2=0 ,$$
so $W_z\cong F_i\cong \mathbb{F}_0$, because $L_i \subseteq F_i$ is horizontal to (indeed a cross-section of) the ruling
$F_i \to C_i$.
This proves the claim and also the lemma.
\end{proof}

Theorem \ref{thm-conic-split-p1p1-blp2} below
replaces the semi-stability condition in Theorem \ref{thm-amerik2} by the birational dominance of
a Fano threefold. It is used crucially in proving Theorem \ref{thm-rho>=3}.

\begin{theorem}\label{thm-conic-split-p1p1-blp2}
Let $X$ be a smooth Fano threefold admitting an int-amplified endomorphism $f$.
Suppose that $X$ admits a conic bundle $\tau:X\to Y$
(with $Y$ smooth), which factors as $X\xrightarrow{\pi}W\xrightarrow{\tau_0}Y$ with $\tau_0$ being a (smooth) $\mathbb{P}^1$-bundle.
Then $\tau_0$ is a splitting $\mathbb{P}^1$-bundle.	
\end{theorem}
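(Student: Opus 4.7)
The plan is to split into cases according to the $H$-semistability of the rank-two bundle $\mathcal{E}$, applying Theorem \ref{thm-amerik2} in the semistable case and constructing a splitting by hand via the Harder--Narasimhan filtration together with Theorem \ref{thm-bh} in the unstable case. To set up, by Lemma \ref{lem-equiv-MMP} and Lemma \ref{lem-int-amp-equiv}, after iterating $f$ both $\pi$ and $\tau_0$ are $f$-equivariant, giving int-amplified descents $\tilde{f}:W\to W$ and $g:=f|_Y:Y\to Y$. By Theorem \ref{thm-MMP}, $W$ is a smooth Fano threefold, and by Theorem \ref{thm-surface-toricpair}, $Y$ is a smooth projective toric surface with toric boundary $\Delta_Y$ satisfying $K_Y+\Delta_Y\sim 0$; if $Y\cong \mathbb{P}^2$, Proposition \ref{prop-ame-split} gives the splitting directly, so I may assume $\rho(Y)\ge 2$.

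If $\mathcal{E}$ is $H$-semistable for some ample divisor $H$ on $Y$, then Theorem \ref{thm-amerik2} applies. Otherwise, the Harder--Narasimhan filtration yields a unique maximal destabilizing line subbundle $\mathcal{K}\hookrightarrow\mathcal{E}$ with quotient $\mathcal{L}$. Let $S_0\subseteq W$ be the corresponding section of $\tau_0$, so $[S_0]=\xi-\tau_0^*c_1(\mathcal{K})$ (with $\xi=c_1(\mathcal{O}_W(1))$) and $N_{S_0/W}\cong \mathcal{L}\otimes\mathcal{K}^{-1}$ is not pseudo-effective by the destabilizing inequality. Uniqueness of the filtration together with $\tilde{f}$-equivariance (or Lemma \ref{lem-non-pe-ti}) forces $\tilde{f}^{-1}(S_0)=S_0$ after further iteration. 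Since $\tau_0^{-1}(\Delta_Y)$ is also $\tilde{f}^{-1}$-invariant (after iteration), Theorem \ref{thm-bh} applied to $S_0+\tau_0^{-1}(\Delta_Y)$ gives that this pair is lc and that $-(K_W+S_0+\tau_0^{-1}(\Delta_Y))$ is $\mathbb{Q}$-linearly equivalent to an effective divisor. A direct intersection computation (using $K_W=\tau_0^*(K_Y+\det\mathcal{E})-2\xi$, the formula for $[S_0]$, and $K_Y+\Delta_Y\sim 0$) simplifies this to $\xi-\tau_0^*c_1(\mathcal{L})$; pushing forward gives $H^0(Y,\mathcal{E}\otimes\mathcal{O}_Y(-c_1(\mathcal{L})))\ne 0$ and hence (after saturation) a second section $S_1$ of $\tau_0$ corresponding to a line subsheaf $\mathcal{L}'\hookrightarrow\mathcal{E}$ distinct from $\mathcal{K}$.

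To deduce $\mathcal{E}\cong \mathcal{K}\oplus \mathcal{L}'$, I would verify $S_0\cap S_1=\emptyset$: otherwise an irreducible curve $C\subseteq S_0\cap S_1$ would, since $Y\setminus \Delta_Y$ is affine, have $\tau_0(C)$ meeting some component of $\Delta_Y$ at a point $y$, forcing three prime divisors $S_0, S_1, \tau_0^{-1}(\text{component of }\Delta_Y)$ to share a common irreducible curve in $W$ over $y$ and contradicting log canonicity of $(W, S_0+S_1+\tau_0^{-1}(\Delta_Y))$ via \cite[Lemma 2.29]{KM98} and Theorem \ref{thm-bh}. Disjointness forces the two sub-line bundles $\mathcal{K}$ and $\mathcal{L}'$ to span $\mathcal{E}$ fibrewise, giving the desired splitting.

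The main obstacle is the degenerate scenario in which the line subsheaf extracted from the effectivity step saturates back to $\mathcal{K}$ itself, so that $S_1$ coincides with $S_0$ and no genuine second section is produced. Ruling this out appears to require the finer fibre-bundle structure provided by Lemma \ref{lem-fanofibrebundle} when $Y$ is a Hirzebruch surface (which supplies a canonical alternative section via the $(-1)$-section of each $\mathbb{F}_1$-fibre, or one of the rulings of each $\mathbb{F}_0$-fibre, of $W\to Z\cong \mathbb{P}^1$), combined with a $g$-equivariant MMP reduction on $Y$ when $\rho(Y)\ge 3$ to reduce inductively to the Hirzebruch or $\mathbb{P}^2$ base cases.
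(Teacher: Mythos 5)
Your reduction to the unstable case stalls exactly where the real difficulty lies, and you say so yourself: the effectivity of $\xi-\tau_0^*c_1(\mathcal{L})$ can never, by itself, produce a second section. If $\mathcal{E}$ is a non-split extension $0\to\mathcal{K}\to\mathcal{E}\to\mathcal{L}\to 0$, then every section of $\mathcal{E}\otimes\mathcal{L}^{-1}$ factors through $\mathcal{K}\otimes\mathcal{L}^{-1}$ (the map $H^0(\mathcal{E}\otimes\mathcal{L}^{-1})\to H^0(\mathcal{O}_Y)$ is surjective precisely when the extension splits), and in the destabilized situation $\mathcal{K}\otimes\mathcal{L}^{-1}$ may well be effective, so $H^0(Y,\mathcal{E}\otimes\mathcal{L}^{-1})\neq 0$ is consistent with non-splitness. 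Thus the ``degenerate scenario'' you flag is not a corner case to be patched but the entire content of the theorem, and deferring it to ``appears to require Lemma \ref{lem-fanofibrebundle} combined with a $g$-equivariant MMP reduction'' leaves the proof incomplete at the decisive step. The paper's route is different and does carry the load: it first disposes of $\rho(Y)=1$ via Proposition \ref{prop-ame-split} and $\rho(Y)\ge 3$ via \cite[Proposition 9.10]{MM83}, reduces to $Y\cong\mathbb{F}_d$ with $d\le 1$, uses Lemma \ref{lem-fanofibrebundle} to pin down $\mathcal{E}|_{Y_z}\cong\mathcal{O}\oplus\mathcal{O}(a)$ on every fibre of the ruling $p:Y\to Z$, pushes forward to get $0\to p^*p_*\mathcal{E}\to\mathcal{E}\to\mathcal{Q}\to 0$, splits this extension by Kodaira vanishing when $2e-b\ge 0$, and in the remaining range invokes Qin's wall-and-chamber theorem to conclude that $\mathcal{E}$ is either split or $H$-stable for $H$ near the wall, the latter being excluded by Theorem \ref{thm-amerik2}. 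Note in particular that Amerik's theorem enters there in the stable case manufactured by Qin, not under your dichotomy ``semistable for some ample $H$ versus unstable for all $H$.''

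Two further points would need repair even in your set-up. First, Theorem \ref{thm-bh} requires an $f^{-1}$-invariant reduced divisor, but the toric boundary $\Delta_Y$ produced by Theorem \ref{thm-surface-toricpair} only \emph{contains} the union of $g^{-1}$-periodic curves; its remaining components need not be $g^{-1}$-invariant, so $S_0+\tau_0^{-1}(\Delta_Y)$ is not a legitimate input, and replacing $\Delta_Y$ by the invariant part $\Sigma_Y$ destroys the cancellation $K_Y+\Delta_Y\sim 0$ on which your computation of $\xi-\tau_0^*c_1(\mathcal{L})$ rests. Second, your assertion that ``by Theorem \ref{thm-MMP}, $W$ is a smooth Fano threefold'' is false for an arbitrary factorization $X\to W\to Y$: the paper stresses (and uses in the proof of Theorem \ref{thm-rho>=3}) that the intermediate threefold may fail to be Fano; this does not seem to be used essentially in your argument, but it should not be claimed.
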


\begin{proof}
After iteration, we may assume that $\tau$ is $f$-equivariant by Lemma \ref{lem-equiv-MMP}.
Let $g:=f|_W$ and $h:=f|_Y$, which are both int-amplified (cf.~Lemma \ref{lem-int-amp-equiv}).
Note that $Y$ is a smooth rational surface (cf.~Proof of Lemma \ref{lem-base-rat}).
By Lemma \ref{lem-p1-bundle-projective}, we may write $W=\mathbb{P}_Y(\mathcal{E})$ with $\mathcal{E}$ a rank-two vector bundle on $Y$.

If $\rho(Y)=1$, then $Y\cong\mathbb{P}^2$, and the theorem follows from Proposition \ref{prop-ame-split}.
If $\rho(Y)\ge 3$, then $\tau=\tau_0$ is a trivial bundle by \cite[Proposition 9.10]{MM83}.

Thus we may assume $\rho(Y)=2$.
Then $Y \cong \mathbb{F}_{d}$ (Hirzebruch surface).
By \cite[Corollary 6.7]{MM83}, $d = 0, 1$.
Fix a ruling $p:Y\to Z\cong\PP^1$.
For $z\in Z$, let $Y_z:=p^{-1}(z)$, $X_z:=(p\circ \tau)^{-1}(z)$ and $W_z:=(p\circ \tau_0)^{-1}(z)$.
Let $\ell$ be the fibre class of $p$ and $C_0$ the section class of $p$ which is extremal in $\NE(Y)$
and $C_0^2 = -d$.
Twisting with a suitable line bundle of $Y$, we may assume that $c_1(\mathcal{E})=aC_0+b\ell$ with $-1\le a, b\le 0$.
Then $\mathcal{O}_{Y_z}(c_1(\mathcal{E}|_{Y_z}))\cong \mathcal{O}_{Y_z}(a)$ for any $z\in Z$.
This, $a \in \{0, -1\}$, and
$\mathbb{P}(\mathcal{E}|_{Y_z}) = W_z \cong \mathbb{F}_{c}$ ($c \le 1$, cf.~Lemma \ref{lem-fanofibrebundle}) by  base change, imply
$\mathcal{E}|_{Y_z}\cong\mathcal{O}_{Y_z}\oplus\mathcal{O}_{Y_z}(a)$ for any $z\in Z$,
and $c= -a$.

Note that our $a$ here is independent of the choice of $z$;  hence the function $z\mapsto h^0(Y_z, \mathcal{E}|_{Y_z})$ is a constant function.
So the natural morphism $\varphi: p^*p_*\mathcal{E}\to\mathcal{E}$ has domain a locally free sheaf, which is an evaluation map on every fibre (cf.~\cite[Ch III, Corollary 12.9]{Har77}).
Note that the global sections of $\mathcal{E}|_{Y_z}$ are constant, since $a \le 0$.
Then we have an exact sequence
$$(*) \hskip 2pc
0\to p^*p_*\mathcal{E}\xrightarrow{\varphi}\mathcal{E} \to \mathcal{Q} \to 0$$
where $\mathcal{Q}$ is still a vector bundle.
If $a=0$, then $p_*\mathcal{E}$ has rank two (over $\PP^1$) and splits always,
and this implies $\mathcal{E}\cong p^*p_*\mathcal{E}$ splits.

We still have to consider the case $a=-1$.
Then $p_*\mathcal{E}$ is a line bundle, say $\mathcal{O}_Z(e)$ for some $e\in \mathbb{Z}$.
Then $p^*p_*\mathcal{E}\cong \mathcal{O}_Y(e\ell)$ and $\mathcal{Q}\cong \mathcal{O}_Y(-C_0+(b-e)\ell)$.
Note that
$$\textup{Ext}^1(\mathcal{Q},p^*p_*\mathcal{E})=H^1(Y,C_0+(2e-b)\ell)
= H^1(Y, K_Y + (C_0-K_Y) + (2e-b) \ell)
$$
vanishes if $2e-b\ge 0$ by the Kodaira vanishing theorem and the nef and bigness of $(C_0-K_Y) + (2e-b)\ell$
when $Y = \F_{d}$ ($d = 0, 1$).
So $2e-b\ge 0$ implies that $(*)$ and hence $\mathcal{E}$ split.
Therefore, we may assume that $2e-b<0$.

Denote by $\xi:=2c_1(p^*p_*\mathcal{E})-c_1(\mathcal{E})\sim C_0+(2e-b)\ell$.
Then neither $\xi$ nor $-\xi$ is pseudo-effective.
Hence, $\xi$ defines a non-empty wall (cf.~\cite[Chapter I, Definition 2.1.5 and Chapter II, Definition 1.2.1]{Qin93})
$$W:=\{A\in \Amp(Y)\,|\,A\cdot \xi=0\}$$
where $\Amp(Y)$ is the ample cone in $\NS_{\R}(Y)$.
By \cite[Chapter II, Theorem 1.2.3, Page 406]{Qin93}, either $\mathcal{E}$ splits or $\mathcal{E}$ is $H$-stable for (any) ample divisor $H\in \mathcal{C}$ where $\mathcal{C}$ is a chamber (surrounded by walls) near $W$ (i.e., $\overline{\mathcal{C}}\cap W\neq\emptyset$) and $A\cdot \xi<0$ for any $A\in\mathcal{C}$.
If $\mathcal{E}$ is $H$-stable, then by our dynamical assumption and Theorem \ref{thm-amerik2},
$\mathcal{E}$ still splits, a contradiction.
This proves the theorem.
\end{proof}

\section{Proof of Theorem \ref{main-thm-prod}}\label{section-7-split}

In this section, we prove Theorem \ref{main-thm-prod}.
The key step is the following:

\begin{theorem}\label{thm-pol-split}
Let $f$ be a non-isomorphic surjective endomorphism of a smooth Fano threefold $X$.
Then either $f$ is polarized after iteration or $X\cong Y\times \mathbb{P}^1$ for a del Pezzo surface $Y$.
\end{theorem}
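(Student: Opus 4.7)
The plan is to run an $f$-equivariant minimal model program---available after iterating $f$ by Lemma~\ref{lem-equiv-MMP}---and to argue by induction on $\rho(X)$, splitting into cases based on the type of extremal contraction. The low Picard number cases come first: if $\rho(X)=1$, then $X\cong\mathbb{P}^3$ by Theorem~\ref{thm-fano-rho=1} and $f$ is automatically polarized since $\dim_{\mathbb{R}}\N^1(X)=1$. If $\rho(X)=2$, Theorem~\ref{thm-rho2} restricts $X$ to three explicit models, exactly one of which, $\mathbb{P}_{\mathbb{P}^2}(\mathcal{O}\oplus\mathcal{O})\cong\mathbb{P}^2\times\mathbb{P}^1$, is of the desired product form. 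For the other two models (a non-trivial splitting $\mathbb{P}^1$-bundle over $\mathbb{P}^2$, or the blowup of $\mathbb{P}^3$ along a line), I would use the two $f$-equivariant extremal contractions together with a distinguished $f^{-1}$-invariant divisor---the negative section, or the exceptional divisor---to compute both diagonal entries of $f^*$ in a compatible integer basis of $\N^1(X)$ and to argue via Lemma~\ref{lem-pol-mod>1} that these entries coincide after iteration, forcing $f$ to be polarized.

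For $\rho(X)\geq 3$, I would invoke Theorem~\ref{thm-mm83-rho>=3} to produce a conic bundle $\tau\colon X\to Y$ from a $K_X$-negative extremal ray, so that $\rho(Y)=\rho(X)-1$ and $Y$ is a smooth rational surface. By Theorem~\ref{thm-conic}, $\tau$ is an algebraic $\mathbb{P}^1$-bundle $X=\mathbb{P}_Y(\mathcal{E})$, and after iteration $\tau$ is $f$-equivariant with descent $g:=f|_Y$. There are two sub-cases. If $g$ is polarized on $Y$ after iteration, I would transfer that polarization to $X$: the fibre-direction eigenvalue of $f^*|_{\N^1(X)}$ is determined by the action of $f$ on a suitable $f^{-1}$-invariant section coming from a splitting $\mathcal{E}=\mathcal{O}\oplus\mathcal{L}$ (obtained via Theorem~\ref{thm-conic-split-p1p1-blp2} when $f$ is int-amplified, or via Lemma~\ref{lem-non-pe-ti} when $\mathcal{L}$ is not pseudo-effective), and this eigenvalue is forced to match the base modulus, yielding polarization of $f$.

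If instead $g$ is not polarized even after iteration, Lemma~\ref{lem-noniso-nonpol} forces $Y\cong\mathbb{P}^1\times\mathbb{P}^1$ whenever $g$ is non-isomorphic. Then Theorem~\ref{thm-boundary-p1bundle} pins down the $f^{-1}$-periodic divisors on the splitting $\mathbb{P}^1$-bundle $X=\mathbb{P}_Y(\mathcal{E})$, Lemma~\ref{lem-p1p1-ticurve} identifies the periodic curves on $Y\cong\mathbb{P}^1\times\mathbb{P}^1$, and together these constraints force the splitting $\mathcal{E}=\mathcal{O}\oplus\mathcal{L}$ to simplify with $\mathcal{L}\cong\mathcal{O}$; after a possible reshuffling of $\mathbb{P}^1$-factors this produces the product $X\cong Y'\times\mathbb{P}^1$ with $Y'$ a del Pezzo surface. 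The remaining subcase where $g$ is an isomorphism is handled by further iterating to $g=\id_Y$, so that $f$ acts fibrewise on $\tau$, and a direct fibre-wise analysis together with the existence of $f^{-1}$-invariant sections yields the product structure.

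The main obstacle I anticipate is the coupling step in the second paragraph above: showing that when the base endomorphism $g$ is polarized, the fibre-direction eigenvalue of $f^*|_{\N^1(X)}$ really does match the base modulus, unless one is already in the product case $X\cong Y\times\mathbb{P}^1$. This requires a careful dynamical analysis of $f$ along a distinguished $f^{-1}$-invariant section of $\tau$, using the log ramification divisor formula together with the log canonical constraint of Theorem~\ref{thm-bh} to rule out mismatched eigenvalues. A secondary subtlety will be to verify that the birational part of the relative MMP collapses---in the notation of Theorem~\ref{thm-MMP}, that $r=0$ in the non-polarized scenario---so that $X$ itself is already the splitting $\mathbb{P}^1$-bundle rather than a non-trivial blowup of it, for otherwise an extra exceptional divisor in the $f$-equivariant MMP could obstruct the product decomposition.
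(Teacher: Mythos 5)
Your plan has two genuine gaps, and they sit exactly where the real work of this theorem lies. First, for $\rho(X)\ge 3$ you pass from Theorem \ref{thm-mm83-rho>=3} directly to ``by Theorem \ref{thm-conic}, $\tau$ is an algebraic $\mathbb{P}^1$-bundle $X=\mathbb{P}_Y(\mathcal{E})$''. Theorem \ref{thm-conic} only applies to a Fano contraction of an \emph{extremal ray}, i.e.\ when $\rho(X)=\rho(Y)+1$; the conic bundle supplied by \cite{MM83} generally has $r=\rho(X)-\rho(Y)-1\ge 1$ and non-empty discriminant, and only the bottom piece $X_{r+1}\to Y$ of the relative MMP in Theorem \ref{thm-MMP} is a $\mathbb{P}^1$-bundle. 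You acknowledge this at the end as a ``secondary subtlety'' to be disposed of by showing $r=0$ in the non-polarized scenario, but that is not how the case plays out: one cannot rule out $r\ge 1$ a priori; rather, $r\ge1$ is the configuration from which polarization must be \emph{extracted}. In the paper this is done (for $\rho(X)\ge 4$) by reducing the base to $\mathbb{F}_1$, taking an $f$-invariant surface $F=\tau^{-1}(L)$ over a $g$-invariant ruling fibre $L$, noting $F\not\cong\mathbb{P}^1\times\mathbb{P}^1$ because it has reducible fibres over $L\cap\Delta_\tau$, applying Lemma \ref{lem-noniso-nonpol} to get $f|_F$ polarized, and then matching the remaining eigenvalue by a degree computation on $X_{r+1}$; and the case $\rho(X)=3$ needs a separate nef-cone analysis (Lemma \ref{key-lem-thm-5.1}) which produces the product via \cite[Lemmas 9.3, 9.4]{MM83}. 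None of this is present in your sketch, and nothing in your sketch substitutes for it.

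Second, several of the tools you lean on are only valid for int-amplified $f$: Theorem \ref{thm-conic-split-p1p1-blp2} and Theorem \ref{thm-amerik2} (your source of the splitting $\mathcal{E}=\mathcal{O}\oplus\mathcal{L}$ over a base with $\rho(Y)=2$), Theorem \ref{thm-boundary-p1bundle} (your control of periodic divisors), and Theorem \ref{thm-bh} (your proposed fix for the ``main obstacle'' of matching the fibre eigenvalue to the base modulus). But the hypothesis of Theorem \ref{thm-pol-split} is only that $f$ is non-isomorphic, and precisely in the branch where the product conclusion can occur (e.g.\ $f=g\times\mathrm{id}$ on $Y\times\mathbb{P}^1$ with $g$ polarized) $f$ is \emph{not} int-amplified, so none of these results may be invoked; establishing the dichotomy with them would be circular in spirit. (Proposition \ref{prop-ame-split} does avoid the int-amplified hypothesis, but only over $\mathbb{P}^n$, which is unavailable once $\rho(Y)\ge 2$.) A smaller but real error of the same kind: when $g$ is an automorphism you propose to ``iterate to $g=\mathrm{id}_Y$'', which is impossible for an automorphism of infinite order; at best one gets $g^*|_{\N^1(Y)}=\mathrm{id}$ after iteration because $\Nef(Y)$ is rational polyhedral. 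Your $\rho(X)\le 2$ treatment is essentially sound (and close to the paper's, which argues directly via the divisorial-vs-primitive dichotomy and the negative section rather than through Theorem \ref{thm-rho2}), but as it stands the $\rho(X)\ge 3$ case is not proved.
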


Before proving Theorem \ref{thm-pol-split}, we need the following:

\begin{lemma}\label{key-lem-thm-5.1}
Theorem \ref{thm-pol-split} holds for the case $\rho(X)=3$.	
\end{lemma}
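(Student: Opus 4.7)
The plan is to assume, contrary to the first alternative of Theorem \ref{thm-pol-split}, that $f$ is not polarized on $X$ even after any iteration, and then to deduce $X\cong Y\times \PP^1$ for some del Pezzo surface $Y$. Since $\rho(X)=3\ge 3$, Theorem \ref{thm-mm83-rho>=3} gives a conic bundle $\tau:X\to Y$ onto a smooth rational surface, and after iterating $f$ (Lemma \ref{lem-equiv-MMP}) I may take $\tau$ to be $f$-equivariant; set $g:=f|_Y$. Theorem \ref{thm-MMP} then factors $\tau$ through an $f$-equivariant relative MMP
\[
X=X_1\to\cdots\to X_{r+1}=\PP_Y(\mathcal{E})\to Y,
\]
with $r=\rho(X)-\rho(Y)-1=2-\rho(Y)$ and $\tau_{r+1}$ an algebraic $\PP^1$-bundle; so $\rho(Y)\in\{1,2\}$.

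I would first eliminate the case $\rho(Y)=1$, i.e., $Y\cong\PP^2$ with $r=1$. Here $X$ is the $f$-equivariant blow-up of $X_2=\PP_{\PP^2}(\mathcal{E})$ along a smooth $(f|_{X_2})^{-1}$-invariant curve $\bar C$; since $\pi:X\to X_2$ is birational, $f|_{X_2}$ still has degree $>1$, so Proposition \ref{prop-ame-split} splits $\mathcal{E}$ and Proposition \ref{prop-toric-pair} makes $X_2$ toric. Using the two disjoint sections of $\tau_2$, the $\tau_2$-pullback of a line in $\PP^2$, and the $\pi$-exceptional divisor on $X$ as eigenvectors of $f^*$, a direct scaling comparison on $\N^1(X)$ would force all three eigenvalues to coincide, making $f$ polarized and contradicting the hypothesis.

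Hence $\rho(Y)=2$, $r=0$, and $\tau:X\to Y$ is itself an algebraic $\PP^1$-bundle. Next I would show that $g$ cannot be polarized on $Y$ after any iteration: a general $g$-periodic smooth fibre of a ruling $p:Y\to\PP^1$ pulled back to $X$, combined with log ramification arguments akin to Lemmas \ref{lem-p2-pol} and \ref{lem-p1p1-pol}, would propagate a polarization on $g$ to a polarization on $f$. So $g$ is non-polarized on $Y$, and when $g$ is also non-isomorphic, Lemma \ref{lem-noniso-nonpol} yields $Y\cong\PP^1\times\PP^1$, which is del Pezzo. The degenerate case $g\in\Aut(Y)$ I would treat separately, using the eigenvalue profile $(1,1,d)$ of $f^*|_{\N^1(X)}$ with $d>1$ together with the $\PP^1$-bundle structure to pin $Y$ to $\mathbb{F}_0$ or $\mathbb{F}_1$. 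Finally, I would trivialize $\mathcal{E}$ to conclude $X\cong Y\times\PP^1$; the main obstacle is that Theorem \ref{thm-conic-split-p1p1-blp2} provides splitting only under int-amplification (not available here in our non-polarized setting), and even once $\mathcal{E}$ splits one must rule out a non-trivial twist. I would replace that input by a direct analysis: the fact that the $\tau$-fibre class is an eigenvector of $f^*$ for the unique eigenvalue of modulus $>1$, together with the non-polarized eigenvalue structure on $\N^1(X)$, should force $c_1(\mathcal{E})$ to be numerically trivial (after twist) and realize $X$ as a trivial $\PP^1$-bundle over the del Pezzo surface $Y$.
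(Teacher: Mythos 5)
Your reduction to a conic bundle via Theorems \ref{thm-mm83-rho>=3} and \ref{thm-MMP} is legitimate (no circularity), but the core of your argument in the main case $\rho(Y)=2$, $r=0$ breaks down at two places. First, the step ``show that $g$ cannot be polarized on $Y$ after any iteration'' is false: take $X=\PP^1\times\PP^1\times\PP^1$ with $f=\id_{\PP^1}\times h\times h$, $\deg(h)\ge 2$, and let $\tau$ be the projection onto the last two factors; then $f$ is non-isomorphic and never polarized (eigenvalues $1,p^k,p^k$), while $g=f|_Y=h\times h$ is polarized. So a polarization on $g$ simply does not propagate to $f$, and no argument ``akin to Lemmas \ref{lem-p2-pol} and \ref{lem-p1p1-pol}'' can rule this case out; your case division collapses here. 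Second, your final step conflates ``$X$ is a product of $\PP^1$ and a del Pezzo surface'' with ``$X$ is a trivial $\PP^1$-bundle over the chosen base $Y$'', and the latter can fail: $X=\F_1\times\PP^1$ with $f=\alpha\times h$, $\alpha\in\Aut(\F_1)$, $\deg(h)\ge2$, is a $\PP^1$-bundle over $Y=\PP^1\times\PP^1$ equal to $\PP_Y(\OO_Y\oplus\OO_Y(1,0))$, whose $c_1$ cannot be made numerically trivial by any twist; here the conclusion of Theorem \ref{thm-pol-split} holds only via the \emph{other} projection, onto $\F_1$. So ``forcing $c_1(\mathcal{E})$ to be trivial over the chosen $Y$'' is not just unproven, it is sometimes impossible, and your plan has no mechanism for switching to the correct second contraction. (Your elimination of the case $Y\cong\PP^2$, $r=1$ is also only asserted: the two disjoint sections of $\tau_2$ need not be $f^{-1}$-invariant, hence need not be eigenvectors, when $\mathcal{E}$ is trivial; this part is fixable, e.g.\ by using the $f^{-1}$-invariant strict transform of $\tau_2^{-1}(C_1)$, whose class $\tau^*C_1-E$ forces the eigenvalue on $E$ to equal $q$, and then the projection formula, but as written it is a gap.)

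For contrast, the paper's proof avoids both pitfalls by working directly with the cone structure: since $f^*|_{\N^1(X)}$ is diagonalizable with not-all-equal eigenvalues and $\Nef(X)$ is rational polyhedral, $\Nef(X)$ must be simplicial with exactly three extremal nef eigen-divisors $D_i$, each non-big; one of the three associated contractions must be a Fano contraction $\pi_1$ to a Hirzebruch surface (the all-divisorial case is excluded by an eigenvalue count), which by Theorem \ref{thm-conic} is a $\PP^1$-bundle; an $f$-invariant fibre surface $F$ over an invariant fibre of the ruling has two distinct eigenvalues, so $F\cong\PP^1\times\PP^1$ by Lemma \ref{lem-noniso-nonpol}, and then \cite[Lemmas 9.3, 9.4]{MM83} give $X\cong Z\times_{\PP^1}Y_1$, from which the product structure is extracted by base change — over $Y_1$ or over the other Fano contraction's target, exactly the flexibility your approach lacks. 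If you want to salvage your route, you would need to replace the ``$g$ not polarized'' and ``trivialize $\mathcal{E}$ over $Y$'' steps by an argument of this fibre-product type that allows the product decomposition to be transverse to the initially chosen conic bundle.
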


\begin{proof}
Note that $\Nef(X)$ is a rational polyhedron with $r$ extremal rays.
Since $\Nef(X)$ spans $\N^1(X)$, we have $r\ge\rho(X)=3$.
After iteration, we may assume
$$f^*|_{\N^1(X)}=\diag[\lambda_1, \lambda_2, \lambda_3]$$
is diagonal with $\lambda_i$ being positive integers.
We may assume that $f$ is not polarized even after iteration.
Then $\#\{\lambda_1, \lambda_2, \lambda_3\} = 2, 3$ (cf.~Lemma \ref{lem-pol-mod>1}).

We show that $r=3$.
Suppose $r\ge 4$.
Note that any three of the extremal rays are linearly independent.
If $f^*|_{\N^1(X)}$ has three distinct eigenvalues, then there are exactly three linearly independent one-dimensional eigenspaces.
However, one of them contains two extremal rays, a contradiction.
If $f^*|_{\N^1(X)}$ has two distinct eigenvalues, then there is a two-dimensional eigenspace containing three extremal rays, a contradiction.
So $r=3$ and $\Nef(X)$ (and hence $\N^1(X)$) is generated by three extremal nef divisors $D_1, D_2, D_3$.

We may assume $f^*D_i=\lambda_i D_i$.
Since $f$ is non-isomorphic and non-polarized, $D_i$ is not big
by Lemma \ref{lem-pol-mod>1}, so
$D_i^3=0$ for all $i$.
Since $-K_X$ is ample, there are three extremal ray contractions $\pi_i:X\to Y_i$ such that $\pi_i^*\Nef(Y_i)$ is generated by $\{D_1, D_2, D_3\}\backslash \{D_i\}$ and $\rho(Y_i) = 2$.
Denote by
$f_i:=f|_{Y_i}$.

Consider the case where every $\pi_i$ is a divisorial contraction.
Then each exceptional locus
of $\pi_i$ consists of a single prime divisor $E_i$
 by \cite[Theorem 3.3]{Mor82}.
Note that each $E_i$ is $\pi_i$-anti ample and hence a non-nef (effective) eigenvector of $f^*|_{\N^1(X)}$ different from $D_1, D_2, D_3$.
Thus $\#\{\lambda_1, \lambda_2, \lambda_3\} = 2$.
We may assume $\lambda_1=\lambda_2\neq \lambda_3$.
Note that $f_3$ (like its birational lifting $f$) is non-isomorphic.
Then $f_3$ is $\lambda_1$-polarized and hence so is $f$ (cf.~Lemma \ref{lem-pol-mod>1}),
a contradiction.

Thus we may assume that $\pi_1: X \to Y_1$ is a Fano contraction.
Since $\rho(Y_1)=2$, we have $Y_1\cong \F_d$, a Hirzebruch surface (cf.~Lemma \ref{lem-base-rat}).
By Theorem \ref{thm-conic}, $\pi_1$ is further an algebraic $\mathbb{P}^1$-bundle $\mathbb{P}_{Y_1}(\mathcal{E})$.
By Lemma \ref{lem-noniso-nonpol},  $\lambda_2 = \lambda_3$ if
$Y_1 \not\cong\mathbb{P}^1\times \mathbb{P}^1$.
Since $f$ is not polarized, either $\lambda_1\neq\lambda_2$ or $\lambda_1\neq\lambda_3$.
Without loss of generality, we may assume $\lambda_1\neq\lambda_3$.
Since $Y_1 \cong \F_d$, we can choose a suitable ruling $p:Y_1\to C\cong \mathbb{P}^1$ such that
$\deg (f_1|_C)=\lambda_2$.
By Lefschetz fixed point formula on $C$, there exists an $f_1$-invariant fibre $L\cong \mathbb{P}^1$ of $p$.
Denote by $F:=\pi_1^{-1}(L)$ which is $f$-invariant.
Note that $(f|_F)^*|_{\N^1(F)}$ has two distinct eigenvalues $\lambda_1, \lambda_3$ by our assumption.
Then $F\cong \F_0 = \mathbb{P}^1\times \mathbb{P}^1$ by Lemma \ref{lem-noniso-nonpol}.
Since $\pi_1$ is a (smooth) $\PP^1$-bundle and $X$ is Fano,
each fibre of $p\circ\pi_1$ is isomorphic to $\F_0$
(cf.~e.g.
\cite[Lemma 9.4]{MM83}).
By \cite[Lemma 9.3]{MM83}, $X\cong Z\times_{\mathbb{P}^1} Y_1$ for a $\mathbb{P}^1$-bundle $Z$ over $\mathbb{P}^1$.
In particular, the induced map $X\to Z$ is another Fano contraction which is either $\pi_2$ or $\pi_3$,
i.e., $Z = Y_j$ for $j = 2$ or $3$.
If $Y_1\cong \mathbb{P}^1\times\mathbb{P}^1$, then $X$ is a trivial  $\mathbb{P}^1$-bundle over $Z$ by the base change.
If $Y_1\not \cong \mathbb{P}^1\times\mathbb{P}^1$, then $\lambda_2=\lambda_3\neq \lambda_1$
implies that $f_2^*|_{\N^1(Y_2)}$ and $f_3^*|_{\N^1(Y_3)}$ have two distinct positive eigenvalues.
Then $Z = Y_j \cong \mathbb{P}^1\times \mathbb{P}^1$ by Lemma \ref{lem-noniso-nonpol} and hence $X$ is a trivial  $\mathbb{P}^1$-bundle over $Y_1$ by the base change.
This proves the lemma.
\end{proof}

\begin{proof}[Proof of Theorem \ref{thm-pol-split}]
We divide the proof into three cases.

\par \vskip 0.3pc \noindent
\textbf{Case $\rho(X)=1$.}
In this case, $f$ is polarized.

\par \vskip 0.3pc \noindent
\textbf{Case $\rho(X)=2$.}
If $X$ has a divisorial contraction to a projective variety $X_1$ which is $f$-equivariant after iteration (cf.~Lemma \ref{lem-equiv-MMP}), then
$\rho(X_1) = \rho(X) - 1 = 1$, and hence
$f|_{X_1}$ is polarized; thus so is $f$ (cf.~Lemma \ref{lem-pol-mod>1}).
So we may assume $X$ has only Fano contractions.
Thus $X$ is primitive.
By \cite[Theorem 1.6]{MM83},
there is an ($f$-equivariant, after iterating $f$) Fano contraction $\tau:X\to Y$ to a surface $Y$.
Since $\rho(Y) = \rho(X) - 1 = 1$,
we have $Y \cong \mathbb{P}^2$ by Lemma \ref{lem-base-rat}.
Set $g:=f|_Y$.
By Theorem \ref{thm-conic} and Proposition \ref{prop-ame-split}, $\tau$ is a splitting $\mathbb{P}^1$-bundle.

We may assume $\tau$ is not trivial; otherwise, $X$ splits.
We aim to prove $f$ is polarized after iteration.
After suitable twisting, we may write $X\cong\mathbb{P}_Y(\mathcal{O}_Y\oplus\mathcal{L})$ with the line bundle $\mathcal{L}$ being  not pseudo-effective on $Y$.
Then there exists a section $S$ of $\tau$ such that $\mathcal{O}_X(S)|_S\cong (\tau|_S)^*\mathcal{L}$ is not pseudo-effective.
By Lemma \ref{lem-non-pe-ti}, $f^{-1}(S)=S$ after iteration.
Write $f^*S=aS$ for some integer $a\ge 1$ and $g^*|_{\N^1(Y)}=q\id$ for some integer $q\ge 1$.
Since $\N^1(X)$ is spanned by $S$ and $\tau^*\N^1(Y)\cong \R$,
we have $f^*|_{\N^1(X)}=\diag[a,q]$.
Note  that $(f|_S)^*(S|_S)=aS|_S$ and $S|_S\not\equiv 0$.
Then $a$ is an eigenvalue of $(f|_S)^*|_{\N^1(S)}$, which implies $a=q$ is an eigenvalue of $g^*|_{\N^1(Y)}$ since $S$ is $f$-equivariantly isomorphic to $Y$.
Since $f$ is non-isomorphic, we have $a=q>1$ and hence $f$ is polarized (cf.~Lemma \ref{lem-pol-mod>1}).

\par \vskip 0.3pc \noindent
\textbf{Case $\rho(X)\ge 3$.}
By Theorems
\ref{thm-mm83-rho>=3} and
\ref{thm-MMP}, there is an $f$-equivariant (after iteration) conic bundle $\tau:X\to Y$ and denote by $g:=f|_Y$ (cf.~Lemma \ref{lem-equiv-MMP}).
In the following, we apply Theorem \ref{thm-MMP} and use the notations there.

If $\rho(X)=3$, then Theorem \ref{thm-pol-split} follows from Lemma \ref{key-lem-thm-5.1}.

Thus we may assume $\rho(X)\ge 4$.
If $\tau : X \to Y$ has the base $\rho(Y) \ge 3$, then it is a trivial conic bundle
by \cite[Proposition 9.10]{MM83}, and we are done.
So we assume $\rho(Y) \le 2$.
Hence,
$Y = \PP^2$ or $Y = \F_d$ (cf.~Lemma \ref{lem-base-rat}) with $d \le 1$ (cf.~\cite[Corollary 6.7]{MM83}).
Since the $g^{-1}$-invariant divisor $\Delta_\tau$ is a disjoint union of $r = \rho(X) - \rho(Y) -1$ of $\PP^1$'s (cf.~Theorem \ref{thm-MMP}),
we may assume
$Y\cong \F_1$ (cf.~\cite[(9.5), (9.6)]{MM83},
Lemma \ref{lem-p1p1-ticurve}).

Thus $\Delta_\tau$ is a subset of the disjoint union of the $(-1)$-curve $C_0$ on $Y \cong \F_1$ and another section $C_1$ disjoint from $C_0$
(cf.~\cite[Corollary 6.7]{MM83}).
Let $L \cong \PP^1$ be a $g$-invariant fibre along the ruling $\sigma:Y\to Z\cong\mathbb{P}^1$ by the Lefschetz fixed point formula for $f|_Z$.
Let $F:=\tau^{-1}(L)$. Then $F$ is an $f$-invariant smooth projective rational surface, which has some reducible fibres lying over $L\cap\Delta_\tau$.
In particular, $F\not\cong\mathbb{P}^1\times \mathbb{P}^1$.
If $f|_F$ is isomorphic, then
$f|_Z$ and hence $g=f|_Y$ are non-isomorphic,
so $g$ is polarized after iteration (cf.~Lemma \ref{lem-noniso-nonpol}); thus $g|_L$ is polarized and non-isomorphic, which implies $f|_F$ is also non-isomorphic, a contradiction.
Hence, $f|_F$ is non-isomorphic and thus $q$-polarized for some $q>1$, since $F\not\cong\PP^1\times\PP^1$ (cf.~Lemma \ref{lem-noniso-nonpol}).
As a result, $g|_L$ (and hence $g$) are both $q$-polarized.
Moreover, we have $f^*F\sim qF$.

Finally, we prove $f$  is $q$-polarized after iteration.
By Lemma \ref{lem-pol-mod>1}, it suffices to show $f_{r+1}$ is $q$-polarized (cf.~the notations in Theorem \ref{thm-MMP}).
Note that $\rho(X_{r+1})=\rho(Y)+1=3$ and $\tau_{r+1}$ is an algebraic $\PP^1$-bundle over $Y$.
Write $g^*|_{\N^1(Y)}=\diag[q,q]$ with $q>1$ and $f_{r+1}^*|_{\N^1(X_{r+1})/\tau_{r+1}^*\N^1(Y)}=a\id$ for some integer $a\ge 1$.
Then $f_{r+1}^*|_{\N^1(X_{r+1})}=\diag[a,q,q]$ and it suffices to show that $a=q$ (cf.~Lemma \ref{lem-pol-mod>1}).
By the projection formula, $\deg (f_{r+1})=aq^2$.
On the other hand, $\deg (f|_F)\cdot F=f_*F=(\deg (f)/q) F$ by the projection formula.
Since $f|_F$ is $q$-polarized,
we have $\deg (f_{r+1})=\deg (f)=q^3$ and hence $a=q$ as desired.
This completes the proof of our theorem.
\end{proof}

\begin{proof}[Proof of Theorem \ref{main-thm-prod}]
Clearly, (3) implies (2).
 By Theorem \ref{thm-pol-split}, (2) implies (1).
If $X\cong\mathbb{P}^1\times S$ for a del Pezzo surface $S$, then taking $f_1:\mathbb{P}^1\to\mathbb{P}^1$ to be the square map $[x:y]\mapsto [x^2:y^2]$ and $f:=f_1\times \textup{id}_S$ the product map, we have $f$ is non-isomorphic and non-int-amplified.
So (1) implies (3).
\end{proof}

\section
{Proofs of Theorem \ref{main-thm-toric} %
and Corollary \ref{cor-2}
}\label{section-8-mainproof}

To prove
Theorem \ref{main-thm-toric},  
we begin with the following, using results in Section \ref{section-6-mmp}.

\begin{theorem}\label{thm-rho>=3}
Let $X$ be a smooth Fano threefold with $\rho(X)\ge 3$.
Suppose that $X$ admits an int-amplified endomorphism $f$.
Then $X$ is toric.
\end{theorem}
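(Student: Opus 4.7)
The plan is to cascade through the structural theorems already established in the paper. Because every int-amplified endomorphism has degree $>1$ and is therefore non-isomorphic, Theorem \ref{thm-mm83-rho>=3} furnishes a conic bundle $\tau\colon X\to Y$. Feeding this into Theorem \ref{thm-MMP}, and replacing $f$ by an iterate, yields an $f$-equivariant factorisation
\[
X = X_1 \to \cdots \to X_{r+1} \xrightarrow{\tau_{r+1}} Y,
\]
in which $\pi\colon X\to X_{r+1}$ is the blowup along a disjoint union of $(f|_{X_{r+1}})^{-1}$-invariant smooth curves $\overline{C_1},\dots,\overline{C_r}$ projecting onto $(f|_Y)^{-1}$-invariant smooth curves $C_i\subseteq Y$, and $\tau_{r+1}$ is a smooth $\mathbb{P}^1$-bundle.

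Next, I would apply Theorem \ref{thm-conic-split-p1p1-blp2} to upgrade $\tau_{r+1}$ to a splitting $\mathbb{P}^1$-bundle $X_{r+1}\cong\mathbb{P}_Y(\mathcal{O}_Y\oplus\mathcal{L})$. The descended endomorphism $f|_{X_{r+1}}$ is again int-amplified (by Lemma \ref{lem-int-amp-equiv}), so Theorem \ref{thm-boundary-p1bundle} makes $X_{r+1}$ toric with a toric pair $(X_{r+1},\Delta_{r+1})$ whose boundary
\[
\Delta_{r+1} = S_0 \cup S_1 \cup \tau_{r+1}^{-1}(\Delta_Y)
\]
contains every $(f|_{X_{r+1}})^{-1}$-periodic prime divisor; here $S_0,S_1$ are two disjoint sections of $\tau_{r+1}$ and $(Y,\Delta_Y)$ is the induced toric pair on the base.

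To finish, I need each $\overline{C_i}$ to be a torus-invariant curve in $X_{r+1}$, so that $\pi$ becomes a toric blowup. Since $C_i$ is $(f|_Y)^{-1}$-invariant, Theorem \ref{thm-surface-toricpair} places $C_i\subseteq\Delta_Y$, and hence $F_i := \tau_{r+1}^{-1}(C_i)$ is a Hirzebruch surface sitting inside $\Delta_{r+1}$. Restricting $f$ to the $f^{-1}$-invariant divisor $F_i$ gives an int-amplified endomorphism (restrict an ample $L$ on $X_{r+1}$ with $f^*L-L$ ample to $F_i$), and Theorem \ref{thm-surface-toricpair} forces its toric boundary to be a cycle consisting of two disjoint $(f|_{F_i})^{-1}$-invariant sections together with two invariant fibres of $F_i\to C_i$. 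The only candidates for such invariant sections are $S_0\cap F_i$ and $S_1\cap F_i$. Now $\overline{C_i}$ is a smooth irreducible $(f|_{F_i})^{-1}$-invariant curve surjecting onto $C_i$, and is a section of $F_i\to C_i$ by the explicit description of the blowup centres in the relative MMP for smooth conic bundles \cite[Proposition 6.8]{MM83}; therefore $\overline{C_i}\in\{S_0\cap F_i,\,S_1\cap F_i\}$, exhibiting $\overline{C_i}$ as a transversal intersection of two components of $\Delta_{r+1}$ and hence as a torus-invariant smooth curve in $X_{r+1}$. Since the $\overline{C_i}$ are pairwise disjoint by Theorem \ref{thm-MMP}, $\pi$ is a toric blowup and $X$ is toric.

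The main hurdle I anticipate is the identification in the third paragraph: verifying that $\overline{C_i}$ is a genuine section of $F_i\to C_i$ (so it cannot accidentally be one of the invariant fibres in the toric boundary of $F_i$), and that the two disjoint $(f|_{F_i})^{-1}$-invariant sections produced by Theorem \ref{thm-surface-toricpair} necessarily coincide with the restrictions $S_j\cap F_i$ of the two toric boundary sections of $X_{r+1}$. Once this matching is secured, the rest of the argument assembles directly from the preparatory results collected in Sections \ref{section-3-tid}--\ref{section-6-mmp}.
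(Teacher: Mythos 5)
Your reduction to the splitting $\PP^1$-bundle $X_{r+1}=\mathbb{P}_Y(\mathcal{E})\xrightarrow{\tau_{r+1}}Y$ via Theorems \ref{thm-mm83-rho>=3}, \ref{thm-MMP}, \ref{thm-conic-split-p1p1-blp2} and \ref{thm-boundary-p1bundle} is exactly the paper's setup, but the decisive step is not proved: you need each blowup centre $\overline{C_i}$ to be invariant under one torus acting on $X_{r+1}$, and your identification $\overline{C_i}\in\{S_0\cap F_i,\,S_1\cap F_i\}$ does not follow from what you cite. Theorem \ref{thm-surface-toricpair} applied to $F_i=\tau_{r+1}^{-1}(C_i)$ only produces \emph{some} toric pair intrinsic to $F_i$ whose boundary contains the $(f|_{F_i})^{-1}$-periodic curves; its boundary sections are neither required to be invariant nor to coincide with the restrictions of the global boundary sections of $(X_{r+1},\Delta_{r+1})$. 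Worse, $S_1$ itself is in general \emph{not} $(f|_{X_{r+1}})^{-1}$-invariant: in the proof of Theorem \ref{thm-boundary-p1bundle} it is an arbitrarily chosen section disjoint from $S_0$ whenever $\Sigma_h$ has only one horizontal component, so ``the only candidates for invariant sections are $S_0\cap F_i$ and $S_1\cap F_i$'' is unjustified. Concretely, $F_i\to C_i$ can carry a second $(f|_{F_i})^{-1}$-invariant section (e.g.\ over the second fixed point of the fibre-direction dynamics when $F_i\cong\PP^1\times\PP^1$) which is not the trace on $F_i$ of any invariant, or torus-invariant, divisor of $X_{r+1}$; nothing excludes $\overline{C_i}$ from being that curve, and this possibility is precisely the difficulty the paper's proof is built to avoid. (Even granting invariance of each $\overline{C_i}$ for some torus, you would still need a single toric pair handling all $i$ simultaneously.)

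The paper sidesteps the problem by refusing to realize the centres on the model $X_{r+1}$: it first manufactures a global $f^{-1}$-invariant section $S$ of $\tau:X\to Y$ (the proper transform of an invariant section $S_{r+1}$ of $\tau_{r+1}$, obtained from Lemma \ref{lem-non-pe-ti} in the non-trivial case and from Lemma \ref{lem-p1p1-ticurve} in the trivial case), and then, over each discriminant component $C_i$, contracts whichever component $E_i$ of $\tau^{-1}(C_i)=E_i\cup F_i$ the section $S$ passes through. The resulting model $X'$ over $Y$ may differ from $X_{r+1}$ and need not be Fano (this is why Theorem \ref{thm-conic-split-p1p1-blp2} is stated for a Fano $X$ dominating a possibly non-Fano $\PP^1$-bundle), but its blowup centres are by construction $C_i'=F_i'\cap S'$, intersections of two $(f|_{X'})^{-1}$-invariant prime divisors, which Theorem \ref{thm-boundary-p1bundle} places inside one toric boundary $\Delta'$; only then is $X\to X'$ a composition of toric blowups. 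Without either reproducing this manoeuvre or genuinely proving that every $\overline{C_i}$ lies in two components of a single toric boundary of $X_{r+1}$, your argument does not close.
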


\begin{proof}
We apply Theorems \ref{thm-mm83-rho>=3} and \ref{thm-MMP} and employ the same  notations there.
Especially,
$X_{r+1} = \mathbb{P}_Y(\mathcal{E}) \xrightarrow{\tau_{r+1}} Y$ is a splitting $\PP^1$-bundle (cf.~Theorem \ref{thm-conic-split-p1p1-blp2}).
Since there are only finitely many $f^{-1}$-periodic subvarieties by \cite[Corollary 3.8]{MZ20}, we may assume that they are all $f^{-1}$-invariant (after iterating $f$).
If $r=0$,  then $\tau:X\to Y$ is a splitting $\mathbb{P}^1$-bundle  and hence $X$ is toric by Theorem \ref{thm-boundary-p1bundle}.
We now assume $1 \le r$ ($= \rho(X) - \rho(Y)-1$).

Then, by Theorem \ref{thm-MMP}, $\tau$ is a conic bundle and $\Delta_\tau = \coprod_{i=1}^r C_i$ with
$C_i \cong \PP^1$.
By \cite[Proposition 6.3]{MM83}, $\tau^{-1}(C_i)=E_i\cup F_i$, and
$E_i$ and $F_i$
are (smooth) $\PP^1$-bundles over $C_i$, with the $f^{-1}$-invariant curve $E_i\cap F_i$ dominating $C_i$ (and indeed, a cross-section).

We claim that there is an $f^{-1}$-invariant section $S$ of $\tau$ dominating $Y$.
Recall the birational contraction $\pi:X\to X_{r+1}$ and the extremal Fano contraction $\tau_{r+1}:X_{r+1}\to Y$.
Note that $\pi(E_1\cap F_1)$ is an $f_{r+1}^{-1}$-invariant subsection over $Y$ (not contracted by $\tau_{r+1}$)
and $\pi(E_1\cup F_1)=\tau_{r+1}^{-1}(C_1)$ is a $\PP^1$-bundle over $C_1$.
If $\tau_{r+1}$ is a trivial bundle so that $X_{r+1} = Y \times Z\cong Y\times\PP^1$,
then its second projection induces a second ruling on $\pi(E_1\cup F_1)$; thus $\pi(E_1\cup F_1) \cong \PP^1 \times \PP^1$; now $\pi(E_1\cap F_1)$ is contained in an $f_{r+1}^{-1}$-invariant horizontal section $S_{r+1}$ of $\tau_{r+1}$ (cf.~\cite[Lemma 7.5]{CMZ20} and Lemma \ref{lem-p1p1-ticurve} applied to $\pi(E_1\cup F_1)$).
If $\tau_{r+1}$ is not a trivial bundle, as in the proof of Theorem \ref{thm-boundary-p1bundle}, some section $S_{r+1}$ of $\tau_{r+1}$ has
$S_{r+1}|_{S_{r+1}}$ not being pseudo-effective and is hence $f_{r+1}^{-1}$-invariant after iteration by Lemma \ref{lem-non-pe-ti}.
In both cases, we take $S \subseteq X$ to be the proper transform of $S_{r+1}$.
Note also that $\pi|_S:S\cong S_{r+1}$  by the observation of $\pi$ as in Theorem \ref{thm-MMP}.
So the claim is proved.

Since $\pi(S)=Y$,
we have $(E_i\cup F_i)\cap S\neq\emptyset$.
Hence, we may assume $S\cap E_i\neq \emptyset$ for each $i$.
By \cite[Proposition 6.3]{MM83} and Lemma \ref{lem-equiv-MMP}, after iteration, there is an $f$-equivariant birational morphism $\pi':X\to X'$ over $Y$ contracting all $E_i$ with $f':=f|_{X'}$,
such that:

\begin{itemize}
\item[(i)] the induced morphism $\tau':X'\to Y$ is a conic bundle with $\Delta_{\tau'}=\emptyset$; and
\item[(ii)] $\pi'$ is the blowup of $X'$ along $r$ smooth curves $C'_i:=F'_i\cap S'$, where $F'_i:={\tau'}^{-1}(C_i)$ and $S':=\pi'(S)$ are ${f'}^{-1}$-invariant prime divisors.
\end{itemize}

Our $\tau'$ is a  $\mathbb{P}^1$-bundle, but $X'$ may not be Fano.
By  Theorem \ref{thm-conic-split-p1p1-blp2},
$\tau'$ is a splitting $\PP^1$-bundle. Hence, by Theorem  \ref{thm-boundary-p1bundle}, there is a toric pair $(X', \Delta')$ such that $\Delta'$ contains all $f'^{-1}$-invariant prime divisors (including $F_i'$, $S'$).
By the construction, $\pi'$ is the composition of  toric blowups
of the intersection of two prime divisors in the log smooth toric boundary starting from the (log smooth)
toric pair $(X', \Delta')$.
Thus $X$ is toric.
\end{proof}

\begin{proof}[Proof of Theorem \ref{main-thm-toric}]
Clearly, (2) implies (3).
By Theorems \ref{thm-rho2} and \ref{thm-rho>=3}, (3) implies (1).
Note that every projective toric variety has a polarized endomorphism; see \cite[Lemma 4]{Nak02} and
\cite[Proof of Theorem 1.4]{MZg20}.
So (1) implies (2).
\end{proof}


\begin{proof}[Proof of Corollary \ref{cor-2}]
By Main Theorem, we may assume $X = \PP^1 \times S$
with $S$ a del Pezzo surface.
Iterating $f$, we may assume $f$ fixes all
(finitely many and automatically $K_X$-negative)
extremal rays of $\NE(X)$. So $f$ descends to $f_P$ on $\PP^1$ and $f_S$ on $S$ via the two projections which are contractions of extremal faces of $\NE(X)$, and $f = f_P \times f_S$.
If $\deg(f_S) \ge 2$, then $S$ and hence $X$ are toric by \cite[Theorem 3]{Nak02}.
We are done.
\end{proof}

\begin{ack}
The authors are supported by  a Research Fellowship of KIAS,  an ARF of NUS and a President's Scholarship of NUS, respectively.
The authors would like to thank the referee for very careful reading and suggestions to improve the paper.
On behalf of all authors, the corresponding author states that there is no conflict of interest.
\end{ack}

\end{document}